\setlist[itemize,2]{      
	leftmargin=0.3em    
}
\setlist[itemize,3]{      
	leftmargin=0.6em              
}
\setlist[itemize,4]{      
	leftmargin=0.9em             
}
\setlist[enumerate]{
	leftmargin=1.1em         
}
\newtheorem{thm}{Theorem}[section]
\newtheorem*{thm*}{Theorem}
\newtheorem{cor}[thm]{Corollary}
\newtheorem{lem}[thm]{Lemma}
\newtheorem{pro}[thm]{Proposition}
\newtheorem{rem}[thm]{Remark}
\numberwithin{equation}{section}
\date{}
\begin{document}
	
	\title{\bf Quenched correlation decay for random splittings of some prototypical 3D flows including the ABC flow}
	\author[1,2]{Nianci Jiang\thanks{jiangnianci22@mails.ucas.ac.cn} }
	\author[3,4]{Weili Zhang\thanks{zhangweili@amss.ac.cn}}
	\affil[1]{School of Mathematical Sciences, University of Chinese Academy of Sciences, Beijing 100049, China}
	\affil[2]{Academy of Mathematics and Systems Science, Chinese Academy of Sciences, Beijing 100190, China}
	\affil[3]{Beijing Institute of Mathematical Sciences and Applications, Beijing 101408, China}
	\affil[4]{Yau Mathematical Sciences Center, Tsinghua University, Beijing 100084, China}
	\renewcommand\Authfont{\normalsize}  
	\renewcommand\Affilfont{\small}             
	\maketitle
	\noindent{\bf Abstract:}\quad For the long-time dynamical challenges of some prototypical 3D flows including the ABC flow on $\mathbb{T}^3$, we apply a random splitting method to establish two fundamental indicators of chaotic dynamics. First, under general assumptions, we establish that these random splittings exhibit Lagrangian chaos, characterized by a  positive top Lyapunov exponent. Furthermore, we demonstrate the almost-sure quenched correlation decay of these random splittings, which is a stronger property than the almost-sure positivity of Lyapunov exponents alone. This framework is then applied to construct ideal dynamo in kinematic dynamo theory and to establish exponential mixing of passive scalars. 
	
	
	\noindent  {\bf MSC:} 37A25, 37H05, 76W05, 76F25, 35Q49.
	
	
	\noindent {\bf Keywords:} Convergence, positive top Lyapunov exponent, ideal dynamo, quenched correlation decay, exponential mixing, ABC flow.	 
	\tableofcontents
	\section{Introduction}
	\quad The long-term behavior of deterministic equations, such as turbulence and chaos in fluid systems, eludes direct analytical deduction from the governing equations themselves, a hallmark challenge in nonlinear dynamics. To bridge this gap, we implement the random splitting method \cite{AMM1}, which strategically incorporates stochasticity to enhance analytical tractability. Consider the ordinary differential equation on $\mathbb{T}^3=\mathbb{R}^3/(2\pi\mathbb{Z})^3$:
	\begin{equation}\label{ODE}
		\dot{x} = X(x) = \sum_{i=1}^3 X_i(x), \quad x = (x_1, x_2, x_3) \in \mathbb{T}^3,
	\end{equation}
	where $X$ and $\{X_i\}_{i=1}^3$ are smooth vector fields on $\mathbb{T}^3$. Let $\Phi_t$ denote the flow of $\dot{x} = X(x)$, and let $\varphi_t^{(i)}$ denote the flow generated by $\dot{x} = X_i(x)$ for $i = 1, 2, 3$. Fix $h > 0$, and let $\{\tau_i\}_{i=1}^\infty$ be a sequence of independent, identically distributed exponential random variables with mean $h$. The \textbf{random splitting} $\{\Phi_{\underline{\tau}}^m\}_{m \geq 0}$ on $\mathbb{T}^3$ is defined iteratively as:
	\[
	\Phi_{\underline{\tau}}^m = 
	\begin{cases}
		I, & m = 0; \\
		\varphi_{\tau_{3m}}^{(3)} \circ \varphi_{\tau_{3m-1}}^{(2)} \circ \varphi_{\tau_{3m-2}}^{(1)}\left(\Phi_{\underline{\tau}}^{m-1}\right), & m \geq 1,
	\end{cases}
	\]
	where $I$ is the identity map and $\underline{\tau} = (\tau_1, \tau_2, \dots)$. We prove that under general conditions, the random splitting trajectories ${\Phi_{\underline{\tau}}^m}$ converge almost surely to the deterministic flow $\Phi_t$ as $h \to 0$ (see Theorem \ref{convergence th} for the rigorous statement). Consequently, this framework not only provides a rigorous approximation scheme but also offers a novel mechanism for incorporating stochastic agitation. Our primary focus lies in analyzing the intrinsic properties of random splitting methods rather than their role as mere approximations of the system \eqref{ODE}. Central to this study is the characterization of their long-term dynamical behavior. Specifically, we establish almost sure positivity of the top Lyapunov exponent (see Theorem \ref{Positivity of the top Lyapunov exponent} for the rigorous statement) and quenched correlation decay (see Theorem \ref{Exponential mixing} for the rigorous statement) for random splitting of broad fluid models.
	
	One particularly important application of this framework lies in dynamo theory. The generation of magnetic fields in celestial bodies or conductive fluids is the focus of dynamo theory. Kinematic dynamo theory studies which fluid motions can lead to exponential growth of the magnetic field under low magnetic diffusivity. Let $B$ denote the magnetic field and $u$ the incompressible velocity field. The magnetic induction equation is given by:
	\begin{equation}\label{magnetic induction equation}
		\begin{cases}
			\partial_t B + (u \cdot \nabla)B - (B \cdot \nabla)u = \kappa \Delta B, & \text{in } (0, \infty) \times \mathbb{T}^3, \\
			\nabla \cdot B = 0, & \text{in } (0, \infty) \times \mathbb{T}^3, \\
			B(0, \cdot) = B_0, & \text{in } \mathbb{T}^3,
		\end{cases}
	\end{equation}
	where $\kappa$ represents magnetic diffusivity. Given $u(t,x)$ we define kinematic dynamo action to occur when 
	\begin{equation*}
		\gamma(\kappa) := \sup_{B_0 \in L^2(\mathbb{T}^3)} \limsup_{t \to \infty} \frac{1}{t} \log \| B(t;\kappa) \|_{L^2(\mathbb{T}^3)} > 0.
	\end{equation*}
    A velocity field $u$ is defined to be an \textbf{ideal dynamo} if $\gamma(0) > 0$, and a \textbf{fast kinematic dynamo} if $\lim_{\kappa\to 0} \gamma(\kappa)>0$. A central open problem in dynamo theory, originally posed by Ya. B. Zeldovich \cite{Z} and listed as Problem 1994-28 in V. I. Arnold's monograph \cite{AVI}, concerns whether there exists a divergence-free velocity field $u$ on $\mathbb{T}^3$ which is a fast kinematic dynamo. Building on our stochastic framework, we construct a random velocity field $u_{\underline{\tau}}$ (see \eqref{random velocity field}) that almost surely exhibits ideal dynamo action through chaotic advection mechanisms inherent to the random splitting (see Corollary \ref{Ideal dynamo} for the rigorous statement).
    
    Another important application of this framework lies in fluid dynamics. Mixing in fluid dynamics characterizes the efficiency of a divergence-free velocity field $u_t$ in homogenizing a passive scalar $f_t$ through repeated stretching and folding, erasing large-scale tracer patterns to achieve near-uniformity. This process is governed by the advection equation:
    \begin{equation}\label{advection equation}
    	\begin{cases}
    		\partial_t f + u \cdot \nabla f = 0, & \text{in } (0, \infty) \times \mathbb{T}^3, \\
    		f(0, \cdot) = f_0, & \text{in } \mathbb{T}^3,
    	\end{cases}
    \end{equation}
    where $u_t$ satisfies $\nabla \cdot u_t = 0$, and $f_t$ represents the tracer concentration (e.g., dye or temperature) advected by the flow. To measure mixing, we use the negative Sobolev norm $\|\cdot\|_{H^{-s}}$ ($s > 0$) for mean-zero functions, for any mean-zero initial function $f_0 \in H^s$, defined via the dual-space supremum
    \[
    \|f(t) \|_{H^{-s}} = \sup_{\|g\|_{H^s} = 1} \left|\int_{\mathbb{T}^3} g(x) f(t,x) \, dx \right|,
    \]
    where the supremum is taken over mean-zero functions $g\in H^s$. The velocity field $u$ is called \textbf{exponentially mixing} if for all $s>0$, there exists a constant $\alpha>0$ and a constant $D$ (depending on the initial velocity) such that
    \begin{equation*}
    	\|f(t) \|_{H^{-s}} \leq D e^{-\alpha t} \|f_0\|_{H^{s}},
    \end{equation*}
    for all $t >0$. This definition can likewise be extended to the random version. And we prove that the random velocity field $u_{\underline{\tau}}$ (see \eqref{random velocity field} below) is almost-sure exponential mixing (see Corollary \ref{exponential mixing} for the rigorous statement). 
    
    The Arnold-Beltrami-Childress (ABC) flow is a three-dimensional incompressible velocity field which is an exact solution of Euler's equation. Defined on $\mathbb{T}^3$ through parameters $A, B, C \in \mathbb{R}$, these flows are governed by the divergence-free vector field
    \begin{equation}\label{ABC}
    	X(x) = \begin{pmatrix} 
    		A \sin x_3 + C \cos x_2 \\ 
    		B \sin x_1 + A \cos x_3 \\ 
    		C \sin x_2 + B \cos x_1 
    	\end{pmatrix}, \quad x = (x_1, x_2, x_3) \in \mathbb{T}^3,
    \end{equation}
    which simultaneously satisfies the Beltrami property (vorticity parallel to velocity). The parameter values critically determine the flow's dynamical regime: when one of the parameters $A$, $B$ or $C$ vanishes, the flow is integrable, whereas non-vanishing parameters ($ABC \neq 0$) typically lead to complex and unpredictable streamline behaviors. In this paper, we consider the ABC flow with $ABC \neq 0$ as a special case of our more general mechanism with 
    \[
    X_1(x) = A(\sin x_3, \cos x_3, 0), \
    X_2(x) = B(0, \sin x_1, \cos x_1), \
    X_3(x) = C(\cos x_2, 0, \sin x_2).
    \]
    By employing the random splitting method, we establish the almost-sure positivity of the top Lyapunov exponent and prove almost-sure quenched correlation decay. Furthermore, we construct the random velocity field $u_{\underline{\tau}}$, demonstrating that it exhibits almost-sure exponential mixing for the advection equation and almost-sure ideal dynamo action for the magnetic induction equation.
    
    Our proofs are grounded in the following key elements. For any $h > 0$, a crucial step in proving the almost-sure positivity of the top Lyapunov exponent for the random splitting of some prototypical 3D flows including the ABC flow, is to show that the one-point Markov chain is uniformly geometrically ergodic. To go from almost-sure positivity of the top Lyapunov exponent to quenched correlation decay, we demonstrate uniform geometric ergodicity for both the projective and two-point Markov chains. This necessitates overcoming technical challenges. For the projective Markov chain, this involves verifying topological irreducibility by constructing explicit movement strategies to connect any given point to a designated point, requiring thorough classification analysis. In the case of the two-point Markov chain, critical steps include (i) identifying and excluding invariant sets that otherwise support additional stationary measures for the transition kernel $P^{(2)}$ corresponding to two-point Markov chain, which would obstruct uniform geometric ergodicity of $P^{(2)}$, and (ii) establishing topological irreducibility through the construction of orbits using rationally linear independent components, serving as ``transfer stations''.
    
    \textbf{Relation to existing results}. Closest to the present work is the results of Coti Zelati and Navarro-Fern{\'a}ndez \cite{CZNF}, who constructed a velocity field by introducing random amplitudes and random phases into the ABC vector field \eqref{ABC}. They established that this velocity field is almost-sure exponential mixing for a passive scalar, while simultaneously serving as an ideal dynamo for a passive vector. For comparison, we develop a robust dynamics-based framework by applying random splitting of some prototypical 3D flows including the ABC flow \eqref{ABC}. This framework enables the construction of velocity fields that simultaneously achieve exponential mixing for passive scalars and act as ideal dynamos for passive vectors. Furthermore, the random splitting trajectories converge almost surely to the deterministic flow as the mean of the exponentially distributed random time approaches zero. 
    
    The random splitting method, first introduced in \cite{AMM1}, serves as a stochastic counterpart to the classical operator splitting approach. Operator splitting is a widely used numerical technique for approximating solutions to differential equations, as seen in \cite{AR, BDMR, NN}. The decomposition in random splitting is also motivated by progress in establishing ergodicity results for piecewise deterministic Markov processes; see, for instance, \cite{BLBMZ, Bak}. In \cite{AMM1}, it is shown that for a class of fluid models, random splittings converge to their deterministic counterparts as the mean time step $h$ tends to zero, and admit a unique ergodic measure. Building on this, \cite{AMM2} proves that for the conservative Lorenz-96 model and Galerkin approximations of the 2D Euler equations, the top Lyapunov exponent is almost surely positive for sufficiently small $h$.
    
	The literature on Lyapunov exponents and exponential mixing is extensive. The local mechanism underlying mixing involves hyperbolic stretching and contraction in phase space, quantified by the top Lyapunov exponent. A positive top Lyapunov exponent implies exponential divergence of trajectories. However, even for the Chirikov standard map \cite{Ch}, verifying positivity remains an open problem, due to the intricate coexistence of hyperbolic orbits and elliptic islands \cite{DP, PC, GA}. Introducing randomness provides an intrinsic method to disrupt potential coherent structures. For instance, \cite{BXY} establishes a positive lower bound for the top Lyapunov exponent of the Chirikov standard map under small random perturbations. Building on Furstenberg's seminal work \cite{F}, which establishes verifiable criteria for the strict positivity of the top Lyapunov exponent in i.i.d. products of determinant-one matrices, a substantial body of research has further advanced these ideas, including \cite{AV, BL, C, L} and references therein. Under the positive top Lyapunov exponent condition, \cite{DKK} establishes almost-sure exponential mixing for a class of nondegenerate stochastic differential equations. For passive scalars, Pierrehumbert \cite{P} provided early numerical evidence that alternating sine shear flows with randomized phases induce almost-sure exponential mixing. Recent advances have established rigorous proofs of almost-sure exponential mixing for advecting velocity fields through various constructions, including: velocity fields arising as solutions to stochastically forced Navier-Stokes equations \cite{BBP1,BBP}; alternating shear flows on $\mathbb{T}^2$ with randomized phases \cite{BCZG} or random switching times \cite{Co,Zh}; a random version of the ABC flow on $\mathbb{T}^3$ \cite{CZNF}.
	
	We also present results pertaining to the ABC flow. The ABC flows were discovered by Gromeka in 1881 \cite{Gr} and rediscovered by Beltrami in 1889. These flows were analyzed in \cite{DFGHMS}, which introduced the name A-B-C because this example was independently introduced by Arnold (1965) \cite{A1965} and Childress (1970) \cite{CS} as an interesting class of Beltrami flows. When one of the parameters vanishes, the flow is integrable. If all three parameters are nonzero, the flow becomes non-integrable \cite{A1965, H1966, DFGHMS} and exhibits a mixture of chaotic regions and regular islands. The presence of chaos in ABC flows makes them particularly attractive for dynamo modeling. Childress \cite{CS} proposed the 1:1:1 ABC flow as a prototype dynamo, demonstrating large-scale magnetic field growth through an isotropic alpha effect. Numerical evidence supporting fast dynamo action in ABC flows can be found in works such as \cite{AK, BD, A2011, JG}. For more detailed discussions on ABC flows and fast dynamos, we refer to the monographs \cite{CG, AK2021}.
	
	This paper is organized as follows. Section \ref{Results and discussions} introduces the notation and presents the main results. Section \ref{section 3} provides preliminaries on Markov chains and proves the uniform geometric ergodicity for the one-point Markov chain $\{\Phi_{\underline{\tau}}^m\}$ (Theorem \ref{Uniformly geometrically ergodic}). In Section \ref{section 4}, we show the almost-sure positivity for the top Lyapunov exponent for $\{\Phi_{\underline{\tau}}^m\}$ (Theorem \ref{Positivity of the top Lyapunov exponent}). Section \ref{section 5} focuses on the projective Markov chain, establishing its uniform geometric ergodicity and constructing a random velocity field $u_{\underline{\tau}}$, which acts as an ideal dynamo for passive vectors (Corollary \ref{Ideal dynamo}). In Section \ref{section 6}, we establish the $V$-uniform geometric ergodicity for the two-point Markov chain. Section \ref{section 7} establishes the quenched correlation decay for $\{\Phi_{\underline{\tau}}^m\}$ (Theorem \ref{Exponential mixing}), and proves that $u_{\underline{\tau}}$ exhibits exponential mixing for passive scalars (Corollary \ref{exponential mixing}).
	
	\section{Results and discussions}\label{Results and discussions}
	\quad Consider the ordinary differential equation on $\mathbb{T}^3$:
	\[
	\dot{x} = X(x) = \sum_{i=1}^{3} X_i(x),
	\]
	where $x=(x_1,x_2,x_3)$ and $X_i: \mathbb{T}^3 \to \mathbb{R}^3$ are defined as:
	\[
	X_1(x) = (f_1(x_3), f_1'(x_3), 0), \quad
	X_2(x) = (0, f_2(x_1), f_2'(x_1)), \quad
	X_3(x) = (f_3'(x_2), 0, f_3(x_2)),
	\]
	with $f_i \in C^\omega(\mathbb{S}^1, \mathbb{R})$ being non-constant functions and $\mathbb{S}^1 = \mathbb{R}/(2\pi \mathbb{Z})$. We assume the condition:
    \begin{flalign*}
    	&\text{(H1)} \quad C_{f_i} \cap C_{f_i'} = \emptyset \quad \text{and} \quad C_{f_i'} \cap C_{f_i''} = \emptyset, \quad \text{for} \ \ i = 1, 2, 3, &
    \end{flalign*}
	where $C_g := \{x \in \mathbb{S}^1 : g(x) = 0\}$.
	
	Denote $\mathcal{X}:=\{X_1, X_2, X_3\}$. Let $\Phi_t$ denote the flow of $\dot{x} = X(x)$ and $\varphi_t^{(i)}$ the flow of $\dot{x} = X_i(x)$. The flows $\varphi_t^{(i)}$ are explicitly given by:
	\[
	\varphi_{t}^{(1)}(x) =
	\begin{pmatrix}
		x_1 + t f_1(x_3) \\
		x_2 + t f_1'(x_3) \\
		x_3
	\end{pmatrix},
	\quad
	\varphi_{t}^{(2)}(x) =
	\begin{pmatrix}
		x_1 \\
		x_2 + t f_2(x_1) \\
		x_3 + t f_2'(x_1)
	\end{pmatrix},
	\quad
	\varphi_{t}^{(3)}(x) =
	\begin{pmatrix}
		x_1 + t f_3'(x_2) \\
		x_2 \\
		x_3 + t f_3(x_2)
	\end{pmatrix}.
	\]
    
    We aim to construct a random dynamical approximation of the flow $\Phi_t$. Fix $h > 0$, and let $\{\tau_i\}_{i=1}^\infty$ be a sequence of independent, exponentially distributed random variables with mean $h$. The approximating dynamics is defined as a Markov chain $\{\Phi_{\underline{\tau}}^m\}$ on $\mathbb{T}^3$, given by:
    \[
    \Phi_{\underline{\tau}}^m =
    \begin{cases}
    	I, & m = 0; \\
    	\varphi_{\tau_{3m}}^{(3)} \circ \varphi_{\tau_{3m-1}}^{(2)} \circ \varphi_{\tau_{3m-2}}^{(1)}(\Phi_{\underline{\tau}}^{m-1}), & m = 1, 2, \dots,
    \end{cases}
    \]
    where $I$ denotes the identity map, and $\underline{\tau} = (\tau_1, \tau_2, \dots) \in \mathbb{R}_{\geq 0}^\mathbb{N} := [0, \infty)^\mathbb{N}$. Let $(\mathbb{R}_{\geq 0}^\mathbb{N}, \theta)$ represent the space of non-negative real sequences equipped with the \textbf{left shift operator} $\theta$, defined for $n \in \mathbb{N}$ as:
    \[
    \theta_n(\underline{\tau}) = (\tau_{n+1}, \tau_{n+2}, \dots),
    \]
    where $\underline{\tau} = (\tau_1, \tau_2, \dots)$. This approximating dynamics, known as a \textbf{random splitting} associated with $\mathcal{X}$, was introduced in \cite{AMM1}.  
	
	Let $C^i(\mathbb{T}^3)$ denote the space of $i$-times continuously differentiable functions $g: \mathbb{T}^3 \to \mathbb{R}$, equipped with the norm
	$$\|g\|_i := \sup_{x \in \mathbb{T}^3} \max_{0 \leq k \leq i} \|D^k g(x)\|, $$
	where $D^k g(x)$ denotes the $k$-th derivative (as a multilinear operator) with $D^0 g(x) = g(x)$. For a linear operator $P: C^i(\mathbb{T}^3) \to C^j(\mathbb{T}^3)$, define its operator norm as
	\[
	\|P\|_{i \to j} := \sup_{\|g\|_i = 1} \|Pg\|_j.
	\]

    The operator $S_t$ represents the deterministic semigroup generated by the flow $\Phi_t$. For a function $f \in C^{k}(\mathbb{T}^3)$,
    \[
    S_t f(x):= f(\Phi_t(x)).
    \]
    Similarly, the semigroup $\{S_t^{(j)}\}_{t \geq 0}$ generated by $\varphi_t^{(j)}$ is given by
    \[
    S_t^{(j)} f(x):=f(\varphi_t^{(j)}(x)),
    \]
    for $j=1,2,3$. In particular, $m$ iterations of the random splitting corresponds to $S_{\underline{\tau}}^{m}:=S_{\tau_{3m}}^{(3)} \cdots S_{\tau_{1}}^{(1)}$, where $\underline{\tau}=(\tau_1, \tau_2, \dots)$.
	\begin{thm}[\textbf{Almost sure convergence}]\label{convergence th}
		Fix $t > 0$. Then for any $\varepsilon > 0$,
		\[
		P \left( \limsup_{m \to \infty} \left\| S_{\underline{\tau}}^{m^2} - S_t \right\|_{2 \to 0} > \varepsilon \right) = 0,
		\]
		where $\underline{\tau}=(\tau_1, \tau_2, \dots)$, and $\{\tau_i\}_{i=1}^{\infty}$ is a sequence of independent exponential random variables with mean $t/m^2$. 
	\end{thm}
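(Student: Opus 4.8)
The plan is to establish $\sum_m P\big(\|S^{m^2}_{\underline{\tau}} - S_t\|_{2\to0} > \varepsilon\big) < \infty$, where in the $m$-th term $\underline{\tau}$ has i.i.d.\ $\mathrm{Exp}$-coordinates of mean $h:=t/m^2$; since $\{\limsup_m a_m > \varepsilon\}\subseteq\{a_m>\varepsilon\text{ infinitely often}\}$ for any real sequence, the Borel--Cantelli lemma then gives the theorem. Write $N:=m^2$, $n:=3N$. I would first pass to trajectories on the universal cover: the $2\pi\mathbb Z^3$-periodic fields $X_i,X$ lift to $\mathbb R^3$, their flows $\tilde\varphi^{(i)}_s,\tilde\Phi_t$ are $\mathbb Z^3$-equivariant, and the random splitting lifts to $\tilde\Phi^N_{\underline{\tau}}=\tilde\psi_n\circ\cdots\circ\tilde\psi_1$ with $\tilde\psi_j=\tilde\varphi^{(i_j)}_{\tau_j}$ and $i_j$ cycling through $1,2,3$. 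Since $S^N_{\underline{\tau}}g=g\circ\Phi^N_{\underline{\tau}}$ and $\|g\|_1\le\|g\|_2$,
$$\big\|S^N_{\underline{\tau}}-S_t\big\|_{2\to0}\;\le\;\big\|S^N_{\underline{\tau}}-S_t\big\|_{1\to0}\;\le\;\sup_{\tilde x\in\mathbb R^3}\big|\tilde\Phi^N_{\underline{\tau}}(\tilde x)-\tilde\Phi_t(\tilde x)\big|\;=\;\sup_{\tilde x\in[0,2\pi]^3}\big|\tilde\Phi^N_{\underline{\tau}}(\tilde x)-\tilde\Phi_t(\tilde x)\big|\;=:\;\delta_m,$$
the middle equality because $\tilde x\mapsto\tilde\Phi^N_{\underline{\tau}}(\tilde x)-\tilde\Phi_t(\tilde x)$ is $2\pi\mathbb Z^3$-periodic. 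So it suffices to bound $P(\delta_m>\varepsilon)$.

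For a fixed $\tilde x$ I would write $\tilde\Phi^N_{\underline{\tau}}(\tilde x)-\tilde\Phi_t(\tilde x)=\big(\tilde\Phi^N_{\underline{\tau}}(\tilde x)-\mathbb E\tilde\Phi^N_{\underline{\tau}}(\tilde x)\big)+\big(\mathbb E\tilde\Phi^N_{\underline{\tau}}(\tilde x)-\tilde\Phi_t(\tilde x)\big)$ and bound each in $L^2$, uniformly in $\tilde x$. For the fluctuation, Doob-decompose along $\mathcal F_j:=\sigma(\tau_1,\dots,\tau_j)$: the increments are $\Delta_j=G_j\big(\tilde\varphi^{(i_j)}_{\tau_j}(z_{j-1})\big)-\mathbb E_{\tau_j}G_j\big(\tilde\varphi^{(i_j)}_{\tau_j}(z_{j-1})\big)$ with $z_{j-1}:=\tilde\psi_{j-1}\circ\cdots\circ\tilde\psi_1(\tilde x)$ ($\mathcal F_{j-1}$-measurable) and $G_j(y):=\mathbb E[\tilde\psi_n\circ\cdots\circ\tilde\psi_{j+1}(y)]$. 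Since $\mathrm{Lip}(G_j)\le\mathbb E\big[\prod_{i>j}\|D\tilde\psi_i\|\big]\le(1-Ch)^{-n}\le e^{C't}$ for $h$ small, while $s\mapsto\tilde\varphi^{(i_j)}_s(z_{j-1})$ has speed $\le\|X_{i_j}\|_\infty$, the map $s\mapsto G_j(\tilde\varphi^{(i_j)}_s(z_{j-1}))$ is $C(t)$-Lipschitz, so $\mathbb E_{j-1}|\Delta_j|^2=\mathrm{Var}_{\tau_j}\big[G_j(\tilde\varphi^{(i_j)}_{\tau_j}(z_{j-1}))\big]\le C(t)^2\mathrm{Var}(\tau_j)=C(t)^2h^2$. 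Martingale orthogonality then gives $\mathbb E\big|\tilde\Phi^N_{\underline{\tau}}(\tilde x)-\mathbb E\tilde\Phi^N_{\underline{\tau}}(\tilde x)\big|^2=\sum_{j=1}^n\mathbb E|\Delta_j|^2\le nC(t)^2h^2=3C(t)^2t^2/m^2$, uniformly in $\tilde x$.

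For the mean term I would use the resolvent identity $\mathbb E[g\circ\tilde\varphi^{(i)}_\tau]=(I-h\tilde{\mathcal L}_i)^{-1}g$ with $\tilde{\mathcal L}_i:=X_i\cdot\nabla$. Iterating $G_{j-1}=(I-h\tilde{\mathcal L}_{i_j})^{-1}G_j$ from $G_n=\mathrm{id}$ yields $\mathbb E\tilde\Phi^N_{\underline{\tau}}(\tilde x)=\big(\tilde P_h^N\,\mathrm{id}\big)(\tilde x)$ with $\tilde P_h:=(I-h\tilde{\mathcal L}_1)^{-1}(I-h\tilde{\mathcal L}_2)^{-1}(I-h\tilde{\mathcal L}_3)^{-1}$, while $\tilde\Phi_t=e^{t\tilde{\mathcal L}}\,\mathrm{id}=(e^{h\tilde{\mathcal L}})^N\mathrm{id}$, so telescoping gives $\mathbb E\tilde\Phi^N_{\underline{\tau}}(\tilde x)-\tilde\Phi_t(\tilde x)=\sum_{k=0}^{N-1}\big[\tilde P_h^k\,(\tilde P_h-e^{h\tilde{\mathcal L}})\,\tilde\Phi_{(N-1-k)h}\big](\tilde x)$. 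The crucial point is that $\tilde P_h-e^{h\tilde{\mathcal L}}$ kills constants and its $O(h)$ part $h\tilde{\mathcal L}$ cancels, so $\|(\tilde P_h-e^{h\tilde{\mathcal L}})\phi\|_{C^0(\mathbb R^3)}\le Ch^2\big(\|D\phi\|_\infty+\|D^2\phi\|_\infty+\|D^3\phi\|_\infty\big)$ with no dependence on $\|\phi\|_\infty$ (second-order Taylor remainders of $\tilde\varphi^{(i)}_s$ produce the resolvent contributions, and the Lie--Trotter discrepancy between the product and $e^{h\tilde{\mathcal L}}$ is $\le Ch^2\|D\phi\|_\infty$ because its leading commutator $[\tilde{\mathcal L}_i,\tilde{\mathcal L}_j]=[X_i,X_j]\cdot\nabla$ is only first order). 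This lets me apply the estimate to the unbounded $\phi=\tilde\Phi_{(N-1-k)h}$, whose derivatives up to order $3$ are bounded by $C(t)$ for times $\le t$; using also that $\tilde P_h$ is a sup-norm contraction on bounded functions, each summand is $\le C(t)h^2$, hence $|\mathbb E\tilde\Phi^N_{\underline{\tau}}(\tilde x)-\tilde\Phi_t(\tilde x)|\le NC(t)h^2=C(t)t^2/m^2$, uniformly in $\tilde x$.

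Combining the two bounds, $\mathbb E|\tilde\Phi^N_{\underline{\tau}}(\tilde x)-\tilde\Phi_t(\tilde x)|^2\le C(t)/m^2$ for every $\tilde x$. To pass from this to $\delta_m=\sup_{\tilde x}$, set $G_m:=\{\tau_1+\cdots+\tau_n\le 3t+1\}$: then $P(G_m^c)\le e^{-cm^2}$ by Chernoff/Gamma concentration of a sum of $3m^2$ i.i.d.\ exponentials about its mean $3t$, while on $G_m$ the periodic map $\tilde x\mapsto|\tilde\Phi^N_{\underline{\tau}}(\tilde x)-\tilde\Phi_t(\tilde x)|$ is $L(t)$-Lipschitz (as $\|D\tilde\Phi^N_{\underline{\tau}}\|_\infty\le e^{C(3t+1)}$ and $\|D\tilde\Phi_t\|_\infty\le e^{Ct}$); so on $G_m$, $\delta_m>\varepsilon$ forces the deviation to exceed $\varepsilon/2$ on a ball of radius $\varepsilon/(2L(t))$, whence $\int_{[0,2\pi]^3}|\tilde\Phi^N_{\underline{\tau}}-\tilde\Phi_t|^2\gtrsim\varepsilon^5/L(t)^3$ and Chebyshev gives $P(\delta_m>\varepsilon,G_m)\le C(t)L(t)^3\varepsilon^{-5}m^{-2}$. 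Therefore
$$P\big(\|S^{m^2}_{\underline{\tau}}-S_t\|_{2\to0}>\varepsilon\big)\;\le\;P(\delta_m>\varepsilon)\;\le\;e^{-cm^2}+\frac{C(t,\varepsilon)}{m^2},$$
which is summable, and Borel--Cantelli finishes. I expect the mean estimate of the third paragraph to be the main obstacle: bounding $|\mathbb E\tilde\Phi^N_{\underline{\tau}}(\tilde x)-\tilde\Phi_t(\tilde x)|$ by accumulated one-step splitting and time-randomization errors only gives $O(nh)=O(1)$, not $o(1)$, so the gain must come from the fact that \emph{averaging} one random step reproduces the exact flow to second order in $h$ — encoded by the resolvent identity and by the derivative-only bound on $\tilde P_h-e^{h\tilde{\mathcal L}}$ — which is precisely what lets a per-step error of size $O(h^2)$ telescope over $N$ steps to the final $O(Nh^2)=O(1/m^2)$.
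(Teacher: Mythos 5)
The paper does not actually write out a proof of this theorem; it defers to the argument of Theorem 4.5 in \cite{AMM1}, and your proposal is correct and follows essentially that same strategy: the resolvent identity $\mathbb{E}[g\circ\varphi^{(i)}_\tau]=(I-h\mathcal{L}_i)^{-1}g$ showing the averaged one-step operator matches $e^{h\mathcal{L}}$ to $O(h^2)$, a telescoping (Duhamel) sum giving bias $O(Nh^2)=O(t^2/m^2)$, a martingale second-moment bound of the same order for the fluctuation, and Borel--Cantelli with $h=t/m^2$. Your only real deviations are conveniences afforded by the compact torus — reducing the $\|\cdot\|_{2\to0}$ operator norm to the sup-distance between the lifted flow maps (i.e.\ taking $g=\mathrm{id}$), and upgrading the pointwise $L^2$ bound to a supremum via a Lipschitz-plus-Chebyshev argument on the event that the total random time is controlled — and both steps are sound.
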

	Theorem \ref{convergence th} asserts that as the mean evolution time $h \to 0$, the random splitting ${\Phi_{\underline{\tau}}^m}$ converges almost surely to the deterministic flow $\Phi_t$ over finite time intervals. The proof follows the argument of Theorem 4.5 in \cite{AMM1}. For brevity, we omit the detailed proof.
	
	We refer to $\{\Phi_{\underline{\tau}}^m\}$ as the one-point Markov chain. Its transition kernel $P:\mathbb{T}^3 \times \mathscr{B}(\mathbb{T}^3) \to [0,1]$ is defined by
    \[
    P(x,D)=\mathbb{P}(\Phi_{\underline{\tau}}(x)\in D)=\mathbb{E}\chi_D(\Phi_{\underline{\tau}}(x))
    \]
    where $\mathscr{B}(\mathbb{T}^3)$ is the Borel $\sigma$-algebra on $\mathbb{T}^3$ and $\chi_D$ is the indicator function of $D$. The operator $P$ acts on measurable functions $g:\mathbb{T}^3 \to \mathbb{R}$ as follows:
    \[
    Pg(x) =\mathbb{E}g(\Phi_{\underline{\tau}}(x)) = \int_{\mathbb{R}_+^3}\frac{1}{h} g(\Phi_{t}(x))e^{-\sum_{i=1}^3 t_i/h} dt,
    \]
    where $\mathbb{R}_+ := (0, \infty)$, $t = (t_1, t_2, t_3)$, and $dt = dt_1 dt_2 dt_3$. And $P$ acts on a measure $\mu$ on $\mathbb{T}^3$ by
    \[
    P^*\mu(D) := \int P(x,D) \mu(dx), \quad D \in \mathscr{B}(\mathbb{T}^3).
    \]
	The stationary measure plays an important role in our results on ergodicity and chaos. Recall that a probability measure $\mu$ on $\mathbb{T}^3$ is \textbf{stationary} for the Markov chain $\{\Phi_{\underline{\tau}}^m\}$ if $P^*\mu = \mu$. We say that $P$ is \textbf{uniquely ergodic} if the set of stationary measures has cardinality one. 
	
	Given a function $V: \mathbb{T}^3 \to [1, \infty)$, we define the weighted norm
	\[
	\|g\|_V := \sup_{x \in \mathbb{T}^3} \frac{|g(x)|}{V(x)},
	\]
	and define $\mathcal{M}_V(\mathbb{T}^3)$ to be the space of measurable observables $g: \mathbb{T}^3 \to \mathbb{R}$ such that $\|g\|_V < \infty$.
	
	We say that $P$ is $V$-\textbf{uniformly geometrically ergodic} if $P$ admits a unique stationary measure $\mu$ on $\mathbb{T}^3$, and there exist constants $C > 0$ and $\gamma \in (0, 1)$ such that the bound
	\[
	\left| P^m g(x) - \int g \, d\mu \right| \leq C V(x) \|g\|_V \gamma^m,\ \text{ for all } m\in\mathbb{N},
	\]
	holds for all $x \in \mathbb{T}^3$ and $g \in \mathcal{M}_V(\mathbb{T}^3)$. We say that $P$ is \textbf{uniformly geometrically ergodic} if it is $V$-uniformly geometrically ergodic in the special case where $V\equiv 1$.
	
	\begin{thm}[\textbf{Uniformly geometrically ergodic}]\label{Uniformly geometrically ergodic}
		Let $\{\Phi_{\underline{\tau}}^m\}$ be as above. Then the transition kernel $P$ of $\{\Phi_{\underline{\tau}}^m\}$ is uniformly geometrically ergodic.
	\end{thm}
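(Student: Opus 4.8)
The plan is to establish uniform geometric ergodicity of $P$ via the standard Doeblin-type (or Harris-type with $V\equiv 1$) criterion: on a compact state space like $\mathbb{T}^3$, it suffices to prove a uniform minorization condition, namely that there exist $m_0\in\mathbb{N}$, $\alpha\in(0,1)$, and a probability measure $\nu$ on $\mathbb{T}^3$ such that $P^{m_0}(x,\cdot)\geq \alpha\,\nu(\cdot)$ for all $x\in\mathbb{T}^3$. Since the whole space is ``small'' once such a minorization holds, uniform geometric ergodicity with $V\equiv 1$ follows immediately from classical results (e.g.\ the coupling argument of Meyn--Tweedie or Hairer--Mattingly). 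The uniqueness of the stationary measure is then automatic.

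To get the minorization, I would first show that $P$ (or a finite iterate $P^{m_0}$) has a density component: because each $\varphi_t^{(i)}$ depends smoothly on the random time $t$, and the times are exponentially distributed with a density on $\mathbb{R}_+$, the law of $\Phi_{\underline{\tau}}(x)=\varphi_{\tau_3}^{(3)}\circ\varphi_{\tau_2}^{(2)}\circ\varphi_{\tau_1}^{(1)}(x)$ is the pushforward of the product exponential density under the map $(t_1,t_2,t_3)\mapsto \varphi_{t_3}^{(3)}\circ\varphi_{t_2}^{(2)}\circ\varphi_{t_1}^{(1)}(x)$. The key analytic input is that the Lie bracket / Hörmander-type configuration of $X_1,X_2,X_3$ (which under (H1) is exactly what makes the flow hypoelliptic and the image of the three shears locally span all three directions) guarantees that for each $x$ there is a choice of times $(t_1^\ast,t_2^\ast,t_3^\ast)$ at which this map is a submersion onto $\mathbb{T}^3$; hence the pushforward measure has an absolutely continuous component with a continuous, strictly positive density on a neighborhood of the image point. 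One then needs this to hold uniformly: by compactness of $\mathbb{T}^3$ and continuity of the relevant Jacobians in $x$, one can cover $\mathbb{T}^3$ by finitely many such neighborhoods and, after composing enough steps $m_0$ so that from any starting point the chain can reach a common open set with positive density, extract a uniform lower bound $\alpha\,\nu$.

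The steps in order: (1) compute the explicit form of $D_{(t_1,t_2,t_3)}\big(\varphi_{t_3}^{(3)}\circ\varphi_{t_2}^{(2)}\circ\varphi_{t_1}^{(1)}(x)\big)$ using the formulas for $\varphi^{(i)}_t$ given above, and verify using (H1) that its determinant is not identically zero in $(t_1,t_2,t_3)$ for each fixed $x$ — this is where the hypotheses $C_{f_i}\cap C_{f_i'}=\emptyset$ and $C_{f_i'}\cap C_{f_i''}=\emptyset$ are used, ensuring the three shear directions genuinely rotate to span $\mathbb{R}^3$; (2) conclude that $P(x,\cdot)$ has a non-trivial absolutely continuous part with density bounded below and supported on an open set $U_x$, depending lower-semicontinuously on $x$; (3) use a reachability/controllability argument (again exploiting (H1)) to show there is an open set $W\subset\mathbb{T}^3$ and $m_0$ such that $P^{m_0}(x,W)>0$ uniformly and $P^{m_0}(x,\cdot)|_W$ dominates a fixed measure; (4) invoke the minorization $\Rightarrow$ uniform geometric ergodicity theorem. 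The main obstacle I expect is Step (1)–(2): proving the uniform (in $x$) non-degeneracy of the composed flow map, i.e.\ pinning down precisely how (H1) forces the Jacobian to be nonzero on a set of times of positive Lebesgue measure and getting enough quantitative control to make the density lower bound uniform; this likely requires a careful case analysis of where the various $f_i, f_i', f_i''$ vanish, exactly the ``classification analysis'' flagged in the introduction.
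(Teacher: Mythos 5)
Your overall strategy is sound, but it takes a more demanding route than the paper. The paper does not attempt a global Doeblin minorization $P^{m_0}(x,\cdot)\geq\alpha\nu$ valid for \emph{every} $x\in\mathbb{T}^3$. Instead it verifies the four hypotheses of a Harris-type criterion (Theorem \ref{conditions for uniformly geometrically ergodic}): (a) existence of \emph{one} open small set, obtained by checking $\mathrm{Lie}_x(\mathcal{X})=T_x\mathbb{T}^3$ at a single well-chosen point (the point where each $|f_i|$ is maximal, so that $(X_1,X_2,X_3)$ is a diagonal matrix with nonzero entries) and then applying a submersion lemma (Lemma \ref{surjective}) plus a local density lower bound (Lemma \ref{absolutely continuous}); (b) topological irreducibility, proved by exact controllability — an explicit choice of times steering any point to a designated target and back (Lemma \ref{irreducibility}); (c) strong aperiodicity from $\Phi_0=\mathrm{id}$ and $\mathbb{P}(\underline{\tau}<\varepsilon)>0$; (d) the trivial drift condition $V\equiv 1$ on the compact torus. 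The abstract theorem then does the work of converting ``one small set $+$ irreducibility $+$ aperiodicity'' into the uniform minorization you are trying to build by hand. Your approach buys a self-contained coupling argument, but at the cost of having to prove everything uniformly in the starting point, which is precisely what the cited theorem lets the paper avoid.

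Two concrete soft spots in your plan. First, Step (1) as written fixes $m_0=1$ and asks that $t\mapsto\varphi^{(3)}_{t_3}\circ\varphi^{(2)}_{t_2}\circ\varphi^{(1)}_{t_1}(x)$ be a submersion somewhere for \emph{every} $x$; this can degenerate at special points (e.g.\ when several of $f_1(x_3)$, $f_2'(x_1)$, $f_3'(\cdot)$ vanish simultaneously along the orbit), and in general one needs a variable number of iterates $m=m(x)$ — this is exactly why Lemma \ref{surjective} allows arbitrarily many flow segments. Second, your compactness step (``cover $\mathbb{T}^3$ by finitely many such neighborhoods and compose enough steps'') silently requires a \emph{uniform} lower bound on the probability of reaching a common open set in a \emph{common} number of steps from every $x$; the number of steps and the time configurations produced by a controllability argument vary with $x$, and synchronizing them requires an aperiodicity argument (the chain can ``idle'' because short times have positive probability). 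Neither issue is fatal, but both need to be addressed explicitly; the paper's route circumvents them by needing non-degeneracy only at one point and by delegating the synchronization to the general ergodicity theorem.
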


	\begin{thm}[\textbf{Positivity of the top Lyapunov exponent}]\label{Positivity of the top Lyapunov exponent}
		Let $\{\Phi_{\underline{\tau}}^m\}$ be as above. Then the random dynamical system $\{\Phi_{\underline{\tau}}^m\}$ almost surely has a positive top Lyapunov exponent 
		\begin{equation*}
			\lambda_1:=\lim_{m\to\infty}\frac{1}{m}\log \|D_x\Phi_{\underline{\tau}}^m(x)\|
		\end{equation*}
		for $\mu$-a.e. $x\in\mathbb{T}^3$.
	\end{thm}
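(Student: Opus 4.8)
The plan is to follow the framework of \cite{AMM2}. Since every $X_i$ is divergence-free, each $\varphi_t^{(i)}$ preserves the Lebesgue measure on $\mathbb{T}^3$, hence $\det D_x\Phi_{\underline{\tau}}^m(x)\equiv 1$. Consequently the Lyapunov exponents $\lambda_1\ge\lambda_2\ge\lambda_3$ --- which exist and are a.s.\ constant by the Oseledets theorem, using the unique ergodicity of $P$ from Theorem~\ref{Uniformly geometrically ergodic} --- satisfy $\lambda_1+\lambda_2+\lambda_3=0$, so in particular $\lambda_1\ge 0$. It therefore suffices to exclude the degenerate case $\lambda_1=0$, which forces $\lambda_1=\lambda_2=\lambda_3=0$.

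Suppose $\lambda_1=0$. Lifting the derivative cocycle to the unit sphere bundle $S\mathbb{T}^3:=\mathbb{T}^3\times\mathbb{S}^2$ with transition kernel $\widehat{P}$, compactness yields a $\widehat{P}$-stationary measure, which projects onto $\mu$ by unique ergodicity, and the Furstenberg--Khasminskii formula realizes each $\lambda_i$ as the integral of $\widehat{\psi}(x,v):=\mathbb{E}\log\|D_x\Phi_{\underline{\tau}}(x)v\|$ against an ergodic $\widehat{P}$-stationary measure. Because the whole Lyapunov spectrum is trivial and the cocycle is $\log$-integrable (the $\tau_i$ have exponential tails), the rigidity/invariance-principle machinery of \cite{AMM2} (of Ledrappier--Furstenberg type) applies and produces a measurable, a.s.-invariant conformal structure on $T\mathbb{T}^3$ over $\mu$-a.e.\ base point, i.e.\ a measurable Riemannian metric $x\mapsto\langle\cdot,\cdot\rangle_x$ (normalized by the invariant volume, so that it has unit determinant) with $\langle D_x\Phi_{\underline{\tau}}(x)u,\,D_x\Phi_{\underline{\tau}}(x)w\rangle_{\Phi_{\underline{\tau}}(x)}=\langle u,w\rangle_x$ almost surely.

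The next step is to exploit the independence and full support of the random times. Writing $\Phi_{\underline{\tau}}=\varphi^{(3)}_{\tau_3}\circ\varphi^{(2)}_{\tau_2}\circ\varphi^{(1)}_{\tau_1}$ and running a Fubini argument on $(\tau_1,\tau_2,\tau_3)\in(0,\infty)^3$, one upgrades the invariance of the metric to invariance under each flow $\varphi_t^{(i)}$ separately, for all $t\in\mathbb{R}$ and $\mu$-a.e.\ $x$. Now (H1) delivers the contradiction: since $D_x\varphi^{(1)}_t e_3=(t f_1'(x_3),\,t f_1''(x_3),\,1)$, invariance gives $\langle e_3,e_3\rangle_x=\langle D_x\varphi^{(1)}_t e_3,\,D_x\varphi^{(1)}_t e_3\rangle_{\varphi^{(1)}_t(x)}$, and the hypothesis $C_{f_1'}\cap C_{f_1''}=\emptyset$ forces $f_1'(x_3)^2+f_1''(x_3)^2>0$ for every $x_3$, so --- provided the metric is bounded below away from zero --- the right-hand side grows quadratically in $t$ while the left-hand side is fixed. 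This contradiction shows $\lambda_1>0$ almost surely.

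I expect the main obstacle to be making this last step rigorous, for two intertwined reasons. First, the invariance principle a priori only furnishes a \emph{measurable} invariant structure, possibly a proper invariant subbundle rather than a metric on all of $T\mathbb{T}^3$; a careful classification of the possible invariant subbundles of dimension $1$ and $2$ is then needed, each refuted by the explicit shear forms of $X_1,X_2,X_3$ together with the two parts of (H1) --- precisely the classification analysis alluded to in the introduction. Second, the invariant metric (or conjugating frame) need not be bounded, so the quadratic-growth argument does not apply verbatim; one must first establish the essential boundedness of the metric, which is where the real-analyticity of the $f_i$ (so that the degenerate loci $C_{f_i},C_{f_i'},C_{f_i''}$ are finite) and the full strength of the uniform geometric ergodicity from Theorem~\ref{Uniformly geometrically ergodic} --- valid for \emph{every} $h>0$ --- enter.
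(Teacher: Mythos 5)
Your skeleton is the same as the paper's: volume preservation gives $\lambda_\Sigma=0$, so $\lambda_1=0$ forces the fully degenerate spectrum $3\lambda_1=\lambda_\Sigma$, and one then invokes the Furstenberg--Baxendale rigidity dichotomy and refutes both alternatives, with the conformal alternative killed by the quadratic growth of $D_x\varphi^{(1)}_t e_3=(tf_1'(x_3),tf_1''(x_3),1)$ under (H1). However, the two issues you flag as ``the main obstacle'' are not peripheral technicalities to be patched at the end --- they are essentially the entire content of the argument (Sections~\ref{subsection 4.1}--\ref{subsection 4.2}), and your sketched resolutions of both differ from what actually works. For the regularity issue: the paper does not establish ``essential boundedness'' of the metric via finiteness of the zero sets $C_{f_i}$; instead it uses the uniform geometric ergodicity of $P$ (Theorem~\ref{Uniformly geometrically ergodic}) to show that the invariant conditional distribution $x\mapsto\nu_x$ admits a \emph{continuous} version (Lemma~\ref{RCPD}, by writing the fibre integrals as uniform limits of the continuous functions $Q^nF_{g,n}$). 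This is what allows Baxendale's Theorem 6.8 in \cite{B} to deliver alternative (a) of Theorem~\ref{degeneracy} with invariance holding for \emph{every} $x$ and \emph{every} deterministic time vector --- which also makes your Fubini upgrade to the individual flows automatic --- and it is this everywhere-defined, locally comparable structure that the quadratic-growth and periodicity/denseness arguments of Lemma~\ref{ruling out a} exploit. Without that continuity step, ``bounded below away from zero'' is exactly what you cannot assume, and the contradiction does not close.

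For the invariant-subbundle alternative, your proposed case-by-case classification of measurable invariant subbundles of dimensions $1$ and $2$ runs into the same measurability wall: a priori the family $x\mapsto E_x^i$ is only measurable, so there is nothing to classify pointwise. The paper's route (Lemma~\ref{ruling out b}) is softer and avoids this entirely: it verifies the H\"ormander condition for the \emph{lifted} vector fields $\hat{X}_i$ on $T\mathbb{T}^3$ (Lemma~\ref{Lie bracket of lifted chain}, a genuinely nontrivial $6\times 6$ determinant computation using both halves of (H1)), deduces that $\hat{P}^{m_0}$ is strong Feller on a neighborhood (Theorem~\ref{Strong Feller}), and observes that the indicator of $\bigcup_i E_x^i$ would be a bounded, measurable, $\hat{P}^{m_0}$-fixed, discontinuous function --- a contradiction. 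So while your strategic outline is faithful to \cite{AMM2} and to the paper, the proof as proposed is incomplete precisely at the two steps where the work lies, and the mechanisms you gesture at (finiteness of degenerate loci; explicit subbundle classification) are not the ones that close those gaps.
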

    
    Consider the magnetic induction equation introduced in \eqref{magnetic induction equation} with $\kappa=0$, we demonstrate that the velocity field. We demonstrate that the velocity field
    \begin{align}\label{random velocity field}
    	&u_{\underline{\tau}}(t,x)\nonumber\\
    	=&\sum_{n=0}^{\infty} \left( \tau_{3n+1} X_1(x) \chi_{(3n,3n+1]}(t) + \tau_{3n+2} X_2(x) \chi_{(3n+1,3n+2]}(t) + \tau_{3n+3} X_3(x) \chi_{(3n+2,3n+3]}(t) \right),
    \end{align}
    is an ideal dynamo. This is formalized in the following corollary:
    \begin{cor}[\textbf{Ideal dynamo}]\label{Ideal dynamo}
    	Let $u_{\underline{\tau}}(t,x)$ be as above. Then, for $\mathbb{P}$-a.s. $\underline{\tau}$, the velocity field $u_{\underline{\tau}}$ is an ideal dynamo.
    \end{cor}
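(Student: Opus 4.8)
The plan is to reduce the ideal-dynamo statement to the exponential growth of the Jacobian of the random splitting and then invoke Theorem~\ref{Positivity of the top Lyapunov exponent}. First I would observe that $u_{\underline{\tau}}(t,\cdot)$ is a piecewise-autonomous family of divergence-free fields: on $(3n,3n+1]$ it equals $\tau_{3n+1}X_1$, on $(3n+1,3n+2]$ it equals $\tau_{3n+2}X_2$, on $(3n+2,3n+3]$ it equals $\tau_{3n+3}X_3$, and each $X_i$ is divergence-free. Solving $\dot x=u_{\underline{\tau}}(t,x)$ over the successive unit intervals therefore reproduces exactly $\varphi^{(1)}_{\tau_{3n+1}}$, $\varphi^{(2)}_{\tau_{3n+2}}$, $\varphi^{(3)}_{\tau_{3n+3}}$, so the flow of $u_{\underline{\tau}}$ from time $0$ to time $3m$, call it $\Phi^{u_{\underline{\tau}}}_{3m}$, coincides with $\Phi^m_{\underline{\tau}}$; moreover $\Phi^m_{\underline{\tau}}$ is volume-preserving. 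Consequently normalized Lebesgue measure is stationary for $P$, and by the uniqueness part of Theorem~\ref{Uniformly geometrically ergodic} the stationary measure $\mu$ equals normalized Lebesgue measure on $\mathbb{T}^3$.

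Next I would use the classical representation of solutions of \eqref{magnetic induction equation} with $\kappa=0$: for smooth divergence-free $B_0$ the solution is the pushforward of the initial field, $B(t,\cdot)=(\Phi^{u_{\underline{\tau}}}_{t})_\ast B_0$, i.e. $B\big(t,\Phi^{u_{\underline{\tau}}}_{t}(x)\big)=D_x\Phi^{u_{\underline{\tau}}}_{t}(x)\,B_0(x)$. Evaluating at $t=3m$ and changing variables (the flow is volume-preserving) gives $\|B(3m)\|_{L^2}^2=\int_{\mathbb{T}^3}\big|D_x\Phi^m_{\underline{\tau}}(x)\,B_0(x)\big|^2\,dx$. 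The Lyapunov exponent controls the \emph{operator} norm of $D_x\Phi^m_{\underline{\tau}}(x)$, whereas the induction equation is fed a \emph{fixed} field $B_0$; to bridge this I would let $B_0$ run over the three constant fields $e_1,e_2,e_3$ and sum, obtaining
\[
\sum_{j=1}^{3}\|B_{e_j}(3m)\|_{L^2}^2=\int_{\mathbb{T}^3}\|D_x\Phi^m_{\underline{\tau}}(x)\|_{\mathrm{HS}}^2\,dx\ \geq\ \int_{\mathbb{T}^3}\|D_x\Phi^m_{\underline{\tau}}(x)\|_{\mathrm{op}}^2\,dx,
\]
so that $\max_{1\le j\le 3}\|B_{e_j}(3m)\|_{L^2}^2\geq \tfrac13\int_{\mathbb{T}^3}\|D_x\Phi^m_{\underline{\tau}}(x)\|_{\mathrm{op}}^2\,dx$.

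To lower bound the right-hand side I would fix $\varepsilon\in(0,\lambda_1)$ and apply Theorem~\ref{Positivity of the top Lyapunov exponent}: for $\mathbb{P}$-a.s.\ $\underline{\tau}$ one has $m^{-1}\log\|D_x\Phi^m_{\underline{\tau}}(x)\|\to\lambda_1$ for $\mu$-a.e., hence (by Step~1) Lebesgue-a.e., $x$, which forces convergence in measure; thus $\mathrm{Leb}(A_m)\to(2\pi)^3$ where $A_m:=\{x:\|D_x\Phi^m_{\underline{\tau}}(x)\|_{\mathrm{op}}\ge e^{(\lambda_1-\varepsilon)m}\}$, and therefore $\int_{\mathbb{T}^3}\|D_x\Phi^m_{\underline{\tau}}(x)\|_{\mathrm{op}}^2\,dx\ge \tfrac12(2\pi)^3 e^{2(\lambda_1-\varepsilon)m}$ for all large $m$. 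By the pigeonhole principle applied to the finite index set $\{1,2,3\}$, some $j_0$ realizes the maximum in the previous display along a subsequence $m_k\to\infty$, so $\|B_{e_{j_0}}(3m_k)\|_{L^2}\ge c\,e^{(\lambda_1-\varepsilon)m_k}$ and hence, restricting the $\limsup$ defining $\gamma(0)$ to the times $t=3m$,
\[
\gamma(0)\ \geq\ \limsup_{m\to\infty}\frac{1}{3m}\log\|B_{e_{j_0}}(3m)\|_{L^2}\ \geq\ \frac{\lambda_1-\varepsilon}{3};
\]
letting $\varepsilon\downarrow 0$ yields $\gamma(0)\ge\lambda_1/3>0$ almost surely, which is the claim.

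The only genuinely substantive ingredient is Theorem~\ref{Positivity of the top Lyapunov exponent}, which may be assumed; granting it, the two points needing care are (i) that positivity of the Lyapunov exponent is a statement about the operator norm of $D_x\Phi^m_{\underline{\tau}}$, which is precisely why the induction equation must be tested against the three coordinate directions and why one passes through a finite maximum (this loses only a harmless factor $3$), and (ii) the identification $\mu=\mathrm{Leb}$, which is what upgrades the $\mu$-a.e.\ Lyapunov limit to a lower bound on an honest Lebesgue integral over $\mathbb{T}^3$. A minor technical verification is that $B(t)=(\Phi^{u_{\underline{\tau}}}_{t})_\ast B_0$ really solves \eqref{magnetic induction equation} for the piecewise-autonomous field $u_{\underline{\tau}}$; this follows from the standard identity $\tfrac{d}{dt}(\Phi^{u}_t)^\ast B(t)=(\Phi^{u}_t)^\ast\big(\partial_t B+\mathcal{L}_{u}B\big)$ applied on each subinterval, together with the divergence-free property that keeps $\nabla\cdot B=0$.
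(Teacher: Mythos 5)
Your argument is correct, but it takes a genuinely different route for the key lower bound. The paper does not work with the operator norm at all: it first establishes uniform geometric ergodicity of the projective chain (Theorem \ref{Uniformly geometrically ergodic for projective chain}) in order to invoke a non-random multiplicative ergodic theorem (Lemma \ref{a version of Kifer}), which gives $\tfrac1m\log|D_x\Phi^m_{\underline{\tau}}(x)v|\to\lambda_1$ for \emph{every fixed direction} $v$ and $\mu$-a.e.\ $x$. This yields a pointwise bound $|D_x\Phi^m_{\underline{\tau}}B_0(x)|\ge G(x)e^{(\lambda_1-\varepsilon)m}$ for a single constant field $B_0$ and all $m$, hence exponential growth of $\|B(t)\|_{L^1}$ (then $L^2$ by H\"older) at all large times. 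You instead use only the operator-norm statement of Theorem \ref{Positivity of the top Lyapunov exponent} and bridge the gap between ``operator norm'' and ``fixed initial field'' by summing over the three coordinate fields to reach the Hilbert--Schmidt norm, then pigeonholing; this is legitimate because $\gamma(0)$ involves a supremum over $B_0$ and a $\limsup$ in $t$, so growth along a subsequence for one unidentified $e_{j_0}$ suffices. The trade-off: your proof is more elementary for this corollary, bypassing all of Section \ref{subsection 5.1} (which the paper needs anyway for the two-point drift condition later), but it proves a weaker quantitative statement (subsequential growth of some coordinate field rather than growth of every constant field at every large time). Your identification $\mu=\mathrm{Leb}$ and the a.e.-to-in-measure upgrade are both sound, and the order of quantifiers in Theorem \ref{Positivity of the top Lyapunov exponent} (a.s.\ in $\underline{\tau}$, then $\mu$-a.e.\ in $x$) is exactly what your Fubini-type use requires.
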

    
    \begin{thm}[\textbf{Quenched correlation decay}]\label{Exponential mixing}
    	Let $\{\Phi_{\underline{\tau}}^m\}$ be as above. Then, for any $q, s>0$, there exists a random variable $\xi=\xi_{q,s}\geq 1$ and a constant $\alpha=\alpha_{q,s}>0$ such that for all mean-zero function $g_1,g_2\in H^s(\mathbb{T}^3)$, we have the almost sure estimate
    	\begin{equation}\label{quenched correlation decay}
    		\left|\int_{\mathbb{T}^3} f(x)g\left(\Phi_{\underline{\tau}}^m(x)\right)dx\right|\leq \xi(\underline{\tau})e^{-\alpha m}\|f\|_{H^s}\|g\|_{H^s},
    	\end{equation}
    	while the random variable $\xi$ satisfies the moment bound $\mathbb{E}[\xi^q]<\infty$.
    \end{thm}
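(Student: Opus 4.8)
\noindent\emph{Overall strategy.} I would deduce \eqref{quenched correlation decay} from an \emph{averaged} mixing estimate for the two‑point motion, promote it to a bound that is uniform over all mean‑zero data via a Hilbert--Schmidt device at high Sobolev regularity, and then descend to every $s>0$ by interpolation together with a frequency truncation. First, reformulate the conclusion operator‑theoretically: write $T_{\underline{\tau}}^m g:=g\circ\Phi_{\underline{\tau}}^m$; since each $\varphi^{(i)}_t$ is volume preserving, $T^m_{\underline{\tau}}$ is an $L^2$‑isometry that preserves the mean. With
\[
M_m(\underline{\tau}):=\sup\Big\{\|T^m_{\underline{\tau}}g\|_{H^{-s}}/\|g\|_{H^s}:\ g\in H^s,\ \textstyle\int_{\mathbb{T}^3}g=0,\ g\neq 0\Big\},
\]
the left side of \eqref{quenched correlation decay} is at most $M_m(\underline{\tau})\|f\|_{H^s}\|g\|_{H^s}$ by $H^s$--$H^{-s}$ duality, so it suffices to prove that for each $q$ there are $\alpha_1>0$ and $C$ with $\mathbb{E}[M_m^q]\le Ce^{-\alpha_1 m}$: then $\xi:=1+\big(\sum_{m\ge 0}e^{\alpha q m}M_m^q\big)^{1/q}$ with $\alpha:=\alpha_1/(2q)$ has $\mathbb{E}[\xi^q]<\infty$ and dominates every $e^{\alpha m}M_m$, which is exactly \eqref{quenched correlation decay}. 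The catch, which dictates the proof, is that $\|T^m_{\underline{\tau}}\|_{L^2\to L^2}=1$ for every realization, so \emph{no pathwise decay is available} and all of the decay must be extracted in expectation.

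\noindent\emph{Averaged estimate from the two‑point chain.} Expanding the square and using that both points run under the same realization,
\[
\mathbb{E}\Big[\Big(\textstyle\int_{\mathbb{T}^3} f\,(g\circ\Phi^m_{\underline{\tau}})\Big)^2\Big]=\big\langle f\otimes f,\ (P^{(2)})^m(g\otimes g)\big\rangle_{L^2(\mathbb{T}^3\times\mathbb{T}^3)}.
\]
By volume preservation and the off‑diagonal irreducibility and $V$‑uniform geometric ergodicity of the two‑point chain established in Section \ref{section 6}, Lebesgue$\otimes$Lebesgue is the unique stationary measure of $P^{(2)}$, against which $g\otimes g$ integrates to $(\int g)^2=0$; moreover, since the two‑point Lyapunov weight blows up only like a small negative power of the distance to the diagonal it is square integrable, so $V$‑uniform geometric ergodicity upgrades, by the standard bounded‑part/tail truncation, to an $L^2(\mathrm{Leb}^{\otimes 2})$ spectral gap $\|(P^{(2)})^m h\|_{L^2}\le C\gamma^m\|h\|_{L^2}$ on mean‑zero $h$. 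Pairing the resulting remainder with the kernel of $(-\Delta)^{-1}$ on $\mathbb{T}^3$, which is square integrable near the diagonal in three dimensions, yields $\mathbb{E}\big[\|g\circ\Phi^m_{\underline{\tau}}\|_{H^{-1}}^2\big]\le C\gamma^m\|g\|_{L^2}^2$ for all mean‑zero $g\in L^2$, hence also $\mathbb{E}\big[\|g\circ\Phi^m_{\underline{\tau}}\|_{H^{-s}}^2\big]\le C\gamma^m\|g\|_{L^2}^2$ for every $s\ge 1$.

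\noindent\emph{From averaged to quenched, all $s>0$.} For $s>3/2$, note that $M_m$ is the operator norm on $L^2$ of $A_m:=(-\Delta)^{-s/2}T^m_{\underline{\tau}}(-\Delta)^{-s/2}$, hence at most its Hilbert--Schmidt norm; evaluating on the Fourier basis $\{e_k\}$ gives $\|A_m\|_{HS}^2=\sum_{k\neq 0}|k|^{-2s}\|e_k\circ\Phi^m_{\underline{\tau}}\|_{H^{-s}}^2$, so the averaged estimate gives $\mathbb{E}[M_m^2]\le\mathbb{E}[\|A_m\|_{HS}^2]\le C\gamma^m\sum_{k\neq 0}|k|^{-2s}<\infty$ (as $2s>3$). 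Since each $\|e_k\circ\Phi^m_{\underline{\tau}}\|_{H^{-s}}\le\|e_k\circ\Phi^m_{\underline{\tau}}\|_{L^2}=\|e_k\|_{L^2}$, one also has $M_m\le\|A_m\|_{HS}\le\big(c\sum_{k\neq0}|k|^{-2s}\big)^{1/2}$ pathwise, so $\mathbb{E}[M_m^q]\le C_q\gamma^m$ for every $q$, i.e.\ \eqref{quenched correlation decay} holds for all $s>3/2$. For $0<s\le 3/2$ I would fix $s'=2$, write $g=g_{\le N}+g_{>N}$, interpolate $H^{-s}$ between $H^{-s'}$ and $L^2$, apply the already‑proven $s'$‑case to $g_{\le N}$ (using the $L^2$‑isometry for the $L^2$ factor and $\|g_{\le N}\|_{H^{s'}}\lesssim N^{s'-s}\|g\|_{H^s}$), bound the tail by $\|g_{>N}\circ\Phi^m_{\underline{\tau}}\|_{H^{-s}}\le\|g_{>N}\|_{L^2}\lesssim N^{-s}\|g\|_{H^s}$, and optimize $N\asymp e^{\alpha_{s'}m/(4-s)}$; this gives \eqref{quenched correlation decay} for all $s\in(0,3/2]$ with rate $\alpha_s=\alpha_{s'}s/(4-s)$ and a random constant of the form $\xi_{s'}^{s/2}+1$, whose $q$‑th moment is finite because $\xi_{s'}$ has all moments (chosen, for the given $q,s$, with $q_{s'}\ge qs/2$ in its construction).

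\noindent\emph{Main obstacle.} The crux is the middle step: turning the $V$‑uniform geometric ergodicity of $P^{(2)}$ from Section \ref{section 6} into an $L^2(\mathrm{Leb}^{\otimes 2})$ spectral gap with a genuine exponential rate. This requires the two‑point Lyapunov weight to be mild enough near the diagonal to lie in $L^2$ (so that the truncation argument closes) and relies on Lebesgue$\otimes$Lebesgue being the \emph{only} stationary measure for $P^{(2)}$—precisely the exclusion of spurious invariant sets carried out in Section \ref{section 6}. Once that averaged decay is secured, the Hilbert--Schmidt device (for $s>3/2$) and the interpolation–truncation descent (for $s\le 3/2$) are essentially bookkeeping; their only real purpose is to route the purely‑averaged decay around the fact that $T^m_{\underline{\tau}}$ is a pathwise isometry, so that a uniform‑in‑data, almost‑sure statement with a random constant possessing all prescribed moments can nonetheless be produced.
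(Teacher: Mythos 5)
Your proposal reaches the theorem by a genuinely different route in the crucial ``averaged to quenched'' step, and the overall architecture is sound. The paper works mode by mode: for each pair of Fourier modes it applies Chebyshev to $\mathbb{E}\,\mathrm{Cor}_m(e_{k_1},e_{k_2})^2=\langle \tilde e_{k_1},(P^{(2)})^m\tilde e_{k_2}\rangle$, runs Borel--Cantelli to get random times $N_{k_1,k_2}$, introduces a random frequency cutoff $K(\underline\tau)$ to assemble $\xi$, and needs $s'>5/2$ to sum the Fourier series before mollifying down to general $s$. You instead package everything into the single random operator norm $M_m$ of $(-\Delta)^{-s/2}T^m_{\underline\tau}(-\Delta)^{-s/2}$, dominate it by its Hilbert--Schmidt norm, use the same two-point input $\langle \cdot,(P^{(2)})^m(e_k\otimes e_k)\rangle$ to get $\mathbb{E}[M_m^q]\lesssim\gamma^m$ (needing only $s'>3/2$), and perform Borel--Cantelli once via the series defining $\xi$. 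This is cleaner: one random variable instead of a family $N_{k_1,k_2}$ plus a cutoff $K$, a better regularity threshold, and the moment bound $\mathbb{E}[\xi^q]<\infty$ comes for free from the pathwise bound $M_m\le C_s$. The descent to $0<s\le 3/2$ by sharp frequency truncation and interpolation is the same idea as the paper's fractional-heat mollification (Lemma \ref{general Hs approximation}), just implemented differently.

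One intermediate claim, however, is not justified as stated: that $V$-uniform geometric ergodicity of $P^{(2)}$ ``upgrades by the standard bounded-part/tail truncation'' to a genuine $L^2(\mathrm{Leb}^{\otimes 2})$ spectral gap, and hence that $\mathbb{E}\|g\circ\Phi^m_{\underline\tau}\|_{H^{-1}}^2\le C\gamma^m\|g\|_{L^2}^2$ for \emph{all} mean-zero $g\in L^2$. Truncating $h$ at level $R$ controls the bounded part by $R\gamma^m\|V\|_{L^2}$, but the tail is only bounded by $\|h\chi_{\{|h|>R\}}\|_{L^2}$ (using that $P^{(2)}$ is an $L^2$ contraction), and this is not uniformly small over the $L^2$ unit ball; no choice of $R=R(m)$ yields a uniform exponential rate. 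Fortunately you never use the general claim: in the Hilbert--Schmidt step the observables are $e_k\otimes e_k$, which have $\|e_k\otimes e_k\|_{\mathcal M_V}\le\|e_k\otimes e_k\|_{L^\infty}=1$, so Theorem \ref{Uniformly geometrically ergodic for two-point chain} applies directly and, after pairing with the kernel $G_s$ of $(-\Delta)^{-s}$ (bounded for $s>3/2$) and using $\|V\|_{L^1}<\infty$, gives exactly $\mathbb{E}\|e_k\circ\Phi^m_{\underline\tau}\|_{H^{-s}}^2\le C\gamma^m$ --- the same input the paper uses. Restated that way, your argument closes; as written, the $L^2$ spectral gap assertion should be deleted or replaced by the bounded-observable estimate.
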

    We see that \eqref{quenched correlation decay} quantifies an almost-sure exponential decay of correlations, commonly referred to as quenched correlation decay in the dynamics literature.
    
    \begin{cor}[\textbf{Exponential mixing}]\label{exponential mixing}
    	Let $u_{\underline{\tau}}(t,x)$ be as defined above. For any $s > 0$, let $f$ be a solution to the advection equation \eqref{advection equation} transported by $u_{\underline{\tau}}$ with mean-zero initial data $f_0 \in H^s(\mathbb{T}^3)$. Then, there exists an almost surely finite random variable $\tilde{\xi}=\tilde{\xi}_s$ and a constant $\beta=\beta_{s}>0$ such that
    	\begin{equation*}
    		\|f(t)\|_{H^{-s}} \leq \tilde{\xi}(\underline{\tau}) e^{-\beta t} \|f_0\|_{H^s}.
    	\end{equation*}
    \end{cor}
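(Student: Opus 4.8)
\section*{Proof proposal for Corollary \ref{exponential mixing}}

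The plan is to read off the continuous-time mixing estimate from the discrete quenched decay of Theorem~\ref{Exponential mixing} along the arithmetic grid $\{3m\}_{m\ge 0}$, and then to absorb the intermediate fractional times into an almost surely finite random prefactor.

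First I would identify the flow of $u_{\underline\tau}$ with the random splitting. Since $u_{\underline\tau}$ is piecewise constant in time, equal on $(3n,3n+1]$, $(3n+1,3n+2]$, $(3n+2,3n+3]$ to $\tau_{3n+1}X_1$, $\tau_{3n+2}X_2$, $\tau_{3n+3}X_3$ respectively, and the time-one flow of $\tau X_i$ is $\varphi^{(i)}_\tau$, the flow $\Psi_t$ of $u_{\underline\tau}$ satisfies $\Psi_{3m}=\Phi_{\underline\tau}^m$; for $t=3m+r$ with $r\in(0,3]$ one has $\Psi_t=\eta_{m,t}\circ\Phi_{\underline\tau}^m$, where $\eta_{m,t}$ is a composition of at most three of the explicit shear flows $\varphi^{(i)}_s$ with $0\le s\le\tau_{3m+i}$. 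All of these maps are volume-preserving, since each $X_i$ is divergence-free and hence so is $u_{\underline\tau}$. The solution of \eqref{advection equation} is $f(t,\cdot)=f_0\circ\Psi_t^{-1}$; using duality with $H^s$ and the change of variables $x=\Psi_t(y)$, which has unit Jacobian,
\[
\|f(t)\|_{H^{-s}}=\sup\Big\{\ \Big|\int_{\mathbb{T}^3}g(\Psi_t(y))\,f_0(y)\,dy\Big|\ :\ \|g\|_{H^s}=1,\ g\ \text{mean-zero}\ \Big\}.
\]
At $t=3m$ the integrand is $g(\Phi_{\underline\tau}^m(y))f_0(y)$, which, since $f_0$ and $g$ are mean-zero, is exactly the quantity bounded in \eqref{quenched correlation decay}. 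Applying Theorem~\ref{Exponential mixing} (with, say, $q=1$) gives an almost surely finite $\xi=\xi_{1,s}\ge 1$ and a constant $\alpha=\alpha_{1,s}>0$ with $\|f(3m)\|_{H^{-s}}\le\xi(\underline\tau)\,e^{-\alpha m}\|f_0\|_{H^s}$ for every $m\ge 0$.

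To pass to general $t\in(3m,3m+3]$ I would prove $\|f(t)\|_{H^{-s}}\le\rho_m\,\|f(3m)\|_{H^{-s}}$ for a random factor $\rho_m$ depending only on $\tau_{3m+1},\tau_{3m+2},\tau_{3m+3}$. This follows either from the standard transport inequality $\tfrac{d}{dt}\|f(t)\|_{H^{-s}}\le C_s\|u_{\underline\tau}(t,\cdot)\|_{C^{\lceil s\rceil+1}}\|f(t)\|_{H^{-s}}$, which yields $\rho_m=\exp\!\big(C_s'(\tau_{3m+1}+\tau_{3m+2}+\tau_{3m+3})\big)$, or directly from $g\circ\Psi_t=(g\circ\eta_{m,t})\circ\Phi_{\underline\tau}^m$ combined with boundedness of the composition operator $g\mapsto g\circ\eta_{m,t}$ on $H^s$ (with norm controlled by a power of $\|\eta_{m,t}\|_{C^{\lceil s\rceil}}$, since $\eta_{m,t}$ is a volume-preserving diffeomorphism) and the polynomial-in-time growth of the $C^k$-norms of the explicit shears, which yields a $\rho_m$ polynomial in $\tau_{3m+1},\tau_{3m+2},\tau_{3m+3}$. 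In either case the $\rho_m$ are i.i.d.\ (each depends on a disjoint block of $\underline\tau$) and $\mathbb{P}(\log\rho_m>R)$ decays at least exponentially in $R$, so $\sum_m\mathbb{P}(\log\rho_m>\varepsilon m)<\infty$ for every $\varepsilon>0$, and Borel--Cantelli gives $M:=\sup_{m\ge 0}\rho_m e^{-\varepsilon m}<\infty$ almost surely.

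Combining, with $\varepsilon:=\alpha/2$, for $t\in(3m,3m+3]$ we obtain $\|f(t)\|_{H^{-s}}\le\rho_m\,\xi(\underline\tau)\,e^{-\alpha m}\|f_0\|_{H^s}\le M\,\xi(\underline\tau)\,e^{-\alpha m/2}\|f_0\|_{H^s}$, and since $m\ge t/3-1$ this is at most $\tilde\xi(\underline\tau)\,e^{-\beta t}\|f_0\|_{H^s}$ with $\beta:=\alpha/6>0$ and $\tilde\xi:=M\,\xi\,e^{\alpha/2}$, which is almost surely finite (every $t>0$ lies in exactly one such interval, and the $m=0$ case is consistent since $\xi\ge 1$ and $\|f_0\|_{H^{-s}}\le\|f_0\|_{H^s}$). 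The main obstacle is the treatment of the fractional-time intervals: because the shear durations $\tau_{3m+j}$ are unbounded exponential variables, the step-to-step amplification of $\|f(t)\|_{H^{-s}}$ admits no uniform bound, so one cannot merely interpolate the discrete estimate; the key point is that these amplification factors grow only sub-exponentially in $m$ almost surely, hence are dominated by the geometric rate $e^{-\alpha m}$ supplied by Theorem~\ref{Exponential mixing}. A secondary technical issue is the boundedness of the composition (equivalently, transport) operator on $H^{-s}$ for non-integer $s$, handled by the volume-preserving change of variables together with interpolation.
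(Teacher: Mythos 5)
Your proposal is correct and follows essentially the same route as the paper: both reduce to the discrete quenched correlation decay of Theorem \ref{Exponential mixing} along the grid $\{3m\}$, control the fractional-time amplification by a factor depending only on the block $(\tau_{3m+1},\tau_{3m+2},\tau_{3m+3})$ (you phrase it as a bound on $\|f(t)\|_{H^{-s}}$ by $\rho_m\|f(3m)\|_{H^{-s}}$, the paper as the dual bound $\|g\circ\psi_{\theta_{3n}\underline{\tau}}^{\tilde t}\|_{H^s}\lesssim\Gamma(\theta_{3n}\underline{\tau})\|g\|_{H^s}$ via the Gagliardo seminorm and Gr\"onwall), and then absorb these factors by a Chebyshev/Borel--Cantelli argument showing they grow sub-exponentially in $m$. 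The differences are purely in presentation, not substance.
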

    Since $f(x) = \sin x$ satisfies assumption (H1), the above results directly lead to the following corollary.
    \begin{cor}[\textbf{Random splitting of ABC flow}]\label{Random splitting of ABC flow}
    	Let $\{\Phi_{\underline{\tau}}^m\}$ be as above with $f_1(x_3)=A\sin x_3$, $f_2(x_1)=B\sin x_1$ and $f_3(x_2)=C\sin x_2$, where $A$, $B$ and $C$ are nonzero constants. Then Theorems \ref{Uniformly geometrically ergodic}, \ref{Positivity of the top Lyapunov exponent}, and \ref{Exponential mixing} hold for the random splitting of the ABC flow. Moreover, Corollaries \ref{Ideal dynamo} and \ref{exponential mixing} apply to the corresponding velocity field $u_{\underline{\tau}}$.
    \end{cor}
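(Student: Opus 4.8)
The plan is to observe that Corollary \ref{Random splitting of ABC flow} is a direct specialization of the general results of Section \ref{Results and discussions}: everything to be done is to check that the sinusoidal profiles $f_1(x_3)=A\sin x_3$, $f_2(x_1)=B\sin x_1$, $f_3(x_2)=C\sin x_2$ with $ABC\neq 0$ satisfy the standing hypotheses under which Theorems \ref{Uniformly geometrically ergodic}, \ref{Positivity of the top Lyapunov exponent}, \ref{Exponential mixing} and Corollaries \ref{Ideal dynamo}, \ref{exponential mixing} were proved, after which each assertion of the corollary is literally an instance of the corresponding general statement.

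First I would record that for each nonzero constant $c$ the map $x\mapsto c\sin x$ is real-analytic and non-constant on $\mathbb{S}^1$, so each $f_i\in C^\omega(\mathbb{S}^1,\mathbb{R})$ is of the admissible type. Then I would verify (H1) by computing the zero sets explicitly. Writing $g(x)=c\sin x$ with $c\neq 0$, one has $g'(x)=c\cos x$ and $g''(x)=-c\sin x$, and since scaling by a nonzero constant does not change a zero set, $C_g=C_{g''}=\{0,\pi\}$ while $C_{g'}=\{\tfrac{\pi}{2},\tfrac{3\pi}{2}\}$; hence $C_g\cap C_{g'}=\emptyset$ and $C_{g'}\cap C_{g''}=\emptyset$. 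Applying this with $g=f_i$ for $i=1,2,3$ gives (H1).

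Next I would confirm that the vector fields generated by these profiles are exactly the ABC splitting announced in the introduction. Since $f_1'(x_3)=A\cos x_3$, $f_2'(x_1)=B\cos x_1$, $f_3'(x_2)=C\cos x_2$, the general formulas $X_1=(f_1(x_3),f_1'(x_3),0)$, $X_2=(0,f_2(x_1),f_2'(x_1))$, $X_3=(f_3'(x_2),0,f_3(x_2))$ become $X_1=A(\sin x_3,\cos x_3,0)$, $X_2=B(0,\sin x_1,\cos x_1)$, $X_3=C(\cos x_2,0,\sin x_2)$, and $X=X_1+X_2+X_3$ reproduces \eqref{ABC}; moreover each $X_i$ is divergence-free, so the random velocity field $u_{\underline\tau}$ of \eqref{random velocity field} is an admissible incompressible field for the advection equation \eqref{advection equation} and the magnetic induction equation \eqref{magnetic induction equation}.

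With these verifications in hand I would simply invoke the general statements: Theorem \ref{Uniformly geometrically ergodic} yields uniform geometric ergodicity of the one-point transition kernel, Theorem \ref{Positivity of the top Lyapunov exponent} the almost-sure positivity of $\lambda_1$, Theorem \ref{Exponential mixing} the quenched correlation decay, and Corollaries \ref{Ideal dynamo} and \ref{exponential mixing} the ideal dynamo and exponential mixing properties of $u_{\underline\tau}$. I do not expect a genuine obstacle here: the entire content is already packaged in the earlier theorems, and the only step needing any attention is the elementary case check that $c\sin x$ together with its first two derivatives has pairwise disjoint zero sets, i.e. that the pure sine choice avoids precisely the degenerate locus excluded by (H1).
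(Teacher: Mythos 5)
Your proposal is correct and matches the paper's approach exactly: the paper simply observes that $f(x)=\sin x$ satisfies (H1) and then cites the general theorems, which is precisely your verification of the disjoint zero sets for $c\sin x$ followed by specialization. No further comment is needed.
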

	
	\section{Uniform geometric ergodicity for the one-point Markov chain}\label{section 3}
	\quad Some of the concepts used in the sequel are best introduced in a general abstract setting. We begin by covering some preliminaries on Markov chains in this abstract setting.
	\subsection{Markov chain preliminaries}\label{subsection 3.1}
	\quad The lifted chain is the Markov chain $\{\hat{\Phi}_{\underline{\tau}}^m\}$ on the tangent bundle $T\mathbb{T}^3$ with transition kernel 
	\[
	\hat{P}((x,u),\hat{A})=\mathbb{P}(\hat{\Phi}_{\underline{\tau}}(x,u)\in\hat{A}),\ \ (x,u)\in T\mathbb{T}^3,
	\] 
	its corresponding vector fields are $\hat{\mathcal{X}}=\{\hat{X_1}, \hat{X_2}, \hat{X_3}\}$, where for $(x,u)\in T\mathbb{T}^3$,
	\begin{equation*}
		\hat{\Phi}_{\underline{\tau}}^m(x,u)=\left(\Phi_{\underline{\tau}}^m(x), D_x\Phi_{\underline{\tau}}^m u\right),
	\end{equation*}
	and
	\[ 
	\hat{X_i}(x,u) = \begin{pmatrix}
		X_i(x) \\
		DX_i(x)u
	\end{pmatrix},\quad 
	i=1, 2,3.
	\]
	
	The projective chain is the Markov chain $\{\check{\Phi}_{\underline{\tau}}^m\}$ on the sphere bundle $S\mathbb{T}^3$ of $\mathbb{T}^3$ with transition kernel 
	\[
	\check{P}((x,u),\check{A})=\mathbb{P}(\check{\Phi}_{\underline{\tau}}(x,u)\in\check{A}), \ \ (x,u)\in S\mathbb{T}^3,
	\]
	its corresponding vector fields are $\check{\mathcal{X}}=\{\check{X_1}, \check{X_2}, \check{X_3}\}$, where for $(x,u)\in S\mathbb{T}^3$, 
	\begin{equation*}
		\check{\Phi}_{\underline{\tau}}^m(x,u)=\left(\Phi_{\underline{\tau}}^m(x), \frac{D_x\Phi_{\underline{\tau}}^m u}{|D_x\Phi_{\underline{\tau}}^m u|}\right),
	\end{equation*}
	and 
	\[ 
	\check{X_i}(x,u) = \begin{pmatrix}
		X_i(x) \\
		DX_i(x)u-\langle DX_i(x)u, u \rangle u
	\end{pmatrix},\quad 
	i=1, 2, 3.
	\]
	
	The two-point chain is the Markov chain $\{\tilde{\Phi}_{\underline{\tau}}^m\}$ on $\mathcal{T}:=\mathbb{T}^3\times \mathbb{T}^3\setminus\mathcal{I}$ ($\mathcal{I}$ is the invariant set of $\tilde{\Phi}_{\underline{\tau}}$, i.e., $\tilde{\Phi}_{\underline{\tau}}\mathcal{I}=\mathcal{I}$ for every $\underline{\tau}$) with transition kernel 
	\[
	P^{(2)}((x,y),A^{(2)})=\mathbb{P}(\tilde{\Phi}_{\underline{\tau}}(x,y)\in A^{(2)}), \ \ (x,y)\in\mathcal{T},
	\]
	its corresponding vector fields are $\tilde{\mathcal{X}}=\{\tilde{X_1}, \tilde{X_2}, \tilde{X_3}\}$, where for $(x,y)\in\mathcal{T}$,
	\begin{equation*}
		\tilde{\Phi}_{\underline{\tau}}^m(x,y)=(\Phi_{\underline{\tau}}^m(x), \Phi_{\underline{\tau}}^m(y)),
	\end{equation*}
	and 
	\[ 
	\tilde{X_i}(x,y) = \begin{pmatrix}
		X_i(x) \\
		X_i(y)
	\end{pmatrix},\quad
	i=1, 2, 3.
	\]
	
	Throughout this paper, we use $\{\Psi_{\underline{\tau}}^m\}$ to denote any of the aforementioned chains, including the one-point chain, two-point chain, and projective chain. Let $M$ denote the corresponding state space of dimension $d$, $Q$ the associated Markov transition kernel, and $\mathcal{F}:=\{Y_1, Y_2, Y_3\}$ the corresponding vector field. Furthermore, $\Psi_{\underline{\tau}}^m$ is defined recursively by $\Psi_{\underline{\tau}}^m = \psi_{\tau_{3m}}^{(3)} \circ \psi_{\tau_{3m-1}}^{(2)}\circ\psi_{\tau_{3m-2}}^{(1)}(\Psi_{\underline{\tau}}^{m-1})$. We say that $Q$ is \textbf{Feller} if $Qg$ is continuous for any bounded, continuous function $g$ on $M$. We say that a set $D\subset M$ is \textbf{small} for $\{\Psi_{\underline{\tau}}^m\}$  if there exists an $m_0>0$, and a positive measure $\nu$ on $M$ such that for any $x\in D$,
	\[
	Q^{m_0}(x,E)\geq \nu(E), 
	\]
	for all $E\in\mathscr{B}(M)$.  We say that $\{\Psi_{\underline{\tau}}^m\}$ is \textbf{topologically irreducible} if for every $x\in M$ and open set $U\subset M$, there exists $m_0>0$ such that 
	\[
	Q^{m_0}(x,U)>0.
	\]
	We say that $\{\Psi_{\underline{\tau}}^m\}$ is \textbf{strongly aperiodic} if there exists $x_0\in M$ such that for all open sets $U$ containing $x_0$, we have
	\[
	Q(x_0,U)>0.
	\]
	We say that a function $V:M \to[1,\infty)$ satisfies a \textbf{Lyapunov-Foster drift condition} for $\{\Psi_{\underline{\tau}}^m\}$ if there exist $\alpha\in (0,1)$, $b>0$ and a compact set $C\subset M$ such that 
	\[
	QV\leq \alpha V+b\chi_C,
	\]
	where $\chi_C$ is the characteristic function on $C$.
	\begin{thm}[\cite{BCZG}, Theorem 2.3]\label{conditions for uniformly geometrically ergodic}
		Let $\{\Psi_{\underline{\tau}}^m\}_{m\geq 0}$ and $Q$ be as above. If $Q$ is a Feller transition kernel and assume the following:
		\begin{itemize}
			\item[(a)] There exists an open small set for $\{\Psi_{\underline{\tau}}^m\}$.
			\item[(b)] $\{\Psi_{\underline{\tau}}^m\}$ is topologically irreducible.
			\item[(c)] $\{\Psi_{\underline{\tau}}^m\}$ is strongly aperiodic.
			\item[(d)] There exists a function $V: X\to[1,\infty)$ that satisfies a Lyapunov-Foster drift condition for $\{\Psi_{\underline{\tau}}^m\}$.
		\end{itemize}
		Then $Q$ is $V$-uniformly geometrically ergodic.
	\end{thm}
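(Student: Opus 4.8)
The plan is to deduce the statement from the classical geometric ergodicity theorem of Meyn--Tweedie: a $\psi$-irreducible, aperiodic Markov chain that satisfies a geometric drift condition toward a petite set is $V$-uniformly geometrically ergodic. Hypothesis (d) already supplies the drift, so the substance of the proof is to convert the \emph{topological} hypotheses (a)--(c), together with the Feller property, into the \emph{measure-theoretic} inputs of that theorem: petiteness of a sublevel set of $V$, $\psi$-irreducibility, and aperiodicity.

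The key step is to show that a suitable sublevel set $S:=\{V\le R\}$ is petite. Choose $R$ large enough that the compact set $C$ from (d) satisfies $C\subseteq S$ (in the settings of interest $S$ is itself compact). Let $U_0$ be the open small set from (a), with $Q^{m_0}(x,\cdot)\ge\nu_0(\cdot)$ for all $x\in U_0$. By the Feller property the map $x\mapsto Q^m(x,U)$ is lower semicontinuous for open $U$, so each $G_m:=\{x:Q^m(x,U_0)>0\}$ is open, and topological irreducibility (b) gives $\bigcup_m G_m=M\supseteq S$. For each $x\in S$ choose $m(x)$ with $Q^{m(x)}(x,U_0)>0$; lower semicontinuity yields a neighborhood on which $Q^{m(x)}(\cdot,U_0)\ge\tfrac12 Q^{m(x)}(x,U_0)$, and compactness of $S$ produces a finite subcover with base points $x_1,\dots,x_L$. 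Hence there are finitely many times $n_1,\dots,n_L$ and a constant $\epsilon_0>0$ such that every $y\in S$ has $Q^{n_l}(y,U_0)\ge\epsilon_0$ for some $l$; composing with the minorization on $U_0$ gives
\[
\sum_{l=1}^{L}\tfrac1L\,Q^{n_l+m_0}(y,\cdot)\ \ge\ \tfrac{\epsilon_0}{L}\,\nu_0(\cdot)\qquad\text{for all }y\in S,
\]
so $S$ is petite. The same composition started from an arbitrary point (using (b) to first reach $U_0$) shows the chain is $\nu_0$-irreducible, and strong aperiodicity (c) — a point $x_0$ with $Q(x_0,U)>0$ for every open $U\ni x_0$ — forces period one by the standard criterion, since by the Feller property and the construction above a small neighborhood of $x_0$ is a small set to which the chain returns in one step with positive probability.

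It remains to assemble. Iterating (d) gives $Q^m V\le\alpha^m V+b/(1-\alpha)$, and since $C\subseteq S$ the chain drifts geometrically toward the petite set $S$; being moreover $\nu_0$-irreducible and aperiodic, it satisfies the hypotheses of the Meyn--Tweedie geometric ergodicity theorem (equivalently, the Hairer--Mattingly form of Harris' theorem applied to a suitable iterate $Q^{n_*}$ on which $S$ is a genuine small set, using the one-step bound $\|Q^j\|_{V\to V}\le 1+(n_*-1)b$ from (d) to interpolate over the intermediate times). This produces a unique stationary measure $\mu$ with $\int V\,d\mu<\infty$ and constants $C_0>0$, $\gamma\in(0,1)$ with $|Q^m g(x)-\mu(g)|\le C_0 V(x)\|g\|_V\gamma^m$ for all $m\in\mathbb{N}$, $x\in M$ and $g\in\mathcal M_V(M)$, which is precisely $V$-uniform geometric ergodicity.

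The main obstacle is the petiteness step: topological irreducibility only guarantees that $U_0$ is reached from a point $x$ after a number of steps and with a probability that both depend on $x$ and could a priori degenerate as $x$ ranges over $S$. Turning this into the uniform-over-$S$ minorization required by the drift argument is exactly where the Feller property (lower semicontinuity, hence an \emph{open} cover of $S$ by the sets $G_m$), compactness of the sublevel set (a \emph{finite} subcover), and the small-set structure of $U_0$ (to convert a visit to $U_0$ into an honest lower bound on the kernel) must all be combined; the bookkeeping relating the aperiodicity point $x_0$, the small set $U_0$, and its minorizing measure $\nu_0$ is the delicate part.
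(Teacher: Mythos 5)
The paper does not prove this theorem---it is quoted directly from \cite{BCZG} (Theorem 2.3)---and your proposal is a correct reconstruction of the standard argument behind such results: the Feller property (lower semicontinuity of $x\mapsto Q^m(x,U_0)$), the open small set, topological irreducibility, and compactness combine to give petiteness and $\psi$-irreducibility; strong aperiodicity at $x_0$ yields two consecutive minorization times and hence aperiodicity; and the Meyn--Tweedie $V$-uniform ergodic theorem closes the argument. The only wrinkle is your reliance on compactness of the sublevel set $\{V\le R\}$, which the hypotheses do not guarantee in general; this is harmless, though, because condition (d) already supplies a compact set $C$ to which your finite-subcover petiteness argument applies verbatim.
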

	\subsection{Proof of Theorem \ref{Uniformly geometrically ergodic}}\label{subsection 3.2}
	\quad The proof of Theorem \ref{Uniformly geometrically ergodic} using the following results. The first is a version of Theorem 1 in \cite[Chapter 3]{J}, it says that if the Lie bracket condition holds at $x\in M$, then due to the surjectivity of $D_t\Psi^m(x,t)$, the random dynamics can locally reach any infinitesimal direction in the tangent space from $x$ in arbitrarily small times. 
	
	\begin{lem}\label{surjective}
		Let $\{\Psi_{\underline{\tau}}^m\}$ be as above. Assume $\text{Lie}_x(\mathcal{F})=T_xM$ for some $x\in M$. Then for any neighborhood $U$ of $x$ and any $T'>0$ there exists a point $y$ in $U$, an $m$ and a $\tilde{t}=(\tilde{t}_1,\cdots,\tilde{t}_{3m})\in\mathbb{R}_+^{3m}$ such that $\sum_{i=1}^{3m}\tilde{t}_i\leq T'$ and $\Psi^m(x,\cdot): t\mapsto \Psi^m(x,t)$ is a submersion at $t=\tilde{t}$ and $\Psi^m(x,\tilde{t})=y$, i.e., $D_t\Psi^m(x,\tilde{t}):T_{\tilde{t}}\mathbb{R}_+^{3m}\to T_yM$ is surjective.
	\end{lem}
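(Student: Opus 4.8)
The plan is to follow the ODE-control-theory argument of Jurdjevic \cite{J}, adapting the classical theorem on attainable sets of control systems to the random splitting setting, where the ``controls'' are the elapsed times $t_1,\dots,t_{3m}$ of the successive flows $\psi^{(1)}_{t},\psi^{(2)}_{t},\psi^{(3)}_{t}$. The key structural fact is that the map $\Psi^m(x,\cdot)\colon \mathbb{R}_+^{3m}\to M$ is smooth, and its differential $D_t\Psi^m(x,t)$ at a time vector $t$ is spanned by the pushforwards of the generating vector fields along the composed flow; concatenating longer and longer splitting words enlarges this span, and the Lie bracket hypothesis $\mathrm{Lie}_x(\mathcal{F})=T_xM$ guarantees that after finitely many concatenations the span fills $T_yM$ at some nearby point $y$, in arbitrarily short total time.

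First I would set up the reachable-set / orbit machinery. For a fixed starting point $x$ and a word length $m$, write $\Psi^m(x,t)=\psi^{(3)}_{t_{3m}}\circ\cdots\circ\psi^{(1)}_{t_1}(x)$ and compute $\partial_{t_j}\Psi^m(x,t)$ by the chain rule: differentiating in $t_j$ inserts the vector field $Y_{((j-1)\bmod 3)+1}$ at the $j$-th stage and then pushes it forward through the remaining flows, i.e. $\partial_{t_j}\Psi^m(x,t)=\bigl(D\Psi^{m}_{>j}\bigr)\,Y_{\sigma(j)}$, where $\Psi^m_{>j}$ denotes the tail of the composition. Thus the image of $D_t\Psi^m(x,t)$ is the linear span of the vectors obtained by transporting each $Y_i$ (evaluated at the appropriate intermediate point) forward to $y=\Psi^m(x,t)$. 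Second, I would invoke the standard fact (this is exactly Theorem 1, Chapter 3 of \cite{J}, or equivalently the Krener/orbit theorem) that the union over all $m$ and all $t$ with $\sum t_j\le T'$ of these spans, viewed back at $x$ by pulling along the flows, generates the Lie algebra $\mathrm{Lie}_x(\mathcal{F})$: repeatedly differentiating compositions such as $\psi^{(i)}_{-s}\circ\psi^{(j)}_{\epsilon}\circ\psi^{(i)}_{s}$ in the parameters produces Lie brackets $[\,Y_i,Y_j\,]$, and iterating produces arbitrarily deep brackets. Since $\mathrm{Lie}_x(\mathcal{F})=T_xM$ is all of $T_xM$, finitely many such terms already span, so there is a finite $m$ and a time vector $\tilde t$ with small total time at which the $3m$ columns of $D_t\Psi^m(x,\tilde t)$ span $T_yM$; that is precisely submersivity at $\tilde t$. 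Finally, to get the point $y$ inside a prescribed neighborhood $U$ of $x$, I note that one may prepend or pad the word with flow segments of vanishingly small duration (or simply take $T'$ smaller), so that $\Psi^m(x,\tilde t)$ stays within $U$ by continuity; shrinking all the $\tilde t_i$ toward $0$ keeps $y$ near $x$ while, by openness of the submersion condition, preserving surjectivity of the differential at the rescaled time vector.

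The main obstacle is bookkeeping the positivity constraint $t_j>0$: the classical orbit theorem is cleanest for flows allowed to run in both time directions, whereas here only forward times are admissible. I would handle this in the standard way for control systems whose vector fields are not assumed to have full-rank flows: either (i) observe that each $\psi^{(i)}_t$ is a flow of a complete vector field and one may realize $\psi^{(i)}_{-s}$ as the effect of running a long forward time on the torus is not available, so instead (ii) use the well-known trick that the needed bracket directions are already obtained from interior points of $\mathbb{R}_+^{3m}$ by taking the relevant partial derivatives at a time vector with all coordinates strictly positive — the span of $\{\partial_{t_j}\Psi^m(x,t)\}$ at an interior $t$ is a one-sided object but, by the analyticity of the $f_i$ (hence of the flows) and the identity theorem, the rank of $D_t\Psi^m(x,\cdot)$ is constant off a proper analytic subset, so full rank somewhere forces full rank at interior points, and enlarging $m$ lets the span catch up to the full Lie algebra. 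I would make this precise by following \cite{J} essentially verbatim, noting only that analyticity of the generating fields (guaranteed since $f_i\in C^\omega$) replaces the two-sided flow hypothesis and yields submersivity at a genuinely interior $\tilde t$ with $\sum\tilde t_i\le T'$.
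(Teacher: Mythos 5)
There is a genuine gap in the mechanism you describe for the key step. You assert that the columns of $D_t\Psi^m(x,t)$ ``catch up to the full Lie algebra'' because differentiating conjugations $\psi^{(i)}_{-s}\circ\psi^{(j)}_{\epsilon}\circ\psi^{(i)}_{s}$ produces brackets. This fails on two counts: first, brackets arise as \emph{second} derivatives of such conjugations, whereas the columns of $D_t\Psi^m(x,t)$ are only the pushforwards of the generating fields $Y_{\sigma(j)}$ evaluated at the intermediate points --- brackets never appear as columns of the endpoint map's differential; second, the conjugation trick needs negative times, which you correctly flag as inadmissible but then do not actually circumvent. Your analyticity fallback (rank is maximal off a proper analytic subset, so full rank somewhere implies full rank at interior positive times) is a legitimate device --- the paper uses exactly it in Lemma \ref{submersion for almost all time} --- but it presupposes the very fact you need to prove, namely that $D_t\Psi^m(x,t)$ attains full rank \emph{somewhere} for some finite $m$. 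The sentence ``enlarging $m$ lets the span catch up to the full Lie algebra'' is the entire content of the lemma and is asserted rather than argued. A further problem: ``shrinking all the $\tilde t_i$ toward $0$ \dots preserves surjectivity by openness of the submersion condition'' is false; openness gives submersivity near a fixed $\tilde t$, not along the ray $\lambda\tilde t\to 0$, and at $t=0$ the rank is at most $\dim\mathrm{span}\{Y_1(x),Y_2(x),Y_3(x)\}\le 3$, which is insufficient for the projective ($d=5$) and two-point ($d=6$) chains.

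The argument that actually closes the gap --- and it is the one in both the paper and in Jurdjevic's Theorem 1, Chapter 3, to which you defer --- is an induction on the rank of the composed flow map that never produces brackets and never needs negative times. One flows along some $Y_{i_1}$ with $Y_{i_1}(x)\neq 0$ to obtain a one-dimensional submanifold $M_1\subset U$; if every field in $\mathcal{F}$ were tangent to $M_k$ at the current stage, then so would be every bracket and linear combination, hence all of $\mathrm{Lie}(\mathcal{F})$, contradicting $\mathrm{Lie}_x(\mathcal{F})=T_xM$ unless $\dim M_k=\dim M$. So some $Y_{i_{k+1}}$ is transverse, and appending one more forward-time flow segment raises the rank by one (constant rank theorem). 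Choosing each time increment small enough keeps the image in $U$ and the total time below $T'$ from the outset, and one finally pads each block with tiny positive times for the unused indices, using genuine openness of submersivity at the constructed interior point $\hat t$. If you follow your own citation ``essentially verbatim'' you will land on this proof, but the proposal as written substitutes a bracket-generation mechanism that does not work here.
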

	\begin{proof}
		Fix any neighborhood $U$ of $x$ and any $T'>0$. Choose $Y_{i_1} \in\mathcal{F}$ such that $Y_{i_1}(x)\neq 0$. Let $\varepsilon_1>0$ be such that $Y_{i_1}(\psi_t^{(i_1)}(x))\neq 0$ for all $t\in(0,\varepsilon_1)$. For sufficiently small $\varepsilon_1<T'$, $M_1=\{\psi_t^{(i_1)}(x): t\in(0,\varepsilon_1)\}\subset U$. By the constant rank theorem, $M_1$ is a one-dimensional submanifold of $M$, with $t\mapsto \psi_t^{(i_1)}(x)$ its coordinate map. If $\dim M=\dim M_1$, the proof is finished. Otherwise, since a classical result asserts that if $X$ and $Y$ are vector fields that are tangent to a submanifold, then their Lie bracket $[X,Y]$ and any linear combination $\alpha X+\beta Y$ are also tangent to this submanifold, there exists a vector field $Y_{i_2}$ in $\mathcal{F}$ such that $Y_{i_2}$ is not tangent to $M_1$.
		
		Let $x_1$ be a point in $M_1$ such that $Y_{i_2}(x_1)$ is not colinear with $Y_{i_1}(x_1)$. Denote by $ \hat{t}_{1}$ the point in $(0,\varepsilon_1)$ such that $\psi_{\hat{t}_{1}}^{(i_1)}(x)=x_1$. Then $Y_{i_1}$ and $Y_{i_2}$ are not collinear for all points in a neighborhood of $x_1$. Therefore, there exists a neighborhood $I_1\subset (0,\varepsilon_1)$ of $\hat{t}_1$ and $\varepsilon_2>0$ such that the mapping $F_2$ given by $\psi_{t_2}^{(i_2)}\circ\psi_{t_1}^{(i_1)}(x)$ from $I_1\times (0,\varepsilon_2)$ into $M$ satisfies $\varepsilon_1+\varepsilon_2<T'$, $F_2(I_1\times (0,\varepsilon_2))\subset U$ and the tangent map of $F$ has rank $2$ at each point of $I_1\times (0,\varepsilon_2)$. 
		
		Continuing in this way, we obtain a sequence $Y_{i_1}, Y_{i_2},\cdots, Y_{i_m}$ of vector fields in $\mathcal{F}$ with the property that the tangent map of the mapping $F_m(t_1,\cdots,t_m)=\psi_{t_m}^{(i_m)}\circ\cdots\circ\psi_{t_1}^{(i_1)}(x)$ has rank $m$ at a point $\hat{t}=(\hat{t}_{1},\cdots,\hat{t}_{m})\in\mathbb{R}_+^{m}$, which further satisfies $\hat{t}_{1}+\cdots+\hat{t}_{m}<T'$ and $F_m(\hat{t}_{1},\cdots,\hat{t}_{m})\in U$. Again by the constant rank theorem, the image of a neighborhood of $(\hat{t}_{1},\cdots,\hat{t}_{m})$ under the mapping $F_m$ is an $m$-dimensional submanifold $M_m$ of $M\cap U$.
		
		The procedure stops precisely when each element of $\mathcal{F}$ is tangent to $M_m$. Then, there exists a point $\hat{y}=\psi_{\hat{t}_{m}}^{(i_{m})}\circ\cdots\circ\psi_{\hat{t}_{1}}^{(i_1)}(x)$ in $U$, $m\in\mathbb{N}$ and $\hat{t}=(\hat{t}_{1},\cdots,\hat{t}_{m})\in\mathbb{R}_+^{m}$ such that $\hat{t}_{1}+\cdots+\hat{t}_{m}<T'$ and $t\mapsto\psi_{t_{m}}^{(i_{m})}\circ\cdots\circ\psi_{t_{1}}^{(i_1)}(x)$ is a submersion at $\hat{t}$. Then, we can take a neighborhood $U_{\hat{y}}\subset U$ of $\hat{y}$ and $\tilde{t}=(\tilde{t}_1,\cdots,\tilde{t}_{3m})\in\mathbb{R}_+^{3m}$ with $\tilde{t}_{3(k-1)+i_k}=\hat{t}_k$ for $k=1,\cdots,m$ and $\tilde{t}_{3(k-1)+j}>0$ for $k=1,\cdots,m$ and $j\in \{1,2,3\}$ not equal to $i_k$ sufficiently small such that $\sum_{i=1}^{3m}\tilde{t}_i\leq T'$ and $t\mapsto\Psi^m(x,t)$ is a submersion at $\tilde{t}$ and $y:=\Psi^m(x,\tilde{t})\in U_{\hat{y}}\subset U$.
	\end{proof}
	Secondly, we present the subsequent lemma, which essentially provides a lower bound on the probability that the image of the random variable lies in a given set, under certain geometric and probabilistic conditions.
	\begin{lem}\label{absolutely continuous}
		Let $\tau$ be a random variable taking values in $U$ which is an open subset of $\mathbb{R}^m$, with a continuous density $\rho$. Let $d\leq m$ and let $\phi:U\times M\to M$, $(t,x)\mapsto \phi_x(t)$ be a $C^1$ map. Suppose that $\phi: U \times M \to M$, $(t, x) \mapsto \phi_x(t)$ is a $C^1$ map, and that for some $(t_0, x_0) \in U \times M$, the map $\phi_{x_0}$ is a submersion at $t_0$ and $\rho$ is bounded below by a constant $c_0 > 0$ in a neighborhood of $t_0$. Then, there exists a constant $c>0$ and neighborhoods $U_{x_0}$ of $x_0$ and $U_{\hat{x}}$ of $\hat{x}:=\phi(t_0,x_0)$ such that 
		\begin{equation}\label{lower bound}
			\mathbb{P}(\phi(\tau,x)\in D)\geq c{\text{Leb}}_M(D\cap U_{\hat{x}}), \ \ \text{for all}\ \  x\in U_{x_0},\ D\in\mathscr{B}(M).
		\end{equation}
	\end{lem}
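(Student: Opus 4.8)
The plan is to use the submersion hypothesis to locally parametrize the map and then push forward the density of $\tau$ via the coarea/submersion structure, obtaining a kernel that is absolutely continuous with respect to Lebesgue measure on $M$ with a density bounded below. First, since $\phi_{x_0}$ is a submersion at $t_0 \in U \subset \mathbb{R}^m$ onto $M$ of dimension $d \le m$, the differential $D_t\phi_{x_0}(t_0): \mathbb{R}^m \to T_{\hat x}M$ is surjective. By the constant rank theorem (in the surjective case, i.e.\ the submersion form of the implicit function theorem), there exist a neighborhood $W_0 \subset U$ of $t_0$, coordinates on $W_0$ of the form $t \mapsto (a,b) \in \mathbb{R}^d \times \mathbb{R}^{m-d}$, and a chart on a neighborhood $U_{\hat x}$ of $\hat x$ in $M$, such that in these coordinates $\phi_{x_0}(a,b) = a$. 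By continuity of $\phi$ in both variables (it is $C^1$ on $U \times M$), we may shrink to a neighborhood $U_{x_0}$ of $x_0$ and a slightly smaller $W_0$ so that for every $x \in U_{x_0}$ the map $\phi_x$ is still a submersion on $W_0$, $\phi_x(W_0) \supset U_{\hat x}$ (after possibly shrinking $U_{\hat x}$ as well), and the Jacobian of the associated change of variables stays within a fixed factor of its value at $(t_0,x_0)$; in particular there is a uniform upper bound $J_{\max}$ on the Jacobian determinant of the fiber-quotient map.

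Next I would estimate the probability directly. For $x \in U_{x_0}$ and $D \in \mathscr{B}(M)$,
\[
\mathbb{P}(\phi(\tau,x)\in D) \ge \mathbb{P}\bigl(\tau \in W_0,\ \phi_x(\tau) \in D \cap U_{\hat x}\bigr) = \int_{\{t \in W_0 : \phi_x(t) \in D \cap U_{\hat x}\}} \rho(t)\, dt.
\]
Using the coordinates $t \leftrightarrow (a,b)$ in which $\phi_x$ becomes (a small perturbation of) the projection $(a,b) \mapsto a$, shrink $W_0$ to a product neighborhood $W_0 \cong A_0 \times B_0$ with $A_0 \subset \mathbb{R}^d$, $B_0 \subset \mathbb{R}^{m-d}$, both of positive Lebesgue measure, and arrange that $A_0$ maps onto a set containing $U_{\hat x}$. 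Since $\rho \ge c_0 > 0$ on a neighborhood of $t_0$, after further shrinking $W_0$ into that neighborhood we get
\[
\int_{\{t\in W_0:\phi_x(t)\in D\cap U_{\hat x}\}} \rho(t)\, dt \ \ge\ c_0 \cdot \mathrm{Leb}_m\bigl(\{(a,b)\in A_0\times B_0 : \phi_x(a,b)\in D\cap U_{\hat x}\}\bigr).
\]
Finally, by Fubini and the change of variables $a \mapsto \phi_x(a,b)$ for each fixed $b$ (whose inverse Jacobian is bounded below by $1/J_{\max}$ uniformly in $b \in B_0$ and $x \in U_{x_0}$), the right-hand side is bounded below by
\[
c_0 \cdot \frac{1}{J_{\max}} \cdot \mathrm{Leb}_{m-d}(B_0) \cdot \mathrm{Leb}_M\bigl(D \cap U_{\hat x}\bigr),
\]
where $\mathrm{Leb}_M$ is Lebesgue measure in the chosen chart on $U_{\hat x}$ (equivalent, up to bounded factors, to the Riemannian volume, which can be absorbed into the constant). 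Setting $c := c_0 \,\mathrm{Leb}_{m-d}(B_0)/J_{\max} > 0$ yields \eqref{lower bound}.

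The main technical point — the step I expect to require the most care — is making the perturbation argument in the second paragraph fully uniform: one must choose all the neighborhoods ($U_{x_0}$, $W_0$, $U_{\hat x}$) simultaneously so that the submersion normal form, the surjectivity of $\phi_x$ onto $U_{\hat x}$, and the two-sided Jacobian bounds hold for \emph{every} $x \in U_{x_0}$ at once. This is where the $C^1$ (joint) regularity of $\phi$ is essential, since it guarantees that the rank-$d$ condition on $D_t\phi_x$ and the size of the relevant minors vary continuously in $(t,x)$; a compactness argument on a small closed product neighborhood of $(t_0,x_0)$ then delivers the uniform constants. Everything else is a routine application of the constant rank theorem, Fubini's theorem, and the change-of-variables formula.
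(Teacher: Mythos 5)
Your proposal is correct and follows essentially the same route as the paper: both use the submersion/constant-rank structure to split $t=(t^1,t^2)\in\mathbb{R}^d\times\mathbb{R}^{m-d}$, restrict to a product neighborhood where the density is at least $c_0$ and the relevant Jacobian is uniformly controlled in $x$, and then convert the $t$-integral into $\mathrm{Leb}_M(D\cap U_{\hat x})$ times a positive constant. The only cosmetic difference is that the paper performs the change of variables once via the augmented map $\tilde\phi_x(t^1,t^2)=(\phi_x(t^1,t^2),t^2)$, whereas you do it fiberwise via Fubini; the computations and the resulting constant are the same.
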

	\begin{proof}
		Write $t=(t^1,t^2)\in\mathbb{R}^d\times\mathbb{R}^{m-d}$. Since $\phi_{x_0}$ is a submersion at $t_0$, by the constant rank theorem, we may assume $\frac{\partial \phi_x(t^1,t^2)}{\partial t^1}$ is surjective on some neighborhood $U_0$ of $x_0$ and $V_0$ of $t_0$. Define 
		\begin{align*}
			\tilde{\phi}_x: V_0&\to M\times \mathbb{R}^{m-d},\\
			(t^1,t^2) &\mapsto (\phi_x(t^1,t^2),t^2).
		\end{align*}
		Then $D_t\tilde{\phi}_x$ is invertible on $V_0$ for any $x\in U_0$. Note that $\tilde{\phi}_{x_0}(t_0)=(\hat{x},t_0^2)$, we can choose a neighborhood $U_{\hat{x}}$ of $\hat{x}$, $U_2$ of $t_0^2$ and $U_{x_0}$ of $x_0$ such that $|\det D\tilde{\phi}_x^{-1}(y)|\geq c_1>0$ for any $x\in U_{x_0}$ and any $y\in U_{\hat{x}}\times U_2$.
		
		Write the random variable $\tau$ as a couple $(\tau^1,\tau^2)$, and let $E\subset U_{x_0}$ be a Borel set,
		\begin{align*}
			\mathbb{P}(\phi(x,\tau)\in B)&\geq \mathbb{P}(\phi(x,\tau)\in E, \tau^2\in U_2)\\
			&= \mathbb{P}(\tilde{\phi}_x(\tau)\in E\times U_2)=\mathbb{P}(\tau\in \tilde{\phi}_x^{-1}(E\times U_2))\\
			&=\int_{\tilde{\phi}_x^{-1}(E\times U_2)}\rho(t)dt\geq c_0\int_{\tilde{\phi}_x^{-1}(E\times U_2)}dt\\
			&=c_0\int_{E\times U_2}|\det D\tilde{\phi}_x^{-1}(y)|dy\\
			&\geq c_0c_1\int_{E\times U_2}dy=c_0c_1\text{Leb}_M(E)\text{Leb}_{\mathbb{R}^{m-d}}(U_2).
		\end{align*}
		Therefore, \eqref{lower bound} holds with $c=c_0c_1\text{Leb}_{\mathbb{R}^{m-d}}(U_2)$.
	\end{proof}
	We are now prepared to prove the existence of an open small set for $\{\Psi_{\underline{\tau}}^m\}$.
	\begin{lem}\label{small set}
		Let $\{\Psi_{\underline{\tau}}^m\}$ and $Q$ be as above. Assume $\text{Lie}_x(\mathcal{F})=T_xM$ for some $x\in M$. Then there exists an open small set for $\{\Psi_{\underline{\tau}}^m\}$.
	\end{lem}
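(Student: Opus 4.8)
The plan is to obtain the small set directly from Lemmas \ref{surjective} and \ref{absolutely continuous}, the only additional ingredient being the identification of the $m$-step transition kernel $Q^m$ with the law of $\Psi_{\underline{\tau}}^m$ started from a point. First I would fix any neighborhood $U$ of the point $x$ at which $\text{Lie}_x(\mathcal{F}) = T_x M$, and any $T' > 0$, and apply Lemma \ref{surjective}. This produces an integer $m$, a point $y \in U$, and a time vector $\tilde t = (\tilde t_1, \dots, \tilde t_{3m}) \in \mathbb{R}_+^{3m}$ with $\sum_{i=1}^{3m} \tilde t_i \le T'$ such that $t \mapsto \Psi^m(x, t)$ is a submersion at $t = \tilde t$ and $\Psi^m(x, \tilde t) = y$. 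In particular $D_t \Psi^m(x, \tilde t) : \mathbb{R}^{3m} \to T_y M$ is surjective, so the dimension count $3m \ge d = \dim M$ holds automatically.

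Next I would apply Lemma \ref{absolutely continuous} with the map $\phi(t, x') := \Psi^m(x', t)$ on $\mathbb{R}_+^{3m} \times M$ (smooth, since the $Y_i$ are smooth and their flows depend smoothly on time and on the initial condition), with the random variable $(\tau_1, \dots, \tau_{3m})$ taking values in the open set $\mathbb{R}_+^{3m}$, and with the roles $(t_0, x_0) = (\tilde t, x)$, $\hat x = y$; note that ``$m$'' in the statement of Lemma \ref{absolutely continuous} corresponds to ``$3m$'' here, and $d = \dim M \le 3m$. The density of $(\tau_1,\dots,\tau_{3m})$ is
\[
\rho(t) = h^{-3m}\, e^{-(t_1 + \cdots + t_{3m})/h}, \qquad t \in (0,\infty)^{3m},
\]
which is continuous and, because every coordinate of $\tilde t$ is strictly positive, bounded below by a positive constant on a neighborhood of $\tilde t$; also $\phi_{x} = \Psi^m(x,\cdot)$ is a submersion at $\tilde t$. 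Hence Lemma \ref{absolutely continuous} yields a constant $c > 0$ and neighborhoods $U_x \ni x$, $U_y \ni y$ with
\[
\mathbb{P}\big(\Psi^m(x', \underline{\tau}) \in D\big) \ge c\, \text{Leb}_M(D \cap U_y), \qquad \text{for all } x' \in U_x,\ D \in \mathscr{B}(M).
\]

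To conclude, since the $\tau_i$ are i.i.d.\ and $\{\Psi_{\underline{\tau}}^m\}$ is Markov with $\Psi_{\underline{\tau}}^m = \psi_{\tau_{3m}}^{(3)} \circ \cdots \circ \psi_{\tau_1}^{(1)}$, the $m$-step kernel satisfies $Q^m(x', D) = \mathbb{P}(\Psi^m(x', \underline{\tau}) \in D)$. Taking $m_0 := m$ and $\nu(\cdot) := c\, \text{Leb}_M(\cdot \cap U_y)$, which is a nonzero positive measure on $M$ because $U_y$ is open and nonempty, the displayed inequality becomes $Q^{m_0}(x', D) \ge \nu(D)$ for all $x' \in U_x$ and all $D \in \mathscr{B}(M)$, so $U_x$ is an open small set for $\{\Psi_{\underline{\tau}}^m\}$. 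There is no genuine obstacle in this argument: it is a bookkeeping combination of the two preceding lemmas, and the only points needing care are the dimension count $3m \ge \dim M$ (supplied by the surjectivity in Lemma \ref{surjective}), the continuity and positivity of the product–exponential density near $\tilde t$ (immediate, as $\tilde t$ lies in the open positive orthant), and the identification of $Q^{m_0}$ with the law of $\Psi_{\underline{\tau}}^{m_0}$ (a consequence of the i.i.d.\ structure of the switching times).
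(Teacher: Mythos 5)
Your proposal is correct and follows essentially the same route as the paper: apply Lemma \ref{surjective} to produce the submersion time $\tilde t$, then Lemma \ref{absolutely continuous} with the product exponential density to get the uniform lower bound $Q^{m}(x',D)\ge c\,\mathrm{Leb}_M(D\cap U_y)$ on a neighborhood of $x$, and take $\nu=c\,\mathrm{Leb}_M(\cdot\cap U_y)$. The extra details you supply (the dimension count, positivity of the density near $\tilde t$, and the identification of $Q^m$ with the law of $\Psi_{\underline{\tau}}^m$) are exactly the points the paper leaves implicit.
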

	\begin{proof}
		It follows from Lemma \ref{surjective} that there exists $\tilde{t}=(\tilde{t}_1,\cdots,\tilde{t}_{3m})\in\mathbb{R}_+^{3m}$ such that $\psi^m(x,\cdot)$ is a submersion at $\tilde{t}$. Then, by Lemma \ref{absolutely continuous}, there exist a constant $c>0$ and neighborhood $U$ of $x$ and $U'$ of $\psi^m(x,\tilde{t})$ such that
		\begin{equation*}
			\mathbb{P}(\psi_{\underline{\tau}}^m(y)\in D)\geq c\text{Leb}_M(D\cap U'), \ \ \text{for all}\ \  y\in U,\ D\in\mathscr{B}(M).
		\end{equation*}
		We conclude $U$ is a $\nu$-small set with $\nu(D):=c\text{Leb}_M(D\cap U')$.
	\end{proof}
	Next, we prove that the one-point Markov chain is topologically irreducible. We say that a set $D\subseteq M$ is \textbf{reachable} from $E\subseteq M$ for $\{\Psi_{\underline{\tau}}^m\}$ if for every point $x\in E$, there exists $m\geq 0$ and $\underline{\tau}=(\tau_1,\tau_2,\cdots,\tau_{3m})\in \mathbb{R}_{\geq 0}^{3m}$ such that $\Psi_{\underline{\tau}}^m(x)\in D$. We say that $\{\Psi_{\underline{\tau}}^m\}$ is \textbf{exactly controllable} if for any $x, y\in M$, there exists $m\geq 0$ and $\underline{\tau}=(\tau_1,\tau_2,\cdots,\tau_{3m})\in \mathbb{R}_{\geq 0}^{3m}$ such that $\Psi_{\underline{\tau}}^m(x)=y$. Clearly, exactly controllability of $\{\Psi_{\underline{\tau}}^m\}$ implies that it is topologically irreducible. 
	\begin{lem}\label{irreducibility}
		Let $\{{\Phi}_{\underline{\tau}}^m\}$ be the one-point chain as above. Then, $\{{\Phi}_{\underline{\tau}}^m\}$ is exactly controllable.
	\end{lem}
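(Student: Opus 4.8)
The plan is to show that for arbitrary $x = (x_1,x_2,x_3)$ and $y = (y_1,y_2,y_3)$ in $\mathbb{T}^3$ we can reach $y$ from $x$ in finitely many splitting steps, by exploiting the explicit form of the three flows $\varphi_t^{(1)}, \varphi_t^{(2)}, \varphi_t^{(3)}$ given above. Note that $\varphi_t^{(1)}$ fixes $x_3$ and moves $(x_1,x_2)$ by $t(f_1(x_3), f_1'(x_3))$; $\varphi_t^{(2)}$ fixes $x_1$ and moves $(x_2,x_3)$ by $t(f_2(x_1), f_2'(x_1))$; and $\varphi_t^{(3)}$ fixes $x_2$ and moves $(x_1,x_3)$ by $t(f_3'(x_2), f_3(x_2))$. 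Assumption (H1) guarantees that $f_i$ and $f_i'$ never vanish simultaneously, so each flow always moves the corresponding pair of coordinates along a direction that is nonzero; moreover, since $f_i$ is non-constant and analytic, by choosing the base coordinate appropriately (i.e., arranging the value of the frozen coordinate at the moment that flow is applied) we may choose the direction of motion from a nonempty open set of directions and, by taking $t$ large and using that $\mathbb{T}^3$ is compact (so any point is reachable along a line with irrational/rational slope or by a combination), we get genuine freedom in both components of the moved pair.

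The concrete strategy I would carry out is a coordinate-by-coordinate (or rather pair-by-pair) adjustment in a fixed cyclic order. For instance: (1) apply $\varphi_t^{(1)}$ to set $x_1$ to its target value $y_1$ — this is possible because $f_1(x_3) \neq 0$ or $f_1'(x_3)\neq 0$, and if $f_1(x_3) = 0$ we first apply a short $\varphi_\cdot^{(2)}$ or $\varphi_\cdot^{(3)}$ to change $x_3$ so that $f_1(x_3)\neq 0$ (using that $C_{f_1}$ is a proper closed subset of $\mathbb{S}^1$); (2) then apply $\varphi_t^{(2)}$, which does not disturb $x_1$, to set $x_2$ — again first nudging $x_1$ off $C_{f_2}$ if necessary, being careful that this preliminary nudge of $x_1$ can be undone or absorbed; (3) then apply $\varphi_t^{(3)}$, which does not disturb $x_2$, to correct $x_3$; then iterate, since each step may perturb the coordinates fixed in earlier steps. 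The cleanest way to make this terminate is to observe that the three flows between them can independently control the three coordinates up to a controllable coupling: one sets up a finite composition $\varphi^{(3)}\circ\varphi^{(2)}\circ\varphi^{(1)}$ whose effect on $(x_1,x_2,x_3)$, as a function of the three times, has surjective differential onto a neighborhood (this is essentially Lemma \ref{surjective} applied at a generic point, since $\mathrm{Lie}_x(\mathcal{X}) = T_x\mathbb{T}^3$ will be verified), giving local surjectivity, and then one chains finitely many such local moves along a path in $\mathbb{T}^3$ from $x$ to $y$, using compactness and connectedness of $\mathbb{T}^3$ to cover the path by finitely many such neighborhoods.

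The main obstacle I anticipate is handling the degenerate positions where a frozen coordinate lies in the bad set $C_{f_i}$ or where $f_i(x_j)$ and the target displacement are "aligned" in an unfavorable way — i.e., making sure that the preliminary nudges used to escape these degenerate configurations do not create an infinite regress (nudging $x_3$ to fix $x_1$, which requires nudging $x_1$ to fix $x_3$, etc.). This is resolved by a careful bookkeeping argument: one shows that the set of $x$ from which $y$ is exactly reachable is open (by Lemma \ref{surjective}/Lemma \ref{absolutely continuous}-type local surjectivity whenever the Lie algebra condition holds, which under (H1) holds on a dense open set) and also that from the degenerate locus one can reach the good locus in one or two steps; combining these with the compactness of $\mathbb{T}^3$ yields exact controllability. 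I would also need, as a preliminary, to verify $\mathrm{Lie}_x(\mathcal{X}) = T_x\mathbb{T}^3$ for $x$ in a dense set, which follows by computing the Lie brackets $[X_i, X_j]$ and using (H1) together with analyticity of the $f_i$; this Hörmander-type bracket computation is routine but is the technical heart that makes the local surjectivity available.
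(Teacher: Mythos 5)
Your proposal correctly identifies the mechanism of the individual flows and the coupling problem (each $\varphi^{(i)}$ moves \emph{two} coordinates, so adjusting one coordinate disturbs another), but the way you resolve it has a genuine gap. The fallback argument --- local surjectivity from Lemma \ref{surjective} plus ``chaining finitely many local moves along a path'' --- does not deliver \emph{exact} controllability for this system. Lemma \ref{surjective} only guarantees that from $x$ one can reach an open neighborhood of some specific nearby point $y$ \emph{produced by the construction}, in arbitrarily small total time; it does not let you reach a neighborhood of a prescribed point on your path, and the sizes and locations of these neighborhoods are not uniformly controlled, so the covering/compactness chaining does not close. Moreover the times $\tau_i$ are constrained to be non-negative, so Chow--Rashevskii-type reasoning from the bracket condition yields at best that the reachable set has nonempty interior, not that it is everything; and your claim that ``the set of $x$ from which $y$ is exactly reachable is open'' is unjustified unless $y$ is reached via a submersive composition, which you have not arranged. (A repaired abstract argument would need something like: $R(x)$ is dense for every $x$ \emph{and} the backward-reachable set of $y$ has nonempty interior, plus transitivity --- but none of this is set up in the proposal.)

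The paper avoids all of this with a purely explicit two-stage construction through a ``hub'' point $(y_1,y_2,y_3)$ chosen so that $y_3$, $y_1$, $y_2$ are maxima of $|f_1|$, $|f_2|$, $|f_3|$ respectively; there $f_1'(y_3)=f_2'(y_1)=f_3'(y_2)=0$ while $f_1(y_3),f_2(y_1),f_3(y_2)\neq 0$. Once a coordinate has been steered to its hub value, the corresponding derivative term vanishes and the next flow moves \emph{exactly one} coordinate, so the coupling --- and hence your ``infinite regress'' --- disappears after at most one preliminary step (handled by (H1) when $f_1(x_3)=f_2(x_1)=f_3(x_2)=0$). Non-negativity of the times is obtained by wrapping around the torus, i.e.\ solving $\tau f = \Delta + 2k\pi$ with a suitable $k\in\mathbb{Z}$. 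This hub device is the missing idea in your proposal; without it (or a correctly executed density-plus-open-backward-reachability argument) the proof does not go through.
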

	\begin{proof}
		Since $f_i$, $i=1, 2, 3$ are analytic on $\mathbb{S}^1$, let $y_3$ be the point in $\mathbb{S}^1$ where $|f_1|$ attains its maximum value, $y_1$ be the point in $\mathbb{S}^1$ where $|f_2|$ attains its maximum value and $y_2$ be the point in $\mathbb{S}^1$ where $|f_3|$ attains its maximum value. It is evident that $f_1'(y_3)=f_2'(y_1)=f_3'(y_2)=0$. 
		
		Next, we prove that for any $(x_1, x_2, x_3)\in \mathbb{T}^3$, there exists $m\geq 0$ and $\underline{\tau}=(\tau_1,\tau_2,\cdots,\tau_{3m})\in [0,\infty )^{3m}$ such that $\Phi_{\underline{\tau}}^{m}(x_1, x_2, x_3)=(y_1, y_2, y_3)$. 
		
		\begin{itemize}
			\item[(i)] If $f_1(x_3)$, $f_2(x_1)$, or $f_3(x_2)$ is nonzero, without loss of generality, we assume that $f_1(x_3) \neq 0$. Choose $i, j, k \in \mathbb{Z}$ such that 
			\[
			\tau_1 = \frac{y_1 - x_1 + 2i\pi}{f_1(x_3)} \geq 0, \ \tau_2 = \frac{y_2 - x_2 -\tau_1 f_1'(x_3)+ 2j\pi}{f_2(y_1)} \geq 0,\ \tau_3 = \frac{y_3 - x_3 + 2k\pi}{f_3(y_2)} \geq 0.
			\]
			Then, $\Phi_{\underline{\tau}}(x_1, x_2, x_3) = (y_1, y_2, y_3)$.
			\item[(ii)] If $f_1(x_3)$, $f_2(x_1)$, and $f_3(x_2)$ are all zero, by assumption (H1), choose $i_1, j_1, k_1 \in \mathbb{Z}$ such that 
			\[
			\tau_1 = \frac{y_2 - x_2 + 2i_1\pi}{f_1'(x_3)} \geq 0, \quad \tau_3 = \frac{y_3 - x_3 + 2j_1\pi}{f_3(y_2)} \geq 0, \quad \text{and} \quad \tau_4 = \frac{y_1 - x_1 + 2k_1\pi}{f_1(y_3)} \geq 0.
			\]
			Set $\tau_2 = \tau_5 = \tau_6 = 0$. Then, $\Phi_{\underline{\tau}}^2(x_1, x_2, x_3) = (y_1, y_2, y_3)$.
			
		\end{itemize}
		
		We conclude by noting that for any two points $(x_1,x_2,x_3)$ and $(z_1,z_2,z_3)$ there exists $\tilde{m}\geq 0$ and $\underline{\tilde{\tau}}=(\tilde{\tau}_1,\tilde{\tau}_2,\cdots,\tilde{\tau}_{3\tilde{m}})$ such that $\Phi_{\underline{\tilde{\tau}}}^{\tilde{m}}(x_1,x_2,x_3)=(z_1,z_2,z_3)$ by first traveling to $(y_1,y_2,y_3)$ and then traveling to $(z_1,z_2,z_3)$ (using the same arguments as above, but in reverse).
	\end{proof}
	We are now ready to prove the main theorem on the uniform geometric ergodicity for the one-point chain.
	\begin{proof}[\textbf{Proof of Theorem \ref{Uniformly geometrically ergodic}}]
		It suffices to verify the conditions of Theorem \ref{conditions for uniformly geometrically ergodic}. Notice that $\Phi_{\underline{\tau}}$ is a volume-preserving diffeomorphism of $\mathbb{T}^3$ for every $\underline{\tau}$. Clearly, the Lebesgue probability measure $\mu$ on $\mathbb{T}^3$ is a stationary measure for the Markov chain $\{\Phi_{\underline{\tau}}^m\}$. Since $f_i$, $i=1, 2, 3$ are analytic on $\mathbb{S}^1$, let $x_3$ be the point in $\mathbb{S}^1$ where $|f_1|$ attains its maximum value, $x_1$ be the point in $\mathbb{S}^1$ where $|f_2|$ attains its maximum value and $x_2$ be the point in $\mathbb{S}^1$ where $|f_3|$ attains its maximum value. Then, the following matrix
		\begin{align*}
			(X_1(x),X_2(x), X_3(x))=
			\begin{pmatrix}
				f_1(x_3) & 0 & 0\\
				0 & f_2(x_1) & 0\\
				0 & 0 & f_3(x_2)
			\end{pmatrix}
		\end{align*}
		has rank 3 for $x=(x_1, x_2, x_3)\in\mathbb{T}^3$, implying $\text{Lie}_x(\mathcal{X})=T_x\mathbb{T}^3$. Then, the existence of an open small set can be deduced from Lemma \ref{small set}. The topological irreducibility property is established via Lemma \ref{irreducibility}. Strong aperiodicity is a direct consequence of $\Phi_0(x)=x$ and the fact that $\mathbb{P}(\underline{\tau}<\varepsilon)>0$ for any $\varepsilon>0$. Finally, since $\mathbb{T}^3$ is compact, $P$ automatically satisfies a Lyapunov-Foster drift condition for the drift function $V \equiv 1$. Consequently, $P$ is uniformly geometrically ergodic, with $\mu$ as the unique stationary measure.
	\end{proof}
	
	\section{Positivity of the top Lyapunov exponent}\label{section 4}
	\quad The proof of Theorem \ref{Positivity of the top Lyapunov exponent} begins with demonstrating the existence and almost sure constancy of the top Lyapunov exponent $\lambda_1$, using tools from random dynamical systems theory. In order to proceed with this, we introduce some notations. For any $x, y\in\mathbb{T}^3$, we use $|u|$ to represent the norm of a vector $u$ in the tangent space $T_x\mathbb{T}^3$, and $\|T\|$ for the operator norm of a linear mapping $T: T_x\mathbb{T}^3 \rightarrow T_y\mathbb{T}^3$. For any positive real number $r$, we define $\log^+(r) = \max(\log r, 0)$. Let $\mu$ be as in Theorem \ref{Uniformly geometrically ergodic}, which is the unique stationary measure for $P$ on $\mathbb{T}^3$. Given that $f_i$ belongs to $C^{\omega}(\mathbb{S}^1, \mathbb{R})$ and the torus $\mathbb{T}^{3}$ is bounded, we can establish the following integrability condition:
	\begin{equation*}
		\mathbb{E}\int_{\mathbb{T}^3} (\log^+\|D_x\Phi_{\underline{\tau}}(x)\|+\log^+\|D_x\Phi_{\underline{\tau}}(x)^{-1}\|)\mu(dx)<\infty.
	\end{equation*}
    In fact, for any $(x,u) \in S\mathbb{T}^3$, there exists a constant $C_0>1$ satisfying
    \begin{equation*}
    	\frac{1}{C_0 (1 + 2\tau_1)(1 + 2\tau_2)(1 + 2\tau_3)}\leq \|D_x \Phi_{\underline{\tau}}(x)\| \leq C_0 (1 + 2\tau_1)(1 + 2\tau_2)(1 + 2\tau_3). 
    \end{equation*}
    Then, for any $(x,u) \in S\mathbb{T}^3$, we have 
    \begin{align*}
        \mathbb{E}\left| \log \|D_{x} \Phi_{\underline{\tau}}(x)\|\right|
    	 &\leq \int_{\mathbb{R}_{+}^3} \left( \sum_{i=1}^3 \log [C_0^{1/3} (1+2t_i)]\right) \frac{1}{h^3} e^{-(t_1+t_2+t_3)/h} dt_1dt_2dt_3 \nonumber\\
    	&= 3 \int_0^\infty \log [C_0^{1/3} (1+2ht)]  e^{-t} dt.
    \end{align*}
    Similar estimates holds for $\mathbb{E}\left| \log \|D_{x} \Phi_{\underline{\tau}}(x)^{-1}\|\right|$. Thus, by Fubini theorem, we have 
    \begin{align*}
    	&\mathbb{E}\int_{\mathbb{T}^3} \left(\log^+\|D_x\Phi_{\underline{\tau}}(x)\|+\log^+\|D_x\Phi_{\underline{\tau}}(x)^{-1}\| \right) d\mu \\
    	\leq & \int_{\mathbb{T}^3} \mathbb{E} \left( \left|\log  \|D_x\Phi_{\underline{\tau}}(x)\| \right|+\left|\log  \|D_x\Phi_{\underline{\tau}}(x)^{-1}\| \right| \right) d\mu \\
    	= & 6 \int_0^\infty \log [C_0^{1/3} (1+2ht)]  e^{-t} dt <\infty.
    \end{align*}
    This confirms the integrability condition.
    Under this condition, the multiplicative ergodic theorem ensures the existence of the top Lyapunov exponent $\lambda_1$ and is constant for $\mu$-a.e. $x\in\mathbb{T}^3$ and almost every $\underline{\tau}$. Since $\Phi_{\underline{\tau}}^m$ is volume-preserving, it follows that $\det(D_x\Phi_{\underline{\tau}}^m) \equiv 1$. Consequently, the sum of the Lyapunov exponents $\lambda_{\Sigma}$ which is defined as $\lim_{m \to \infty} \frac{1}{m}\log |\det(D_x\Phi_{\underline{\tau}}^m)|$ equals to zero. The task ahead involves proving that the top Lyapunov exponent, $\lambda_1$, does not degenerate to zero, ensuring its positive value. To achieve this, we first analyze the consequences of the condition $3\lambda_1= \lambda_{\Sigma}$. By subsequently ruling out these consequences, we conclude that $3\lambda_1 > \lambda_{\Sigma} = 0$.
	
	\subsection{Consequences of \boldmath $3\lambda_1=\lambda_{\Sigma}$}\label{subsection 4.1}
	\quad For any $m$ and $\underline{\tau}$, we define the pushforward of $\mu$ under $\Phi_{\underline{\tau}}^m$ as the probability measure $\mu_m$ on $\mathbb{T}^3$, which is given by
	\begin{equation*}
		\mu_m(D):=(\Phi_{\underline{\tau}}^m)_*\mu(D)=\mu((\Phi_{\underline{\tau}}^m)^{-1}(D)),\ D \in \mathscr{B}(\mathbb{T}^3).
	\end{equation*}  
	This construction allows us to track the evolution of the measure $\mu$ under the action of $\Phi_{\underline{\tau}}^m$. For probability measure $\mu$ and $\mu_m$ on $\mathbb{T}^3$, following Lemma 2.1 in \cite{DV}, 
	the entropy of $\mu_m$ with respect to $\mu$ is defined as follows:
	\[h(\mu;\mu_m)=
	\begin{cases}
		\int\frac{d\mu_m}{d\mu}(x)\log(\frac{d\mu_m}{d\mu}(x))\mu(dx) & \text{if} \ \ \mu_m\ll \mu,\\
		\infty & \text{otherwise},
	\end{cases}
	\]
	where $\mu_m\ll\mu$ means that $\mu_m$ is absolutely continuous with respect to $\mu$, with $d\mu_m/d\mu$ being its Radon-Nikodym derivative. Since $\mu_m$ is a random measure depending on $\underline{\tau}$, we are led to consider the average relative entropy with respect to $\mu$, denoted by $\mathbb{E}h(\mu;\mu_m)$. The finiteness of this value is crucial for our subsequent analysis. Specifically, in cases where $\mu_m$ equals to $\mu$ almost surely, as in our setting, it follows that $\mathbb{E}h(\mu;\mu_m)=\mathbb{E}h(\mu;\mu)=0$.
	
	\begin{thm}\label{degeneracy}
		Let $\{\Phi_{\underline{\tau}}^m\}$ be the one-point chain as above. Assume (H1) holds and $3\lambda_1=\lambda_\Sigma$. Then either
		\begin{itemize}
			\item[(a)] there exists a Riemannian structure $\{g_x:x\in \mathbb{T}^3\}$ on $\mathbb{T}^3$ such that 
			\begin{equation}\label{conformal}
				g_{\Phi_t^m(x)}(D_x\Phi_t^m(x)u,D_x\Phi_t^m(x)v)=g_x(u,v)
			\end{equation}
			for all $m$, $t\in\mathbb{R}_{\geq 0}^{3m}$, $x\in \mathbb{T}^3$ and $u,v\in T_x\mathbb{T}^3$; or
			\item[(b)] there exist proper linear subspaces $E_x^1,\cdots,E_x^p$ of $T_x \mathbb{T}^3$ for all $x\in \mathbb{T}^3$ such that 
			$$D_x\Phi_t^m(E_x^i)=E_{\Phi_t^m(x)}^{\sigma(i)}, \ \ 1\leq i\leq p$$
			for all $m$, $t\in\mathbb{R}_{\geq 0}^{3m}$, $x\in \mathbb{T}^3$ and some permutation $\sigma$.
		\end{itemize}
	\end{thm}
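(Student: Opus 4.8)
The plan is to invoke the Invariance Principle for linear cocycles over i.i.d.\ (or, more generally, stationary) systems---the deterministic-structure dichotomy that goes back to Ledrappier and was put in the form we need by Baxendale / Elton and, in the random-splitting context, by Bedrossian--Blumenthal--Punshon-Smith. The starting point is that equality $3\lambda_1 = \lambda_\Sigma$ forces all three Lyapunov exponents of the cocycle $D_x\Phi_{\underline{\tau}}^m$ to coincide: since $\lambda_1 \geq \lambda_2 \geq \lambda_3$ and $\lambda_1+\lambda_2+\lambda_3 = \lambda_\Sigma = 3\lambda_1$, we get $\lambda_2 = \lambda_3 = \lambda_1$, i.e.\ the Oseledets spectrum is a single point. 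In that degenerate regime the furstenberg-type measure on the projective bundle (the stationary measure of the projective chain $\{\check\Phi_{\underline{\tau}}^m\}$) is non-unique in a very structured way, and the general invariance principle says precisely that one of two things happens: either the cocycle preserves a measurable conformal structure (case (a)), or it preserves a measurable finite family of proper subbundles (case (b)). What remains is to upgrade ``measurable, a.s.-invariant'' to ``defined everywhere and invariant for \emph{every} finite word $t\in\mathbb{R}_{\geq 0}^{3m}$,'' which is where the specific structure of the random splitting is used.

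First I would record the abstract output. Let $\check\nu$ be a stationary measure for the projective chain and disintegrate it over $\mu$ as $\check\nu = \int \nu_x\,\mu(dx)$ with $\nu_x$ a probability measure on $S_x\mathbb{T}^3 \cong \mathbb{RP}^2$. The equality of exponents together with the Invariance Principle (in the $\mathrm{SL}(3)$ setting: the relevant statement is that, when the top exponent is simple one gets a unique $\delta$-like state, and when the spectrum collapses the stationary measure is ``invariant'' in the sense that $\nu_{\psi_t^{(i)}(x)} = (D_x\psi_t^{(i)})_*\nu_x$ for a.e.\ $x$ and a.e.\ time) yields that the disintegration $x\mapsto\nu_x$ is equivariant under each generator flow. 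The classification of $\mathrm{SL}(3,\mathbb{R})$-invariant probability measures on $\mathbb{RP}^2$ then gives the trichotomy: $\nu_x$ is either (i) the round measure coming from an inner product $g_x$ (case (a)), or (ii) supported on a finite union of projective lines/points, equivalently carried by finitely many proper subspaces $E_x^1,\dots,E_x^p$ (case (b)). This is the step I would cite rather than reprove; the cleanest reference in this exact framework is the invariance-principle machinery used in \cite{BCZG}, \cite{DKK}.

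Next comes the genuinely paper-specific work: promoting the a.e.\ statements to everywhere and to all finite words. Here the two ingredients are (1) uniform geometric ergodicity of the one-point chain (Theorem \ref{Uniformly geometrically ergodic}), so $\mu$ is the \emph{unique} stationary measure and has full topological support, and exact controllability (Lemma \ref{irreducibility}), so that any point is reachable from any other by a finite word with \emph{positive-probability} time choices; and (2) the smoothness (indeed analyticity) of the generator flows $\varphi_t^{(i)}$. Because each $\varphi_t^{(i)}$ and each $D_x\varphi_t^{(i)}$ is jointly analytic in $(t,x)$, and because the invariance relations for $g_x$ (resp.\ $E_x^i$) hold for $(t,x)$ in a set of full measure in $\mathbb{R}_{\geq 0}^{3m}\times\mathbb{T}^3$, one shows the invariant object has a version that is itself continuous (using that a measurable conformal structure invariant under a transitive family of analytic maps must agree a.e.\ with a continuous one---a Fubini plus continuity-of-pushforward argument along reachable orbits), and then analyticity/continuity forces the relation to hold for \emph{all} $t$ and \emph{all} $x$, giving exactly \eqref{conformal} and the subbundle statement as written. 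The main obstacle, and the step I would budget the most care for, is this last ``measurable $\Rightarrow$ everywhere, a.e.-time $\Rightarrow$ all-time'' upgrade: one must be careful that the measurable invariant structure produced by the ergodic theorem genuinely has a globally defined continuous representative---this is where full support of $\mu$, Feller continuity of the kernels, and controllability are all used in concert---and that in case (b) the permutation $\sigma$ and the number $p$ can be taken uniform. Once that is in hand, the dichotomy (a)/(b) is precisely the conclusion, and the subsequent sections rule both alternatives out (case (a) by exhibiting, via (H1) and the explicit $\varphi_t^{(i)}$, stretching that destroys any conformal structure; case (b) by showing no continuous proper subbundle can be invariant under all three shear families).
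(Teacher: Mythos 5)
Your proposal follows essentially the same route as the paper: the dichotomy itself is imported from Baxendale's theorem (Theorem 6.8 in \cite{B}), with the invariant conditional distribution supplied by Corollary 5.6 there, and the only paper-specific work is exactly the upgrade you flag --- producing a continuous version of $x\mapsto\nu_x$ so that the invariance holds for every $x$ and all times. The paper carries out that upgrade in Lemma \ref{RCPD} via Lusin's theorem combined with the uniform geometric ergodicity bound $P^n\chi_{K_n^c}(x)\leq 1/n+c\gamma^n$, which is the concrete form of the ``measurable $\Rightarrow$ continuous'' argument you sketch.
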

    Theorem \ref{degeneracy} is a version of Theorem 6.8 in \cite{B}. The slight difference between them lies in the fact that, in our case, the uniform geometric ergodicity of $P$ implies that any invariant conditional distribution $\nu$ (to be defined later) has a version where $x\mapsto \nu_x$ is continuous on $\mathbb{T}^3$, as established in Lemma \ref{RCPD}. As a result, we can apply the conclusion of Proposition 6.3 in \cite{B} to our situation. To achieve this, we establish some preliminary notations. 
    
    Let $\mathscr{M}(S\mathbb{T}^3)$ denote the space of Borel probability measures on $S\mathbb{T}^3$ with the weak topology and $\pi:S\mathbb{T}^3\to\mathbb{T}^3$ is the natural bundle map. Let $\mathscr{M}_{\mu}(S\mathbb{T}^3)$ denote the set of $\nu\in\mathscr{M}(S\mathbb{T}^3)$ such that $\nu \pi^{-1}=\mu$. Since the bundle has compact fibre, the set $\mathscr{M}_{\mu}(S\mathbb{T}^3)$ is uniformly tight and hence it is a compact subset of $\mathscr{M}(S\mathbb{T}^3)$. Suppose $\nu \in \mathscr{M}_{\mu}(S\mathbb{T}^3)$, we call $\{\nu_x\}_{x\in \mathbb{T}^3}$ a \textbf{factorization}  (or disintegration, or sample measure) of $\nu$ respect to $\mu$, if
    \begin{itemize}
    	\item[(1)] $x \mapsto \nu_x$ is measurable;
    	\item[(2)] for $\mu-\text{a.e.}$ $x \in \mathbb{T}^3$, $\nu_x$ is a probability measure on  fiber $\pi^{-1}(x)$;
    	\item[(3)] for any non-negative Borel measurable function f on $S\mathbb{T}^3$, 
    	\[
    	\int_{S\mathbb{T}^3} f(\check{x}) d\nu(\check{x}) = \int_{\mathbb{T}^3} \int_{\pi^{-1}(x)} f(v) d\nu_x(v)d\mu(x).
    	\]
    \end{itemize}
    For any $\nu \in \mathscr{M}_{\mu}(S\mathbb{T}^3)$, by Proposition 1.4.3 in \cite{A}, a factorization of $\nu$ with respect to $\mu$ exists and is $\mu$-a.e. unique. Let us recall the projective chain ${\check{\Phi}_{\underline{\tau}}^m}$, defined for $(x,u) \in S\mathbb{T}^3$, as follows:
    \begin{equation*}		\check{\Phi}_{\underline{\tau}}^m(x,u)=\left(\Phi_{\underline{\tau}}^m(x),\frac{D_x\Phi_{\underline{\tau}}^m(x)u}{|D_x\Phi_{\underline{\tau}}^m(x)u|}\right).
    \end{equation*}
    Denote 
    \[A_{\underline{\tau},x}^m(u):=\frac{D_x\Phi_{\underline{\tau}}^m(x)u}{|D_x\Phi_{\underline{\tau}}^m(x)u|}.\] 
    We say $\nu\in \mathscr{M}_{\mu}(S\mathbb{T}^3)$ satisfies \textbf{invariant conditional distribution} if, for all $m$, $\mu_m\ll \mu$ and $\mu\{x:(A_{\underline{\tau},x}^m)_*\nu_x=\nu_{\Phi_{\underline{\tau}}^m(x)}\}=1$ for almost every $\underline{\tau}$. By Corollary 5.6 in \cite{B}, assuming the integrability condition and $\mathbb{E}h(\mu;\mu_m)<\infty$, if $3\lambda_1=\lambda_{\Sigma}$, there exists $\nu\in\mathscr{M}_{\mu}(S\mathbb{T}^3)$ that satisfies invariant conditional distribution. 
    We are now prepared to present the following lemma, the proof of which can be found in Lemma B.2 of \cite{BCZG}.
	
	\begin{lem}\label{RCPD}
		Let $\{\Phi_{\underline{\tau}}^m\}$ be as above. Assume (H1) holds and $3\lambda_1=\lambda_{\Sigma}$, then there exists $\nu\in\mathscr{M}_{\mu}(S\mathbb{T}^3)$ that satisfies invariant conditional distribution. Moreover, there exists a version of the factorization of $\nu$ respect to $\mu$ such that $x\mapsto \nu_x$ is continuous on $\mathbb{T}^3$. In particular, for all $m$, $(A_{\underline{\tau},x}^m)_*\nu_x=\nu_{\Phi_{\underline{\tau}}^m(x)}$ for every $x\in \mathbb{T}^3$ and $\underline{\tau}\in \mathbb{R}_{\geq 0}^{3m}$.
	\end{lem}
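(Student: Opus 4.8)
The plan is to prove the lemma in three stages: first produce an invariant conditional distribution $\nu$ from the degeneracy hypothesis, then upgrade a factorization of $\nu$ to a version $x\mapsto\nu_x$ that is continuous on all of $\mathbb{T}^3$, and finally promote the almost sure equivariance to an everywhere identity. Throughout I would follow the line of argument of Lemma B.2 in \cite{BCZG}.

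For the first stage I would apply Corollary 5.6 of \cite{B} to the one-point chain $\{\Phi_{\underline{\tau}}^m\}$ and its lift to $S\mathbb{T}^3$. Its three hypotheses hold in our setting: the logarithmic integrability bound $\mathbb{E}\int_{\mathbb{T}^3}(\log^+\|D_x\Phi_{\underline{\tau}}(x)\|+\log^+\|D_x\Phi_{\underline{\tau}}(x)^{-1}\|)\,d\mu<\infty$ was verified above; since every $\Phi_{\underline{\tau}}$ is volume preserving, $\mu_m=(\Phi_{\underline{\tau}}^m)_*\mu=\mu$ identically, so $\mu_m\ll\mu$ and $\mathbb{E}\,h(\mu;\mu_m)=0<\infty$; and $3\lambda_1=\lambda_\Sigma$ is assumed. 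This produces $\nu\in\mathscr{M}_{\mu}(S\mathbb{T}^3)$ satisfying the invariant conditional distribution property, equivalently a $\check P$-stationary probability measure on $S\mathbb{T}^3$ with $\pi_*\nu=\mu$; I would then fix a factorization $\{\nu_x\}$ via Proposition 1.4.3 of \cite{A}.

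The second stage is the main obstacle, and it is the only place where uniform geometric ergodicity of $P$ is used (this is the point of departure from \cite{B}). The key tool I would exploit is an intertwining identity: for $\phi\in C(S\mathbb{T}^3)$, setting $h_\phi(x):=\int_{\pi^{-1}(x)}\phi\,d\nu_x$, the $\mu$-a.e.\ equivariance from stage one together with the stationarity of $\mu$ gives $\langle\check P^n \phi,\nu_x\rangle=(P^n h_\phi)(x)$ for $\mu$-a.e.\ $x$ and every $n$. Since $\check P$ is Feller, $\check P^n \phi\in C(S\mathbb{T}^3)$, while Theorem \ref{Uniformly geometrically ergodic} gives $P^n h_\phi\to\langle\phi,\nu\rangle$ uniformly in $x$ and, crucially, $P^m(x,\cdot)\to\mu$ uniformly in $x$. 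Combining this with the pull-back form $\nu_x=\bigl((A_{\underline{\tau},x}^m)^{-1}\bigr)_*\nu_{\Phi_{\underline{\tau}}^m(x)}$ of the equivariance, averaging over $\underline{\tau}$ and letting $m\to\infty$, should exhibit $x\mapsto\langle\phi,\nu_x\rangle$ as a uniform limit of functions continuous in $x$, for every $\phi$; by separability of $C(S\mathbb{T}^3)$ and compactness of the fibres this selects a version of $x\mapsto\nu_x$ that is weakly continuous on $\mathbb{T}^3$ and still agrees $\mu$-a.e.\ with the original factorization. I expect the delicate point to be making the ``average the pull-back and send $m\to\infty$'' step rigorous, in particular handling the random base point $\Phi_{\underline{\tau}}^m(x)$ at which $\nu$ gets evaluated --- this is precisely where one needs the uniform-in-$x$ equidistribution $P^m(x,\cdot)\to\mu$ rather than mere unique ergodicity --- and I would reproduce the corresponding estimates from \cite{BCZG}.

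For the third stage, fix $m$. Stage one says that for a.e.\ $\underline{\tau}$ the identity $(A_{\underline{\tau},x}^m)_*\nu_x=\nu_{\Phi_{\underline{\tau}}^m(x)}$ holds for $\mu$-a.e.\ $x$, so by Fubini it holds for Lebesgue-a.e.\ $(\underline{\tau},x)\in\mathbb{R}_+^{3m}\times\mathbb{T}^3$. Both sides depend continuously on $(\underline{\tau},x)$: the map $(\underline{\tau},x,u)\mapsto\check\Phi^m_{\underline{\tau}}(x,u)$ is smooth, so $(\underline{\tau},x)\mapsto(A_{\underline{\tau},x}^m)_*\nu_x$ is continuous by stage two and continuity of push-forward, and $(\underline{\tau},x)\mapsto\nu_{\Phi_{\underline{\tau}}^m(x)}$ is continuous for the same reasons. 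Since a Lebesgue-conull subset of $\mathbb{R}_+^{3m}\times\mathbb{T}^3$ is dense and both sides extend continuously to $\mathbb{R}_{\geq 0}^{3m}\times\mathbb{T}^3$, the identity holds for every $x\in\mathbb{T}^3$ and every $\underline{\tau}\in\mathbb{R}_{\geq 0}^{3m}$, which is the asserted conclusion. (The exact controllability of Lemma \ref{irreducibility} furnishes an alternative way to propagate $\nu_x$ from a single base point to all of $\mathbb{T}^3$, but the density argument above is cleaner.)
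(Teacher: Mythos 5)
Your proposal follows essentially the same route as the paper: existence of $\nu$ via Corollary 5.6 of \cite{B}, continuity of a version of $x\mapsto\nu_x$ by writing $\langle\phi,\nu_x\rangle$ as the $\underline{\tau}$-average of the pulled-back fibre measures at $\Phi_{\underline{\tau}}^m(x)$ and invoking the uniform-in-$x$ convergence $P^m(x,\cdot)\to\mu$, and a continuity/density argument for the everywhere identity. The one step you defer to \cite{BCZG} --- making the averaged pull-back continuous even though the integrand involves the merely measurable $\nu$ --- is handled in the paper exactly as you anticipate, via Lusin and Tietze approximation of $F_g(y,M)=\int g(Mv)\,d\nu_y(v)$ on compact sets $K_n$ with $\mu(K_n^c)\le 1/n$, the error being controlled uniformly in $x$ by $P^m\chi_{K_n^c}(x)\le 1/n+c\gamma^m$.
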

	\begin{proof}
		First, we prove that there exists an increasing sequence of compact subsets $K_1\subset K_2\subset\cdots$ of $\mathbb{T}^3$ such that for each $n$, we have $\mu(K_n)\geq 1-1/n$ and $x\mapsto \nu_x$ is continuous along $x\in K_n$. By the compactness of $S\mathbb{T}^3$ and Stone-Weierstrass theorem, there exists $\{g_k\}_{k \in \mathbb{N}}\subset C(S\mathbb{T}^3,\mathbb{R})$ which is dense in $C(S\mathbb{T}^3,\mathbb{R})$. For any $k$ and $n$, by Lusin's theorem, there exists a compact subset $C_{n,k}$ of $\mathbb{T}^3$ such that $
		\mu(C_{n,k})\geq 1-1/(n2^k)$, and the measurable function
		\[
		G_k(x):=\int_{S_x\mathbb{T}^3} g_k(v)d\nu_x(v)
		\]
		is continuous on $C_{n,k}$. Define
		\[		C_n:=\bigcap_{k=1}^{\infty}C_{n,k},\quad
		K_n:=\bigcup_{m=1}^n C_m.
		\]
		Then
		\[
		\mu(K_n)\geq \mu(C_n)=1-\mu(C_n^c)\geq 1-\frac{1}{n}.
		\]
		Furthermore, for any $k$ and $n$, $G_k$ is continuous on $K_n$. Since $\{g_k\}$ is dense in $C(S\mathbb{T}^3,\mathbb{R})$,  $x\mapsto \nu_x$ is continuous along $x\in K_n$.
		
		For any fixed $g\in C(S\mathbb{T}^3,\mathbb{R})$, denote $F_g : \mathbb{T}^3 \times GL_3(\mathbb{R}) \to \mathbb{R}$ by
		\[
		F_g(x, M):=\int_{S_x \mathbb{T}^3} g(Mv)d\nu_x(v).
		\]
		Then $F_g$ is continuous on $K_n\times GL_3(\mathbb{R})$. By Tietze extension theorem, for each $n$ there exists a function $F_{g,n}: \mathbb{T}^3\times GL_3(\mathbb{R})\to\mathbb{R}$ such that $F_g \chi_n=F_{g,n}\chi_n$ and $\|F_{g,n}\|_{L^\infty}\leq\|F_g\|_{L^\infty}$ where $\chi_n$ represents the indicator function of $K_n\times GL_3(\mathbb{R})$. 
		Denote 
		\[
		G(x):=\int_{S_x \mathbb{T}^3} g(v) d\nu_x(v),
		\]
		and for $F: \mathbb{T}^3\times GL_3(\mathbb{R})\to\mathbb{R}$, denote $Q^n F:\mathbb{T}^3\to\mathbb{R}$ by
		\[
		Q^nF(x):=\mathbb{E}[F(\Phi_{\underline{\tau}}^n(x), (A_{\underline{\tau},x}^n)^{-1})].
		\]
		By Corollary 5.6 in \cite{B}, there exists $\nu\in\mathscr{M}_{\mu}(S\mathbb{T}^3)$ satisfing invariant conditional distribution. Let $\mathcal{O}\subset \mathbb{T}^3$ be the $\mu$-full measure set such that for any $x \in \mathcal{O}$, $(A_{\underline{\tau},x}^n)_*\nu_x=\nu_{\Phi_{\underline{\tau}}^n(x)}$ for almost every $\underline{\tau}$. Then, for any $x\in\mathcal{O}$, we have 
		\begin{align*}
			|G(x)-Q^nF_{g,n}(x)|&=|Q^nF_g(x)-Q^nF_{g,n}(x)|\\
			&\leq |Q^n[\chi_n F_g](x)-Q^n[\chi_n F_{g,n}](x)|+2\|F_g\|_{L^\infty}|Q^n[1-\chi_n](x)|\\
			&=2\|F_g\|_{L^\infty} P^n\chi_{K_n^c}(x) \\
			&\leq 2\|F_g\|_{L^\infty}(1/n+c\gamma^n),
		\end{align*}
		where the last inequality holds because $P$ is uniformly geometrically ergodic by Theorem \ref{Uniformly geometrically ergodic}. Therefore, $G$ is the uniform limit of continuous functions $Q^n F_{g,n}$ on $\mathcal{O}$. Since $\mathcal{O}\subset \mathbb{T}^3$ is dense, there exists a continuous function $G'$ on $\mathbb{T}^3$ such that $G'=G$ holds $\mu$-a.e.. This completes the proof.   
	\end{proof}
    We are now ready to prove the main theorem of this subsection on the consequence of $3\lambda_1=\lambda_{\Sigma}$.
	\begin{proof}[\textbf{Proof of Theorem \ref{degeneracy}}]
		Using Theorem 6.8 in \cite{B}, it suffices to show that there exists a version of the factorization of $\nu$ respect to $\mu$ such that $x\mapsto \nu_x$ is continuous on $\mathbb{T}^3$. By the assumption, this result is a direct consequence of Theorem \ref{Uniformly geometrically ergodic} and Lemma \ref{RCPD}.
	\end{proof}
	\subsection{Proof of Theorem \ref{Positivity of the top Lyapunov exponent}}\label{subsection 4.2}
	\quad Let us show how one can rule out Theorem \ref{degeneracy} $(a)$ and $(b)$. 
	\begin{lem}\label{ruling out a}
		Let $\{\Phi_{\underline{\tau}}^m\}$ be as above. Then Theorem \ref{degeneracy} $(a)$ does not hold.
	\end{lem}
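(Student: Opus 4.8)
The plan is to rule out case~(a) by showing that it would force every split flow $\varphi_t^{(i)}$ to be an isometry of one fixed \emph{continuous} Riemannian metric on the compact manifold $\mathbb{T}^3$, which is incompatible with the unbounded shear growth of the Jacobian $D_x\varphi_t^{(1)}(x)$ as $t\to\infty$.

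First I would extract isometries of the three split flows from \eqref{conformal}. Specializing \eqref{conformal} to $m=1$ and to the choices $t=(t_1,0,0)$, $t=(0,t_2,0)$, $t=(0,0,t_3)$ gives $\Phi_{(t_1,0,0)}^1=\varphi_{t_1}^{(1)}$, $\Phi_{(0,t_2,0)}^1=\varphi_{t_2}^{(2)}$ and $\Phi_{(0,0,t_3)}^1=\varphi_{t_3}^{(3)}$, so that for each $i\in\{1,2,3\}$, each $t\ge 0$ and each $x\in\mathbb{T}^3$ the linear map $D_x\varphi_t^{(i)}(x)\colon(T_x\mathbb{T}^3,g_x)\to(T_{\varphi_t^{(i)}(x)}\mathbb{T}^3,g_{\varphi_t^{(i)}(x)})$ is a $g$-isometry. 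Next I would invoke Lemma~\ref{RCPD}: it provides a version of the factorization of $\nu$ with $x\mapsto\nu_x$ continuous on $\mathbb{T}^3$, and this is exactly what makes the Riemannian structure $g$ obtained in case~(a) a genuinely continuous metric on $\mathbb{T}^3$. By compactness there is a constant $C\ge 1$ with $C^{-1}|v|\le\|v\|_{g_x}\le C|v|$ for all $x\in\mathbb{T}^3$ and $v\in T_x\mathbb{T}^3$, where $|\cdot|$ denotes the flat norm; together with the isometry property this forces the uniform bound
\[
\|D_x\varphi_t^{(1)}(x)\|_{\mathrm{op}}\le C^2\qquad\text{for all }t\ge 0,\ x\in\mathbb{T}^3 .
\]

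I would then contradict this bound by a direct computation. Differentiating $\varphi_t^{(1)}(x)=(x_1+tf_1(x_3),\,x_2+tf_1'(x_3),\,x_3)$ gives
\[
D_x\varphi_t^{(1)}(x)=\begin{pmatrix} 1 & 0 & t f_1'(x_3) \\ 0 & 1 & t f_1''(x_3) \\ 0 & 0 & 1 \end{pmatrix},
\]
whose operator norm is at least the Euclidean length of its last column, namely $\sqrt{1+t^2\bigl(f_1'(x_3)^2+f_1''(x_3)^2\bigr)}$. By assumption (H1) we have $C_{f_1'}\cap C_{f_1''}=\emptyset$, so $x_3\mapsto f_1'(x_3)^2+f_1''(x_3)^2$ is continuous and strictly positive on the compact circle $\mathbb{S}^1$ and is therefore bounded below by some $\delta>0$. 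Hence $\|D_x\varphi_t^{(1)}(x)\|_{\mathrm{op}}\ge t\sqrt{\delta}\to\infty$ as $t\to\infty$, contradicting the bound $\le C^2$. Thus Theorem~\ref{degeneracy}(a) cannot occur.

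The only delicate point is that the metric $g$ in case~(a) is continuous — hence uniformly comparable to the flat metric — rather than merely measurable; this regularity is precisely the gain supplied by Lemma~\ref{RCPD}, and with it the remaining argument is elementary. If one wished to avoid invoking continuity, the same contradiction follows from uniform ellipticity of $g$ alone: writing $G(x)$ for the symmetric positive-definite matrix representing $g_x$ in flat coordinates, the isometry identity reads $(I+tN(x))^\top G(\varphi_t^{(1)}(x))(I+tN(x))=G(x)$, where $N(x)$ is the nilpotent matrix appearing above (nonzero by (H1)); taking traces, the left-hand side grows at least like $t^2\,\mathrm{tr}\bigl(N(x)^\top G(\varphi_t^{(1)}(x))N(x)\bigr)\ge c\,t^2$ for some $c>0$, while the right-hand side stays bounded, which is impossible.
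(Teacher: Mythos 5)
Your proof is correct, but it takes a genuinely different and more global route than the paper's. The paper works at a single well-chosen point $x$ with $f_1(x_3)\neq 0$ and $f_1'(x_3)\neq 0$, splits into cases according to whether $f_1(x_3)$ and $f_1'(x_3)$ are rationally dependent, and in each case exploits exact or approximate returns of the orbit $\varphi_t^{(1)}(x)$ to $x$ (periodicity, respectively equidistribution of $\{k\alpha\}$) to compare $g_{\varphi_t(x)}$ with $g_x$ and derive a contradiction from a nontrivial quadratic in $t$; continuity of $g$ is only needed locally at $x$, and in the rationally dependent case not at all. You instead use continuity of $g$ \emph{globally}: compactness of $\mathbb{T}^3$ gives uniform equivalence of $g$ with the flat metric, the isometry property then bounds $\|D_x\varphi_t^{(1)}\|_{\mathrm{op}}$ by $C^2$ uniformly in $t$, and (H1) (no common zeros of $f_1'$ and $f_1''$) makes the shear column $(tf_1'(x_3),tf_1''(x_3),1)^\top$ grow linearly in $t$ at \emph{every} $x$, so no recurrence argument is needed. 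Your version is shorter and avoids the case analysis entirely; its only extra cost is that it leans on the continuity of the Riemannian structure, which is not stated explicitly in Theorem \ref{degeneracy}(a) but is indeed what Lemma \ref{RCPD} supplies (and what the paper itself uses in its case (ii)), so you are right to flag it as the one delicate point. Your trace-based fallback is not really independent of this, since it still requires uniform ellipticity of $G$ along the orbit, but the main argument stands.
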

    \begin{proof} 
    	Let $x=(x_1,x_2,x_3)\in\mathbb{T}^{3}$ such that $f_1(x_3)\neq 0$ and $f_1'(x_3)\neq 0$. Denote $\varphi_t(x):=\varphi_t^{(1)}(x)=(x_1+t f_1(x_3),x_2+tf_1'(x_3),x_3)$, a simple computation shows 
    	\[ D\varphi_t(x)=\left(
    	\begin{array}{ccc}
    		1 & 0 & tf_1'(x_3)\\
    		0  & 1 & tf_1''(x_3)\\
    		0 & 0 & 1
    	\end{array} \right).	\]
    	Now suppose Theorem \ref{degeneracy} $(a)$ holds, then there exists a Riemannian structure $\{g_x:x\in \mathbb{T}^3\}$ on $\mathbb{T}^3$ such that (\ref{conformal}) holds for all $m$, $t\in\mathbb{R}_{\geq 0}^{3m}$, $x\in \mathbb{T}^3$ and $u,v\in T_x\mathbb{T}^3$. In particular,
    	\begin{equation*}
    		g_{\varphi_t(x)}(D\varphi_t(x)u,D\varphi_t(x)v)=g_x(u,v).
    	\end{equation*}
        \begin{itemize}
        	\item[(i)] If $f_1(x_3)$ and $f_1'(x_3)$ are rationally linearly dependent, there exist distinct $t_1, t_2, t_3\in \mathbb{R}_{\geq 0}$ such that $\varphi_{t_1}(x)=\varphi_{t_2}(x)=\varphi_{t_3}(x)=x$. Choose $v=u$ and write $u=(u_1,u_2,u_3)^{T}$. Then for each $k\in1, 2, 3$, we have
        	\begin{align*}
        		g_x(u,u)
        		&=g_{\varphi_{t_k}(x)}(D\varphi_{t_k}(x) u, D\varphi_{t_k}(x) u)\\
        		&= g_x(D\varphi_{t_k}(x) u, D\varphi_{t_k}(x) u)\\
        		&= g_x\left(  \left(
        		\begin{array}{ccc}
        			u_1+t_kf_1'(x_3)u_3\\
        			u_2+t_kf_1''(x_3)u_3\\
        			u_3
        		\end{array} \right)	, 
        		\left(\begin{array}{ccc}
        			u_1+t_kf_1'(x_3)u_3\\
        			u_2+t_kf_1''(x_3)u_3\\
        			u_3
        		\end{array}\right) \right)\\
        		&=t_k^2 g_x\left(  \left(
        		\begin{array}{ccc}
        			f_1'(x_3)u_3\\
        			f_1''(x_3)u_3\\
        			0
        		\end{array} \right)	, 
        		\left(\begin{array}{cc}
        			f_1'(x_3)u_3\\
        			f_1''(x_3)u_3\\
        			0
        		\end{array}\right) \right)+t_k C_1+C_2,
        	\end{align*} 
        	where $C_1$ and $C_2$ are independent of $t_k$. By assumption $f_1'(x_3)\neq 0$, we can choose $u$ such that $f_1'(x_3)u_3\neq 0$. Consequently, the last equation is a nontrivial quadratic equation in $t_k$ with at most two roots, which contradicts the existence of three distinct values $t_1, t_2, t_3$ satisfying the equation.
        	\item[(ii)]If $f_1(x_3)$ and $f_1'(x_3)$ are rationally linearly independent, without loss of generality, we assume $f_1(x_3)>0$. Let $\alpha:=f_1'(x_3)/f_1(x_3)$ be an irrational number. Take a sufficiently small neighborhood $V$ of $x$ such that $\frac{1}{2}g_x\leq g_{x'} \leq 2g_x$ for any $x'\in V$. For any $k\in\mathbb{N}$, let $t_k>0$ such that $t_kf_1(x_3)=2k\pi$, then $t_kf_1'(x_3)=2\alpha k\pi$. Since $ \alpha $ is an irrational number, the sequence of fractional parts $\{ k\alpha \}$ as $k$ varies over natural numbers is dense in the interval $(0, 1)$. Therefore, there exists a sequence $\{k_i\}$, $k_i\to\infty$ such that $\{k_i \alpha\} \to 0$ as $i \to \infty $. Then there exists $N(V)\in\mathbb{N}$ such that $ \varphi_{t_{k_i}}(x) \in V$ for all $i>N(V)$. Choose $v=u$ and write $u=(u_1,u_2,u_3)^T$, then
        	\begin{align*}
        		&2g_x(u,u)\\
        		=&2g_{\varphi_{t_{k_i}}(x)}(D\varphi_{t_{k_i}}(x) u, D\varphi_{t_{k_i}}(x) u)\\
        		\geq& g_x(D\varphi_{t_{k_i}}(x) u, D\varphi_{t_{k_i}}(x) u)\\
        		=&t_{k_i}^2 g_x\left(  \left(
        		\begin{array}{ccc}
        			f_1'(x_3)u_3\\
        			f_1''(x_3)u_3\\
        			0
        		\end{array} \right)	, 
        		\left(\begin{array}{cc}
        			f_1'(x_3)u_3\\
        			f_1''(x_3)u_3\\
        			0
        		\end{array}\right) \right)+t_{k_i} C_1+C_2,\  \text{ for all } i>N(V)
        	\end{align*} 
        	where $C_1$ and $C_2$ are independent of $t_{k_i}$. By assumption $f_1'(x_3)\neq 0$, we can choose $u$ such that $f_1'(x_3)u_3\neq 0$. The coefficient of the quadratic term $t_{k_i}^2$ is positive, and this leads to a contraction as $i$ goes to infinity.
        \end{itemize} 
    \end{proof}
    \begin{lem}\label{ruling out b}
    	Let $\{\Phi_{\underline{\tau}}^m\}$ be as above. Then Theorem \ref{degeneracy} $(b)$ does not hold.
    \end{lem}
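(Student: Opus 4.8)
The plan is to rule out case (b) by locating one base point $w\in\mathbb T^3$ that is simultaneously a periodic point of all three flows $\varphi^{(1)}_t,\varphi^{(2)}_t,\varphi^{(3)}_t$, and then showing that the invariance required in (b) is impossible at $w$. Argue by contradiction, assuming Theorem~\ref{degeneracy}(b) holds with proper subspaces $E^1_x,\dots,E^p_x$ of $T_x\mathbb T^3\cong\mathbb R^3$. From the explicit formulas, $\varphi^{(1)}_t$ fixes $x_3$, and on a slice $\{x_3=x_3^0\}$ it is periodic as soon as $tf_1(x_3^0)\in2\pi\mathbb Z$ and $tf_1'(x_3^0)\in2\pi\mathbb Z$ have a common positive solution --- which occurs whenever $f_1(x_3^0)=0$ (then $f_1'(x_3^0)\neq0$ by (H1)) or $f_1'(x_3^0)/f_1(x_3^0)\in\mathbb Q$. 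Let $\mathcal V_1\subset\mathbb S^1$ collect such $x_3^0$ that moreover satisfy $f_1'(x_3^0)\neq0$, and let $\mathcal V_2,\mathcal V_3$ be the analogues for $\varphi^{(2)}$ (a condition on $x_1$) and $\varphi^{(3)}$ (a condition on $x_2$). Analyticity of the $f_i$ together with (H1) makes each $\mathcal V_i$ infinite and accumulating at a point where $g_i:=f_i''/f_i'$ is analytic, and $g_i$ is non-constant there, for $f_i''=cf_i'$ would make $f_i'$ a $2\pi$-periodic exponential, hence $f_i$ constant. Since the three conditions defining $\mathcal V_1,\mathcal V_2,\mathcal V_3$ involve three different coordinates, we may choose $w=(x_1^0,x_2^0,x_3^0)$ with $x_3^0\in\mathcal V_1$, $x_1^0\in\mathcal V_2$, $x_2^0\in\mathcal V_3$, so that $w$ is periodic for each of the three flows; and we may in addition arrange that the vectors
\[
v_1:=\bigl(f_1'(x_3^0),f_1''(x_3^0),0\bigr),\quad v_2:=\bigl(0,f_2'(x_1^0),f_2''(x_1^0)\bigr),\quad v_3:=\bigl(f_3''(x_2^0),0,f_3'(x_2^0)\bigr)
\]
are linearly independent, i.e.\ $f_1'(x_3^0)f_2'(x_1^0)f_3'(x_2^0)+f_1''(x_3^0)f_2''(x_1^0)f_3''(x_2^0)\neq0$; otherwise, freezing two of the three points would force one of $g_1,g_2,g_3$ to be constant along the corresponding $\mathcal V_i$, which we just excluded.

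Let $t_i>0$ be a period of $\varphi^{(i)}$ at $w$. For each $n\geq1$ the map $\varphi^{(i)}_{nt_i}$ has the form $\Phi^n_{\underline t}$ (take $t_i$ in the $i$-th slot of each block, $0$ elsewhere) and fixes $w$, so the hypothesis of (b) makes $D_w\varphi^{(i)}_{nt_i}(w)$ permute the finite set $\{E^1_w,\dots,E^p_w\}$. From the explicit Jacobians (that of $\varphi^{(1)}$ is computed in the proof of Lemma~\ref{ruling out a}, the others analogously), $D_w\varphi^{(i)}_t(w)=I+tN_i$ with $N_i$ of rank one, $N_i^2=0$, $N_i\neq0$ (as $f_i'\neq0$ at the relevant coordinate), $\operatorname{im}N_i=\langle v_i\rangle=:L_i$, and $\ker N_i=:P_i$ one of the coordinate planes $\langle e_1,e_2\rangle,\langle e_2,e_3\rangle,\langle e_1,e_3\rangle$. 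Hence $I+nt_iN_i$ permutes $\{E^j_w\}_j$ for every $n$; since the symmetric group is finite, some power $I+n_it_iN_i$ fixes each $E^j_w$, and this forces $N_iE^j_w\subseteq E^j_w$. Therefore every $E:=E^j_w$ is a proper subspace of $\mathbb R^3$ that is invariant under all of $N_1,N_2,N_3$.

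Now use that a proper subspace invariant under a rank-one square-zero $N$ must be contained in $\ker N$ or contain $\operatorname{im}N$. If $\dim E=1$, the first alternative holds for each $i$ (the second gives $E=L_i\subseteq P_i$ anyway), so $E\subseteq P_1\cap P_2\cap P_3=\{0\}$ --- impossible. If $\dim E=2$, then for each $i$ either $E=P_i$ or $L_i\subseteq E$; but $E=P_i$ is impossible, because there is an index $k$ with $L_k\not\subseteq P_i$ (using only that $f_j'\neq0$ at the chosen coordinates --- e.g.\ $L_3\not\subseteq P_1$ since the last entry of $v_3$ is $f_3'(x_2^0)\neq0$), and then the condition coming from $N_k$, namely $E=P_k$ or $L_k\subseteq E$, fails for $E=P_i\neq P_k$. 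Hence $L_1+L_2+L_3\subseteq E$, which contradicts $\dim E=2$ since $v_1,v_2,v_3$ span $\mathbb R^3$. In every case we reach a contradiction, so Theorem~\ref{degeneracy}(b) does not hold.

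The permutation/pigeonhole step and the three-dimensional linear algebra are routine; the real work is the construction of $w$ --- producing a \emph{single} point that is periodic for all three flows \emph{and} at which $\{v_1,v_2,v_3\}$ is a basis. This is where analyticity of the $f_i$ and hypothesis (H1) are essential: they supply plenty of rational-slope (hence periodic) points clustering at a point where $f_i''/f_i'$ is analytic and non-constant, which simultaneously makes the three periodicity requirements jointly satisfiable and defeats the lone degenerate relation $f_1'f_2'f_3'+f_1''f_2''f_3''=0$.
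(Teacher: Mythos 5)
Your argument is correct, but it is a genuinely different proof from the paper's. The paper rules out alternative (b) by first verifying the Lie bracket (H\"ormander) condition for the \emph{lifted} chain on $T\mathbb{T}^3$ (Lemma \ref{Lie bracket of lifted chain}, a $6\times 6$ determinant computation) and deducing a local strong Feller property for $\hat{P}^{m_0}$ (Theorem \ref{Strong Feller}, via submersions and the coarea formula); the indicator function of $\bigcup_i E_x^i$ is then a bounded, measurable, discontinuous function fixed by $\hat{P}^{m_0}$, contradicting continuity of $\hat{P}^{m_0}f$. You instead work entirely at a single, explicitly constructed point $w$ that is periodic for each of $\varphi^{(1)},\varphi^{(2)},\varphi^{(3)}$ separately, exploit that the Jacobians there are unipotent shears $I+tN_i$ with rank-one square-zero $N_i$ whose kernels are the three coordinate planes, upgrade the permutation hypothesis to genuine $N_i$-invariance of each $E_w^j$ by finiteness of $S_p$, and finish with elementary invariant-subspace analysis in $\mathbb{R}^3$. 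This is sound: the determinant identity $\det(v_1,v_2,v_3)=f_1'f_2'f_3'+f_1''f_2''f_3''$ is right, the dichotomy ``$E\subseteq\ker N$ or $\operatorname{im}N\subseteq E$'' is right, and the reduction of the non-degeneracy to ``$g_i=f_i''/f_i'$ cannot be constant on a set with an accumulation point'' is right (via $1+g_1g_2g_3=0$ and the identity theorem). Your approach is more elementary — it avoids strong Feller machinery and needs no measurability of $x\mapsto E_x^i$ — at the cost of leaning harder on analyticity and (H1) to manufacture the periodic point; the one place your write-up is thin is the claim that each $\mathcal{V}_i$ is infinite and accumulates where $f_i'\neq 0$, which does hold (on any interval where $f_if_i'\neq 0$ the function $f_i'/f_i$ is non-constant analytic, since $f_i'=cf_i$ would force $f_i$ constant by periodicity, so its rational level sets are infinite there) but deserves the two lines of justification. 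The paper's heavier route has the side benefit that Lemma \ref{Lie bracket of lifted chain} and Theorem \ref{Strong Feller} are reused elsewhere, whereas your argument is purpose-built for this lemma.
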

    In order to prove this lemma, we first present some auxiliary results. Let $\{\Psi_{\underline{\tau}}^m\}$, $Q$, $M$ and $\mathcal{F}$, etc., be as introduced in Section \ref{subsection 3.1}. We say that $Q$ is \textbf{Strong Feller} if $Qg$ is continuous for any bounded, measurable function $g$ on $M$. Next, we present a version of Proposition 5.4 from \cite{AMM2} that is more convenient for our purposes in this paper, and the main idea of the proof follows the same lines as in \cite{AMM2}.
    \begin{thm}\label{Strong Feller}
    	Let $\{\Psi_{\underline{\tau}}^m\}$ and $Q$ be as above. Assume $\text{Lie}_{x_0}(\mathcal{F})=T_{x_0} M$ for some $x_0\in M$. Then there exist some $m$ and open neighborhood $U$ of $x_0$ such that the transition kernel $Q^{m}$ is strong Feller on $U$.
    \end{thm}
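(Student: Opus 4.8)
\textbf{The plan} is to follow the strategy of Proposition 5.4 in \cite{AMM2}. Throughout, write
\[
Q^m g(x)=\int_{\mathbb{R}_+^{3m}} g(\Psi^m(x,t))\,\rho_m(t)\,dt,\qquad \rho_m(t)=h^{-3m}e^{-(t_1+\cdots+t_{3m})/h},
\]
where $\Psi^m(x,t)=\psi_{t_{3m}}^{(3)}\circ\cdots\circ\psi_{t_1}^{(1)}(x)$ and $d=\dim M$. The first step is to produce a regular time that survives perturbation of the base point. Since $\mathrm{Lie}_{x_0}(\mathcal F)=T_{x_0}M$, Lemma \ref{surjective} supplies an $m$ and a $\tilde t\in\mathbb{R}_+^{3m}$ such that $t\mapsto\Psi^m(x_0,t)$ is a submersion at $\tilde t$. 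The flows $\psi^{(i)}_t$ are assembled from the analytic functions $f_i$ (indeed the displayed formulas for $\varphi^{(i)}_t$ are real-analytic in $(t,x)$, and passing to the derivative cocycle and to the projected/two-point dynamics preserves analyticity), so $(x,t)\mapsto\Psi^m(x,t)$ and $(x,t)\mapsto D_t\Psi^m(x,t)$ are real-analytic. Hence the rank-$d$ locus $G:=\{(x,t):\mathrm{rank}\,D_t\Psi^m(x,t)=d\}$ is open and contains $(x_0,\tilde t)$; I then fix an open $U\ni x_0$ and a bounded open $\mathcal V\ni\tilde t$ with $\overline U\times\overline{\mathcal V}\subset G$.

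The second step upgrades this to absolute continuity of the full kernel. Fix $x\in U$. By analyticity in $t$ and the presence of the regular point $\tilde t\in\mathcal V$, the critical set $\mathcal C_x:=\{t:\mathrm{rank}\,D_t\Psi^m(x,t)<d\}$ — the common zero set of the finitely many $d\times d$ minors of $D_t\Psi^m(x,\cdot)$ — is a proper real-analytic subvariety of the connected set $\mathbb{R}_+^{3m}$, hence Lebesgue-null. Therefore $\mathbb{R}_+^{3m}\setminus\mathcal C_x$ is open and of full $\rho_m$-measure, and $Q^m(x,\cdot)=\Psi^m(x,\cdot)_*(\rho_m\,dt)$ is absolutely continuous with respect to $\mathrm{Leb}_M$.

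The third step is the continuity of $x\mapsto Q^m(x,\cdot)$, done pointwise. Fix $x\in U$ and $\varepsilon>0$. Cover $\mathbb{R}_+^{3m}\setminus\mathcal C_x$ by countably many open rectangles on each of which the constant-rank theorem provides a coordinate split $t=(t^1,t^2)\in\mathbb{R}^d\times\mathbb{R}^{3m-d}$ with $\partial\Psi^m(x,\cdot)/\partial t^1$ invertible; by openness of $G$ each such split persists for $x'$ in a neighborhood of $x$. Pick finitely many of these rectangles whose union has $\rho_m$-measure $\geq 1-\varepsilon$, disjointify them (the pieces $E_j$ have null boundary), and let $U_{x,\varepsilon}\subset U$ be a neighborhood of $x$ on which all of these finitely many splits remain valid. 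On each $E_j$ a change of variables exactly as in the proof of Lemma \ref{absolutely continuous} — introduce $\Xi_{x'}:(t^1,t^2)\mapsto(\Psi^m(x',t^1,t^2),t^2)$ and integrate out $t^2$ — rewrites $\int_{E_j}g(\Psi^m(x',t))\rho_m(t)\,dt=\int_M g(z)\,q_j(x',z)\,dz$ with $q_j\geq 0$ compactly supported in $z$ and, by dominated convergence, jointly continuous in $(x',z)\in U_{x,\varepsilon}\times M$. Summing over $j$,
\[
Q^m g(x')=h_{x,\varepsilon}(x')+R_{x,\varepsilon}(x'),\qquad x'\in U_{x,\varepsilon},
\]
where $h_{x,\varepsilon}$ is continuous on $U_{x,\varepsilon}$ and $|R_{x,\varepsilon}(x')|\leq\varepsilon\,\|g\|_{L^\infty}$ for every $x'\in U_{x,\varepsilon}$, since the leftover $t$-region has $\rho_m$-measure $\leq\varepsilon$ independently of $x'$. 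As $x\in U_{x,\varepsilon}$ for every $\varepsilon$, a standard $3\varepsilon$ argument shows $Q^m g$ is continuous at $x$; since $x\in U$ and the bounded measurable $g$ were arbitrary, $Q^m$ is strong Feller on $U$.

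\textbf{The main obstacle} is precisely the passage from ``$Q^m(x,\cdot)$ is absolutely continuous for each fixed $x$'' to ``$Q^m(x,\cdot)$ varies continuously with $x$'': the branches of the preimage $\{t:\Psi^m(x,t)=z\}$, hence the local pieces of the density, move with $x$, so no naive continuity is available. The analyticity of the splitting maps is what rescues this, forcing $\mathcal C_x$ to be Lebesgue-null so that finitely many ``good'' rectangles carry all but $\varepsilon$ of the transition mass; this is the one point where $f_i\in C^\omega$ is used beyond what assumption (H1) requires. Some bookkeeping is also needed to ensure the finitely many coordinate splits persist on a common neighborhood of the chosen base point, which is why $G$ must be exploited as an open set rather than a pointwise condition.
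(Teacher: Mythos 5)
Your proposal is correct in outline and matches the paper's strategy for the first two steps: both start from Lemma \ref{surjective} to produce a submersion at some $(x_0,\tilde t)$, use openness of the full-rank locus to fix the neighborhood $U$, and use analyticity of $t\mapsto\Psi^m(x,t)$ to conclude that the critical times form a Lebesgue-null set for every $x\in U$ (the paper's Lemma \ref{submersion for almost all time} does exactly this via the analytic function $t\mapsto|J\Psi_x^m(t)|$). Where you diverge is the final continuity step. The paper keeps the observable $f$ fixed, splits the time domain into $K^c\cup(K\cap V)\cup(K\cap V^c)$ with the first two pieces of small probability, obtains via a compactness argument (Lemma \ref{positivity of det J}) a \emph{uniform lower bound} on $|J\Psi_x^m(t)|$ for $x$ near the base point and $t$ in a neighborhood of $K\cap V^c$, and then approximates $f$ by a continuous $\tilde f$ in $L^1(\mathrm{Leb}_M)$, the Jacobian bound converting $\|f-\tilde f\|_{L^1}$ into a bound on the expectation. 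You instead prove continuity of the transition density itself in $L^1$, by covering the good times with finitely many rectangles carrying all but $\varepsilon$ of the mass on which a fixed coordinate split persists for nearby base points. Both routes work; the paper's avoids any discussion of how the density varies with $x$, while yours yields the slightly stronger and reusable statement that $x\mapsto Q^m(x,\cdot)$ is continuous in total variation up to an $\varepsilon$ of mass.

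One claim should be weakened. As you define it, $q_j(x',z)$ is an integral over $t^2$ of $\chi_{\Xi_{x'}(E_j)}(z,t^2)$ against a smooth factor, and this is \emph{not} jointly continuous in $(x',z)$ in general: the $z$-slices of $\partial\,\Xi_{x'}(E_j)$ need not be null for every $z$, so the indicator can contribute a genuine discontinuity. What you actually need is only that $x'\mapsto\int_M g(z)\,q_j(x',z)\,dz$ is continuous at the base point $x$, and this does follow by dominated convergence: $\partial E_j$ is Lebesgue-null and $\Xi_x$ is a diffeomorphism, so for Lebesgue-a.e.\ $(z,t^2)$ the point $\Xi_x^{-1}(z,t^2)$ avoids $\partial E_j$ and the integrand is continuous in $x'$ there, while the uniform bounds on $\rho_m$, on $|\det D\Xi_{x'}^{-1}|$, and on the compact supports supply the dominating function. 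With that repair the argument is complete.
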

    In preparation for the proof of this theorem, we will first present a few lemmas. For any $m\in\mathbb{N}$, $x\in M$, consider $\Psi_x^m:\mathbb{R}_+^{3m}\to M$, $\Psi_x^m(t):=\Psi_t^m(x)$. Let $J{\Psi_x^m}(t)$ represent the Jacobian determinant of $\Psi_x^m$, then
    \begin{equation*}
    	|J{\Psi_x^m}(t)|=(\det(D\Psi_x^m(t)D\Psi_x^m(t)^T))^{1/2},
    \end{equation*}
    where $M^T$ is the transpose of matrix $M$.
    \begin{lem}\label{submersion for almost all time}
    	Assume $\text{Lie}_{x_0}(\mathcal{F})=T_{x_0} M$ for some $x_0\in M$, then there exist some $m$ and open neighborhood $U$ of $x_0$ such that for every $x\in U$, the map $t\mapsto \Psi^{m}_{x}(t)$ is a submersion for almost every $t\in\mathbb{R}_+^{3m}$. Moreover, for any bounded, measurable function $f$ on $M$, writting $t=(t_{3m},\cdots,t_1)$ and $dt=dt_{3m}\cdots dt_1$,
    	\begin{equation*}
    		Q^mf(x)=\int_{M}f(y)\left(\int_{\{t\in\mathbb{R}_{+}^{3m}:\Psi_{x}^{m}(t)=y\}}\frac{e^{-\sum_{i=1}^{3m}t_i/h}}{h^{3m}|J{\Psi_x^m}(t)|}\mathcal{H}^{3m-d}(dt)\right)\text{Leb}_M(dy),\ \ \text{for any} \ x\in U,
    	\end{equation*}
    	where $\mathcal{H}^{3m-d}$ is $(3m-d)$-dimensional Hausdorff measure on $\mathbb{R}_+^{3m}$.
    \end{lem}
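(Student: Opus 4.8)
### Proof plan for Lemma \ref{submersion for almost all time}

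The plan is to combine Lemma \ref{surjective}, which gives one time-parameter $\tilde t$ at which $\Psi^m_x$ is a submersion, with an analyticity/Sard-type argument to upgrade "submersion at one point" to "submersion for a.e.\ $t$," and then to derive the disintegration formula by the coarea formula. First I would invoke Lemma \ref{surjective}: since $\text{Lie}_{x_0}(\mathcal F)=T_{x_0}M$, there is an $m$, a point $y$ near $x_0$, and a $\tilde t\in\mathbb{R}_+^{3m}$ with $\Psi^m_{x_0}$ a submersion at $\tilde t$, i.e.\ $\operatorname{rank} D_t\Psi^m_{x_0}(\tilde t)=d$. The key structural observation is that the flows $\varphi^{(i)}_t$ are generated by real-analytic vector fields (the $f_i$ are $C^\omega$), so $(t,x)\mapsto \Psi^m_x(t)$ is real-analytic in $t$ jointly with $x$ on the relevant domain; hence each $d\times d$ minor of the Jacobian matrix $D_t\Psi^m_x(t)$ is a real-analytic function of $(t,x)$. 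Since at least one such minor is nonzero at $(\tilde t,x_0)$, by analyticity the set where all $d\times d$ minors vanish is a proper analytic subset, which therefore has Lebesgue measure zero; restricting to a connected neighborhood $U$ of $x_0$ and using that the nonvanishing minor persists, I get that for every $x\in U$ the bad set $\{t:\operatorname{rank} D_t\Psi^m_x(t)<d\}$ has $\text{Leb}_{3m}$-measure zero, i.e.\ $\Psi^m_x$ is a submersion for a.e.\ $t$. One must be slightly careful that the domain $\mathbb{R}_+^{3m}$ is connected and that the flows are defined for all positive times on the compact manifold, so the analytic-continuation argument applies globally in $t$; the minor being analytic and not identically zero on this connected set is what forces the zero set to be null.

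Next I would establish the integral formula. Fix $x\in U$ and a bounded measurable $f$ on $M$. By definition of the kernel, $Q^m f(x)=\int_{\mathbb{R}_+^{3m}} f(\Psi^m_x(t))\,\frac{e^{-\sum t_i/h}}{h^{3m}}\,dt$, since the $\tau_i$ are i.i.d.\ exponential with mean $h$ and $\Psi^m_{\underline\tau}(x)=\Psi^m_x(\underline\tau)$. Now apply the coarea formula to the smooth map $\Psi^m_x:\mathbb{R}_+^{3m}\to M$, which has a well-defined Jacobian $|J\Psi^m_x(t)|=(\det(D\Psi^m_x(t)D\Psi^m_x(t)^T))^{1/2}$; the coarea formula gives, for any nonnegative measurable $g$ on $\mathbb{R}_+^{3m}$,
\[
\int_{\mathbb{R}_+^{3m}} g(t)\,|J\Psi^m_x(t)|\,dt=\int_M\Big(\int_{\{t:\Psi^m_x(t)=y\}} g(t)\,\mathcal H^{3m-d}(dt)\Big)\text{Leb}_M(dy).
\]
Apply this with $g(t)=f(\Psi^m_x(t))\,\frac{e^{-\sum t_i/h}}{h^{3m}|J\Psi^m_x(t)|}$, which is legitimate because $|J\Psi^m_x(t)|>0$ for a.e.\ $t$ by the submersion property just proved (so $g$ is defined a.e.\ and the factor $|J\Psi^m_x(t)|$ cancels), and because $f$ bounded plus the exponential weight makes everything integrable. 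Since on the fiber $\{t:\Psi^m_x(t)=y\}$ one has $f(\Psi^m_x(t))=f(y)$, pulling $f(y)$ out of the inner integral yields exactly the claimed expression for $Q^m f(x)$.

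The main obstacle I anticipate is making the analyticity argument fully rigorous: one needs that $\Psi^m_x(t)$, as a composition of time-$t_j$ maps of analytic flows on the compact analytic manifold $M$ (here $M$ is $\mathbb{T}^3$, $S\mathbb{T}^3$, or $\mathcal T$, all real-analytic), is jointly real-analytic in $(t,x)$ on an open connected set, so that its Jacobian minors are analytic and the zero set of a not-identically-zero analytic function is Lebesgue-null. For the lifted/projective/two-point chains one should check the relevant vector fields $\hat X_i,\check X_i,\tilde X_i$ remain analytic — this is immediate from their explicit formulas in terms of $X_i$ and $DX_i$. A secondary, more technical point is verifying the integrability hypotheses for the coarea-formula application (the fiber integrals are finite for a.e.\ $y$), which follows from Fubini/Tonelli once nonnegativity is arranged, and noting that the formula is initially proved for $U$ a small connected neighborhood of $x_0$ — exactly as the statement asserts. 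These are all standard once the analyticity backbone is in place, so the genuine content is the passage from "submersion somewhere" (Lemma \ref{surjective}) to "submersion almost everywhere."
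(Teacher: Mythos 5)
Your proposal is correct and follows essentially the same route as the paper: invoke Lemma \ref{surjective} to get one submersion point $(x_0,\tilde t)$, use real-analyticity of the composed flows to conclude that for each $x$ in a suitable neighborhood $U$ the Jacobian is a not-identically-zero analytic function of $t$ (hence vanishes on a Lebesgue-null set), and then apply the coarea formula, noting that the a.e.-nonvanishing of the Jacobian suffices via a Sard-type remark. The only cosmetic difference is that you track individual $d\times d$ minors while the paper works with $\det(D\Psi_x^m D(\Psi_x^m)^T)$; both yield the same zero set.
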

    \begin{proof}
    	It follows from Lemma \ref{surjective} that there exist $m$ and $t_0\in \mathbb{R}_{+}^{3m}$ such that $\Psi_{x_0}^{m}: \mathbb{R}_{+}^{3m}\to M$ is a submersion at $t$. Define $g:M\times \mathbb{R}_{+}^{3m}\to\mathbb{R}$ by 
    	\begin{equation*}
    		g(x,t):=|J{\Psi_x^{m}}(t)|.
    	\end{equation*}
    	Then $g(x_0,t_0)>0$. With the function $g_{t_0}(\cdot) = g(\cdot, t_0): M\to \mathbb{R}$ being continuous, it naturally follows that the set $U= g_{t_0}^{-1}((0, \infty))$ defines an open neighborhood around $x_0$ in $M$. Now since the vector fields (and hence their flows) are analytic and analyticity is preserved under addition, multiplication, composition and differentiation, $g_{x}(\cdot):=g(x,\cdot):\mathbb{R}_{+}^{3m}\to\mathbb{R}$ is analytic for all $x\in M$. Consequently, based on a result that is widely acknowledged, with instances provided in \cite{M} and \cite[Lemma 5.22]{KP}, it follows that the set $\mathcal{N}(g_{x}):=\{t\in\mathbb{R}_{+}^{3m}:g_{x}(t)=0\}$ is a zero Lebesgue measure set for every $x\in U$. Therefore, for every $x\in U$, the map $t\mapsto \Psi^{m}_{x}(t)$ is a submersion for almost every $t\in \mathbb{R}_{+}^{3m}$. The first part of the lemma now follows as above. 
    	
    	Next, we proceed to prove the `moreover' part. For any bounded, measurable function $f$ on $M$, $x\in U$, writting $t=(t_{1},\cdots,t_{3m})$ and $dt=dt_1\cdots dt_{3m}$,
    	\begin{align*}
    		Q^mf(x)=&\int_{\mathbb{R}_{+}^{3m}}\frac{f(\Psi_{t}^{m}(x))}{h^{3m}e^{-\sum_{i=1}^{3m}t_i/h}}dt\\
    		=&\int_{M}\left(\int_{\{t\in\mathbb{R}_{+}^{3m}:\Psi_{x}^m(t)=y\}}\frac{f(\Psi_{t}^{m}(x))e^{-\sum_{i=1}^{3m}t_i/h}}{h^{3m}|J{\Psi_x^{m}}(t)|}\mathcal{H}^{3m-d}(dt)\right)\text{Leb}_M(dy)\\
    		=&\int_{M}f(y)\left(\int_{\{t\in\mathbb{R}_{+}^{3m}:\Psi_{x}^{m}(t)=y\}}\frac{e^{-\sum_{i=1}^{3m}t_i/h}}{h^{3m}|J{\Psi_x^{m}}(t)|}\mathcal{H}^{3m-d}(dt)\right)\text{Leb}_M(dy), 
    	\end{align*}
    	where the second equality follows from the coarea formula, as can be found, for example, in \cite{FH}. One caveat in our application of the coarea formula is that our assumption says $J{\Psi_x^{m}}(t)$ is nonzero only almost-surely. This is not an issue however since $\{y:\Psi_{t}^{m}(x)=y\ \text{and}\ J{\Psi_x^{m}}(t)=0\}$ has measure zero in $M$ by Sard's lemma.
    \end{proof}
    Considering any subset $K$ from $\mathbb{R}_{+}^{3m}$, any point $x$ in $M$, we define
    \begin{equation*}
    	A_{K}(x):=\{t\in K: |J{\Psi_x^m}(t)|=0\}.
    \end{equation*}
    \begin{lem}\label{positivity of det J}
    	Assume $K$ is a compact set in $\mathbb{R}_{+}^{3m}$, $U$ is an open subset of $M$, and the matrix product $D\Psi_x^m(t)D\Psi_x^m(t)^T$ is almost surely invertible for every $x$ in $U$. Then for any $x_*\in U$ and $\varepsilon>0$ there exists an open set $V\subset \mathbb{R}_{+}^{3m}$ and $\delta>0$ such that 
    	\begin{equation*}
    		\mathbb{P}(V)<\varepsilon,\ \ B_\delta(x_*)\subset U \ \ \text{and}\ \ A_{K}(x)\subset V\ \text{for any} \ x\in B_\delta(x_*).
    	\end{equation*}
    	Moreover, there exists an open neighborhood $W$ of $K\cap V^c$ such that 
    	\begin{equation*}
    		\inf\{	|J{\Psi_x^m}(t)|:x\in B_\delta(x_*), t\in W\}>0.
    	\end{equation*}
    \end{lem}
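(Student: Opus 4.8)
The plan is to exploit two facts: first, that for each fixed $x\in U$ the ``bad'' set $A_{\mathbb{R}_+^{3m}}(x)=\{t:|J\Psi_x^m(t)|=0\}$ has Lebesgue measure zero (this is the content of Lemma \ref{submersion for almost all time}, since $t\mapsto|J\Psi_x^m(t)|$ is real-analytic and not identically zero on the connected set $\mathbb{R}_+^{3m}$, the analyticity at a fixed $x_*\in U$ already giving a not-identically-zero analytic function); and second, a uniform continuity/compactness argument to upgrade the single-$x$ statement to a statement on a small ball $B_\delta(x_*)$. Concretely, first fix $x_*\in U$ and $\varepsilon>0$. Since $A_K(x_*)\subset A_{\mathbb{R}_+^{3m}}(x_*)$ is a compact (closed subset of compact $K$) Lebesgue-null set, by outer regularity of Lebesgue measure — and using that the exponential density $\rho(t)=h^{-3m}e^{-\sum t_i/h}$ is bounded on the bounded set containing $K$ — I can choose a bounded open set $V_0\supset A_K(x_*)$ with $V_0$ at positive distance from nowhere in particular but with $\mathbb{P}(V_0)<\varepsilon/2$; I will then shrink $V_0$ slightly if needed so that its closure still contains $A_K(x_*)$ in its interior, and set $V$ to be a slightly enlarged version so that $\mathbb{P}(V)<\varepsilon$ while $\overline{A_K(x_*)}\subset V$.

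The second step is the perturbation in $x$. The set $K\cap V^c$ is compact and disjoint from $A_K(x_*)$, so by definition $|J\Psi_{x_*}^m(t)|>0$ for every $t\in K\cap V^c$; since $(x,t)\mapsto|J\Psi_x^m(t)|=(\det(D\Psi_x^m(t)D\Psi_x^m(t)^T))^{1/2}$ is continuous on $M\times\mathbb{R}_+^{3m}$ (the flows $\psi_t^{(i)}$ and their $t$-derivatives depend continuously, indeed analytically, on $(x,t)$), the function attains a positive minimum $2\eta>0$ on the compact set $\{x_*\}\times(K\cap V^c)$. By continuity and compactness of $K\cap V^c$, there is $\delta_1>0$ and an open neighborhood $W$ of $K\cap V^c$ in $\mathbb{R}_+^{3m}$ such that $|J\Psi_x^m(t)|>\eta$ for all $x\in B_{\delta_1}(x_*)$ and $t\in W$; this is a standard tube-lemma argument, covering $K\cap V^c$ by finitely many product boxes on which the Jacobian stays above $\eta$. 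Taking $\delta\le\delta_1$ small enough that also $B_\delta(x_*)\subset U$ (possible since $U$ is open) gives the ``moreover'' conclusion $\inf\{|J\Psi_x^m(t)|:x\in B_\delta(x_*),\,t\in W\}\ge\eta>0$, and since $W\supset K\cap V^c$ we automatically get $A_K(x)\subset K\setminus W\subset V$ for every $x\in B_\delta(x_*)$: indeed if $t\in K$ and $|J\Psi_x^m(t)|=0$ then $t\notin W$, hence $t\in V$.

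I expect the main obstacle to be the bookkeeping in the first step — arranging a single open set $V$ that simultaneously has small probability, contains $A_K(x)$ for \emph{all} $x$ in a ball (not just $x_*$), and whose complement-in-$K$ is a ``good'' compact set on which the Jacobian is uniformly bounded below. The resolution is the order of quantifiers used above: one does \emph{not} try to find $V$ working for all $x$ first and then shrink; instead one fixes $V$ using only $x_*$ and the null-set/regularity argument, and then the neighborhood $W\supset K\cap V^c$ produced by compactness \emph{automatically} forces $A_K(x)\subset K\setminus W\subset V$ for nearby $x$, because vanishing of the Jacobian at such $t$ would contradict the lower bound $\eta$ on $W$. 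One technical point worth checking is that $t\mapsto|J\Psi_x^m(t)|$ is genuinely not identically zero for $x=x_*$ — this is exactly where $x_*\in U$ and Lemma \ref{submersion for almost all time} (equivalently $\mathrm{Lie}_{x_*}(\mathcal{F})=T_{x_*}M$, which holds on $U$ by construction of $U$ in Theorem \ref{Strong Feller}) are used, guaranteeing $A_{\mathbb{R}_+^{3m}}(x_*)$ is Lebesgue-null and hence $A_K(x_*)$ is a compact null set to which outer regularity applies.
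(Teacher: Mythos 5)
Your proposal is correct and follows essentially the same route as the paper: choose $V$ by outer regularity around the compact null set $A_K(x_*)$, then use continuity of $(x,t)\mapsto|J\Psi_x^m(t)|$ and compactness of $K\cap V^c$ to get a uniform positive lower bound on a product neighborhood $B_\delta(x_*)\times W$. The only difference is cosmetic ordering — the paper first proves $A_K(x)\subset V$ for nearby $x$ by a sequential-compactness contradiction and then constructs $W$, whereas you obtain $W$ and the lower bound directly via the tube lemma and deduce the containment as a corollary; both are valid.
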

    \begin{proof}
    	Fix any $x_*\in U$ and any $\varepsilon>0$.  Since the matrix product $D\Psi_x^m(t)D\Psi_x^m(t)^T$ is almost surely invertible for every $x$ in $U$, $A_{K}(x)$ has Lebesgue measure zero for all $x\in U$. In particular, since $\mathbb{P}$ is absolutely continuous with respect to Lebesgue measure, there exists an open neighborhood $V$ of $A_{K}(x_*)$ such that $\mathbb{P}(V)<\varepsilon$. We will next prove our conclusion using the method of contradiction. Suppose there is a $\delta>0$ such that $B_\delta(x_*)\subset U$ and there exists a sequence $\{x_n\}\subset B_\delta(x_*)$ converging to $x_*$ such that $A_{K}(x_n)$ is not contained in $V$, that is, for each $x_n$ there is a $t_n$ in $K\cap V^c$ satisfying $|J{\Phi_{x_n}^m}(t_n)|=0$. Then since $K\cap V^c$ is compact, there is a subsequence $\{t_{n_k}\}$ which converges to some $t_*\in K\cap V^c$. It follows from the continuous of $(x,t)\mapsto |J{\Psi_x^m}(t)|$ that
    	\begin{equation*}
    		0=|J{\Psi_{x_{n_k}}^m}(t_{n_k})|=|J{\Psi_{x_*}^m}(t_*)|.
    	\end{equation*} 
    	But this implies that $t_*$ is in $A_{K}(x_*)$, which contradicts the fact that $A_{K}(x_*)\cap (K\cap V^c)=\emptyset$.     	
    	By the preceding argument and the continuous of $(x,t)\mapsto |J{\Psi_x^m}(t)|$, we have
    	\begin{equation*}
    		\inf\{	|J{\Psi_x^m}(t)|:x\in B_{\delta/2}(x_*), t\in K\cap V^c\}\geq 2c, 
    	\end{equation*}
    	for some $c>0$. Set $g_x(t):=|J{\Psi_x^m}(t)|$ and let $K'$ be the closure in $\mathbb{R}_+^{mn}$ of 
    	\begin{equation*}
    		\bigcup_{x\in B_{\delta/2}(x_*)}g_x^{-1}((0,c)).
    	\end{equation*}
    	Since $K\cap V^c$ and $K'$ are closed and disjoint, they can be seperated by disjoint open sets. By continuity and construction, there exists an open neighborhood $W$ of $K\cap V^c$ satisfies
    	\begin{equation*}
    		\inf\{	|J{\Psi_x^m}(t)|:x\in B_{\delta/2}(x_*), t\in W\}\geq c >0.
    	\end{equation*}
    \end{proof}
    We are now ready to prove the strong Feller property in a general setting.
    \begin{proof}[\textbf{Proof of Theorem \ref{Strong Feller}}]
    	Since $\text{Lie}_{x_0}(\mathcal{F})=T_{x_0}M$ for some $x_0\in M$, by the first part of Lemma \ref{submersion for almost all time}, there exist some $m$ and open neighborhood $U$ of $x_0$ such that for every $x\in U$, $D\Psi_x^{m}(t)D\Psi_x^{m}(t)^T$ is invertible for almost every $t\in\mathbb{R}_{+}^{3m}$. Consider a bounded, measurable function $f$ on $M$. Without loss of generality, we assume that $f$ is not identically zero. Denote $ \|f\|_{L^\infty} =c_1/6$, where $c_1>0$ is a constant. Fix any $x_*\in U$ and any $\varepsilon>0$. Since $\mathbb{P}$ is absolutely continuous with respect to Lebesgue measure, there exists a compact subset $K$ of $\mathbb{R}_{+}^{3m}$ such that $\mathbb{P}(K^c)<\varepsilon/{c_1}$. By Lemma \ref{positivity of det J}, there exists an open set $V\subset \mathbb{R}_{+}^{3m}$ and $\delta_1>0$ such that $\mathbb{P}(V)<\varepsilon/{c_1}$, $B_{\delta_1}(x_*)\subset U$ and an open neighborhood $W$ of $K\cap V^c$ such that
    	\begin{equation}\label{44}
    		\inf\{|J{\Psi_x^{m}}(t)|:x\in B_{\delta_1}(x_*), t\in W\}=:c>0. 
    	\end{equation}
    	Since $\mathbb{R}_{+}^{3m}=K^c\cup(K\cap V)\cup(K\cap V^c)$, for any $x\in U$, we have 
    	\begin{align}\label{45}
    		Q^{m}f(x)-Q^{m}f(x_*)=&\mathbb{E}\left([f(\Psi_{\underline{\tau}}^{m}(x))-f(\Psi_{\underline{\tau}}^{m}(x_*))]\chi_{K^c}\right)+\mathbb{E}\left([f(\Psi_{\underline{\tau}}^{m}(x))-f(\Psi_{\underline{\tau}}^{m}(x_*))]\chi_{K\cap V}\right)\nonumber\\
    		&~~~~~~~+\mathbb{E}\left([f(\Psi_{\underline{\tau}}^{m}(x))-f(\Psi_{\underline{\tau}}^{m}(x_*))]\chi_{K\cap V^c}\right).
    	\end{align}
    	Due to the selection of $K$, it follows that
    	\begin{equation}\label{46}
    		|\mathbb{E}\left([f(\Psi_{\underline{\tau}}^{m}(x))-f(\Psi_{\underline{\tau}}^{m}(x_*))]\chi_{K^c}\right)|\leq 2\|f\|_{L^\infty}\mathbb{P}(K^c)<\varepsilon/3.
    	\end{equation}
    	Similarly, the choice of $V$ ensures that
    	\begin{equation}\label{47}
    		|\mathbb{E}\left([f(\Psi_{\underline{\tau}}^{m}(x))-f(\Psi_{\underline{\tau}}^{m}(x_*))]\chi_{K\cap V}\right)|\leq 2\|f\|_{L^\infty}\mathbb{P}(V)<\varepsilon/3.
    	\end{equation}
    	To handle the term involving $\chi_{K\cap V^c}$, let us denote $L^1 := L^1(\text{Leb}_M)$. We then select a compactly supported, continuous function $\tilde{f}$ on $M$ such that $\|\tilde{f} - f\|_{L^1} \leq \sqrt{c}\varepsilon/12$. Note that this can always be done since compactly supported, continuous functions are dense in $L^1$. By adding and subtracting $\tilde{f}\circ\Psi_{\underline{\tau}}^{m}$ appropriately,
    	\begin{align}\label{48}
    		&\mathbb{E}\left([f(\Psi_{\underline{\tau}}^{m}(x))-f(\Psi_{\underline{\tau}}^{m}(x_*))]\chi_{K\cap V^c}\right)\nonumber\\
    		=&\mathbb{E}\left([f(\Psi_{\underline{\tau}}^{m}(x))-\tilde{f}(\Psi_{\underline{\tau}}^{m}(x))]\chi_{K\cap V^c}\right)+\mathbb{E}\left([\tilde{f}(\Psi_{\underline{\tau}}^{m}(x))-\tilde{f}(\Psi_{\underline{\tau}}^{m}(x_*))]\chi_{K\cap V^c}\right)\nonumber\\
    		&+\mathbb{E}\left([\tilde{f}(\Psi_{\underline{\tau}}^{m}(x_*))-f(\Psi_{\underline{\tau}}^{m}(x_*))]\chi_{K\cap V^c}\right).
    	\end{align} 
    	Since $\tilde{f}\circ\Psi_{\underline{\tau}}^{m}$ is continuous, there exists $\delta_2>0$ such that for every $x\in B_{\delta_2}(x_*)$, 
    	\begin{equation}\label{49}
    		\left|\mathbb{E}\left([\tilde{f}(\Psi_{\underline{\tau}}^{m}(x))-\tilde{f}(\Psi_{\underline{\tau}}^{m}(x_*))]\chi_{K\cap V^c}\right)\right|<\frac{\varepsilon}{6}.
    	\end{equation}
    	Let us focus on the first and third terms on the right side of \eqref{48}, writting $t=(t_1,\cdots,t_{3m})$ and $dt=dt_1\cdots dt_{3m}$, it is established that for all $x\in B_{\delta_1}(x_*)$,
    	\begin{align}\label{410}
    		&\left|\mathbb{E}\left([f(\Psi_{\underline{\tau}}^{m}(x))-\tilde{f}(\Psi_{\underline{\tau}}^{m}(x))]\chi_{K\cap V^c}\right)\right|\nonumber\\
    		=&\left|\int_{W} \frac{[f(\Psi_{t}^{m}(x))-\tilde{f}(\Psi_{t}^{m}(x))]\chi_{K\cap V^c}e^{-\sum_{i=1}^{3m}t_i/h}}{h^{3m}}dt\right|\nonumber\\
    		=&\left|\int_{M}\left(\int_{W\cap \{t\in\mathbb{R}_{+}^{3m}:\Psi_{x}^{m}(t)=\tilde{x}\}}\frac{[f(\Psi_{t}^{m}(x))-\tilde{f}(\Psi_{t}^{m}(x))]\chi_{K\cap V^c}e^{-\sum_{i=1}^{3m}t_i/h}}{h^{3m}|J{\Psi_x^{m}}(t)|}\mathcal{H}^{3m-d}(dt)\right)\text{Leb}_M(d\tilde{x})\right|\nonumber\\
    		=&\left|\int_{M}\left(f(\tilde{x})-\tilde{f}(\tilde{x})\right)\left(\int_{W\cap \{t\in\mathbb{R}_{+}^{3m}:\Psi_{x}^{m}(t)=\tilde{x}\}} \frac{\chi_{K\cap V^c}e^{-\sum_{i=1}^{3m}t_i/h}}{h^{3m}|J{\Psi_x^{m}}(t)|}\mathcal{H}^{3m-d}(dt)\right)\text{Leb}_M(d\tilde{x})\right|\nonumber\\
    		\leq&\frac{1}{\sqrt{c}}\|f-\tilde{f}\|_{L^1}<\frac{\varepsilon}{12},
    	\end{align}
    	where the first equality is due to $K\cap V^c$ being a subset of $W$, the second is confirmed by Lemma \ref{submersion for almost all time}, and the first inequality is established through \eqref{44}. Subsequently, we set $\delta$ to be the minimum of $\delta_1$ and $\delta_2$, by substituting \eqref{49} and \eqref{410} into \eqref{48} and considering all $x$ in the ball $B_\delta(x_*)$, we obtain
    	\begin{equation}\label{411}
    		\left|\mathbb{E}\left([f(\Psi_{\underline{\tau}}^{m}(x))-f(\Psi_{\tau}^{m}(x_*))]\chi_{K\cap V^c}\right)\right|<\frac{\varepsilon}{3}.
    	\end{equation}
    	Ultimately, substituting \eqref{46}, \eqref{47} and \eqref{411} into \eqref{45} yields,
    	\begin{equation*}
    		\left|Q^{m}f(x)-Q^{m}f(x_*)\right|<\varepsilon,
    	\end{equation*}
    	for every $x\in B_\delta(x_*)$. Hence, $Q^mf$ is continuous at every point $x_*$ in $U$, establishing that $Q^{m}$ possesses the strong Feller property on $U$.
    \end{proof}
    To apply Theorem \ref{Strong Feller}, we now prove the Lie bracket condition for the lifted Markov chain $\{\hat{\Phi}_{\underline{\tau}}^m\}$.
    \begin{lem}\label{Lie bracket of lifted chain}
    	Let $\hat{\mathcal{X}}$ be as above. Then, there exists some ${\hat{x}}\in T\mathbb{T}^3$ such that $\text{Lie}_{\hat{x}}(\hat{\mathcal{X}})=T_{\hat{x}}T\mathbb{T}^3$.
    \end{lem}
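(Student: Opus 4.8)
The plan is to reduce the assertion to a finite linear-algebra computation by exploiting the classical fact that the complete (tangent) lift $Z\mapsto\hat Z$, where $\hat Z(x,u)=(Z(x),D_xZ\,u)$ in the standard coordinates $(x,u)$ on $T\mathbb{T}^3$, is a Lie algebra homomorphism from the vector fields on $\mathbb{T}^3$ to those on $T\mathbb{T}^3$; a direct coordinate computation gives $[\hat Z,\hat W]=\widehat{[Z,W]}$. Since a homomorphism carries a generated subalgebra onto the subalgebra generated by the images, $\text{Lie}(\hat{\mathcal{X}})=\{\hat Z:Z\in\text{Lie}(\mathcal{X})\}$, and hence at every $\hat x=(x,u)$,
\[
\text{Lie}_{\hat x}(\hat{\mathcal{X}})=\mathrm{span}\bigl\{\,(Z(x),D_xZ\,u):Z\in\text{Lie}(\mathcal{X})\,\bigr\}.
\]
It therefore suffices to produce a point $x\in\mathbb{T}^3$, a vector $u\in T_x\mathbb{T}^3$, and six elements $Z^{(1)},\dots,Z^{(6)}\in\text{Lie}(\mathcal{X})$ for which the $6\times 6$ matrix with columns $\bigl(Z^{(k)}(x),D_xZ^{(k)}\,u\bigr)$ is invertible. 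Everything in this reduction is soft.

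I would take $x^\ast=(x_1^\ast,x_2^\ast,x_3^\ast)$ with $x_1^\ast,x_2^\ast,x_3^\ast$ critical points of $f_2,f_3,f_1$ respectively (e.g.\ the maximizers used in the proof of Theorem \ref{Uniformly geometrically ergodic}), so that by (H1) the numbers $f_1(x_3^\ast),f_2(x_1^\ast),f_3(x_2^\ast)$ and $f_1''(x_3^\ast),f_2''(x_1^\ast),f_3''(x_2^\ast)$ are all nonzero while $f_1'(x_3^\ast)=f_2'(x_1^\ast)=f_3'(x_2^\ast)=0$. Then $X_1(x^\ast),X_2(x^\ast),X_3(x^\ast)$ are nonzero multiples of $e_1,e_2,e_3$, so (as in that proof) the horizontal $\mathbb{R}^3$ is already spanned by $\hat X_1,\hat X_2,\hat X_3$ at $(x^\ast,u)$, and it remains to supply three vertical directions. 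For these I would use the first-order brackets $Z^{(3+i)}:=[X_i,X_{i+1}]$, $i=1,2,3$ (indices mod $3$): computing each $[X_i,X_{i+1}]$ and its Jacobian from the explicit formulas for the $X_i$ and evaluating at $x^\ast$, the determinant of the $6\times6$ matrix built from $\hat X_1,\hat X_2,\hat X_3,\widehat{[X_1,X_2]},\widehat{[X_2,X_3]},\widehat{[X_3,X_1]}$ at $(x^\ast,u)$ is a homogeneous cubic $P(u_1,u_2,u_3)$ (only the monomials $u_1u_2u_3$, $u_1^2u_2$, $u_2^2u_3$, $u_1u_3^2$ appear) whose coefficients are explicit polynomials in the values $f_i,f_i'',f_i'''$ at the respective critical points. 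For the ABC choice $f_i=A_i\sin$ one gets $P(u)=-9(A_1A_2A_3)^3\,u_1u_2u_3$, so any $u$ with $u_1u_2u_3\neq0$ yields $\text{Lie}_{(x^\ast,u)}(\hat{\mathcal{X}})=T_{(x^\ast,u)}T\mathbb{T}^3$; the same works for every $f_i$ for which $P$ fails to vanish identically at some admissible triple of critical points.

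To finish one must dispose of the degenerate analytic profiles for which $P$ vanishes identically for all choices of critical points; inspecting the coefficients of $P$ shows this forces $f_1'''=f_2'''=f_3'''=0$ at those points together with a rigid algebraic identity among the $f_i$ and $f_i''$, which can then be broken by appending the second-order brackets $[X_i,[X_i,X_{i+1}]]$, whose vertical parts at $x^\ast$ bring in $f_i'''$ and $f_i''''$ and are not contained in the span of the previous six vectors; again a finite explicit computation produces an invertible $6\times6$ matrix. I expect the main obstacle to be exactly this bookkeeping — carrying out the routine but lengthy Lie-bracket and Jacobian computations and checking that, across all $f_i\in C^\omega(\mathbb{S}^1)$ satisfying (H1), at least one of these finite families of iterated brackets is non-degenerate at a suitable $(x^\ast,u)$ — rather than anything structural.
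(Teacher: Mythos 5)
Your strategy is the paper's strategy: same base point (critical points of $f_2,f_3,f_1$ in the respective coordinates), the same six fields $\hat X_1,\hat X_2,\hat X_3$ and the three first-order lifted brackets, the same $6\times 6$ determinant, and your description of its structure is accurate — the determinant factors as $f_1f_2f_3$ times a cubic in $u$ containing exactly the monomials $u_1u_2u_3$, $u_1^2u_2$, $u_2^2u_3$, $u_1u_3^2$, and your ABC value $-9(A_1A_2A_3)^3u_1u_2u_3$ is consistent with the paper's formula. The observation that the complete lift is a Lie-algebra homomorphism is a clean way to package the reduction (the paper verifies $[\hat X_i,\hat X_j]=\widehat{[X_i,X_j]}$ by direct computation), and your generic-case argument (some $f_i'''\neq 0$ at the chosen points) matches the paper's case (1).

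The gap is in what you call the degenerate case, and it is not a peripheral one: for $f_i=A_i\sin$ one has $f_i'''=-f_i'=0$ at every critical point, so \emph{every} instance of the ABC flow lands in the branch where all third derivatives vanish. There the cubic collapses to the paper's \eqref{eq equiv 0}, i.e.\ to $u_1u_2u_3\,f_1''f_2''f_3''\bigl[(f_1-f_1'')(f_2-f_2'')(f_3-f_3'')-f_1''f_2''f_3''\bigr]$, and the issue is whether the bracketed quantity can vanish at every admissible choice of critical points. You defer this to "appending the second-order brackets" and "a finite explicit computation," but that computation is neither carried out nor obviously workable (the vertical parts of $[X_i,[X_i,X_{i+1}]]$ involve precisely the $f_i'''$ you have just assumed to vanish, so it is not clear they contribute anything new). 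The paper instead closes this case with the first-order brackets alone, by a sign argument: (H1) forces $f_i\neq 0$ and $f_i''\neq 0$ at critical points, with $f_i''<0$ at a maximizer and $f_i''>0$ at a minimizer, and non-constancy gives a maximizer or minimizer where $f_i-f_i''$ has a definite sign; switching between maximizers and minimizers of the three functions then always produces a sign pattern for which the two terms in \eqref{eq equiv 0} cannot cancel. That sign analysis — not higher brackets — is the missing ingredient, so as written your proof is incomplete precisely on the case that covers the paper's main example.
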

    \begin{proof}
    	For any $(x,u)=(x_1, x_2, x_3, u_1, u_2, u_3)\in T\mathbb{T}^3$,
    	\begin{equation*}
    		\hat{X}_1(x,u)=\begin{pmatrix}
    			f_1(x_3)\\
    			f_1'(x_3)\\
    			0\\
    			u_3f_1'(x_3)\\
    			u_3f_1''(x_3)\\
    			0
    		\end{pmatrix},\quad
    		\hat{X}_2(x,u)=\begin{pmatrix}
    			0\\
    			f_2(x_1)\\
    			f_2'(x_1)\\
    			0\\
    			u_1 f_2'(x_1)\\
    			u_1 f_2''(x_1)
    		\end{pmatrix},\quad
    		\hat{X}_3(x,u)=\begin{pmatrix}
    			f_3'(x_2)\\
    			0\\
    			f_3(x_2)\\
    			u_2f_3''(x_2)\\
    			0\\
    			u_2f_3'(x_2)
    		\end{pmatrix}.
    	\end{equation*}
    	And their Lie bracket are
    	\begin{equation*}
    		[\hat{X}_1,\hat{X}_2](x,u)=\begin{pmatrix}
    			-f_1'(x_3)f_2'(x_1)\\
    			(f_1(x_3)-f_1''(x_3))f_2'(x_1)\\
    			f_1(x_3)f_2''(x_1)\\
    			-(u_3f_1''(x_3)f_2'(x_1)+u_1f_1'(x_3)f_2''(x_1))\\
    			u_1(f_1(x_3)-f_1''(x_3))f_2''(x_1)+u_3(f_1'(x_3)-f_1'''(x_3))f_2'(x_1)\\
    			u_1 f_1(x_3)f_2'''(x_1)+u_3f_1'(x_3)f_2''(x_1)
    		\end{pmatrix},
    	\end{equation*}
    	\begin{equation*}
    		[\hat{X}_1,\hat{X}_3](x,u)=\begin{pmatrix}
    			(f_3''(x_2)-f_3(x_2))f_1'(x_3)\\
    			-f_1''(x_3)f_3(x_2)\\
    			f_1'(x_3)f_3'(x_2)\\
    			u_2(f_3'''(x_2)-f_3'(x_2))f_1'(x_3)+u_3(f_3''(x_2)-f_3(x_2))f_1''(x_3)\\
    			-(u_3f_1'''(x_3)f_3(x_2)+u_2f_1''(x_3)f_3'(x_2))\\
    			u_2f_1'(x_3)f_3''(x_2)+u_3f_1''(x_3)f_3'(x_2)
    		\end{pmatrix},
    	\end{equation*}
    	and
    	\begin{equation*}
    		[\hat{X}_2,\hat{X}_3](x,u)=\begin{pmatrix}
    			f_2(x_1)f_3''(x_2)\\
    			-f_2'(x_1)f_3'(x_2)\\
    			(f_2(x_1)-f_2''(x_1))f_3'(x_2)\\
    			u_2f_2(x_1)f_3'''(x_2)+u_1f_2'(x_1)f_3''(x_2)\\
    			-(u_1f_2''(x_1)f_3'(x_2)+u_2f_2'(x_1)f_3''(x_2))\\
    			u_1(f_2'(x_1)-f_2'''(x_1))f_3'(x_2)+u_2(f_2(x_1)-f_2''(x_1))f_3''(x_2)
    		\end{pmatrix}.
    	\end{equation*}
    	Define matrix 
    	\begin{equation*}
    		M(x,u):=\left(\hat{X}_1(x,u),\hat{X}_2(x,u), \hat{X}_3(x,u), [\hat{X}_1,\hat{X}_2](x,u), [\hat{X}_1,\hat{X}_3](x,u), [\hat{X}_2,\hat{X}_3](x,u)\right).
    	\end{equation*}
    	Let $z_{i-1}\in C_{f_{i}'}$, to be determined later. Then, for $z=(z_1,z_2,z_3)$ and arbitrary $u=(u_1,u_2,u_3)$, 
    	\[
    	\resizebox{\textwidth}{!}{$
    		M(z,u)=\begin{pmatrix}
    			f_1(z_3)& 0& 0& 0&0&f_2(z_1) f_3''(z_2)\\
    			0&f_2(z_1)&0&0&-f_1''(z_3) f_3(z_2)&0 \\
    			0&0&f_3(z_2)&f_1(z_3) f_2''(z_1)&0&0 \\
    			0&0&u_2 f_3''(z_2)&0&u_3 f_1''(z_3) (f_3''(z_2)-f_3(z_2))&u_2 f_2(z_1) f_3'''(z_2) \\
    			u_3 f_1''(z_3)&0&0&u_1 f_2''(z_1)(f_1(z_3)-f_1''(z_3))&-u_3 f_1'''(z_3) f_3(z_2)&0 \\
    			0&u_1 f_2''(z_1)&0&u_1 f_1(z_3) f_2'''(z_1)&0&u_2 f_3''(z_2)(f_2(z_1)-f_2''(z_1)).
    		\end{pmatrix}
    		$}
    	\]
    	By a direct calculation, we obtain that 
    	\begin{equation}\label{det of M}
    		\det (M(z,u))=f_1(z_3) f_2(z_1) f_3(z_2) \cdot S(z,u),
    	\end{equation}
    	where $S=S_1-S_2+S_3$ and
    	\begin{align*}
    		S_1(z,u) &= u_2^2 u_3 f_1(z_3) f_1'''(z_3) f_2''(z_1) (f_2(z_1)-f_2''(z_1)) (f_3''(z_2))^2 \\
    		&\quad -u_1 u_2 u_3 (f_1''(z_3) f_2''(z_1) f_3''(z_2))^2, \\
    		S_2(z,u) &= u_1 u_2 u_3 f_1''(z_3) (f_1(z_3)-f_1''(z_3))f_2''(z_1)(f_2(z_1)-f_2''(z_1)) f_3''(z_2) (f_3''(z_2)-f_3(z_2)) \\
    		&\quad -u_1^2 u_2 f_1''(z_3) (f_1(z_3)-f_1''(z_3)) (f_2''(z_1))^2 f_3(z_2) f_3'''(z_2), \\
    		S_3(z,u) &= -u_1 u_3^2 (f_1''(z_3))^2 f_2(z_1) f_2'''(z_1) f_3''(z_2) (f_3''(z_2)-f_3(z_2)) \\
    		&\quad + u_1 u_2 u_3 f_1(z_3) f_1'''(z_3) f_2(z_1) f_2'''(z_1) f_3(z_2) f_3'''(z_2).
    	\end{align*}
    	We claim that there exist $(z,u)$ such that $\det (M(z,u)) \neq 0$. Notice that by (H1), this is equivalent to $S(z,u) \neq 0$. We divide the analysis into the following cases. 
    	\begin{itemize}
    		\item [(1)] If there exists $i \in \{1,2,3\}$ such that $f_{i}'''(z_{i-1})$, we conventionally set $z_0=z_3$. We split into the following three cases.
    		\begin{itemize}
    			\item [(i)] If $f_1'''(z_3) \neq 0$, set $u_1=0$, $u_2u_3 \neq 0$. Since $f_2 \in C^\omega(\mathbb{S}^1, \mathbb{R})$ is non-constant, let $x_1$ be a point where $f_2$ achieves its maximum value, and $y_1$ be a point where $f_2$ achieves its minimum value. By assumption (H1), $f_2''(x_1) < 0$ and $f_2''(y_1) > 0$. Moreover, either $f_2(x_1) > 0$ or $f_2(y_1) < 0$. Consequently, either $f_2(x_1) - f_2''(x_1) > 0$ or $f_2(y_1) - f_2''(y_1) < 0$. Then, we can choose $z_1\in C_{f_2'}$ such that $f_2(z_1)-f_2''(z_1)\neq 0$. Then, combining with (H1) and previous estimates, 
    			\[
    			S(z,u)= u_2^2 u_3 f_1(z_3) f_1'''(z_3) f_2''(z_1) (f_2(z_1)-f_2''(z_1)) (f_3''(z_2))^2 \neq 0.
    			\]
    			\item [(ii)] If $f_2'''(z_1)\neq 0$, set $u_2=0$, $u_1u_3 \neq 0$. Similarly, we can choose $z_2\in C_{f_3'}$ such that $f_3(z_2)-f_3''(z_2) \neq 0$. Then, combining with (H1) and previous estimates, 
    			\[
    			S(z,u)= -u_1 u_3^2 (f_1''(z_3))^2 f_2(z_1) f_2'''(z_1) f_3''(z_2) (f_3''(z_2)-f_3(z_2))\neq 0.
    			\]
    			\item [(iii)] If $f_3'''(z_2)\neq 0$, set $u_3=0$, $u_1u_2 \neq 0$. Similarly, we can choose $z_3\in C_{f_1'}$ such that $f_1(z_3)-f_1''(z_3)\neq 0$. Then, combining with (H1) and previous estimates,
    			\[
    			S(z,u)= u_1^2 u_2 f_1''(z_3) (f_1(z_3)-f_1''(z_3)) (f_2''(z_1))^2 f_3(z_2) f_3'''(z_2)\neq 0.
    			\]
    		\end{itemize}
    		\item[(2)] If for all $i$, there holds $f_{i}'''(z_{i-1}) = 0$, then 
    		\begin{align}\label{eq equiv 0}
    			S(z,u)=&u_1 u_2 u_3 f_1''(z_3) f_2''(z_1) f_3''(z_2)(f_1(z_3)-f_1''(z_3))(f_2(z_1)-f_2''(z_1)) (f_3(z_2)-f_3''(z_2))\nonumber\\
    			&-u_1 u_2 u_3 (f_1''(z_3) f_2''(z_1) f_3''(z_2))^2.
    		\end{align}
    		Define $ x_1$, $x_2$, $x_3 $ as the maximum values and $y_1$, $y_2$, $y_3 $ as the minimum values of $f_2$, $f_3$, $f_1$, respectively. We now analyze the following cases.
    		\begin{itemize}
    			\item [(i)] If $(f_1(x_3)-f_1''(x_3))(f_2(x_1)-f_2''(x_1)) (f_3(x_2)-f_3''(x_2)) \geq 0$, let $z_i=x_i$ and $u_1u_2u_3 \neq 0$. Then, combining with $f_i''(x_{i-1}) <0$ and \eqref{eq equiv 0}, $S(z,u) \neq 0$. 
    			\item[(ii)] If among the three elements in the set $\{f_i(x_{i-1}) - f_i''(x_{i-1})\}$, exactly two are positive and one is negative, we may assume without loss of generality that $ f_j(x_{j-1}) - f_j''(x_{j-1}) > 0$ for $j = 1,2$, and $f_3(x_2) - f_3''(x_2) < 0$. Let $z_1=x_1, z_2=y_2, z_3=x_3$ and $u_1u_2u_3 \neq 0$. Then, $f_1''(x_3) f_2''(x_1) f_3''(y_2) >0$ and $f_3(y_2)-f_3''(y_2) <0$ since $f_3(x_2) - f_3''(x_2) < 0$. Thus, by \eqref{eq equiv 0}, $S(z,u) \neq 0$.
    			\item[(iii)] If $f_i(x_{i-1}) - f_i''(x_{i-1})$ are all negative for all $i = 1,2,3$, let $z_i=y_i$ and $u_1u_2u_3 \neq 0$. By \eqref{eq equiv 0}, $S(z,u) \neq 0$.
    		\end{itemize}
    	\end{itemize}
    	From the above analysis, the claim holds.
    \end{proof}
    With these auxiliary results in place, we now proceed to rule out Theorem \ref{degeneracy} $(b)$.
    \begin{proof}[\textbf{Proof of Lemma \ref{ruling out b}}]
    	By Lemma \ref{Lie bracket of lifted chain} and Theorem \ref{Strong Feller}, there exists an $m_0$ and a neighborhood $\hat{U}$ of $\hat{x}$ such that $\hat{P}^{m_0}|_{\hat{U}}$ is strong Feller. Assume Theorem \ref{degeneracy} $(b)$ holds, there exist proper linear subspaces $E_x^1,\cdots,E_x^p$ of $T_x \mathbb{T}^3$ for all $x\in \mathbb{T}^3$ such that 
    	\begin{equation}\label{permutation}
    		D_x\Phi_t^m(E_x^i)=E_{\Phi_t^m(x)}^{\sigma(i)}, \ \ 1\leq i\leq p
    	\end{equation}
    	for all $m$, $t\in \mathbb{R}_{\geq 0}^{3m}$ and some permutation $\sigma$. For every $x\in\mathbb{T}^3$, define $f:\hat{U}\to \mathbb{R}$ by
    	\[f(x,v):=\chi_{E_x}(v)
    	\]
    	where $\chi_{E_x}$ is the indicator function of $E_x$ and $E_x=\bigcup_{i=1}^p E_x^i$. Then $f$ is well-defined and is bounded, measurable but discontinuous since $E_x^i$ are proper subspaces. By (\ref{permutation}),
    	\begin{align*}
    		\hat{P}^{m_0}f(x,v)&=\mathbb{E}\big(f(\Phi_{\underline{\tau}}^{m_0}(x),D_x \Phi_{\underline{\tau}}^{m_0}(x)v)\big)\\
    		&=\mathbb{E}\big(\chi_{E_{\Phi_{\underline{\tau}}^{m_0}(x)}}(D_x \Phi_{\underline{\tau}}^{m_0}(x)v)\big)\\
    		&=\mathbb{E}\big(\chi_{D_x\Phi_{\underline{\tau}}^{m_0} (E_x)}(D_x \Phi_{\underline{\tau}}^{m_0}(x)v)\big)\\
    		&=f(x,v),
    	\end{align*}
    	is discontinuous which is a contradiction with $\hat{P}^{m_0}f(x,v)$ is continuous since $\hat{P}^{m_0}|_{\hat{U}}$ is strong Feller.
    \end{proof}
    We are now ready to prove the main theorem on the positivity of the top Lyapunov exponent.
    \begin{proof}[\textbf{Proof of Theorem \ref{Positivity of the top Lyapunov exponent}}]
    	Given assumption (H1), it follows from Lemma \ref{ruling out a} and Lemma \ref{ruling out b} that Theorem \ref{degeneracy} $(a)$ and $(b)$ do not hold. Therefore, $3\lambda_1\neq \lambda_\Sigma=0$, leading to the conclusion that $\lambda_1>0$.
    \end{proof}

\section{Ideal dynamo}\label{section 5}
\quad To establish that Corollary \ref{Ideal dynamo}, we invoke a variant of the `non-random' multiplicative ergodic theorem (Theorem III.1.2 in \cite{Ki}), which necessitates the uniqueness of the stationary probability measure for the associated projective Markov chain. In the following subsection, we establish the uniform geometric ergodicity of this projective Markov chain. This property ensures the uniqueness of its stationary measure and is crucial for proving the almost-sure quenched correlation decay, as stated in Theorem \ref{Exponential mixing}.

\subsection{Uniform geometric ergodicity for the projective Markov chain}\label{subsection 5.1}
\quad Recall the projective chain $\{\check{\Phi}_{\underline{\tau}}^m\}$ introduced in Section \ref{subsection 3.1}, for $(x, u) \in S\mathbb{T}^3$, 
\begin{equation*}	        \check{\Phi}_{\underline{\tau}}^m(x,u)=\left(\Phi_{\underline{\tau}}^m(x), \frac{D_x\Phi_{\underline{\tau}}^m u}{|D_x\Phi_{\underline{\tau}}^m u|}\right),
\end{equation*}
and the corresponding lifted vector fields $\check{X}_i(x, u)$ are defined by 
\[ 
\check{X_i}(x,u) = \begin{pmatrix}
	X_i(x) \\
	DX_i(x)u-\langle DX_i(x)u, u \rangle u
\end{pmatrix},
\]
for $i=1, 2, 3$. We will now demonstrate the irreducibility of the projective Markov chain.
\begin{lem}\label{irreducibility for projective chain}
	Let $\{\check{\Phi}_{\underline{\tau}}^m\}$ be the projective chain as above. Then, $\{\check{\Phi}_{\underline{\tau}}^m\}$ is exactly controllable. In particular, $\{\check{\Phi}_{\underline{\tau}}^m\}$ is topologically irreducible.
\end{lem}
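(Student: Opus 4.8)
The plan is to prove exact controllability by an explicit construction, from which topological irreducibility is immediate. Note first that the projective chain is a skew product over the one-point chain: it evolves the base $x$ by $\Phi_{\underline\tau}^m$ and the direction $u$ by the normalized derivative cocycle $A_{\underline\tau,x}^m$. Since control words compose via the cocycle identity $\check\Phi_{\underline\tau}^{m+k}=\check\Phi_{\theta_{3m}\underline\tau}^{k}\circ\check\Phi_{\underline\tau}^{m}$, and since the one-point chain is already exactly controllable by Lemma \ref{irreducibility}, it suffices to fix one designated point $(x^*,u^*)\in S\mathbb T^3$ and show that every $(x,u)$ can be steered to $(x^*,u^*)$ by an admissible control (all $\tau_i\ge 0$); the reverse connection, from $(x^*,u^*)$ to an arbitrary $(y,v)$, is then obtained by running the same construction backwards, exactly as at the end of the proof of Lemma \ref{irreducibility}. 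Concatenating the two gives exact controllability of $\{\check\Phi_{\underline\tau}^m\}$, hence topological irreducibility.

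For the designated point I would take $x^*=(y_1,y_2,y_3)$ as in Lemma \ref{irreducibility}, where $y_3,y_1,y_2$ realize the maxima of $|f_1|,|f_2|,|f_3|$, so that $f_1'(y_3)=f_2'(y_1)=f_3'(y_2)=0$ while, by (H1), $f_1(y_3)f_2(y_1)f_3(y_2)\neq 0$ and $f_1''(y_3)f_2''(y_1)f_3''(y_2)\neq 0$. The key elementary fact is that at $x^*$ each flow $\varphi_t^{(i)}$ moves exactly one base coordinate (because the relevant first derivative vanishes) and acts on tangent vectors by a single elementary shear:
\[
D\varphi_t^{(1)}(x^*)=I+tf_1''(y_3)E_{23},\qquad D\varphi_t^{(2)}(x^*)=I+tf_2''(y_1)E_{31},\qquad D\varphi_t^{(3)}(x^*)=I+tf_3''(y_2)E_{12},
\]
where $E_{ij}$ denotes the matrix unit. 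Taking commutators of these three one-parameter families produces all remaining elementary shears, so together they generate $\mathrm{SL}_3(\mathbb R)$, which acts transitively on $\mathbb R^3\setminus\{0\}$. More usefully for an exact construction, one has explicit unipotent derivative matrices along the short coordinate arcs through $x^*$, and these, combined with the ability (Lemma \ref{irreducibility}) to bring any base point to $x^*$ along a short controlled path, are the building blocks of the final maneuver.

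The steering from $(x,u)$ then proceeds in three stages. First, use exact controllability of the one-point chain to bring the base to a point $q$ in a prescribed neighborhood of $x^*$ at which all of $f_i,f_i',f_i''$ are nonzero at the relevant coordinates; the direction is carried to some explicit vector $w$. Second, if $w$ lies in an exceptional position---one or two components vanishing, or lying along a shear axis---apply a single auxiliary shear of the appropriate type to move it to a vector with all components nonzero; this is where a classification according to the vanishing pattern of the components of the transported direction and of the $f_i$ at the current base coordinates is needed, paralleling cases (i)--(ii) in the proof of Lemma \ref{irreducibility}. Third, carry out a bounded maneuver $q\to\cdots\to x^*$ consisting of a fixed number of splitting steps whose time parameters must simultaneously solve the equations that place the base at $x^*$ and those that place the normalized direction at $u^*$; with enough steps this leaves a positive-dimensional solution set, from which one selects a solution with all times $\ge 0$ by shifting the relevant arc lengths by integer multiples of the periods $2\pi/f_i$, just as in Lemma \ref{irreducibility}.

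The main obstacle is to make the direction adjustment \emph{exact} rather than merely approximate. The derivative matrices arising from loops based at a fixed point form only a sparse (essentially discrete) subset of $\mathrm{SL}_3(\mathbb R)$, so one cannot first park the base at $x^*$ and afterwards rotate $u$ freely; the base motion and the fiber adjustment must be interleaved, and the sign constraint $\tau_i\ge 0$ forbids directly undoing an elementary shear. This is precisely what forces the classification into cases according to how the current direction sits relative to the shear axes $E_{12},E_{23},E_{31}$ available at each base point, and it is here that analyticity together with (H1)---guaranteeing the nonvanishing of $f_i,f_i',f_i''$ at the points we visit---is used to keep every scalar equation solved along the way nondegenerate.
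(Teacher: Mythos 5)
Your proposal is a plan rather than a proof, and the part it leaves unexecuted is precisely where all the difficulty lies. You correctly identify the skew-product structure, the fact that only one-directional shears are available at any given base point (since $\tau_i\ge 0$ and $f_i''$ has a fixed sign there), and the need to interleave base and fiber control. But your third stage rests on the claim that one can set up a system of equations for the times, find a positive-dimensional solution set, and then ``select a solution with all times $\ge 0$ by shifting the relevant arc lengths by integer multiples of the periods $2\pi/f_i$.'' That mechanism does not work for the fiber component: shifting $\tau_i$ by $2\pi k/|f_i(\cdot)|$ preserves the base coordinate on the torus but changes the derivative $D\varphi_{\tau_i}^{(i)}$, whose shear entries $\tau_i f_i'(\cdot)$, $\tau_i f_i''(\cdot)$ grow linearly in $\tau_i$. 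So the trick that makes times nonnegative in Lemma \ref{irreducibility} (where only the base matters) cannot be recycled to simultaneously pin down the direction. Likewise, the observation that the three elementary shears generate $\mathrm{SL}_3(\mathbb{R})$ via commutators gives at best accessibility; with nonnegative times and fixed signs of $f_i''$ you only generate a sub-semigroup, and whether that semigroup moves any direction to any other is exactly the content of the case analysis you defer.

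For comparison, the paper's proof resolves this by a staged dimension reduction in the fiber: it first steers into $D=\{u_3=0\}$, then into $E=\{u=(1,0,0)\}$, then to the single point $F=(y_1,y_2,y_3,1,0,0)$, and at each stage the sign obstruction is handled by explicitly relocating the base to pre-selected points where the relevant second derivative has the needed sign (e.g.\ $y_1$ with $f_2''(y_1)<0$ versus $z_1$ with $f_2''(z_1)>0$, and auxiliary points $p_2$, $q_1$, $z_2$ with an irrationality condition), with the time parameters chosen to satisfy simultaneous sign and magnitude constraints such as $1+\tau_2\tau_6 f_2'(x_1)f_3''(q_2)<0$. None of this is automatic, and your proposal does not supply a substitute for it. As written, the argument establishes neither exact controllability nor topological irreducibility.
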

\begin{proof}
	Under the assumptions that $f_i \in C^\omega(\mathbb{S}^1, \mathbb{R})$ and (H1) are satisfied, we make the following selections:  
	\begin{itemize}
		\item $y_i \in \mathbb{S}^1$, where $f_1'(y_3) = f_2'(y_1) = f_3'(y_2) = 0$, $f_2''(y_1) < 0$, and $f_3''(y_2) < 0$,
		\item $z_1 \in \mathbb{S}^1$, where $f_2'(z_1) = 0$, $f_2''(z_1) > 0$,
		\item $z_2 \in \mathbb{S}^1$, where $f_3(z_2) \neq 0$, $f_3'(z_2) \neq 0$, and $\frac{f_3(z_2)}{f_3'(z_2)}$ is irrational,
		\item $z_3 \in \mathbb{S}^1$, where $f_1'(z_3) \neq 0$, $f_1''(z_3) = 0$,
		\item $p_2 \in \mathbb{S}^1$, where $f_3'(p_2) < 0$,
		\item $q_1 \in \mathbb{S}^1$, where $f_2'(q_1) > 0$, $f_2''(q_1) = 0$.
	\end{itemize}
	The sets are defined as:
	\[
	D = \{(x_1, x_2, x_3, u_1, u_2, u_3) \in S\mathbb{T}^3 : u_3 = 0\},
	\]
	\[
	E = \{(x_1, x_2, x_3, u_1, u_2, u_3) \in S\mathbb{T}^3 : u_1 = 1, u_2 = 0, u_3 = 0\}, \quad
	F = \{(y_1, y_2, y_3, 1, 0, 0)\}.
	\]
	\begin{itemize}
		\item[(1)] First, we prove that $D$ is reachable from $S\mathbb{T}^3$. We consider the following two cases:
		\begin{itemize}
			\item[(I)] The first case is when $u_2 \neq 0$. If $u_3 = 0$, it is clear that $D$ is reachable from $S\mathbb{T}^3$. If $u_3 \neq 0$, the proof is divided into two cases:
			\begin{itemize}
				\item[(A)] When $u_2u_3 > 0$, we analyze the problem by considering the following three cases:
				\begin{itemize}
					\item[(i)] If $f_3'(x_2) < 0$, choose 
					\[
					\tau_1 = \tau_2 = 0, \quad \tau_3 = -\frac{u_3}{u_2 f_3'(x_2)}.
					\]
					It is evident that $D$ is reachable from $S\mathbb{T}^3$.
					
					\item[(ii)] If $f_3'(x_2) = 0$, by assumption (H1), we have $f_3(x_2) \neq 0$. Choose $i, j \in \mathbb{Z}$ such that
					\[
					\tau_3 = \frac{z_3 - x_3 + 2i\pi}{f_3(x_2)} \geq 0, \quad 
					\tau_4 = \frac{p_2 - x_2 + 2j\pi}{f_1'(z_3)} \geq 0.
					\]
					Set $\tau_1 = \tau_2 = \tau_5 = \tau_6 = 0$, now this situation is analogous to (1)(I)(A)(i).
					
					\item[(iii)] Now we consider the case that $f_3'(x_2) > 0$. If $f_3(x_2) \neq 0$, the argument is similar to case (1)(I)(A)(ii). If $f_3(x_2) = 0$, we consider three cases:
					\begin{enumerate}
						\item[(a)] If $u_1 = 0$, choose $i \in \mathbb{Z}$ such that
						\[
						\tau_3 = \frac{y_1 - x_1 + 2i\pi}{f_3'(x_2)} \geq 0, \quad 
						\tau_5 = \frac{p_2 - x_2 + 2j\pi}{f_2(y_1)} \geq 0.
						\]
						Set $\tau_1 = \tau_2 = \tau_4 = \tau_6 = 0$, now this situation is analogous to (1)(I)(A)(i).
						\item[(b)] If $u_1u_3 > 0$, choose $i\in\mathbb{Z}$ such that
						\[\tau_3=\frac{y_1-x_1+2i\pi}{f_3'(x_2)}\geq 0.\]
						Set 
						\[\tau_1=\tau_2=\tau_4=\tau_6=0,\quad 
						\tau_5=-\frac{u_3}{u_1f_2''(y_1)}.\]
						We obtain that $D$ is reachable from $S\mathbb{T}^3$.
						\item[(c)] If $u_1u_3 < 0$, choose $i\in\mathbb{Z}$ such that 
						\[\tau_3=\frac{z_1-x_1+2i\pi}{f_3'(x_2)}\geq 0.\]
						Set 
						\[\tau_1=\tau_2=\tau_4=\tau_6=0,\quad 
						\tau_5=-\frac{u_3}{u_1f_2''(z_1)}.\]
						We obtain that $D$ is reachable from $S\mathbb{T}^3$.
					\end{enumerate} 
				\end{itemize}
				\item[(B)] When $u_2u_3 < 0$, the argument follows similarly to case (1)(I)(A).
			\end{itemize}
			
			\item[(II)] The second case is when $u_2 = 0$. If $u_3 = 0$, it is clear that $D$ is reachable from $S\mathbb{T}^3$. If $u_3 \neq 0$, the proof is divided into two cases:
			\begin{itemize}
				\item[(A)] When $u_1\neq 0$, let $p_1\in \mathbb{S}^1$ be chosen such that $f_2''(p_1)u_1u_3<0$. We analyze the problem by considering the following three cases: 
				\begin{itemize}
					\item[(i)] If $f_2''(x_1)u_1u_3<0$, set
					\[\tau_1=\tau_3=0,\quad \tau_2=-\frac{u_3}{u_1f_2''(x_1)}.\] 
					Then, we conclude that $D$ is reachable from $S\mathbb{T}^3$.
					\item[(ii)] If $f_2''(x_1)u_1u_3>0$, we divide the analysis into two cases. We begin with the first case: $f_1(x_3) = f_3'(x_2) = 0$. Choose $i, j\in\mathbb{Z}$ such that
					\[\tau_3=\frac{y_3-x_3+2i\pi}{f_3(x_2)}\geq 0,\quad
					\tau_4=\frac{p_1-x_1+2j\pi}{f_1(y_3)}.\]
					Set $\tau_1=\tau_2=\tau_5=\tau_6=0$. This situation is now analogous to (1)(II)(A)(i). Next, consider the second case: 
					$f_1(x_3) \neq 0$ or $f_3'(x_2) \neq 0$. Without loss of generality, we may assume $f_1(x_3) \neq 0$. Choose $i\in\mathbb{Z}$ such that 
					\[\tau_1=\frac{p_1-x_1+2i\pi}{f_1(x_3)}\geq 0.\]
					Set $\tau_2=\tau_3=0$. This situation is now analogous to (1)(II)(A)(i).
					\item[(iii)] If $f_2''(x_1)=0$, choose $i, j\in\mathbb{Z}$ such that 
					\[\tau_2=\frac{y_3-x_3+2i\pi}{f_2'(x_1)}\geq 0,\quad
					\tau_4=\frac{p_1-x_1+2j\pi}{f_1(y_3)}\geq 0.\]
					Set $\tau_1=\tau_3=\tau_5=\tau_6=0$. This situation is now analogous to (1)(II)(A)(i). 
				\end{itemize}
				
				\item[(B)] When $u_1 = 0$, the proof is divided into two cases:
				\begin{itemize}
					\item[(i)] If $f_1'(x_3)\neq 0$, choosing $\tau_1 > 0$ arbitrarily and setting $\tau_2 = \tau_3 = 0$, this situation is now analogous to (1)(I) or (1)(II)(A).
					\item[(ii)] If $f_1'(x_3)=0$, we divide the analysis into two cases.
					\begin{enumerate}
						\item[(a)] If $f_2'(x_1) \neq 0$ or $f_3(x_2) \neq 0$. Without loss of generality, we may assume $f_2'(x_1) \neq 0$. Choose $i\in\mathbb{Z}$ such that 
						\[\tau_2=\frac{z_3-x_3+2i\pi}{f_2'(x_1)}\geq 0.\]
						Set $\tau_1=\tau_3=0$, this situation is now analogous to (1)(II)(A).
						\item[(b)] If $f_2'(x_1)=f_3(x_2)=0$, choose $i\in\mathbb{Z}$ such that 
						\[\tau_2=\frac{y_2-x_2+2i\pi}{f_2(x_1)}.\]
						Set $\tau_1=\tau_3=0$, this situation is now analogous to (1)(II)(B)(ii)(a).
					\end{enumerate}
				\end{itemize}
			\end{itemize}
		\end{itemize}
		
		\item[(2)] Second, we prove that $E$ is reachable from $D$. The proof is divided into two cases:
		\begin{itemize}
			\item[(I)] The first case is when $u_2=0$. We only need to prove that $E$ is reachable from $\{(x_1, x_2, x_3, -1, 0, 0): (x_1,x_2,x_3)\in\mathbb{T}^3\}$. 
			\begin{itemize}
				\item[(A)] If $f_2''(x_1)=0$, by assumption (H1), we have $f_2'(x_1)\neq 0$. Let $q_2\in\mathbb{S}^1$ be choose such that $f_3'(q_2)=0$ and $f_2'(x_1)f_3''(q_2)<0$. Choose $i, j\in\mathbb{Z}$ such that
				\[\tau_2=\frac{z_3-x_3+2i\pi}{f_2'(x_1)}>0,\quad
				\tau_4=\frac{q_2-x_2-\tau_2f_2(x_1)+2j\pi}{f_1'(z_3)}\geq 0.\]
				Since $f_i\in C^\omega(\mathbb{S}^1, \mathbb{R})$, we can choose $\tau_6>0$ such that $1+\tau_2\tau_6f_2'(x_1)f_3''(q_2)<0$ and $f_1(z_3+\tau_6f_3(q_2))\neq 0$. Choose $k\in\mathbb{Z}$ such that
				\[\tau_7=\frac{-\tau_4f_1(z_3)+2k\pi}{f_1(z_3+\tau_6f_3(q_2))}\geq 0.\] 
				Choose $\tau_8>0$ such that $(1+\tau_2\tau_6f_2'(x_1)f_3''(q_2))\tau_8+\tau_2=0$. Set $\tau_1=\tau_3=\tau_5=\tau_9=0$, we conclude that $E$ is reachable from $\{(x_1, x_2, x_3, -1, 0, 0): (x_1,x_2,x_3)\in\mathbb{T}^3\}$. 
				\item[(B)] If $f_2''(x_1)\neq 0$,we divide the analysis into two case. 
				\begin{itemize}
					\item[(i)] If $f_1(x_3) \neq 0$ or $f_3'(x_2) \neq 0$. Without loss of generality, we may assume $f_1(x_3) \neq 0$. Choose $i\in\mathbb{Z}$ such that 
					\[\tau_1=\frac{q_1-x_1+2i\pi}{f_1(x_3)}\geq 0.\]
					Set $\tau_2=\tau_3=0$. This situation is now analogous to (2)(I)(A).
					\item[(ii)] If $f_1(x_3) = f_3'(x_2) = 0$.  Choose $i$ such that
					\[\tau_3=\frac{y_3-x_3+2i\pi}{f_3(x_2)}\geq 0.\]
					Set $\tau_1=\tau_2=0$. This situation is now analogous to (2)(I)(B)(i).
				\end{itemize}
			\end{itemize}
			\item[(II)] The second case is when $u_2\neq 0$. The proof is divided into two cases:
			\begin{itemize}
				\item[(A)] If $u_1 \neq 0$, we divide the analysis into two case. 
				\begin{itemize}
					\item[(i)] The first case is when $u_1u_2<0$. If $f_2'(x_1)>0$, choose $\tau_2>0$ such that $u_1\tau_2f_2'(x_1)+u_2=0$, set $\tau_1=\tau_3=0$, this situation is now analogous to (2)(I). If $f_2'(x_1)\leq 0$, we consider the following two cases:
					\begin{enumerate}
						\item[(a)] If $f_1(x_3)\neq 0$, choose $i\in\mathbb{Z}$ such that
						\[\tau_1=\frac{q_1-x_1+2i\pi}{f_1(x_3)}\geq 0.\]
						Set $\tau_2=\tau_3=0$, this situation is now analogous to (2)(II)(A)(i) with $f_2'(x_1)>0$.
						\item[(b)] If $f_1(x_3)=0$, choose $i, j\in\mathbb{Z}$ such that
						\[\tau_1=\frac{y_2-x_2+2i\pi}{f_1'(x_3)}\geq 0,\quad 
						\tau_3=\frac{y_3-x_3+2j\pi}{f_3(y_2)}\geq 0.\]
						Set $\tau_1=0$, this situation is now analogous to (2)(II)(A)(i)(a).
					\end{enumerate}
					\item[(ii)] If $u_1u_2>0$, the argument follows similarly to case (2)(II)(A)(i).
				\end{itemize}
				\item[(B)] If $u_1 = 0$, we divide the analysis into two case. 
				\begin{itemize}
					\item[(i)] The first case is when $f_3''(x_2)\neq 0$. If $f_3'(x_2)=0$, choose any $\tau_3>0$, set $\tau_1=\tau_2=0$, this situation is now analogous to (2)(II)(A). If $f_3'(x_2)\neq 0$, we divide the analysis into two case.
					\begin{enumerate}
						\item[(a)] If $f_1'(x_3)\neq 0$ or $f_2(x_1)\neq 0$. Without loss of generality, we assume $f_1'(x_3)\neq 0$, choose $i\in\mathbb{Z}$ such that 
						\[\tau_1=\frac{y_2-x_2+2i\pi}{f_1'(x_3)}\geq 0.\]
						Set $\tau_2=\tau_3=0$, this situation is now analogous to (2)(II)(B)(i) with $f_3'(x_2)=0$.
						\item[(b)] If $f_1'(x_3)=f_2(x_1)=0$, choose $i\in\mathbb{Z}$ such that 
						\[\tau_2=\frac{z_3-x_3+2i\pi}{f_2'(x_1)}\geq 0.\]
						Set $\tau_1=\tau_3=0$, this situation is now analogous to (2)(II)(B)(i)(a).
					\end{enumerate}
					\item[(ii)] The second case is when $f_3''(x_2)=0$, the argument follows similarly to case (2)(II)(B)(i) with $f_3'(x_2)\neq 0$.
				\end{itemize}
			\end{itemize}
		\end{itemize}
		\item[(3)] Third, prove that $F$ is reachable from $E$. 
		For any $(x_1, x_2, x_3, 1, 0, 0)\in E$, 
		\begin{itemize}
			\item[(i)] if $f_1'(x_3)\neq 0$, by assumption (H1), we can choose $i, j, k \in \mathbb{Z}$ such that 
			\[
			\tau_1=\frac{y_2-x_2+2i\pi}{f_1'(x_3)}\geq 0,\quad \tau_3=\frac{y_3-z_3+2j\pi}{f_3(y_2)}\geq 0, \quad \tau_4=\frac{y_1-x_1+2k\pi}{f_1(y_3)}\geq 0.
			\]
			Set $\tau_2=\tau_5=\tau_6=0$. Then $\check{\Phi}_{\underline{\tau}}^2(x_1, x_2, z_3, 1, 0, 0)=(y_1, y_2, y_3, 1, 0, 0)$.
			\item[(ii)] For the case $f_1'(x_3)=0$,
			\begin{itemize}
				\item[(a)] if $f_3(x_2)\neq 0$, we can choose $i\in\mathbb{Z}$ such that
				\[\tau_3=\frac{z_3-x_3+2i\pi}{f_3(x_2)}\geq 0.\]
				Set $\tau_1=\tau_2=0$. Then employing a method of proof similar to that used in case (i), we establish that $F$ is reachable from $E$.
				\item[(b)] if $f_3(x_2)=0$, by assumption (H1), $f_3'(x_2)\neq 0$, choose $i, j\in\mathbb{Z}$ such that 
				\[\tau_3=\frac{y_1-x_1+2i\pi}{f_3'(x_2)}\geq 0, \quad \tau_5=\frac{z_2-x_2+2j\pi}{f_2(y_1)}\geq 0.
				\]
				Since $f_3, f_3'\in C^\omega(\mathbb{S}^1,\mathbb{R})$ and $f_3(z_2)/f_3'(z_2)$ is an irrational number, we can choose $k\in\mathbb{Z}$ such that 
				\[
				\tau_6=\frac{z_1-y_1+2k\pi}{f_3'(z_2)}\geq 0, \quad f_1'(x_3+\tau_6f_3(z_2))\neq 0.
				\]
				Set 
				\[
				\tau_1=\tau_2=\tau_4=\tau_7=\tau_9=0, \quad \tau_8:=-\frac{\tau_5f_2''(y_1)}{f_2''(z_1)}\geq 0.
				\]
				Then employing a method of proof similar to that used in case (i), we establish that $F$ is reachable from $E$.
			\end{itemize} 
		\end{itemize}
	\end{itemize}
	We conclude by noting that any point $\check{x}\in S\mathbb{T}^3$ is reachable from any point $\check{w}\in S\mathbb{T}^3$ by first traveling to $F$ and then traveling to $\check{x}$, using the same arguments as above.
\end{proof}
Next, we establish the Lie bracket condition for the projective Markov chain, which ensures the existence of an open small set $\check{\Phi}_{\underline{\tau}}$. Recall that for $x = (x_1, x_2, x_3)$, the vector fields $X_1$, $X_2$, and $X_3$ are defined as  
\[
X_1(x) = (f_1(x_3), f_1'(x_3), 0), \quad 
X_2(x) = (0, f_2(x_1), f_2'(x_1)), \quad \text{and} \quad 
X_3(x) = (f_3'(x_2), 0, f_3(x_2)).
\]
For $(x, u) \in S\mathbb{T}^3$, the corresponding vector fields $\check{X_i}$ are given by  
\[
\check{X_i}(x, u) = 
\begin{pmatrix}
	X_i(x) \\
	DX_i(x)u - \langle DX_i(x)u, u \rangle u
\end{pmatrix}, \quad i = 1, 2, 3.
\]
And the set of lifted vector fields is given by $\check{\mathcal{X}}=\{\check{X_1}, \check{X_2}, \check{X_3}\}$.
\begin{lem}\label{Lie condition of projective chain}
	Let $\check{\mathcal{X}}$ be as above. Then $\text{Lie}_x(\check{\mathcal{X}})=T_x S\mathbb{T}^3$ for some $\check{x}\in S\mathbb{T}^3$.
\end{lem}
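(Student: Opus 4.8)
The plan is to deduce this from Lemma~\ref{Lie bracket of lifted chain} by a pushforward argument, avoiding a second round of bracket computations on the five-dimensional bundle $S\mathbb{T}^3$. Consider the map
\[
p:\ T\mathbb{T}^3\setminus\{0\}\ \to\ S\mathbb{T}^3,\qquad p(x,u)=\Big(x,\tfrac{u}{|u|}\Big),
\]
which is a smooth submersion (its fibres are the open radial rays, so $dp_{(x,u)}$ is surjective at every point). A direct computation---either differentiating $p$ in coordinates, or noting that if $\dot x=X_i(x)$ and $\dot u=DX_i(x)u$ then $v:=u/|u|$ solves $\dot v=DX_i(x)v-\langle DX_i(x)v,v\rangle v$---shows that $\hat X_i$ is $p$-related to $\check X_i$ for $i=1,2,3$.

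Since $p$-relatedness is preserved under Lie brackets, every iterated bracket of $\hat X_1,\hat X_2,\hat X_3$ is $p$-related to the corresponding iterated bracket of $\check X_1,\check X_2,\check X_3$. Hence for any $\hat x=(z,u)\in T\mathbb{T}^3$ with $u\neq 0$,
\[
dp_{\hat x}\big(\text{Lie}_{\hat x}(\hat{\mathcal X})\big)\ \subseteq\ \text{Lie}_{p(\hat x)}(\check{\mathcal X}).
\]

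Now apply Lemma~\ref{Lie bracket of lifted chain}: there is a point $\hat x=(z,u)$ with $\det M(z,u)\neq 0$, so $\text{Lie}_{\hat x}(\hat{\mathcal X})=T_{\hat x}T\mathbb{T}^3$. Inspecting the case analysis in the proof of that lemma, the selected tangent vector always satisfies one of $u_2u_3\neq 0$, $u_1u_3\neq 0$, $u_1u_2\neq 0$, or $u_1u_2u_3\neq 0$; in particular $u\neq 0$, so $p$ is a submersion at $\hat x$ and $dp_{\hat x}$ maps onto all of $T_{\check x}S\mathbb{T}^3$, where $\check x:=p(\hat x)$. Combining with the inclusion above gives $\text{Lie}_{\check x}(\check{\mathcal X})=T_{\check x}S\mathbb{T}^3$, as claimed.

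The argument is essentially routine once Lemma~\ref{Lie bracket of lifted chain} is available; the only point to watch---the nearest thing to an obstacle here---is that $p$, and hence the whole reduction, is defined only on $T\mathbb{T}^3\setminus\{0\}$, so one must check that the point produced by that lemma has nonzero fibre coordinate, which is immediate from its explicit construction. If a self-contained proof is preferred, an alternative is to compute $\check X_i$ and the brackets $[\check X_i,\check X_j]$ directly at a point $z$ with $z_{i-1}\in C_{f_i'}$ and a suitable unit vector $u$, and to verify that the resulting $5\times 5$ matrix is nonsingular by a case split mirroring that of Lemma~\ref{Lie bracket of lifted chain}; the pushforward route simply reuses that work.
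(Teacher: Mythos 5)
Your argument is correct, but it takes a genuinely different route from the paper's. The paper proves this lemma by a second direct computation on $S\mathbb{T}^3$: it writes out $\check{X}_1,\check{X}_2,\check{X}_3$ and the brackets $[\check{X}_1,\check{X}_2]$, $[\check{X}_1,\check{X}_3]$ explicitly (using the identity that $[\check{X_i},\check{X_j}]$ equals the check-lift of $[X_i,X_j]$), evaluates them at a freshly chosen point with $f_1'(x_3)=f_2'(x_1)=f_3'(x_2)=0$, $f_i(x_{i-1})\neq f_i''(x_{i-1})$, and $u=(u_1,0,u_3)$, and then verifies that a $5\times 5$ minor is nonsingular via a sign analysis in $u_1u_3$. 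You instead observe that $\hat{X}_i$ is $p$-related to $\check{X}_i$ under the radial projection $p(x,u)=(x,u/|u|)$, that $p$-relatedness is inherited by iterated brackets, and that $dp$ is surjective off the zero section, so the full-rank conclusion of Lemma \ref{Lie bracket of lifted chain} pushes forward to $\check{x}=p(\hat{x})$. This is sound: the only hypotheses to check are that $p$ is a submersion on $T\mathbb{T}^3\setminus\{0\}$ (clear, since its fibres are the radial rays) and that the point supplied by Lemma \ref{Lie bracket of lifted chain} has $u\neq 0$, which you verify from its construction --- and which is in fact automatic, since the fibre components of the $\hat{X}_i$ and of all their brackets are linear in $u$ and hence vanish identically on the zero section, so the rank condition could never hold there. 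What each approach buys: yours is shorter, reuses the determinant computation already done for the lifted chain, and is less error-prone; the paper's is self-contained within its section and exhibits an explicit base point and direction, at the cost of a second $5\times 5$ determinant evaluation. Note that the paper already invokes the same naturality principle when it identifies $[\check{X_i},\check{X_j}]$ with the check-lift of $[X_i,X_j]$; your argument simply carries that observation one step further and lets Lemma \ref{Lie bracket of lifted chain} do all the computational work.
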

\begin{proof}
	By a direct calculation, the Lie brackets of $\check{X_i}$ and $\check{X_j}$ for $i\neq j$ are given by 
	\begin{align*}
		[\check{X_i},\check{X_j}](\check{x})&=D\check{X_j}(\check{x})\check{X_i}(\check{x})-D\check{X_i}(\check{x})\check{X_j}(\check{x})\\
		&=\begin{pmatrix}
			[X_i,X_j](x)\\
			D[X_i,X_j](x)u-\langle D[X_i,X_j](x)u, u \rangle u
		\end{pmatrix}\\
		&=\check{[X_i,X_j]}(\check{x}).
	\end{align*}
	By a direct computation, we find that:
	\[
	[X_1,X_2](x)=\begin{pmatrix}
		-f_1'(x_3)f_2'(x_1)\\
		(f_1(x_3)-f_1''(x_3))f_2'(x_1)\\
		f_1(x_3)f_2''(x_1)
	\end{pmatrix}, \quad 
	[X_1,X_3](x)=\begin{pmatrix}
		(f_3''(x_2)-f_3(x_2))f_1'(x_3)\\
		-f_1''(x_3)f_3(x_2)\\
		f_1'(x_3)f_3'(x_2)
	\end{pmatrix},
	\]
	\[
	D[X_1,X_2](x)u=\begin{pmatrix}
		-u_1f_1'(x_3)f_2''(x_1)-u_3f_1''(x_3)f_2'(x_1)\\
		u_1(f_1(x_3)-f_1''(x_3))f_2''(x_1)+u_3(f_1'(x_3)-f_1'''(x_3))f_2'(x_1)\\
		u_1f_1(x_3)f_2'''(x_1)+u_3f_1'(x_3)f_2''(x_1)
	\end{pmatrix},
	\]
	\[
	D[X_1,X_3](x)u=\begin{pmatrix}
		u_2(f_3'''(x_2)-f_3'(x_2))f_1'(x_3)+u_3f_1''(x_3)(f_3''(x_2)-f_3(x_2))\\
		-u_2f_3'(x_2)f_1''(x_3)-u_3f_1'''(x_3)f_3(x_2)\\
		u_2f_1'(x_3)f_3''(x_2)+u_3f_1''(x_3)f_3'(x_2)
	\end{pmatrix}.
	\]
	The lifted vector fields are given by:
	\[\check{X_1}(\check{x})= 
	\begin{pmatrix}
		f_1(x_3) \\
		f_1'(x_3) \\
		0 \\
		-u_1 (u_1 u_3 f_1'(x_3) + u_2 u_3 f_1''(x_3)) + u_3 f_1'(x_3) \\
		-u_2 (u_1 u_3 f_1'(x_3) + u_2 u_3 f_1''(x_3)) + u_3 f_1''(x_3) \\
		-u_3 (u_1 u_3 f_1'(x_3) + u_2 u_3 f_1''(x_3))
	\end{pmatrix},\]
	\[
	\check{X}_2(\check{x}) = 
	\begin{pmatrix}
		0 \\
		f_2(x_1) \\
		f_2'(x_1) \\
		-u_1 (u_1 u_2 f_2'(x_1) + u_1 u_3 f_2''(x_1)) \\
		u_1 f_2'(x_1) - u_2 (u_1 u_2 f_2'(x_1) + u_1 u_3 f_2''(x_1)) \\
		u_1 f_2''(x_1) - u_3 (u_1 u_2 f_2'(x_1) + u_1 u_3 f_2''(x_1))
	\end{pmatrix},
	\]
	\[
	\check{X}_3(\check{x}) = 
	\begin{pmatrix}
		f_3'(x_2) \\
		0 \\
		f_3(x_2) \\
		-u_1 (u_1 u_2 f_3''(x_2) + u_2 u_3 f_3'(x_2)) + u_2 f_3''(x_2) \\
		-u_2 (u_1 u_2 f_3''(x_2) + u_2 u_3 f_3'(x_2)) \\
		u_2 f_3'(x_2) - u_3 (u_1 u_2 f_3''(x_2) + u_2 u_3 f_3'(x_2))
	\end{pmatrix},
	\]
	where $\check{x}=(x,u)=(x_1,x_2,x_3,u_1,u_2,u_3)$. Finally, define the matrix:
	\[M(\check{x}):=(\check{X_1}(\check{x}),\check{X_2}(\check{x}),\check{X_3}(\check{x}), [\check{X_1},\check{X_2}](\check{x}),[\check{X_1},\check{X_3}](\check{x})).\]
	Since $f_1 \in C^\omega(\mathbb{S}^1, \mathbb{R})$ is not a constant function, let $y_3$ be a point where $f_1$ achieves its maximum value, and $z_3$ be a point where $f_1$ achieves its minimum value. By assumption (H1), $f_1''(y_3) < 0$ and $f_1''(z_3) > 0$. Moreover, either $f_1(y_3) > 0$ or $f_1(z_3) < 0$. Consequently, either $f_1(y_3) - f_1''(y_3) > 0$ or $f_1(z_3) - f_1''(z_3) < 0$. A similar conclusion holds for $f_2$ and $f_3$. Thus, we can choose $(x_1, x_2, x_3)$ such that $f_1'(x_3)=f_2'(x_1)=f_3'(x_2)=0$, $f_1(x_3)\neq f_1''(x_3)$, $f_2(x_1)\neq f_2''(x_1)$, $f_3(x_2)\neq f_3''(x_2)$. Let $u_2 = 0 $, with $u_1 \neq 0$, $u_3 \neq 0$ and $u_1^2+u_3^2=1$, where the specific values of $u_1$ and $u_3$ will be provided below. Then, the matrix $M(x_1,x_2,x_3,u_1,0,u_3)$ is given by
	\[
	\resizebox{\textwidth}{!}{$
		\begin{pmatrix}
			f_1(x_3) & 0 & 0 & 0 & 0 \\
			0 & f_2(x_1) & 0 & 0 & -f_1''(x_3)f_3(x_2) \\
			0 & 0 & f_3(x_2) & f_1(x_3)f_2''(x_1) & 0 \\
			0 & -u_1^2u_3f_2''(x_1) & 0 & -u_1^2u_3f_1(x_3)f_2'''(x_1) & u_3^3f_1''(x_3)(f_3''(x_2)-f_3(x_2)) \\
			u_3f_1''(x_3) & 0 & 0 & u_1(f_1(x_3)-f_1''(x_3))f_2''(x_1) & -u_3f_1'''(x_3)f_3(x_2) \\
			0 & u_1f_2''(x_1)-u_1u_3^2f_2''(x_1) & 0 & u_1^3f_1(x_3)f_2'''(x_1) & -u_1u_3^2f_1''(x_3)(f_3''(x_2)-f_3(x_2))
		\end{pmatrix}.
		$}
	\]
	Define the matrix $N(x_1,x_2,x_3,u_1,0,u_3)$ as 
	\[
	\resizebox{\textwidth}{!}{$
		\begin{pmatrix}
			f_1(x_3) & 0 & 0 & 0 & 0 \\
			0 & f_2(x_1) & 0 & 0 & -f_1''(x_3)f_3(x_2) \\
			0 & 0 & f_3(x_2) & f_1(x_3)f_2''(x_1) & 0 \\
			u_3f_1''(x_3) & 0 & 0 & u_1(f_1(x_3)-f_1''(x_3))f_2''(x_1) & -u_3f_1'''(x_3)f_3(x_2) \\
			0 & u_1f_2''(x_1)-u_1u_3^2f_2''(x_1) & 0 & u_1^3f_1(x_3)f_2'''(x_1) & -u_1u_3^2f_1''(x_3)(f_3''(x_2)-f_3(x_2))
		\end{pmatrix},
		$}
	\]
	Then, we have
	\[
	\det N(x_1,x_2,x_3,u_1,0,u_3)=ac+bu_1^3u_3\neq 0,
	\]
	where 
	\[
	a:=a(u_1,u_3)=(1-u_3^2)f_2''(x_1)f_3(x_2)-f_2(x_1)(f_3''(x_2)-f_3(x_2))u_3^2,
	\]
	\[
	b:=b(u_1,u_3)=f_1^2(x_3)f_2(x_1)f_3^2(x_2)f_1'''(x_3)f_2'''(x_1),
	\] 
	and 
	\[
	c:=c(u_1,u_3)=u_1^2f_1(x_3)f_3(x_2)f_1''(x_3)f_2''(x_1)(f_1(x_3)-f_1''(x_3)).
	\]
	Clearly, $ c \neq 0 $. Based on the choices of $ x_1 $ and $ x_2 $, we can choose $ u_3 \neq 0 $ such that $ a \neq 0 $. If $ b = 0 $, for any $ u_1 $ such that $ u_1^2 + u_3^2 = 1 $, we have $ \det N(x_1, x_2, x_3, u_1, 0, u_3) \neq 0 $. Let $ d := ac $. If $ db > 0 $, choose $ u_1 $ such that $ u_1 u_3 > 0 $ and $ u_1^2 + u_3^2 = 1 $, then $ \det N(x_1, x_2, x_3, u_1, 0, u_3) \neq 0 $. If $ db < 0 $, choose $ u_1 $ such that $ u_1 u_3 < 0 $ and $ u_1^2 + u_3^2 = 1 $, then $ \det N(x_1, x_2, x_3, u_1, 0, u_3) \neq 0 $. Thus, we can choose $(x_1,x_2,x_3,u_1,u_2,u_3)$ such that the matrix $N(x_1,x_2,x_3,u_1,u_2,u_3)$ has rank 5. Consequently, $\text{Lie}_x(\check{\mathcal{X}})=T_x S\mathbb{T}^3$ for some $\check{x}\in S\mathbb{T}^3$.
\end{proof}
\begin{thm}[\textbf{Uniform geometric ergodicity for the projective Markov chain}]\label{Uniformly geometrically ergodic for projective chain}
	Let $\{\check{\Phi}_{\underline{\tau}}^m\}$ be as above. Then the transition kernel $\check{P}$ of $\{\check{\Phi}_{\underline{\tau}}^m\}$ is uniformly geometrically ergodic.
\end{thm}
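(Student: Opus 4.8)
The plan is to invoke the abstract criterion Theorem \ref{conditions for uniformly geometrically ergodic} with state space $M = S\mathbb{T}^3$, transition kernel $Q = \check{P}$, vector fields $\mathcal{F} = \check{\mathcal{X}}$, and weight $V \equiv 1$; it then suffices to verify that $\check{P}$ is Feller together with the four hypotheses (a)--(d). First I would check the Feller property: each elementary flow $\varphi_t^{(i)}$ is jointly smooth in $(t,x)$, and since $D_x\Phi_{\underline{\tau}}$ is always invertible the normalizing factors $|D_x\Phi_{\underline{\tau}} u|$ never vanish, so $\check{\Phi}_{\underline{\tau}}$ is a continuous function of $(\underline{\tau},\check{x})$ on $\mathbb{R}_{\geq 0}^3 \times S\mathbb{T}^3$; for bounded continuous $g$, dominated convergence then gives continuity of $\check{P}g(\check{x}) = \mathbb{E}\, g(\check{\Phi}_{\underline{\tau}}(\check{x}))$.

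Next I would dispatch the four conditions, each of which reduces to a fact already available. For (d): since $S\mathbb{T}^3$ is compact, $V \equiv 1$ trivially satisfies a Lyapunov--Foster drift condition with $C = S\mathbb{T}^3$, any $\alpha \in (0,1)$, and $b = 1$. For (c) strong aperiodicity: fix any $\check{x}_0 \in S\mathbb{T}^3$; since $\check{\Phi}_0(\check{x}_0) = \check{x}_0$ and $\check{\Phi}_{\underline{\tau}}(\check{x}_0) \to \check{x}_0$ as $\underline{\tau} \to 0$ while $\mathbb{P}(\underline{\tau} < \varepsilon) > 0$ for every $\varepsilon > 0$, each open $U \ni \check{x}_0$ has $\check{P}(\check{x}_0, U) > 0$. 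For (b) topological irreducibility: this is exactly Lemma \ref{irreducibility for projective chain}, which establishes exact controllability of the projective chain (and exact controllability implies topological irreducibility). For (a) the existence of an open small set: by Lemma \ref{small set} it is enough to have $\text{Lie}_{\check{x}}(\check{\mathcal{X}}) = T_{\check{x}} S\mathbb{T}^3$ at some $\check{x}$, which is Lemma \ref{Lie condition of projective chain}. Assembling (a)--(d) together with the Feller property, Theorem \ref{conditions for uniformly geometrically ergodic} yields that $\check{P}$ is $1$-uniformly geometrically ergodic, i.e., uniformly geometrically ergodic with a unique stationary measure, which is the claim.

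The main obstacle is not in this bookkeeping but in the two structural lemmas the argument rests on. Establishing $\text{Lie}_{\check{x}}(\check{\mathcal{X}}) = T_{\check{x}} S\mathbb{T}^3$ (Lemma \ref{Lie condition of projective chain}) requires the explicit $6 \times 5$ matrix of lifted fields and first brackets and a careful choice of base point with $u_2 = 0$, tuning $u_1, u_3$ on the circle $u_1^2 + u_3^2 = 1$ so that a $5 \times 5$ minor is nonzero, using (H1) to control the zeros and signs of $f_i$, $f_i'$, $f_i''$. The genuinely delicate part, however, is the exact controllability of the projective chain (Lemma \ref{irreducibility for projective chain}): one must steer an arbitrary $(x,u) \in S\mathbb{T}^3$ first into $\{u_3 = 0\}$, then into $\{u_1 = 1,\ u_2 = u_3 = 0\}$, and finally to the distinguished point $F$, through a long sign-by-sign case analysis that exploits the disjointness of the zero sets guaranteed by (H1) and an irrationality condition $f_3(z_2)/f_3'(z_2) \notin \mathbb{Q}$ whose orbit acts as a ``transfer station.'' Once those two facts are granted, the present theorem follows immediately from the abstract criterion.
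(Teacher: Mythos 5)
Your proposal is correct and follows essentially the same route as the paper: apply Theorem \ref{conditions for uniformly geometrically ergodic} to $\check{P}$, obtaining the open small set from Lemma \ref{Lie condition of projective chain} combined with Lemma \ref{small set}, topological irreducibility from Lemma \ref{irreducibility for projective chain}, strong aperiodicity from $\check{\Phi}_0(\check{x})=\check{x}$ together with $\mathbb{P}(\tau<\varepsilon)>0$, and the drift condition trivially from compactness of $S\mathbb{T}^3$ with $V\equiv 1$. Your identification of the two structural lemmas as the genuine content is also consistent with how the paper organizes the argument.
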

\begin{proof}
	Similar to the proof of Theorem \ref{Uniformly geometrically ergodic}, it is sufficient to apply Theorem \ref{conditions for uniformly geometrically ergodic}. The existence of an open small set is established by Lemma \ref{Lie condition of projective chain} and Lemma \ref{small set}. Topological irreducibility is guaranteed by Lemma \ref{irreducibility for projective chain}. Strong aperiodicity follows directly from the condition $\mathbb{P}(\tau < \varepsilon) > 0$ for any $\varepsilon > 0$ and the fact that $\check{\Phi}_0(x,u) = (x,u)$. Finally, since $S\mathbb{T}^3$ is compact, $\check{P}$ satisfies a Lyapunov-Foster drift condition with the drift function $V \equiv 1$ automatically.
\end{proof}
\subsection{Proof of Corollary \ref{Ideal dynamo}}\label{subsection 5.2}
\quad Consider the kinematic dynamo equations on $\mathbb{T}^3$ under the frozen-flux limit ($\kappa = 0$):
\begin{equation*}
	\begin{cases}
		\partial_t B + (u \cdot \nabla) B - (B \cdot \nabla) u = 0, & \text{in } (0, \infty) \times \mathbb{T}^3, \\
		\nabla \cdot B = 0, & \text{in } (0, \infty) \times \mathbb{T}^3, \\
		B(0, \cdot) = B_0, & \text{in } \mathbb{T}^3,
	\end{cases}
\end{equation*}
where $u$ is the incompressible velocity field. Recall that
\begin{align*}
	&u_{\underline{\tau}}(t,x)\\
	=&\sum_{n=0}^{\infty} \left( \tau_{3n+1} X_1(x) \chi_{(3n,3n+1]}(t) + \tau_{3n+2} X_2(x) \chi_{(3n+1,3n+2]}(t) + \tau_{3n+3} X_3(x) \chi_{(3n+2,3n+3]}(t) \right).
\end{align*}
Let $\psi_{\underline{\tau}}^t$ be the flow of $\dot{x}=u_{\underline{\tau}}(t,x)$, then the solution of the above equation with $u=u_{\underline{\tau}}$ can be expressed as 
\[
B_{\underline{\tau}}(t,x)=D_{(\psi_{\underline{\tau}}^t)^{-1}(x)} \psi_{\underline{\tau}}^t B_0((\psi_{\underline{\tau}}^t)^{-1}(x)) \quad \text{for } (t,x) \in [0, \infty) \times \mathbb{T}^3.
\]
Since $\psi_{\underline{\tau}}^t$ is a diffeomorphism here, the above expression is equivalent to 
\begin{equation}\label{solu to magnetic eq}
	B_{\underline{\tau}}(t, \psi_{\underline{\tau}}^t(x))=D_x \psi_{\underline{\tau}}^t B_0(x)\quad \text{for } (t,x) \in [0, \infty) \times \mathbb{T}^3.
\end{equation}
Note that for discrete time, when $t=3n$ with $n \in \mathbb{N}$, we have $\psi_{\underline{\tau}}^{3n}(x)=\Phi_{\underline{\tau}}^n(x)$.

By Theorem \ref{Uniformly geometrically ergodic for projective chain}, the stationary probability measure of $\check{P}$ of the projective Markov chain $\{\check{\Phi}_{\underline{\tau}}^m\}$ is unique. Combining this result with Theorem III.1.2 in \cite{Ki}, we obtain the following lemma:
\begin{lem}\label{a version of Kifer}
	Let $\{\check{\Phi}_{\underline{\tau}}^m\}$ be as above. Then, for $\mu$-a.e. $x\in\mathbb{T}^3$, any $v\in \mathbb{S}^2$,  
	\begin{equation*}
		\lim_{m \to \infty} \frac{1}{m} \log \left| D_x \Phi_{\underline{\tau}}^m(x) v \right| = \lambda_1, \quad \mathbb{P}\text{-a.s.},
	\end{equation*}
    where $\lambda_1$ is the top positive Lyapunov exponent from Theorem \ref{Positivity of the top Lyapunov exponent}.
\end{lem}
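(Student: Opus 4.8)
The plan is to deduce Lemma~\ref{a version of Kifer} from the ``non-random'' multiplicative ergodic theorem of Kifer (Theorem III.1.2 in \cite{Ki}), whose only non-trivial hypotheses in our setting are the integrability condition for $\log\|D_x\Phi_{\underline{\tau}}(x)^{\pm 1}\|$, already verified at the beginning of Section~\ref{section 4}, and the uniqueness of the stationary measure of the projective Markov chain $\{\check{\Phi}_{\underline{\tau}}^m\}$, which is exactly the content of Theorem~\ref{Uniformly geometrically ergodic for projective chain}. So the work has, in effect, already been done; what remains is bookkeeping and an identification of the limiting exponent.

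First I would set up the linear cocycle $A_{\underline{\tau}}^m(x):=D_x\Phi_{\underline{\tau}}^m(x)$ over the one-point chain $\{\Phi_{\underline{\tau}}^m\}$ and recall that the induced action on the compact sphere bundle $S\mathbb{T}^3$ is precisely the projective chain $\{\check{\Phi}_{\underline{\tau}}^m\}$. By Theorem~\ref{Uniformly geometrically ergodic for projective chain} the kernel $\check{P}$ is Feller and has a unique stationary measure $\check{\mu}$; pushing $\check{\mu}$ forward under $\pi:S\mathbb{T}^3\to\mathbb{T}^3$ produces a stationary measure for $P$, which by Theorem~\ref{Uniformly geometrically ergodic} must equal $\mu$, so $\check{\mu}\,\pi^{-1}=\mu$ and $\check{\mu}$ disintegrates over $\mu$. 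With these ingredients Kifer's theorem applies and yields, for $\mu$-a.e.\ $x\in\mathbb{T}^3$ and every $v\in S_x\mathbb{T}^3$,
\[
\lim_{m\to\infty}\frac1m\log\bigl|A_{\underline{\tau}}^m(x)v\bigr|=\Lambda,\qquad \mathbb{P}\text{-a.s.},
\]
where $\Lambda=\int_{S\mathbb{T}^3}\mathbb{E}\bigl[\log|D_x\Phi_{\underline{\tau}}(x)u|\bigr]\,d\check{\mu}(x,u)$ is the Furstenberg integral associated with $\check{\mu}$.

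It then remains to identify $\Lambda$ with $\lambda_1$ from Theorem~\ref{Positivity of the top Lyapunov exponent}. By the ordinary multiplicative ergodic theorem, applicable by the same integrability bound, for $\mu$-a.e.\ $x$ and $\mathbb{P}$-a.e.\ $\underline{\tau}$ the cocycle $A_{\underline{\tau}}^m(x)$ admits Lyapunov exponents with $\lim_m\frac1m\log\|A_{\underline{\tau}}^m(x)\|=\lambda_1$, and every direction $v$ lying outside a certain proper linear subspace of $T_x\mathbb{T}^3$ — in particular Lebesgue-almost every $v\in S_x\mathbb{T}^3$ — satisfies $\frac1m\log|A_{\underline{\tau}}^m(x)v|\to\lambda_1$. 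Comparing this with the previous display for one such generic $v$ forces $\Lambda=\lambda_1$, whence the conclusion of Kifer's theorem reads $\lim_m\frac1m\log|D_x\Phi_{\underline{\tau}}^m(x)v|=\lambda_1$ for \emph{every} $v\in\mathbb{S}^2$, which is the assertion of the lemma.

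The proof requires no hard estimates. The only point that I expect to need care is checking that the hypotheses of Kifer's Theorem III.1.2 are literally met by our i.i.d.\ random-splitting cocycle — compactness of $S\mathbb{T}^3$, the Feller property of $\check{P}$, the two-sided integrability of $\log\|D_x\Phi_{\underline{\tau}}^{\pm1}\|$, and the relation $\check{\mu}\,\pi^{-1}=\mu$ — and that the abstract Furstenberg exponent $\Lambda$ produced by that theorem is the same number $\lambda_1$ as in Theorem~\ref{Positivity of the top Lyapunov exponent}. This identification, rather than any computation, is the main thing to get right.
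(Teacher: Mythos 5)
Your proposal is correct and follows essentially the same route as the paper: the paper proves Lemma \ref{a version of Kifer} by combining the uniqueness of the stationary measure of the projective chain (Theorem \ref{Uniformly geometrically ergodic for projective chain}) with Kifer's Theorem III.1.2, exactly as you do. The extra detail you supply — verifying the integrability hypothesis, noting $\check{\mu}\,\pi^{-1}=\mu$, and identifying the Furstenberg integral with $\lambda_1$ via a generic direction from the multiplicative ergodic theorem — is a correct filling-in of steps the paper leaves implicit.
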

We now proceed to prove the main result of this section, demonstrating that $u_{\underline{\tau}}$ is an ideal dynamo for $\mathbb{P}$-a.s. $\underline{\tau}$.
\begin{proof}[\textbf{Proof of Corollary \ref{Ideal dynamo}}]
	First, we claim that for any $\varepsilon \in (0,\lambda_1)$,  for $\mu$-a.e. $x\in\mathbb{T}^3$, any $v \in \mathbb{S}^2$ and $\mathbb{P}$-a.s. $\underline{\tau}$, there exists a positive constant $G=G(\varepsilon,x,\underline{\tau},v)$ such that for any $n \in \mathbb{N}$, there holds
	\[
	|D_x \Phi_{\underline{\tau}}^n v| \geq G e^{(\lambda_1-\varepsilon)n}.
	\]
	In fact, by Lemma \ref{a version of Kifer}, there exists $N=N(\varepsilon,x,\underline{\tau},v)>0$ such that for $n \geq N$, 
	\[
	\left| \frac{1}{n} \log|D_x \Phi_{\underline{\tau}}^n v|-\lambda_1  \right| < \varepsilon,
	\]
	which implies $|D_x \Phi_{\underline{\tau}}^n v| > e^{(\lambda_1-\varepsilon)n}$. For $n < N$, by utilizing the relation $|u|=|A^{-1}Au|\leq \|A^{-1}\||Au|$, we obtain that
	\[
	|D_x \Phi_{\underline{\tau}}^n v| \geq \frac{1}{C_0^{n} \prod_{i=1}^{3n} (1+2\tau_i)},
	\]
	where $C_0>1$ is independent of $n$ and $\underline{\tau}$. Therefore, the claim follows by taking 
	\[
	G(\varepsilon,x,\underline{\tau},v)=C_0^{-N}e^{-(\lambda_1-\varepsilon)N}\prod_{i=1}^{3N} (1+2\tau_i)^{-1}.
	\] 
	
	Next, we seek to establish a lower bound for the $L^2$ norm of $B(t)$. Applying H\"older's inequality, it suffices to focus on the $L^1$ norm. By \eqref{solu to magnetic eq} and the fact that $\psi_{\underline{\tau}}^t$ is volume preserving diffeomorphism, we have
	\begin{equation*}
		\|B_{\underline{\tau}}(t)\|_{L^1}= \int_{\mathbb{T}^3} |B_{\underline{\tau}}(t, \psi_{\underline{\tau}}^t(x))| dx = \int_{\mathbb{T}^3} |D_x \psi_{\underline{\tau}}^t B_0(x)| dx. 
	\end{equation*}   
    Take $B_0(x)$ as a constant unit vector field on $\mathbb{T}^3$. Then, for any $\varepsilon\in (0,\lambda_1)$ and any $n \in \mathbb{N}$,
	\begin{align}\label{integer}
		\|B_{\underline{\tau}}(3n)\|_{L^1} &= \int_{\mathbb{T}^3} \left| D_x \psi_{\underline{\tau}}^{3n} B_0(x) \right| dx\nonumber \\
		& = \int_{\mathbb{T}^3} \left|D_x \Phi_{\underline{\tau}}^n B_0(x) \right| dx \nonumber\\
		& \geq e^{(\lambda_1-\varepsilon)n} \int_{\mathbb{T}^3} G(\varepsilon,x,\underline{\tau},B_0(x))dx =: \hat{G} e^{(\lambda_1-\varepsilon)n}.
	\end{align}
	where $\hat{G}=\hat{G}(\varepsilon,\underline{\tau},B_0(x))$ is almost surely positive. 
	
	For time $t \in (3n,3(n+1))$, we have
	\[
	D_x\psi_{\underline{\tau}}^t=D_{\psi_{\underline{\tau}}^{3n}(x)} \psi_{\theta_{3n}\underline{\tau}}^{t-3n} \cdot D_x \psi_{\underline{\tau}}^{3n}. 
	\]
	Therefore, 
	\[
	|D_x \psi_{\underline{\tau}}^t B_0(x)| \geq\frac{|D_x \psi_{\underline{\tau}}^{3n} B_0(x)|}{\|(D_{\psi_{\underline{\tau}}^{3n}(x)} \psi_{\theta_{3n}\underline{\tau}}^{t-3n})^{-1}\|}\geq \frac{|D_x \psi_{\underline{\tau}}^{3n} B_0(x)|}{C_0\prod_{i=1}^{3} (1+2\tau_{3n+i})}.
	\]
	Then, by \eqref{integer} we obtain
	\begin{align*}
		\|B_{\underline{\tau}}(t)\|_{L^1} &= \int_{\mathbb{T}^3} |D_x \psi_{\underline{\tau}}^t B_0(x)| dx \\
		&\geq \frac{\hat{G} e^{(\lambda_1-\varepsilon)n}}{C_0\prod_{i=1}^{3} (1+2\tau_{3n+i})} =: \tilde{G} e^{\frac{1}{3}(\lambda_1-\varepsilon)t},
	\end{align*}
	where $\tilde{G}=\tilde{G}(\varepsilon,\underline{\tau},B_0(x)) =  C_0^{-1}e^{-(\lambda_1-\varepsilon)}\hat{G}\prod_{i=1}^{3} (1+2\tau_{3n+i})^{-1}$ is almost surely positive. Thus, the conclusion follows from the above estimate and the definition of $\gamma(0)$. 
\end{proof}

\section{Geometric ergodicity for the two-point Markov chain}\label{section 6}
    \quad To prove the almost-sure quenched correlation decay for the one-point chain, we rely on the $V$-uniformly geometrically ergodicity of the two-point chain $\{\tilde{\Phi}_{\underline{\tau}}^m\}$ and the Borel-Cantelli lemma. This involves applying Theorem \ref{conditions for uniformly geometrically ergodic} to the two-point chain. Since $\mathcal{I}$ is invariant under $\tilde{\Phi}_{\underline{\tau}}$, leading to at least one additional stationary measure for $P^{(2)}$, it is necessary to verify the conditions of Theorem \ref{conditions for uniformly geometrically ergodic} in the noncompact space $\mathcal{T}=\mathbb{T}^3\times \mathbb{T}^3\setminus \mathcal{I}$. The process of verifying the first three conditions is analogous to the one-point chain case as in the proof of Theorem \ref{Uniformly geometrically ergodic}, though with more careful treatment. To verify the Lyapunov-Foster drift condition, we introduce the concept of the twisted semigroup associated with the projective chain and proceed with spectral estimates. To obtain the desired properties of the eigenvalues and eigenfunctions, we utilize the uniformly geometrically ergodic property for the projective chain, as stated in \ref{Uniformly geometrically ergodic for projective chain}, along with the consequence of the positivity of the top Lyapunov exponent, as stated in Theorem \ref{Positivity of the top Lyapunov exponent}.
   \subsection{Topological irreducibility and small set property}
    \quad To establish V-uniform geometric ergodicity for the two-point Markov chain $\{\tilde{\Phi}_{\underline{\tau}}^m\}$, we need to verify the conditions of Theorem \ref{conditions for uniformly geometrically ergodic}. Denote
    \begin{align*}
    	\mathcal{I}_1 := \Bigl\{(x_1,x_2,x_3,y_1,y_2,y_3) &\in \mathcal{O} \times \mathcal{O}: \\
    	&f_1(y_3 + t) \equiv  f_1(x_3 + t),\\
    	&f_2(y_1 + t) \equiv  f_2(x_1 + t),\\
    	&f_3(y_2 + t) \equiv f_3(x_2 + t), \quad \text{for all } t \in \mathbb{R} \Bigr\},
    \end{align*}
    \begin{align*}
    	\mathcal{I}_2 := \Bigl\{(x_1,x_2,x_3,y_1,y_2,y_3) &\in \mathcal{O} \times \mathcal{O}: \\
    	&f_1(y_3 - t) \equiv f_1(x_3 + t),\\
    	&f_2(y_1 + t) \equiv -f_2(x_1 + t),\\
    	&f_3(y_2 - t) \equiv -f_3(x_2 + t), \quad \text{for all } t \in \mathbb{R} \Bigr\},
    \end{align*}
    \begin{align*}
    	\mathcal{I}_3 := \Bigl\{(x_1,x_2,x_3,y_1,y_2,y_3) &\in \mathcal{O} \times \mathcal{O}: \\
    	&f_1(y_3 + t) \equiv -f_1(x_3 + t),\\
    	&f_2(y_1 - t) \equiv -f_2(x_1 + t),\\
    	&f_3(y_2 - t) \equiv f_3(x_2 + t), \quad \text{for all } t \in \mathbb{R} \Bigr\},
    \end{align*}
    and 
    \begin{align*}
    	\mathcal{I}_4 := \Bigl\{(x_1,x_2,x_3,y_1,y_2,y_3) &\in \mathcal{O} \times \mathcal{O}: \\
    	&f_1(y_3 - t) \equiv -f_1(x_3 + t),\\
    	&f_2(y_1 - t) \equiv f_2(x_1 + t),\\
    	&f_3(y_2 + t) \equiv -f_3(x_2 + t), \quad \text{for all } t \in \mathbb{R} \Bigr\}.
    \end{align*}
    It follows from a direct computation that $\mathcal{I}=\cup_{k=1}^4 \mathcal{I}_k$ is the invariant set of $\tilde{\Phi}_{\underline{\tau}}$. We first prove the irreducibility of the two-point Markov chain on $\mathcal{T}=\mathbb{T}^3\times \mathbb{T}^3\setminus\mathcal{I}$.
    \begin{lem}\label{irreducible for two-point chain}
    	Assume (H1) holds. Given $(x',y')\in \mathcal{T}$ and $\varepsilon>0$, $B_\varepsilon(x',y')$ is reachable from $\mathcal{T}$. In particular, $\{\tilde{\Phi}_{\underline{\tau}}^m\}$ is topologically irreducible.
    \end{lem}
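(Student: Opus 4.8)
The plan is to show that $B_\varepsilon(x',y')$ is reachable from every $(x,y)\in\mathcal T$ by a two-stage maneuver: first steer the pair $(x,y)$ to a suitable \emph{transfer station}, and then from the transfer station exploit the equidistribution of linear flows with rationally independent frequencies to land inside $B_\varepsilon(x',y')$. Note first that $\mathcal I$ is closed (each defining condition ``$\equiv$ for all $t\in\mathbb R$'' is closed) and $\tilde\Phi_{\underline\tau}$-invariant, so every $\varphi^{(i)}_t$ maps $\mathcal T$ into $\mathcal T$ and $\mathcal T$ is open; since balls form a basis for the topology of $\mathcal T$ it suffices to treat $B_\varepsilon(x',y')\subset\mathcal T$.

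Call $(\bar x,\bar y)$ a transfer station if $2\pi,\,f_1(\bar x_3),\,f_1'(\bar x_3),\,f_1(\bar y_3),\,f_1'(\bar y_3)$ are $\mathbb Q$-linearly independent. Because (H1), analyticity and non-constancy of $f_1$ force $a_1f_1+a_2f_1'$ to be constant only when $a_1=a_2=0$, the set of $(\bar x_3,\bar y_3)$ violating this condition is a countable union of proper real-analytic subsets of $\mathbb T^2$, so transfer stations are co-null; moreover any transfer station lies in $\mathcal T$, since then $f_1(\bar x_3)=\pm f_1(\bar y_3)$ is impossible, ruling out all four $\mathcal I_k$. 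From a transfer station, applying $\varphi^{(1)}_t$ fixes $\bar x_3,\bar y_3$ and translates $(\bar x_1,\bar x_2,\bar y_1,\bar y_2)$ along the ray $t\,(f_1(\bar x_3),f_1'(\bar x_3),f_1(\bar y_3),f_1'(\bar y_3))$, whose forward orbit is dense in $(\mathbb R/2\pi\mathbb Z)^4$ by Weyl's theorem; so one may pick $t\ge 0$ carrying $(\bar x_1,\bar x_2,\bar y_1,\bar y_2)$ to within $\varepsilon/2$ of $(x_1',x_2',y_1',y_2')$, and, among the (non-discrete) set of such good times, one may further require that the new first coordinates $(\hat x_1,\hat y_1)$ make $2\pi,f_2(\hat x_1),f_2'(\hat x_1),f_2(\hat y_1),f_2'(\hat y_1)$ $\mathbb Q$-linearly independent (the times to be excluded form a countable union of discrete sets, again by analyticity). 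Next, $\varphi^{(2)}_s$ fixes the first coordinates and translates $(\hat x_2,\bar x_3,\hat y_2,\bar y_3)$ densely over $(\mathbb R/2\pi\mathbb Z)^4$, so one may pick $s\ge 0$ bringing the third coordinates to within $\varepsilon/2$ of $(x_3',y_3')$ while moving the second coordinates by less than $\varepsilon/2$. The resulting point lies in $B_\varepsilon(x',y')$, reached from the transfer station by the single round $\varphi^{(2)}_s\circ\varphi^{(1)}_t$ (i.e. with $\tau_3=0$).

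It remains to steer an arbitrary $(x,y)\in\mathcal T$ to a transfer station. Since the diagonal lies in $\mathcal I_1$, membership in $\mathcal T$ forces $x\ne y$; from this, using the explicit shear form of the $\varphi^{(i)}_t$, (H1) (as the zero sets of $f_i,f_i',f_i''$ are pairwise disjoint, one can leave any degenerate position by an arbitrarily short flow), and the hypothesis $(x,y)\notin\mathcal I=\bigcup_{k=1}^4\mathcal I_k$ (which is precisely what prevents the two trajectories from remaining rigidly linked in every coordinate), I would show by a finite case analysis---organized as in Lemma~\ref{irreducibility} and Lemma~\ref{irreducibility for projective chain}, but now tracking the pair---that one can reach a point with $\bar x_3\ne\bar y_3$ together with an open range of attainable values $(\bar x_3,\bar y_3)$; choosing within that range a pair with the $\mathbb Q$-independence above produces a transfer station. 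Concatenating the two stages gives reachability of $B_\varepsilon(x',y')$ from $(x,y)$.

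The ``in particular'' then follows in the standard way: given an open $U\subset\mathcal T$, fix $B_\varepsilon(x',y')\subset U$ and the control $\underline\tau\in\mathbb R_{\ge 0}^{3m}$ produced above; by continuity of $\underline\tau\mapsto\tilde\Phi_{\underline\tau}^m(x,y)$ there is a box around $\underline\tau$ of positive Lebesgue---hence positive exponential---measure whose image remains in $U$, so $P^{(2)\,m}((x,y),U)>0$ and $\{\tilde\Phi_{\underline\tau}^m\}$ is topologically irreducible. I expect the main obstacle to be the case analysis of the third paragraph: verifying that non-membership in $\mathcal I$ supplies exactly the non-degeneracy needed to disentangle the two trajectories enough to reach a transfer station---the two-point counterpart of the long classification carried out for the projective chain in Lemma~\ref{irreducibility for projective chain}.
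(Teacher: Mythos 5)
Your overall architecture is sound, and the second half of your argument (from a ``transfer station'' to $B_\varepsilon(x',y')$) is essentially correct; it is in fact a mild variant of what the paper does. The paper reaches the target in two equidistribution steps using the $2$-fold independence of $f_1(x_3),f_1(y_3)$ to place $(x_1,y_1)$ and then the $4$-fold independence of $f_2(x_1),f_2'(x_1),f_2(y_1),f_2'(y_1)$ to place $(x_2,x_3,y_2,y_3)$, whereas you ask for $4$-fold independence of $f_1,f_1'$ at the two base points and control $(x_1,x_2,y_1,y_2)$ in one stroke. Both work; your claim that ``$a_1f_1+a_2f_1'$ constant forces $a_1=a_2=0$'' for a non-constant periodic analytic $f_1$ is correct, your observation that any transfer station avoids all four $\mathcal{I}_k$ (since each forces $f_1(\bar y_3)=\pm f_1(\bar x_3)$) is correct, and the final continuity-in-$\underline\tau$ argument for topological irreducibility is standard. (One small imprecision: in the $\varphi^{(2)}_s$ stage you should simply target all four of $(x_2',x_3',y_2',y_3')$ at once using density in $\mathbb{T}^4$, rather than asking that the second coordinates ``move by less than $\varepsilon/2$'' --- density gives you no control on the size of the displacement, only on where you land.)

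The genuine gap is the third paragraph: steering an arbitrary $(x,y)\in\mathcal{T}$ to a point admitting an \emph{open} set of attainable values of $(\bar x_3,\bar y_3)$. You state ``I would show by a finite case analysis \dots'' --- but this is precisely the core of the paper's proof (its Step (1), that the set $D$ of pairs with $f_1(x_3),f_1(y_3)$ rationally independent is reachable), and it occupies the bulk of the argument. Concretely, one must show that the map $(\tau_1,\tau_2,\tau_3)\mapsto(x_3^{(3)},y_3^{(3)})$ is a submersion at some parameter, and the obstruction analysis is delicate: assuming the relevant $2\times 2$ determinant vanishes identically, one derives functional identities such as $f_3(y_2^{(2)})\equiv c\,f_3(x_2^{(2)})$ or $f_2'(x_1+t)\equiv -f_2'(y_1+t)$, which are then propagated (via iteration/contraction arguments for $c\neq\pm 1$, and via Fourier-support or symmetry arguments otherwise) until one concludes $(x,y)\in\mathcal{I}_2$ or similar, contradicting $(x,y)\in\mathcal{T}$. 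It is exactly here that non-membership in $\mathcal{I}=\bigcup_k\mathcal{I}_k$ is consumed, and verifying that the four sets $\mathcal{I}_k$ are the \emph{only} obstructions is not routine --- it is the reason the invariant set is defined the way it is. Asserting that this classification ``can be organized as in Lemmas~\ref{irreducibility} and~\ref{irreducibility for projective chain}'' does not discharge it: those lemmas concern a single point (resp.\ a point and a direction) and never have to rule out rigid couplings between two trajectories. Until this case analysis is actually carried out, the proof is incomplete.
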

    \begin{proof}
    	For any given $(x,y)$, $(x',y')\in \mathcal{T}$, we represent their components as $x=(x_1,x_2,x_3)$, $y=(y_1,y_2,y_3)$, $x'=(x_1',x_2',x_3')$, $y'=(y_1',y_2',y_3')$. Define the sets as follows:
    	\[
    	D=\{(x,y)\in \mathcal{T}: f_1 (x_3) \text{ and } f_1(y_3) \text{ are rationally linearly independent}\},
    	\]
    	\[
    	E=\{(x,y)\in \mathcal{T}: x_1 \in B_{\varepsilon/12}(x_1'), y_1\in B_{\varepsilon/12}(y_1')\},
    	\]
    	\[
    	\resizebox{\textwidth}{!}{$
    		F=\{(x,y)\in \mathcal{T}: (x,y)\in B_{\varepsilon/12}(E), f_2(x_1), f_2(y_1), f_2'(x_1) \text{ and }f_2'(y_1) \text{ are rationally linearly independent}\}.
    		$}
    	\]
    	To establish that $B_\varepsilon(x',y')$ is reachable from $\mathcal{T}$, it is sufficient to demonstrate the following:
    	\begin{itemize}
    		\item[(1)] $D$ is reachable from $\mathcal{T}$.\\
    		Fix any $(x,y)\in \mathcal{T}$ and let $\delta>0$ small enough. Define the projection map $\pi: I\to\mathbb{T}^2$ by $\pi(x,y)=(x_3,y_3)$. Define $F_{(x,y)}:[0,\delta)^3\to \mathbb{T}^2$ by 
    		\begin{align*}
    			F_{(x,y)}(\tau_1,\tau_2,\tau_3)=\pi\circ\tilde{\Phi}_{\underline{\tau}}(x,y)
    			=\begin{pmatrix}
    				x_3+\tau_2 f_2'(x_1^{(1)})+\tau_3  f_3(x_2^{(2)}) \\
    				y_3+\tau_2 f_2'(y_1^{(1)})+\tau_3  f_3(y_2^{(2)}) 
    			\end{pmatrix}=:\begin{pmatrix}
    			x_3^{(3)} \\
    			y_3^{(3)}
    		\end{pmatrix},
    		\end{align*} 
    		where $x_1^{(1)}=x_1+\tau_1 f_1( x_3)$, $x_2^{(2)}=x_2+\tau_1 f_1'(x_3)+\tau_2 f_2(x_1^{(1)})$, $y_1^{(1)}=y_1+\tau_1 f_1(y_3)$, $y_2^{(2)}=y_2+\tau_1 f_1'(y_3)+\tau_2 f_2(y_1^{(1)})$. Then $D_{(\tau_1,\tau_2,\tau_3)}F_{(x,y)}$ is given by 
    		\[
    		\begin{pmatrix}
    			* & f_2'(x_1^{(1)})+\tau_3 f_3'(x_2^{(2)})f_2(x_1^{(1)})& f_3(x_2^{(2)}) \\
    			* & f_2'(y_1^{(1)})+\tau_3 f_3'(y_2^{(2)})f_2(y_1^{(1)})& f_3(y_2^{(2)})
    		\end{pmatrix},
    		\]
    		\begin{itemize}
    			\item[(I)] If $|f_1(x_3)| \neq |f_1(y_3)|$, we claim that  there exists $(\tau_1,\tau_2,\tau_3)\in [0,\delta)^3$ such that the determinant of the matrix
    			\begin{equation*}
    				A_{(x,y)}(\tau_1,\tau_2,\tau_3):=\begin{pmatrix}
    					f_2'(x_1^{(1)})+\tau_3 f_3'(x_2^{(2)})f_2(x_1^{(1)})& f_3(x_2^{(2)}) \\
    					f_2'(y_1^{(1)})+\tau_3 f_3'(y_2^{(2)})f_2(y_1^{(1)})& f_3(y_2^{(2)})
    				\end{pmatrix},
    			\end{equation*}
    			is nonzero. Without loss of generality, we may assume $f_1(x_3)\neq 0$. By contradiction, assume that for all $(\tau_1,\tau_2,\tau_3)\in [0,\delta)^3$, the determinant of $\det A_{(x,y)}(\tau_1,\tau_2,\tau_3)$ is identically zero. Then, we have 
    			\begin{equation*}
    				\left(f_2'(x_1^{(1)})+\tau_3 f_3'(x_2^{(2)})f_2(x_1^{(1)})\right)  f_3(y_2^{(2)}) \equiv \left( f_2'(y_1^{(1)})+\tau_3 f_3'(y_2^{(2)})f_2(y_1^{(1)})\right)  f_3(x_2^{(2)})
    			\end{equation*}
    			for all $(\tau_1,\tau_2,\tau_3)\in [0,\delta)^3$. And thus, for all $(\tau_1,\tau_2,\tau_3)\in [0,\delta)^3$, we have
    			\begin{equation}\label{eq51}
    				f_2'(x_1^{(1)}) f_3(y_2^{(2)}) \equiv f_2'(y_1^{(1)}) f_3(x_2^{(2)}),
    			\end{equation}
    			and 
    			\begin{equation}\label{eq52}	 
    				f_3'(x_2^{(2)})f_2(x_1^{(1)})f_3(y_2^{(2)}) \equiv f_3'(y_2^{(2)})f_2(y_1^{(1)})f_3(x_2^{(2)}).
    			\end{equation}
    			By a direct calculation, we obtain that there exist $\tau_1, \tau_2\in[0,\delta)$ such that $f_3( x_2^{(2)})\neq 0$ and $f_3( y_2^{(2)}) \neq 0$. By a direct calculation of \eqref{eq52}, we conclude that $f_3(y_2^{(2)}) \equiv c f_3(x_2^{(2)})$ where $c$ is a non-zero constant.
    			We divide the analysis into two cases:
    			\begin{itemize}
    				\item[(i)] $c\neq \pm 1$. Without loss of generality, we assume $|c|<1$. Fix $\tau_1 \in [0,\delta)$ such that $f_2(x_1+\tau_1 f_1(x_3)) \neq 0$. Then, for all $t\in \mathbb{R}$, we have
    				\[
    				f_3(t) \equiv c f_3\left( \frac{t-x_2-\tau_1 f_1'(x_3)}{f_2(x_1^{(1)})} \cdot f_2(y_1^{(1)}) +\tau_1 f_1'(y_3)+y_2 \right) =: c f_3(a+bt).
    				\]
    				By iterating the above equation, for any $k \geq 1$, 
    				\[
    				f_3(t) \equiv cf_3(a+bt) \equiv \cdots \equiv 
    				\begin{cases}
    					c^{k} f_3(\frac{1-b^k}{1-b}a+b^k t), & b\neq 1,\\
    					c^k f_3(ka+t), & b=1.
    				\end{cases}               
    				\] 
    				Therefore, we have $|f_3(t)| \leq |c|^{k} \sup_{x\in \mathbb{S}^1} |f_3(x)|$. Let $k \to \infty$, we obtain $f_3(t) \equiv 0$ for all $t \in \mathbb{R}$, which is a contradiction. 
    				\item[(ii)] $c=\pm 1$. Without loss of generality, we assume $c=1$. Then, $f_3(x_2^{(2)}) \equiv f_3(y_2^{(2)})$.	By a direct calculation, we obtain that there exist $\tau_1, \tau_2\in[0,\delta)$ such that $f_3( x_2^{(2)})=f_3( y_2^{(2)})\neq 0$. Thus, by \eqref{eq51} we have $f_2'(x_1+\tau_1 f_1(x_3)) \equiv f_2'(y_1+\tau_1 f_1(y_3))$ for all $\tau_1$ in an open interval within $(0,\delta)$. Since $|f_1(x_3)| \neq |f_1(y_3)|$, then for all $t\in \mathbb{R}$, we have
    					\[
    					f_2'(t) \equiv f_2'\left( \frac{t-x_1}{f_1(x_3)} \cdot f_1(y_3)+y_1 \right) =: f_2'(a+bt). 
    					\]
    					We may assume $|b|=|f_1(y_3)/f_1(x_3)|<1$. Then, by iterating the above equation, for any $k \geq 1$,
    					\[
    					f_2'(t) \equiv f_2'(a+bt) \equiv \cdots \equiv f_2'\left(\frac{1-b^k}{1-b}a+b^k t\right).
    					\]
    					Let $k \to \infty$, we obtain $f_2'(t) \equiv f_2'(a/(1-b))$ for all $t \in \mathbb{R}$, which is a contradiction.
    			\end{itemize}
    			\item[(II)] If $|f_1(x_3)| = |f_1(y_3)|\neq 0$, we claim that there exist $(\tau_1,\tau_2,\tau_3)\in [0,\delta)^3$ such that $|f_1(x_3^{(3)})|\neq |f_1(y_3^{(3)})|$. We can then repeat the proof of (1)(I). Now, we prove the claim. Without loss of generality, we assume $f_1(x_3)=f_1(y_3)\neq 0$.
    			\begin{itemize}
    				\item[(i)] If there exists $\tau_1\in [0,\delta)$ such that $|f_2'(x_1+\tau_1f_1(x_3))|\neq |f_2'(x_1+\tau_1f_1(y_3))|$, then we can choose $\tau_2$ such that $|f_1(x_3+\tau_2f_2'(x_1+\tau_1f_1(x_3)))|\neq |f_1(y_3+\tau_2f_2'(y_1+\tau_1f_1(y_3)))|$. Setting $\tau_3=0$ completes the proof.
    				\item[(ii)] If $|f_2'(x_1 + \tau_1 f_1(x_3))| \equiv |f_2'(x_1 + \tau_1 f_1(y_3))|$ for all $\tau_1 \in [0, \delta)$, and since $f_2 \in C^{\omega}(\mathbb{S}^1, \mathbb{R})$, it follows that $|f_2'(x_1 + t)| \equiv |f_2'(x_1 + t)|$ for all $t \in \mathbb{R}$. Without loss of generality, assume that $f_2'(x_1 + t) \equiv -f_2'(y_1 + t)$, which implies that $f_2(x_1 + t) \equiv -f_2(y_1 + t)+c$ for all $t \in \mathbb{R}$, where $c\in\mathbb{R}$ is a constant. Choose $\tau_1\in [0,\delta)$ such that $f_2'(x_1+\tau_1f_1(x_3)))\neq 0$, $f_2(x_1+\tau_1f_1(x_3)))\neq 0$, and 
    				\[
    				\frac{c}{f_2(x_1 + \tau_1 f_1(x_3))} - 1 \notin \{ 0, \pm 1 \} \quad \text{if} \quad c \neq 0.
    				\]
    				
    				If there exists $\tau_2\in [0,\delta)$ such that $|f_1(x_3+\tau_2f_2'(x_1+\tau_1f_1(x_3)))|\neq |f_1(y_3+\tau_2f_2'(y_1+\tau_1f_1(y_3)))|$. Setting $\tau_3=0$ completes the proof. 
    				
    				If $|f_1(x_3+\tau_2f_2'(x_1+\tau_1f_1(x_3)))|\equiv |f_1(y_3+\tau_2f_2'(y_1+\tau_1f_1(y_3)))|$ for all $\tau_2\in [0,\delta)$, and since $f_1(x_3)=f_1(y_3)\neq 0$, it follows that $f_1(x_3+t)\equiv f_1(y_3-t)$ for all $t\in\mathbb{R}$. If there exists $\tau_2\in [0,\delta)$ such that $|f_3(x_2^{(2)}))|\neq |f_3(y_2^{(2)})|$, then we can choose $\tau_3$ such that $|f_1(x_3^{(3)})|\neq |f_1(y_3^{(3)})|$, completing the proof. If $|f_3(x_2^{(2)})|\equiv |f_3(y_2^{(2)})|$ for all $\tau_2\in [0,\delta)$, we proceed by splitting the analysis into the following two cases:
    				\begin{enumerate}
    					\item[(a)] If $f_3(x_2^{(2)})\equiv f_3(y_2^{(2)})$ for all $\tau_2\in [0,\delta)$, and since $f_2(x_1 + t) \equiv -f_2(y_1 + t)+c$ for all $t \in \mathbb{R}$, along with $f_1(x_3)=f_1(y_3)\neq 0$, it follows that $f_3(x_2+\tau_2f_2(x_1+\tau_1f_1(x_3)))\equiv f_3(y_2-\tau_2f_2(x_1+\tau_1f_1(x_3))+c\tau_2)$. 
    					
    					If $c\neq 0$, for all $t\in \mathbb{R}$, we have
    					\begin{align*}
    						f_3(t) &\equiv f_3\left(x_2+y_2- \frac{c x_2}{f_2(x_1+\tau_1f_1(x_3))}+\left(\frac{c}{f_2(x_1+\tau_1f_1(x_3))}-1\right)t \right) \\
    						&=:f_3(a+bt).
    					\end{align*}
    				    Without loss of generality, assume $|b|<1$. By iterating this equation, we obtain for any $k \geq 1$,
    				    \[
    				    f_3(t) \equiv f_3(a+bt) \equiv \cdots \equiv f_3\left(\frac{1-b^k}{1-b}a+b^k t\right).
    				    \]
    				    Let $k \to \infty$, we obtain $f_3(t) \equiv f_3(a/(1-b))$ for all $t \in \mathbb{R}$, which is a contradiction. 
    				    
    				    If $c=0$, $f_3(x_2+\tau_2f_2(x_1+\tau_1f_1(x_3)))\equiv f_3(y_2-\tau_2f_2(x_1+\tau_1f_1(x_3)))$ for all $\tau_2\in [0,\delta)$. Since $f_1\in C^{\omega}(\mathbb{S}^1, \mathbb{R})$ is non-constant, we can choose $\tau_2\in [0, \delta)$ such that $f_1(p)\neq f_1(2(y_3-\tau_2f_2'(x_1+\tau_1f_1(x_3)))-p)$ for some $p\in\mathbb{S}^1$. If $|f_1(x_3+\tau_2f_2'(x_1+\tau_1f_1(x_3))+\tau_3f_3(x_2+\tau_2f_2(x_1+\tau_1f_1(x_3))))|\equiv |f_1(y_3-\tau_2f_2'(y_1+\tau_1f_1(y_3))+\tau_3f_3(y_2+\tau_2f_2(y_1+\tau_1f_1(y_3))))|$ for all $\tau_3\in [0,\delta)$, and since $f_1(x_3)=f_1(y_3)\neq 0$, $f_2(x_1+t)\equiv- f_2(y_1+t)$, $f_1(x_3+t)\equiv f_1(y_3-t)$ for all $t\in\mathbb{R}$, it follows that $f_1(t_3)\equiv f_1(2(y_3-\tau_2f_2'(x_1+\tau_1f_1(x_3)))-t_3)$ for all $t_3\in\mathbb{R}$, which is a contradiction.
    					\item[(b)] If $f_3(x_2^{(2)})\equiv -f_3(y_2^{(2)})$ for all $\tau_2\in [0,\delta)$, and since $f_2(x_1 + t) \equiv -f_2(y_1 + t)+c$ for all $t \in \mathbb{R}$ and $f_1(x_3)=f_1(y_3)\neq 0$, it follows that $f_3(x_2+\tau_2f_2(x_1+\tau_1f_1(x_3)))\equiv -f_3(y_2-\tau_2f_2(x_1+\tau_1f_1(x_3))+c\tau_2)$. If $c \neq 0$, the situation is analogous to case (1)(II)(ii)(a). Now, assuming $c = 0$, we obtain
    					\[
    					f_3(x_2 + t) \equiv -f_3(y_2 - t) \quad \text{for all} \quad t \in \mathbb{R}.
    					\]
    					This implies that $(x,y)\in \mathcal{I}_2$, which leads to a contradiction.
    				\end{enumerate}
    			\end{itemize}
    			\item[(III)] If $f_1( x_3)=f_1(y_3)=0$, we can select $\tau_1\in[0,\delta)$ such that $f_3(x_2+\tau_1 f_1'(x_3))\neq 0$, set $\tau_2=0$, and choose $\tau_3\in[0,\delta)$ such that $f_1(x_3+\tau_3 f_3(x_2+\tau_1 f_1'(x_3))\neq 0$. This can be reduced to the previous argument.
    		\end{itemize}
    		Thus, by the constant rank theorem, $D$ is reachable from $\mathcal{T}$. 
    		\item[(2)] $E$ is reachable from $D$.\\
    		Fix any $(x,y)\in D$, the flow map $(x_1,y_1) \mapsto (x_1+t f_1(x_3), y_1+tf_1(y_3))$, $t\in [0,\infty)$ is dense on $\mathbb{T}^2$, since $f_1(x_3)$ and $f_1(y_3)$ are rationally linearly independent. Then, we can choose $\tau_1\geq 0$ and $\tau_2=\tau_3=0$ such that $x_1+\tau_1 f_1(x_3) \in B_{\varepsilon/12}(x_1')$ and $y_1+\tau_1 f_1(y_3)\in B_{\varepsilon/12}(y_1')$. Thus, $E$ is reachable from $D$.
    		 \item[(3)] $F$ is reachable from $E$.\\
    		 This proof is similar to case (1), and  by choosing 
    		 \[\delta< \frac{\varepsilon}{24M},\]
    		 where $M:=\sup_{x \in \mathbb{S}^1}\max\{|f_2(x)|, |f_3'(x)|\}$, we deduce that $F$ is reachable from $E$. It is important to note that, in this case, we require not only that $f_2(x_1)$ and $f_2(y_1)$ are rationally linearly independent for some $x_1$ and $y_1$, but also that $f_2(x_1)$, $f_2'(x_1)$, $f_2(y_1)$, $f_2'(y_1)$ are rationally linearly independent. Nevertheless, this condition can still be satisfied. In fact, for any $(x,y)\in E$, define $G_{(x,y)}: [0,\delta)^3\to \mathbb{T}^2$ by
    		 \begin{align*}
    		 	&G_{(x,y)}(\tau_1,\tau_2,\tau_3)\\
    		 	=&\begin{pmatrix}
    		 		x_1+\tau_1f_1(x_3)+\tau_3f_3'(x_2+\tau_1f_1'(x_3)+\tau_2f_2(x_1+\tau_1f_1(x_3)))\\
    		 		y_1+\tau_1f_1(y_3)+\tau_3f_3'(y_2+\tau_1f_1'(y_3)+\tau_2f_2(y_1+\tau_1f_1(y_3)))
    		 	\end{pmatrix}\\
    		 	=:&(x_1^{(3)}, y_1^{(3)}).
    		 \end{align*}  
    		 Following the method used in case (1), we can demonstrate that there exists $\underline{\tau} \in [0, \delta)^3$ where $G_{(x,y)}$ is a submersion at $\underline{\tau}$. By the constant rank theorem, the image of $G_{(x,y)}$ contains a connected open neighborhood $U$ of $(x_1^{(3)}, y_1^{(3)})$. For any $q=(q_1, q_2, q_3, q_4)\in \mathbb{Q}^4\setminus\{\textbf{0}\}$, define $H_q: U\to\mathbb{R}$ by
    		 \[
    		 H_q(z_0,z_1)=q_1f_1(z_0)+q_2f_1'(z_0)+q_3f_1(z_1)+q_4f_1'(z_1).
    		 \]
    		 Since $f_1\in C^{\omega}(\mathbb{S}^1,\mathbb{R})$ is non-constant, $H_q$ is not identically zero. Using a result established in \cite{M}, it follows that for every $q \in \mathbb{Q}^4 \setminus \{\mathbf{0}\}$,
    		 \[
    		 H_q^{-1}(0):=\{(z_0,z_1):H_q(z_0,z_1)=0\}
    		 \]
    		 has zero Lebesgue measure. Denote
    		 \[
    		 \mathcal{N}:=\bigcup_{q\in \mathbb{Q}^4\setminus\{\textbf{0}\}}H_q^{-1}(0),
    		 \]
    		 and note that $\mathcal{N}$ is also a set of Lebesgue measure zero. Therefore, for any $(z_0, z_1)\in U\setminus \mathcal{N}$, $f_2(z_0)$, $f_2'(z_0)$, $f_2(z_1)$, and $f_2'(z_1)$ are rationally linearly independent.
    		 \item[(4)] $B_\varepsilon(x', y')$ is reachable from $F$.\\            
    		 By applying a similar proof strategy as in case (2), we demonstrate that $B_\varepsilon(x',y')$ is reachable from $F$. In this case, the flow map is $(x_2,x_3,y_2,y_3)\mapsto(x_2+t f_2(x_1)), x_3+t f_2'(x_1), y_2+t f_2(y_1), y_3+t f_2'(y_1))$. And we choose $\tau_2\geq 0$ and $\tau_1=\tau_3=0$ such that $x_2+\tau_2 f_2(x_1)\in B_{\varepsilon/3}(x_2')$, $x_3+\tau_2 f_2'(x_1)\in B_{\varepsilon/3}(x_3')$, $y_2+\tau_2 f_2(y_1)\in B_{\varepsilon/3}(y_2')$ and $y_3+\tau_2 f_2'(y_1)\in B_{\varepsilon/3}(y_2')$. 
    	\end{itemize}
    \end{proof}
    We next establish the Lie bracket condition for the two-point Markov chain, which ensures the existence of an open small set. 
    \begin{lem}\label{Lie bracket of two-point}
    	Let $\tilde{\mathcal{X}}$ be as above. Then, there exists some ${\tilde{x}}\in \mathcal{T}$ such that $\text{Lie}_{\tilde{x}}(\tilde{\mathcal{X}})=T_{\tilde{x}}\mathcal{T}$. 
    \end{lem}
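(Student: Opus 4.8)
The plan is to verify the Lie bracket condition for the two-point chain by exhibiting an explicit point $\tilde{x} = (z, w) \in \mathcal{T}$ at which the vector fields $\tilde{X}_1, \tilde{X}_2, \tilde{X}_3$ together with their first-order Lie brackets $[\tilde{X}_i, \tilde{X}_j]$ span the $6$-dimensional tangent space $T_{\tilde{x}}\mathcal{T}$. Since $\tilde{X}_i(x,y) = (X_i(x), X_i(y))^T$ acts diagonally, one computes immediately that $[\tilde{X}_i,\tilde{X}_j](x,y) = ([X_i,X_j](x), [X_i,X_j](y))^T$; the bracket formulas for $[X_i,X_j]$ are exactly the ones already recorded in the proof of Lemma \ref{Lie condition of projective chain}. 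So the $6\times 6$ matrix whose columns are $\tilde{X}_1, \tilde{X}_2, \tilde{X}_3, [\tilde{X}_1,\tilde{X}_2], [\tilde{X}_1,\tilde{X}_3], [\tilde{X}_2,\tilde{X}_3]$ has block structure with an $x$-block of three rows and a $y$-block of three rows, each row of the form $(X_\bullet(x) \text{ or } [X_\bullet,X_\bullet](x))$.

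First I would choose the base point cleverly: pick $z = (z_1, z_2, z_3)$ with $f_1'(z_3) = f_2'(z_1) = f_3'(z_2) = 0$ (so $z_i$ are critical points of the $f$'s), which by (H1) forces $f_i(z_{i-1}) \neq 0$ and $f_i''(z_{i-1}) \neq 0$. At such a point the $3\times 3$ matrix $(X_1(z), X_2(z), X_3(z))$ is diagonal with nonzero entries $f_1(z_3), f_2(z_1), f_3(z_2)$, hence invertible; this is the same trick used for the one-point chain. Then I would pick $w = (w_1, w_2, w_3) \neq z$ also at critical points of the $f$'s but generic enough that $(z,w) \notin \mathcal{I} = \cup_k \mathcal{I}_k$ — e.g. so that $|f_1(z_3)| \neq |f_1(w_3)|$, which rules out all four $\mathcal{I}_k$ simultaneously since membership in any $\mathcal{I}_k$ forces $f_1(w_3 \pm t) \equiv \pm f_1(z_3 + t)$ and in particular $|f_1(w_3)| = |f_1(z_3)|$. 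This guarantees $\tilde{x} = (z,w) \in \mathcal{T}$.

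Next I would analyze the determinant of $M(\tilde{x}) = (\tilde{X}_1, \tilde{X}_2, \tilde{X}_3, [\tilde{X}_1,\tilde{X}_2], [\tilde{X}_1,\tilde{X}_3], [\tilde{X}_2,\tilde{X}_3])(\tilde{x})$. With both $z$ and $w$ at critical points, the columns simplify: e.g. $[X_1,X_2](z) = (0, 0, f_1(z_3) f_2''(z_1))^T$ and $[X_1,X_2](w) = (0,0, f_1(w_3) f_2''(w_1))^T$, and similarly the other brackets each have a single nonzero entry at critical points (namely $[X_1,X_3] = (0, -f_1''f_3, 0)$-type, $[X_2,X_3] = (f_2 f_3'', 0, 0)$-type, modulo which critical points one uses). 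The six columns then form a fairly sparse matrix, and I would expand the determinant to get a product of $f_i$'s, $f_i''$'s, and a factor like $f_1(z_3) f_2(w_1) - f_1(w_3) f_2(z_1)$ or a similar $2\times 2$ minor obstruction coming from the two copies of $[X_i,X_j]$ landing in the same coordinate slot in the $z$- and $w$-blocks. The main obstacle — and what requires the careful case analysis reminiscent of Lemma \ref{Lie bracket of lifted chain} — is that at critical points the three brackets $[\tilde{X}_1,\tilde{X}_2], [\tilde{X}_1,\tilde{X}_3], [\tilde{X}_2,\tilde{X}_3]$ occupy only three coordinate directions, doubled over the $z$- and $w$-blocks, so one must check the resulting cross-terms do not conspire to vanish; this is where the freedom to perturb $w$ away from the critical points of one of the $f_i$'s (while keeping $\tilde x \in \mathcal T$) is used to break any accidental degeneracy, much as the auxiliary parameters $u_i$ were used in the lifted-chain computation. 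Once a nonvanishing determinant is exhibited at some admissible $\tilde x$, the conclusion $\text{Lie}_{\tilde x}(\tilde{\mathcal{X}}) = T_{\tilde x}\mathcal{T}$ follows.
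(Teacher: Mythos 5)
Your setup coincides with the paper's: place both points at critical points of the relevant $f_i$ (i.e.\ in $C_{f_2'}\times C_{f_3'}\times C_{f_1'}$), note that the brackets act componentwise so the one-point formulas from Lemma \ref{Lie condition of projective chain} can be reused, and reduce everything to the nonvanishing of the $6\times 6$ determinant of $(\tilde X_1,\tilde X_2,\tilde X_3,[\tilde X_1,\tilde X_2],[\tilde X_1,\tilde X_3],[\tilde X_2,\tilde X_3])$. Your observation that $|f_1(z_3)|\neq |f_1(w_3)|$ already excludes all four pieces of $\mathcal{I}$ (evaluate each defining identity at $t=0$) is correct and is a clean way to guarantee $(z,w)\in\mathcal{T}$.

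The gap is that the decisive step --- exhibiting an admissible point where the determinant is actually nonzero --- is never carried out, and this is the entire content of the lemma. At critical points the determinant factors as $\prod_{i=1}^{3} f_i(w_{i-1})f_i''(w_{i-1})\bigl(f_i(z_{i-1})-f_i(w_{i-1})\frac{f_{i+1}(z_i)\,f_{i+2}''(z_{i+1})}{f_{i+1}(w_i)\,f_{i+2}''(w_{i+1})}\bigr)$ (indices mod $3$), and the single condition $|f_1(z_3)|\neq|f_1(w_3)|$ does not force the three bracketed factors to be nonzero. Your fallback of perturbing $w$ off the critical locus is not a substitute: once $w$ leaves the critical points the matrix loses the sparsity on which the whole computation rests, and a genericity-by-analyticity argument would still require one point with nonvanishing determinant, which is precisely what is missing. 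The paper closes this while staying in the sparse regime by taking $w_1=z_1$, $w_2=z_2$, and letting $z_3$, $w_3$ be the minimum and maximum points of $f_1$: then the $i=1$ and $i=3$ factors reduce to nonzero multiples of $f_1(z_3)-f_1(w_3)$ and $1-f_1(z_3)/f_1(w_3)$ (distinct extremal values), the $i=2$ factor to a nonzero multiple of $1-f_1''(z_3)/f_1''(w_3)$ (the second derivatives at a minimum and a maximum have opposite signs and are nonzero by (H1)), while $f_1(z_3)\neq f_1(w_3)$ rules out $\mathcal{I}_1,\mathcal{I}_2$ and $f_2(z_1)\neq 0$, $f_3(z_2)\neq 0$ rule out $\mathcal{I}_3,\mathcal{I}_4$. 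Supplying an explicit choice of this kind is what your proposal still needs.
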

    \begin{proof}
    	For any $(x,y)=(x_1, x_2, x_3, y_1, y_2, y_3)\in\mathcal{T}$,
    \begin{equation*}
    	\tilde{X}_1(x, y)=\begin{pmatrix}
    		f_1(x_3)\\
    		f_1'(x_3)\\
    		0\\
    		f_1(y_3)\\
    		f_1'(y_3)\\
    		0
    	\end{pmatrix},\quad
    	\tilde{X}_2(x, y)=\begin{pmatrix}
    		0\\
    		f_2(x_1)\\
    		f_2'(x_1)\\
    		0\\
    		f_2(y_1)\\
    		f_2'(y_1)
    	\end{pmatrix},\quad
    	\tilde{X}_3(x, y)=\begin{pmatrix}
    		f_3'(x_2)\\
    		0\\
    		f_3(x_2)\\
    		f_3'(y_2)\\
    		0\\
    		f_3(y_2)
    	\end{pmatrix}.
    \end{equation*}
    And their Lie bracket are
    \begin{equation*}
    	[\tilde{X}_1,\tilde{X}_2](x,y)=\begin{pmatrix}
    		-f_1'(x_3)f_2'(x_1)\\
    		(f_1(x_3)-f_1''(x_3))f_2'(x_1)\\
    		f_1(x_3)f_2''(x_1)\\
    		-f_1'(y_3)f_2'(y_1)\\
    		(f_1(y_3)-f_1''(y_3))f_2'(y_1)\\
    		f_1(y_3)f_2''(y_1)
    	\end{pmatrix},\quad
    	[\tilde{X}_1,\tilde{X}_3](x,y)=\begin{pmatrix}
    		(f_3''(x_2)-f_3(x_2))f_1'(x_3)\\
    		-f_1''(x_3)f_3(x_2)\\
    		f_1'(x_3)f_3'(x_2)\\
    		(f_3''(y_2)-f_3(y_2))f_1'(y_3)\\
    		-f_1''(y_3)f_3(y_2)\\
    		f_1'(y_3)f_3'(y_2)
    	\end{pmatrix},
    \end{equation*}
    and
    \begin{equation*}
    	[\tilde{X}_2,\tilde{X}_3](x,y)=\begin{pmatrix}
    		f_2(x_1)f_3''(x_2)\\
    		-f_2'(x_1)f_3'(x_2)\\
    		(f_2(x_1)-f_2''(x_1))f_3'(x_2)\\
    		f_2(y_1)f_3''(y_2)\\
    		-f_2'(y_1)f_3'(y_2)\\
    		(f_2(y_1)-f_2''(y_1))f_3'(y_2)
    	\end{pmatrix}.
    \end{equation*}
    Define matrix 
    \begin{equation*}
    	M(x,y):=(\tilde{X}_1(x,y),\tilde{X}_2(x,y), \tilde{X}_3(x,y), [\tilde{X}_1,\tilde{X}_2](x,y), [\tilde{X}_1,\tilde{X}_3](x,y), [\tilde{X}_2,\tilde{X}_3](x,y)).
    \end{equation*}
    We claim that there exist $(x,y)\in \mathcal{T}$ such that $\det M(x,y) \neq 0$. Let $x_{i-1}, y_{i-1}\in C_{f_i'}$, to be determined later. Then, for $(x,y)=(x_1,x_2,x_3,y_1,y_2,y_3)$, 
    \begin{equation*}
    	M(x,y)=\begin{pmatrix}
    		f_1(x_3) & 0 & 0&0&0 &f_2(x_1)f_3''(x_2) \\
    		0& f_2(x_1) &0&0&-f_1''(x_3)f_3(x_2) & 0 \\
    		0&0&f_3(x_2)&f_1(x_3)f_2''(x_1)&0&0\\
    		f_1(y_3)&0&0&0&0& f_2(y_1)f_3''(y_2)\\
    		0& f_2(y_1) &0&0&-f_1''(y_3)f_3(y_2) & 0 \\
    		0&0&f_3(y_2)&f_1(y_3)f_2''(y_1)&0&0\\
    	\end{pmatrix}.
    \end{equation*}
    By a direct calculation, 
    \begin{equation*}
    	\det M(x,y)= \Pi_{i=1}^3 f_i(y_{i-1})f_i''(y_{i-1}) \left( f_i(x_{i-1})- f_i(y_{i-1}) \frac{f_{i+1}(x_i)f_{i+2}''(x_{i+1})}{f_{i+1}(y_i)f_{i+2}''(y_{i+1})} \right).
    \end{equation*}
    Let $y_j=x_j\in C_{f_{j+1}'}$ for $j=1, 2$. Let $x_3,y_3$ be the minimum and maximum points of $f_1$ respectively. By a directi calculation, we have $(x,y) \in \mathcal{T}$ and $\det M(x,y)\neq 0$. Thus, the claim follows.
    \end{proof}
   The next two subsections focus on constructing a function that satisfies the Lyapunov-Foster drift condition for $\tilde{\Phi}_{\underline{\tau}}$.
   \subsection{Projective and twisted semigroups}
   \quad Recall the projective chain $\check{\Phi}_{\underline{\tau}}^m$ on the sphere bundle $S\mathbb{T}^3$ with the associated Markov semigroup $\check{P}$. To construct a function $V$ that satisfies the Lyapunov-Foster drift condition for $\{\tilde{\Phi}_{\underline{\tau}}\}$, we introduce the concept of the twisted Markov semigroup, regarded as a perturbation of the Markov semigroup $\check{P}$, and analyze its associated spectral properties.
   
    Consider the twisted projective semigroup $\{\check{P}_q\}_{q\in\mathbb{R}}$ defined for bounded and measurable $\psi$ on $S\mathbb{T}^3$ by
    \[
    \check{P}_q \psi(x,u)=\mathbb{E} |D_x \Phi_{\underline{\tau}} u|^{-q} \psi \left(\Phi_{\underline{\tau}}(x), {D_x \Phi_{\underline{\tau}} u}/{|D_x \Phi_{\underline{\tau}} u|} \right).
    \]
    Notice that $\check{P}_0=\check{P}$. In order to construct the ideal Lyapunov function $V$ with drift condition, we need to explore the spectral theory of $\check{P},\check{P}_q$. To facilitate the presentation, we first outline some background from spectral theory. For further details, we recommend references such as \cite{Ka, DS}.
    
    Let $X$ be a complex Banach space, and $T$ be a bounded linear operator in $X$. Denote $I$ as the identity operator. The \textbf{spectrum} $\sigma(T)$ of $T$ is the set of all $z\in\mathbb{C}$  for which the operator  $zI-T$ does not have an inverse that is a bounded linear operator. A point $z_0\in\sigma(T)$ is said to be an \textbf{isolated} point of $\sigma(T)$, if there is a neighborhood $U$ of $z_0$ such that $\sigma(T)\cap U=\{z_0\}$. A point $z_0\in\sigma(T)$ is called \textbf{dominant} if it has the largest modulus among all spectrum of $T$. A point $z_0\in\sigma(T)$ is said to be an \textbf{eigenvalue} of $T$, if there exists a nonzero $x\in X$, called an eigenvector, such that $Tx=z_0x$. An eigenvalue $z_0$ of $T$ is said to be \textbf{simple} if the generalized eigenspace 
    \[
    \mathcal{N}_{z_0}=\bigcup_{k\geq 1}\mathcal{N}((z_0I - T)^k)
    \]
    has dimension $1$, where $\mathcal{N}((z_0I - T)^k) = \{x \in X: (z_0I - T)^k x = 0\}$. 
    
    Let $z\to T(z)$ be an operator valued function of a complex parameter $z$ which is holomorphic in the uniform operator topology. We summarize some standard results on spectral perturbation from \cite{DS}; see Lemma 3 (p. 585), Lemma 7 (p. 587), and Theorem 9 (p. 587).
    \begin{lem}\cite{DS}\label{spectral perturbation}
    	Let $T$ be a bounded linear operator on a complex Banach space $X$.
    	\begin{itemize}
    		\item[(i)] For any $\varepsilon>0$, there is a $\delta>0$ such that if $T_1$ is a bounded linear operator on $X$ and $\|T_1-T\|<\delta$, then $\sigma(T_1)\subset B_\varepsilon(\sigma(T))$.
    		\item[(ii)] Let $\Pi_1, \Pi_2$ be two projections in $X$ such that $\|\Pi_1-\Pi_2\|<\min\{\|\Pi_1\|^{-1}, \|\Pi_2\|^{-1}\}$. If one of the projections has a finite dimensional range, then so does the other and $\dim \Pi_1 X=\dim \Pi_2 X$.
    		\item[(iii)] Let $\gamma>0$ and let $T(z)$ be a bounded linear operator valued function defined and holomorphic for $|z|<\gamma$. Let $r_0$ be an isolated point of $\sigma(T(0))$, and suppose the range of the spectral projection $\Pi_{r_0}$ has finite dimension $m$. Let $U$ be an open set with $\overline{U}\cap \sigma(T(0))=\{r_0\}$. Then there is a positive $\delta<\gamma$, an integer $k\leq m$ such that for $|z|<\delta$, $U\cap \sigma(T(z))$ is a finite set $\{r_1(z), r_2(z), \cdots, r_k(z)\}$. Each functions $r_i(z)$ depends holomorphically on the principal value of the fractional power $z^{1/n}$ of $z$, and satisfies $r_i(0)=r_0$.
    	\end{itemize}
    \end{lem}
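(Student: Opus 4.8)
The plan is to observe that Lemma~\ref{spectral perturbation} merely assembles three classical facts from the analytic perturbation theory of bounded operators, so the argument consists of pointing to the precise references and indicating how each item is obtained; no self-contained proof is required. For part~(i) --- upper semicontinuity of the spectrum --- I would fix a compact contour $\Gamma$ enclosing $\sigma(T)$ at distance $\geq\varepsilon$, note that $m:=\sup_{z\in\Gamma}\|(zI-T)^{-1}\|<\infty$, and write $(zI-T_1)^{-1}=(zI-T)^{-1}\big(I-(T_1-T)(zI-T)^{-1}\big)^{-1}$ via a Neumann series, which converges uniformly on $\Gamma$ once $\|T_1-T\|<1/m$; hence $zI-T_1$ is invertible off $B_\varepsilon(\sigma(T))$, i.e. $\sigma(T_1)\subset B_\varepsilon(\sigma(T))$. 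This is Lemma~3 (p.~585) of~\cite{DS}, or \cite[Ch.~IV]{Ka}.

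For part~(ii) I would invoke the standard similarity trick for close idempotents: when $\|\Pi_1-\Pi_2\|<\min\{\|\Pi_1\|^{-1},\|\Pi_2\|^{-1}\}$, the operator $U:=\Pi_2\Pi_1+(I-\Pi_2)(I-\Pi_1)$ is invertible for such projections and satisfies $U\Pi_1=\Pi_2U$, so $\Pi_1$ and $\Pi_2$ are conjugate and their ranges are linearly isomorphic; finite dimensionality, and the dimension itself, are therefore preserved. This is Lemma~7 (p.~587) of~\cite{DS}.

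For part~(iii) --- the Kato--Rellich branching statement --- I would form the Riesz spectral projection $\Pi(z)=\frac{1}{2\pi i}\oint_{\partial U}(\zeta I-T(z))^{-1}\,d\zeta$, which by part~(i) is well defined and holomorphic for $|z|<\delta$ small, hence by part~(ii) has rank constantly equal to $m=\dim\Pi_{r_0}X$. Restricting $T(z)$ to the $m$-dimensional invariant subspace $\Pi(z)X$ reduces the eigenvalue problem inside $U$ to a holomorphic family of $m\times m$ matrices, whose eigenvalues are roots of a polynomial with holomorphic coefficients; by Puiseux's theorem these split into finitely many branches $r_1(z),\dots,r_k(z)$ with $k\leq m$, each holomorphic in $z^{1/n}$ and tending to $r_0$ as $z\to0$. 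This is Theorem~9 (p.~587) of~\cite{DS}; see also \cite[Ch.~II]{Ka}.

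Since the three items are literally the cited results, I anticipate no real obstacle in the proof of the lemma itself. The only point requiring a little care arises not here but when the lemma is applied: one must verify that the twisted family $q\mapsto\check{P}_q$, together with its analytic continuation in a complex parameter, is holomorphic in the uniform operator topology on the relevant Banach space, so that parts~(i) and~(iii) are applicable. That verification is naturally placed in the following subsection and is independent of establishing Lemma~\ref{spectral perturbation}.
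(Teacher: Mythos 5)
Your proposal is correct and matches the paper exactly: the paper states this lemma purely as a citation of Lemma~3 (p.~585), Lemma~7 (p.~587), and Theorem~9 (p.~587) of \cite{DS} and gives no proof, which is precisely what you do, with your additional sketches (Neumann series, similarity of close idempotents, Riesz projection plus Puiseux branching) being the standard and correct arguments behind those cited results.
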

    \begin{rem}\label{holomorphic of r_i(z)}
    	By the proof of Theorem 9 (p. 587) in \cite{DS}, if $m=1$, then $k=1$ and $n=1$, i.e., $r_1(z)$ is holomorphic in $z$.
    \end{rem}
    We also require the following results from Proposition 49.1 (p. 206) in \cite{H1982}.
    \begin{lem}\cite{H1982}\label{Range of Riesz Projector}
    	Let $T$ be a bounded linear operator on a complex Banach space $X$. If $z_0$ is an isolated point of the spectrum of $T$, then the range of the Riesz projector $\Pi_{z_0}$ satisfies
    	\[
    	\text{Ran} (\Pi_{z_0})=\left\{x:\lim_{n \to \infty}\|(z_0 I-T)^nx\|^{1/n}=0\right\}.
    	\]
    \end{lem}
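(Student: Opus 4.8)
\section*{Proof proposal for Lemma~\ref{Range of Riesz Projector}}

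The plan is to invoke the Riesz decomposition of $X$ along the spectral set $\{z_0\}$ and then read off the desired description of $\mathrm{Ran}(\Pi_{z_0})$ by applying the spectral radius formula on each of the two resulting $T$-invariant summands. Recall that, $z_0$ being isolated in $\sigma(T)$, the Riesz projector $\Pi_{z_0}=\frac{1}{2\pi i}\oint_{\Gamma}(zI-T)^{-1}\,dz$ (with $\Gamma$ a small positively oriented circle enclosing $z_0$ and no other point of $\sigma(T)$) is a bounded idempotent that commutes with $T$.

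The first step is to record the decomposition. Since $\{z_0\}$ and $\sigma_2:=\sigma(T)\setminus\{z_0\}$ are disjoint closed sets with union $\sigma(T)$ (here $\sigma_2$ is closed precisely because $z_0$ is isolated), the Riesz decomposition theorem gives $X=M\oplus N$ with $M:=\mathrm{Ran}(\Pi_{z_0})$ and $N:=\mathrm{Ran}(I-\Pi_{z_0})$, both closed and $T$-invariant, and $\sigma(T|_M)=\{z_0\}$, $\sigma(T|_N)=\sigma_2$. Put $S:=z_0 I-T$; then $S$ commutes with $\Pi_{z_0}$, preserves $M$ and $N$, satisfies $\sigma(S|_M)=\{0\}$, and $0\notin\sigma(S|_N)=\{z_0-\lambda:\lambda\in\sigma_2\}$, so $S|_N$ has a bounded inverse $R$ on the Banach space $N$.

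For the inclusion ``$\subseteq$'': since $\sigma(S|_M)=\{0\}$, the spectral radius formula yields $\lim_n\|(S|_M)^n\|^{1/n}=0$, hence for $x\in M$ one bounds $\|S^n x\|^{1/n}=\|(S|_M)^n x\|^{1/n}\le\|(S|_M)^n\|^{1/n}\|x\|^{1/n}\to 0$ (the case $x=0$ being trivial). For ``$\supseteq$'': suppose $\lim_n\|S^n x\|^{1/n}=0$ and write $x=x_M+x_N$ with $x_M=\Pi_{z_0}x\in M$, $x_N=(I-\Pi_{z_0})x\in N$. Because $I-\Pi_{z_0}$ is bounded and commutes with $S$, $\|S^n x_N\|=\|(I-\Pi_{z_0})S^n x\|\le\|I-\Pi_{z_0}\|\,\|S^n x\|$, so $\lim_n\|S^n x_N\|^{1/n}=0$ as well. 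But $x_N=R^n S^n x_N$ gives $\|x_N\|^{1/n}\le\|R\|\,\|S^n x_N\|^{1/n}\to 0$, which forces $\|x_N\|=0$; hence $x=x_M\in\mathrm{Ran}(\Pi_{z_0})$, completing the argument.

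The only substantive input is the Riesz decomposition theorem itself — that $\Pi_{z_0}$ is a bounded idempotent commuting with $T$ whose range and kernel carry precisely the spectral pieces $\{z_0\}$ and $\sigma(T)\setminus\{z_0\}$ — which is the classical functional-calculus machinery underlying the rest of this section and is exactly the content of the cited reference \cite{H1982}. I do not anticipate a genuine obstacle beyond keeping the sign conventions in $S=z_0 I-T$ and in $\sigma(S|_N)=z_0-\sigma_2$ consistent throughout.
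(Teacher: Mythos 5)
Your proof is correct: the Riesz decomposition $X=\mathrm{Ran}(\Pi_{z_0})\oplus\ker(\Pi_{z_0})$ with $\sigma(T|_M)=\{z_0\}$, Gelfand's spectral radius formula on $M$, and the bounded invertibility of $(z_0I-T)|_N$ together give exactly the stated characterization, and each step (including the $n$-th-root limits $\|x\|^{1/n}\to 1$ for $x\neq 0$) is handled properly. Note that the paper does not prove this lemma at all — it imports it directly from Proposition 49.1 of \cite{H1982} — so there is no in-paper argument to compare against; what you have written is the standard textbook proof that the cited reference supplies.
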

    To apply Lemma \ref{spectral perturbation} and \ref{Range of Riesz Projector}, we consider the analytic continuation of $\check{P}_q \psi(x,u)$, $\psi \in \mathcal{B}_b(S \mathbb{T}^3)$, from $q \in \mathbb{R}$ to the complex domain $\mathbb{C}$. First, we present the following lemma:
    \begin{lem}\label{boundedness of the series}
    	For any $z\in\mathbb{C}$, we have
    	\begin{equation*}
    		\sum_{k= 0}^{\infty} \frac{|z|^k}{k!} \mathbb{E}\left| \ln |D_{x} \Phi_{\underline{\tau}}u|\right|^k < \infty.
    	\end{equation*}
    \end{lem}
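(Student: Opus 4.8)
The plan is to dominate $\bigl|\ln|D_x\Phi_{\underline{\tau}}u|\bigr|$ by a single nonnegative random variable whose exponential moments are all finite, and then recognise the series as exactly such a moment.

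First I would invoke the two-sided bound recorded in Section~\ref{section 4}: for a unit vector $u$ one has $|D_x\Phi_{\underline{\tau}}u|\le\|D_x\Phi_{\underline{\tau}}(x)\|$ and $|D_x\Phi_{\underline{\tau}}u|\ge\|D_x\Phi_{\underline{\tau}}(x)^{-1}\|^{-1}$, so (after possibly enlarging the constant $C_0>1$ from that section)
\[
\frac{1}{C_0(1+2\tau_1)(1+2\tau_2)(1+2\tau_3)}\le|D_x\Phi_{\underline{\tau}}u|\le C_0(1+2\tau_1)(1+2\tau_2)(1+2\tau_3).
\]
Taking logarithms and setting $W:=\ln C_0+\sum_{i=1}^{3}\ln(1+2\tau_i)$, this yields $\bigl|\ln|D_x\Phi_{\underline{\tau}}u|\bigr|\le W$; since $\tau_i\ge 0$ and $C_0>1$ we have $W\ge\ln C_0>0$, and in particular $\mathbb{E}\bigl|\ln|D_x\Phi_{\underline{\tau}}u|\bigr|^k\le\mathbb{E}[W^k]$ for every $k\ge 0$.

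Next I would move the summation inside the expectation. Because $W\ge 0$, all the summands $\tfrac{|z|^k}{k!}W^k$ are nonnegative, so Tonelli's theorem applies and
\[
\sum_{k=0}^{\infty}\frac{|z|^k}{k!}\,\mathbb{E}\bigl|\ln|D_x\Phi_{\underline{\tau}}u|\bigr|^k\le\sum_{k=0}^{\infty}\frac{|z|^k}{k!}\,\mathbb{E}[W^k]=\mathbb{E}\!\left[e^{|z|W}\right]=C_0^{|z|}\prod_{i=1}^{3}\mathbb{E}\!\left[(1+2\tau_i)^{|z|}\right],
\]
using that $\ln C_0$ is deterministic and the $\tau_i$ are independent. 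Finally, each factor equals $\int_{0}^{\infty}(1+2t)^{|z|}h^{-1}e^{-t/h}\,dt$, which is finite for every $|z|\ge 0$ because an exponential random variable has finite moments of all real orders; this closes the argument.

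There is no genuine obstacle here: the only step that warrants a word of justification is the interchange of sum and expectation, and that is legitimate precisely because the domination by $W\ge 0$ makes every term nonnegative, so Tonelli suffices and no dominated-convergence hypothesis is needed.
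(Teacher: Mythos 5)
Your proof is correct and follows essentially the same route as the paper: both dominate $\bigl|\ln|D_x\Phi_{\underline{\tau}}u|\bigr|$ by $\ln C_0+\sum_{i=1}^3\ln(1+2\tau_i)$ via the two-sided polynomial bound, interchange sum and expectation by nonnegativity, and reduce the question to finiteness of all real moments of an exponential random variable. The only cosmetic difference is that the paper splits the $k$-th power of the sum with a H\"older-type inequality to reduce to a single one-dimensional integral, whereas you keep the product structure and factor the exponential moment using independence of the $\tau_i$; both are valid.
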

    \begin{proof}
    	By utilizing the relation $|u|=|A^{-1}Au|\leq \|A^{-1}\||Au|$, it can be directly computed that for any $(x,u) \in S\mathbb{T}^3$, there exists a constant $C_0>1$ satisfying
    	\begin{equation}\label{polynomial}
    		\frac{1}{C_0 (1 + 2\tau_1)(1 + 2\tau_2)(1 + 2\tau_3)}\leq|D_x \Phi_{\underline{\tau}}u| \leq C_0 (1 + 2\tau_1)(1 + 2\tau_2)(1 + 2\tau_3). 
    	\end{equation}
        Then, for any $(x,u) \in S\mathbb{T}^3$, we have 
    	\begin{align}\label{ln function}
    		\mathbb{E}\left| \ln |D_{x} \Phi_{\underline{\tau}}u|\right|^k &\leq \int_{\mathbb{R}_{+}^3} \left( \sum_{i=1}^3 \ln [C_0^{1/3} (1+2t_i)]\right)^k \frac{1}{h^3} e^{-(t_1+t_2+t_3)/h} dt_1dt_2dt_3 \nonumber\\
    		&\leq \frac{3^{k-1}}{h^3} \int_{\mathbb{R}_{+}^3} \left( \sum_{i=1}^3 \ln^k [C_0^{1/3} (1+2t_i)]\right)  e^{-(t_1+t_2+t_3)/h} dt_1dt_2dt_3\nonumber \\
    		&= 3^k \int_0^\infty \ln^k [C_0^{1/3} (1+2ht)]  e^{-t} dt,
    	\end{align}
    	where the second inequality holds due to H\"older's inequality. Then, we have
    	\begin{align*}
    		\sum_{k= 0}^{\infty} \frac{|z|^k}{k!} \mathbb{E}\left| \ln |D_{x} \Phi_{\underline{\tau}}u|\right|^k&\leq \int_{0}^{\infty}\sum_{k= 0}^{\infty} \frac{3^k|z|^k}{k!} \ln^k [C_0^{1/3} (1+2ht)]  e^{-t} dt\\
    		&=\int_{0}^{\infty} e^{3|z|\ln  [C_0^{1/3} (1+2ht)]}e^{-t} dt\\
    		&=\int_{0}^{\infty}[C_0^{1/3} (1+2ht)]^{3|z|}e^{-t}dt<\infty.
    	\end{align*}
    \end{proof}
    Next, applying Lemma \ref{boundedness of the series} and Fubini's theorem, for any $\psi \in \mathcal{B}_b(S \mathbb{T}^3)$, we have
    \begin{align*}
    	\check{P}_q \psi(x,u)&=\mathbb{E} |D_x \Phi_{\underline{\tau}} u|^{-q} \psi \left(\Phi_{\underline{\tau}}(x), {D_x \Phi_{\underline{\tau}} u}/{|D_x \Phi_{\underline{\tau}} u|} \right)\\
    	&=\mathbb{E} e^{-q\ln |D_x \Phi_{\underline{\tau}} u|}\psi \left(\Phi_{\underline{\tau}}(x), {D_x \Phi_{\underline{\tau}} u}/{|D_x \Phi_{\underline{\tau}} u|} \right)\\
    	&=\mathbb{E} \sum_{k=0}^{\infty}\frac{q^k}{k!}(-\ln |D_x \Phi_{\underline{\tau}} u|)^k\psi \left(\Phi_{\underline{\tau}}(x), {D_x \Phi_{\underline{\tau}} u}/{|D_x \Phi_{\underline{\tau}} u|} \right)\\
    	&=\sum_{k=0}^{\infty}\frac{q^k}{k!}\mathbb{E}(-\ln |D_x \Phi_{\underline{\tau}} u|)^k\psi \left(\Phi_{\underline{\tau}}(x), {D_x \Phi_{\underline{\tau}} u}/{|D_x \Phi_{\underline{\tau}} u|} \right).
    \end{align*}
    Naturally, we consider the analytic continuation of $\check{P}_q \psi(x,u)$. Specifically, for any $z\in\mathbb{C}$, we define
    \begin{equation}\label{analytic continuation}
    	\check{P}_z \psi(x,u)=\sum_{k=0}^{\infty}\frac{z^k}{k!}\mathbb{E}(-\ln |D_x \Phi_{\underline{\tau}} u|)^k\psi \left(\Phi_{\underline{\tau}}(x), {D_x \Phi_{\underline{\tau}} u}/{|D_x \Phi_{\underline{\tau}} u|} \right).
    \end{equation}
    It follows from Lemma \ref{boundedness of the series} that the right-hand side of (\ref{analytic continuation}) converges and is holomorphic for all $z \in \mathbb{C}$. Therefore, $\check{P}_z \psi(x, u)$ is well-defined for every $z \in \mathbb{C}$. Now, we define the linear operator $\check{P}_z$ on $L^{\infty}(S\mathbb{T}^3;\mathbb{C})$ as follows:
    \[
    \check{P}_z g(x, u) := \check{P}_z \text{Re}(g(x, u)) + i \check{P}_z \text{Im}(g(x, u)),
    \]
    where $\text{Re}(g(x, u))$ and $\text{Im}(g(x, u))$ are the real and imaginary parts of $g(x, u)$, respectively. It is obvious that $\check{P}_z \psi(x, v)=\check{P}_q \psi(x, v)$ when $z=q\in\mathbb{R}$ and $\psi(x, v)\in L^{\infty}(S\mathbb{T}^3)$.
    To apply Lemma \ref{spectral perturbation}, it suffices to show that $\check{P}_z$, $z\in\mathbb{C}$ is a bounded perturbation of $\check{P}_0$.
    \begin{lem}\label{bound for matrix norm of P_q}
    	Let $\check{P}_z$, $z\in\mathbb{C}$ be as above. We have the following properties:
    	\begin{itemize}
    		\item[(i)] $\check{P}_q$ is a bounded linear operator on $L^{\infty}(S\mathbb{T}^3;\mathbb{C})$ for all $z\in\mathbb{C}$.
    		\item[(ii)] The family of operators $\check{P}_z$ converges to $\check{P}_0$ in norm on $L^\infty(S\mathbb{T}^3;\mathbb{C})$ as $z \to 0$.
    	\end{itemize}
    \end{lem}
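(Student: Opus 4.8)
The plan is to read off both claims from the uniform moment estimate \eqref{ln function} and its consequence Lemma \ref{boundedness of the series}, working throughout with the series representation \eqref{analytic continuation} of $\check{P}_z$. I set
\[
m_k:=\sup_{(x,u)\in S\mathbb{T}^3}\mathbb{E}\bigl|\ln|D_x\Phi_{\underline{\tau}}u|\bigr|^k,\qquad k\geq 0,
\]
and recall from \eqref{ln function} that $m_k\leq 3^k\int_0^\infty \ln^k[C_0^{1/3}(1+2ht)]\,e^{-t}\,dt$, a bound which is independent of the base point $(x,u)$. Summing as in the proof of Lemma \ref{boundedness of the series} then gives, for every $z\in\mathbb{C}$,
\[
M(z):=\sum_{k\geq 0}\frac{|z|^k}{k!}\,m_k\ \leq\ \int_0^\infty \bigl[C_0^{1/3}(1+2ht)\bigr]^{3|z|}e^{-t}\,dt\ <\ \infty .
\]

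For part (i): fix $z\in\mathbb{C}$ and a bounded measurable $\psi$ on $S\mathbb{T}^3$. For each $k$, the summand $(x,u)\mapsto\mathbb{E}\bigl[(-\ln|D_x\Phi_{\underline{\tau}}u|)^k\,\psi(\check{\Phi}_{\underline{\tau}}(x,u))\bigr]$ is measurable (the map $(x,u,\underline{\tau})\mapsto\check{\Phi}_{\underline{\tau}}(x,u)$ is smooth and $\psi$ is Borel) and bounded in modulus by $m_k\|\psi\|_\infty$; hence the series \eqref{analytic continuation} converges absolutely and uniformly in $(x,u)$, its sum is measurable, and $\bigl\|\check{P}_z\psi\bigr\|_\infty\leq M(z)\,\|\psi\|_\infty$. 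Linearity is immediate from \eqref{analytic continuation}, and for complex $\psi=\mathrm{Re}\,\psi+i\,\mathrm{Im}\,\psi$ the complexification obeys $|\check{P}_z\psi|\leq|\check{P}_z\mathrm{Re}\,\psi|+|\check{P}_z\mathrm{Im}\,\psi|$, so $\check{P}_z$ is a bounded linear operator on $L^\infty(S\mathbb{T}^3;\mathbb{C})$ with $\|\check{P}_z\|_{L^\infty\to L^\infty}\leq 2M(z)$.

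For part (ii): subtracting the series \eqref{analytic continuation} at $z$ and at $0$ cancels the $k=0$ term, so for every $(x,u)$ and every $\psi$ with $\|\psi\|_\infty\leq 1$,
\[
\bigl|(\check{P}_z-\check{P}_0)\psi(x,u)\bigr|\ \leq\ \sum_{k\geq 1}\frac{|z|^k}{k!}\,m_k .
\]
The right-hand side does not depend on $(x,u)$ or $\psi$, so $\|\check{P}_z-\check{P}_0\|_{L^\infty\to L^\infty}\leq\sum_{k\geq 1}\frac{|z|^k}{k!}\,m_k$; and for $|z|\leq 1$ one has $|z|^k\leq|z|$ when $k\geq 1$, whence $\|\check{P}_z-\check{P}_0\|_{L^\infty\to L^\infty}\leq C\,|z|\to 0$ as $z\to 0$, with $C:=\sum_{k\geq1}m_k/k!<\infty$ by Lemma \ref{boundedness of the series}.

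I do not expect a substantial obstacle: the only points requiring attention are the measurability of each summand (so that $\check{P}_z\psi$ genuinely lies in $L^\infty$) and the interchange of summation with the expectation, both controlled by the base-point-independent moment bound $m_k$ from \eqref{ln function}. The genuinely substantive input — the polynomial control \eqref{polynomial} of $|D_x\Phi_{\underline{\tau}}u|$ and the resulting moment estimate — has already been carried out in establishing Lemma \ref{boundedness of the series}, so this lemma is essentially a bookkeeping consequence of it.
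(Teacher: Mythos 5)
Your proposal is correct and follows essentially the same route as the paper: both parts are read off from the series representation \eqref{analytic continuation} together with the uniform moment bound \eqref{ln function} established for Lemma \ref{boundedness of the series}. The only cosmetic difference is in (ii), where the paper sums the series under the integral and invokes dominated convergence, while you use the elementary bound $|z|^k\leq|z|$ for $k\geq 1$, $|z|\leq 1$ to get the (slightly sharper) linear rate $\|\check{P}_z-\check{P}_0\|\lesssim|z|$; both arguments are valid.
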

    \begin{proof}
    	We first consider the proof of (i). For $g\in L^{\infty}(S\mathbb{T}^3;\mathbb{C})$, by Lemma \ref{boundedness of the series}, we have  
    	\[
    	\|\check{P}_z g\|_{L^{\infty}(S\mathbb{T}^3;\mathbb{C})}\leq  2C(z,h)\|g\|_{L^{\infty}(S\mathbb{T}^3;\mathbb{C})},
    	\]
    	where $C(z,h) > 0$ is a constant depending on $z$ and $h$.
    	
    	Next, consider (ii). For $g(x,v)\in L^{\infty}(S\mathbb{T}^3;\mathbb{C})$, by (\ref{ln function}), we have  
    	\begin{align*}
    		\|\check{P}_z g-\check{P}_0 g\|_{L^{\infty}(S\mathbb{T}^3;\mathbb{C})}
    		\leq & 2\|g\|_{L^{\infty}(S\mathbb{T}^3;\mathbb{C})}\sum_{k=1}^{\infty}\frac{|z|^k}{k!}\mathbb{E}|\ln |D_x \Phi_{\underline{\tau}} u||^k\\
        \leq&2\|g\|_{L^{\infty}(S\mathbb{T}^3;\mathbb{C})}\int_{0}^{\infty}\sum_{k=1}^{\infty} \frac{3^k|z|^k}{k!} \ln^k [C_0^{1/3} (1+2ht)]  e^{-t} dt\\
        =&2\|g\|_{L^{\infty}(S\mathbb{T}^3;\mathbb{C})}\int_{0}^{\infty} \left([C_0^{1/3} (1+2ht)]^{3|z|}-1\right)e^{-t} dt,
        \end{align*} 
        which tends to zero as $z \to 0$, by Lebesgue's dominated convergence theorem.
    \end{proof}
     Let $T$ be a bounded linear operator on a Banach space $X$ with a simple, dominant, isolated eigenvalue $r$. The operator $T$ is said to have a \textbf{spectral gap} if there exists $\varepsilon > 0$ such that
     \[
     \sigma(T) \setminus \{r\} \subset B_{|r|-\varepsilon}(0),
     \]
     where $\sigma(T)$ denotes the spectrum of $T$ and $B_{|r|-\varepsilon}(0)$ is the open ball of radius $|r| - \varepsilon$ centered at the origin.
    \begin{lem}\label{spectral gap}
    	If $\check{P}$ is uniformly geometrically ergodic with respect to a unique stationary measure $\check{\pi}$ on $S\mathbb{T}^3$, then 
    	there exists $q_0 > 0$ such that for all $q \in (-q_0,q_0)$, $\check{P}_q$ has a simple, dominant, isolated eigenvalue $r(q)\in\mathbb{R}$ and a spectral gap. 
    \end{lem}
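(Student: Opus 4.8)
The plan is to establish the claim first for the unperturbed operator $\check{P}_0=\check{P}$ and then propagate it to $\check{P}_q$ for $|q|$ small via the analytic perturbation results collected in Lemmas \ref{spectral perturbation}--\ref{Range of Riesz Projector}, together with Lemmas \ref{boundedness of the series} and \ref{bound for matrix norm of P_q} and Remark \ref{holomorphic of r_i(z)}. For the unperturbed step I work on $X=L^\infty(S\mathbb{T}^3;\mathbb{C})$ and let $\Pi g:=\bigl(\int_{S\mathbb{T}^3}g\,d\check{\pi}\bigr)\mathbf{1}$ be the rank-one projection onto the constants. Stationarity of $\check{\pi}$ gives $\Pi\check{P}=\Pi$ and $\check{P}\mathbf{1}=\mathbf{1}$ gives $\check{P}\Pi=\Pi$, so with $\Pi^2=\Pi$ one gets inductively $(\check{P}-\Pi)^m=\check{P}^m-\Pi$ for all $m\ge1$; the uniform geometric ergodicity hypothesis then reads $\|(\check{P}-\Pi)^m\|_{X\to X}=\|\check{P}^m-\Pi\|_{X\to X}\le C\gamma^m$, so $\rho(\check{P}-\Pi)\le\gamma<1$. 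Splitting $X=\mathrm{Ran}(\Pi)\oplus\ker(\Pi)$ into closed $\check{P}$-invariant subspaces on which $\check{P}$ acts, respectively, as the identity and as $(\check{P}-\Pi)|_{\ker\Pi}$, I conclude $\sigma(\check{P})=\{1\}\cup\sigma(\check{P}|_{\ker\Pi})$ with $\sigma(\check{P}|_{\ker\Pi})\subset\overline{B_\gamma(0)}$; since $I-\check{P}$ is invertible on $\ker(\Pi)$, the generalized eigenspace of $1$ equals $\mathrm{Ran}(\Pi)$. Hence $r(0)=1$ is a simple, dominant, isolated eigenvalue of $\check{P}_0$ with spectral gap, and its Riesz projector is $\Pi$, of rank $m=1$.

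Next I would record that $z\mapsto\check{P}_z$ is holomorphic in the uniform operator topology: writing $\check{P}_z=\sum_{k\ge0}\frac{z^k}{k!}R_k$ from \eqref{analytic continuation}, with $R_k g(x,u)=\mathbb{E}[(-\ln|D_x\Phi_{\underline{\tau}}u|)^k g(\check{\Phi}_{\underline{\tau}}(x,u))]$ and $\|R_k\|_{X\to X}\le\sup_{(x,u)}\mathbb{E}|\ln|D_x\Phi_{\underline{\tau}}u||^k$, Lemma \ref{boundedness of the series} shows $\sum_k\frac{|z|^k}{k!}\|R_k\|<\infty$ for every $z$, so the series converges in operator norm and defines an entire operator-valued function (this subsumes Lemma \ref{bound for matrix norm of P_q}, in particular $\check{P}_z\to\check{P}_0$ in norm as $z\to0$). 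Choosing an open $U\ni1$ with $\overline{U}$ symmetric under complex conjugation and $\overline{U}\cap\sigma(\check{P}_0)=\{1\}$, I apply Lemma \ref{spectral perturbation}(iii) to $T(z)=\check{P}_z$ at $r_0=1$: since the unperturbed Riesz projector has rank $m=1$, Remark \ref{holomorphic of r_i(z)} yields $q_1>0$ such that for $|z|<q_1$ the set $U\cap\sigma(\check{P}_z)$ is a single point $r(z)$, holomorphic in $z$ with $r(0)=1$; the associated Riesz projector $\Pi_{r(z)}$ varies holomorphically, so by Lemma \ref{spectral perturbation}(ii) its rank stays $1$ for $|z|$ small, i.e.\ $r(z)$ is a simple eigenvalue of $\check{P}_z$ (Lemma \ref{Range of Riesz Projector} supplying the description of $\mathrm{Ran}(\Pi_{r(z)})$ used here).

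Finally, fix $\varepsilon>0$ with $2\varepsilon<1-\gamma$ and $B_\varepsilon(1)\subset U$. By Lemma \ref{spectral perturbation}(i), using $\check{P}_z\to\check{P}_0$ in norm, there is $q_2\in(0,q_1]$ with $\sigma(\check{P}_z)\subset B_\varepsilon(\sigma(\check{P}_0))\subset B_\varepsilon(1)\cup B_{\gamma+\varepsilon}(0)$ for $|z|<q_2$; after shrinking $q_2$ so that $|r(z)|>1-\varepsilon$ (legitimate since $r(z)\to1$), we get $\sigma(\check{P}_z)\setminus\{r(z)\}\subset B_{\gamma+\varepsilon}(0)$ with $|r(z)|>1-\varepsilon>\gamma+\varepsilon$, i.e.\ $r(z)$ is dominant, isolated, and $\check{P}_z$ has a spectral gap. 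For real $q$, $\check{P}_q$ is the complexification of an operator on real-valued functions, so $\overline{\check{P}_q g}=\check{P}_q\overline{g}$ and $\sigma(\check{P}_q)$ is conjugation-invariant; since $\overline{U}=U$ and $U\cap\sigma(\check{P}_q)=\{r(q)\}$, necessarily $\overline{r(q)}=r(q)\in\mathbb{R}$. Setting $q_0:=q_2$ completes the argument. The one genuinely delicate step is the unperturbed case: extracting from the quantitative mixing bound a bona fide spectral gap on the non-reflexive space $L^\infty$, with $1$ a \emph{simple} isolated dominant eigenvalue whose Riesz projector has rank exactly $1$ — this is precisely what forces $m=1$ in Remark \ref{holomorphic of r_i(z)}, so that $q\mapsto r(q)$ is a genuine holomorphic branch (not merely a Puiseux series) and the perturbed eigenvalue is simple; the commutation identities $\Pi\check{P}=\check{P}\Pi=\Pi$ are the crux that makes this work.
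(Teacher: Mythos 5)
Your proposal is correct and follows essentially the same route as the paper: establish that $1$ is a simple, dominant, isolated eigenvalue of $\check{P}_0$ with a spectral gap on $L^\infty(S\mathbb{T}^3;\mathbb{C})$, verify holomorphy of $z\mapsto\check{P}_z$ in operator norm, and then combine Lemma \ref{spectral perturbation}, Remark \ref{holomorphic of r_i(z)}, the continuity of the Riesz projectors, and the conjugation symmetry of $\sigma(\check{P}_q)$ for real $q$. The only difference is cosmetic and lies in the unperturbed step: you get the gap from the invariant splitting $X=\mathrm{Ran}(\Pi)\oplus\ker(\Pi)$ and the identity $(\check{P}-\Pi)^m=\check{P}^m-\Pi$, whereas the paper inverts $\lambda I-\check{P}_0+1\otimes\check{\pi}$ by a Neumann series and checks simplicity via the residue computation $(I-\check{P}_0)\Pi_1=0$; both arguments rest on the same commutation relations and yield the same conclusion.
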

    \begin{proof}
    	First, we prove that $\check{P}$ has a simple, dominant, isolated eigenvalue $1$ and a spectral gap. Since $\check{P}$ is uniformly geometrically ergodic with respect to $\check{\pi}$, there exists $C>0,\gamma \in (0,1)$ such that for all $n \geq 1$,
    	\begin{equation}\label{uniform geometric ergodic in SM}
    		\|\check{P}^n - 1\otimes\check{\pi}\|_{L^{\infty}(S\mathbb{T}^3)} \leq C \gamma^n,			
    	\end{equation}
    	where for a function $r$ and measure $\mu$ we define $(r\otimes \mu)(x,dy):=r(x)\mu(dy)$. Similar to $\check{P}_{z}$, we define the linear operator $1\otimes\check{\pi}$ on $L^{\infty}(S\mathbb{T}^3;\mathbb{C})$ as follows:
    	\[
    	(1\otimes\check{\pi})g := (1\otimes\check{\pi}) \text{Re}(g) + i (1\otimes\check{\pi}) \text{Im}(g),
    	\]
    	where $\text{Re}(g)$ and $\text{Im}(g)$ are the real and imaginary parts of $g$, respectively. 
    	
    	By \eqref{uniform geometric ergodic in SM}, it follows that for any complex number $\lambda$ with $|\lambda| > \gamma$,
    	\[
    	\sum_{n=1}^{\infty }\|\lambda^{-n}(\check{P}_0-1 \otimes \check{\pi})^n\|_{L^{\infty}(S\mathbb{T}^3;\mathbb{C})} =\sum_{n=1}^{\infty }\|\lambda^{-n}(\check{P}_0^n-1 \otimes \check{\pi})\|_{L^{\infty}(S\mathbb{T}^3;\mathbb{C})} \leq 2C \sum_{n=1}^{\infty }\left(\frac{\gamma}{|\lambda|}\right)^n < \infty.
    	\]
    	Hence, if $|\lambda| > \gamma$, the Neumann series
    	\[
    	\sum_{n = 0}^{\infty}\lambda^{-n}(\check{P}-1 \otimes \check{\pi})^n
    	\]
    	converges in operator norm. As a consequence, if $|\lambda| > \gamma$, we obtain
    	\[
    	Z_\lambda:=(\lambda I - \check{P}_0 + 1 \otimes \check{\pi})^{-1} = \frac{1}{\lambda}\sum_{n = 0}^{\infty}\lambda^{-n}(\check{P}_0-1 \otimes \check{\pi})^n,
    	\]
    	and this operator is bounded and linear on $L^{\infty}(S\mathbb{T}^3;\mathbb{C})$. Furthermore, for $|\lambda| > \gamma$ and $\lambda \neq 1$, we deduce from
    	\[
    	\lambda I-\check{P}_0=(\lambda I-\check{P}_0+1 \otimes \check{\pi})-1 \otimes \check{\pi},
    	\]
    	and 
    	\[
    	Z_\lambda\left( I+(\lambda-1)^{-1} 1\otimes\check{\pi} \right)=Z_\lambda \left( I-\lambda^{-1}1\otimes\check{\pi} \right)^{-1}=Z_\lambda \left( I-Z_\lambda1\otimes\check{\pi} \right)^{-1},
    	\]
    	that 
    	\begin{equation}\label{the inverse of operator}
    		(\lambda I-\check{P}_0)Z_\lambda \left( I+(\lambda-1)^{-1} 1\otimes\check{\pi} \right)=I.
    	\end{equation}
    	Thus, $(\lambda I-\check{P}_0)^{-1}$ also exists as a bounded linear operator on $L^{\infty}(S\mathbb{T}^3;\mathbb{C})$ whenever $|\lambda| > \gamma$ and $\lambda \neq 1$. Since $\gamma < 1$ and $\check{P}$ is uniformly geometrically ergodic, we can imply that $1$ is the dominant, isolated eigenvalue of $\check{P}_0$ and the eigenspace corresponding to $1$, namely $\ker(I - \check{P}_0)$, has dimension 1. Let $\Pi_1$ be the Riesz projection of $\check{P}_0$ at $1$. It follows from Lemma \ref{Range of Riesz Projector} that 
    	\[
    	\text{Ran} (\Pi_{1})=\left\{f\in L^{\infty}(S\mathbb{T}^3; \mathbb{C}):\lim_{n \to \infty}\|(I-\check{P}_0)^n f\|^{1/n}=0\right\}.
    	\]
    	It is clear that generalized eigenspace $\mathcal{N}_{1}=\bigcup_{k\geq 1}\mathcal{N}((I - T)^k)$ is a subspace of $\text{Ran} (\Pi_{1})$. Next, we prove that $\text{Ran} (\Pi_{1})\subset \ker(I - \check{P}_0)$. By (\ref{the inverse of operator}), there exists $\delta>0$ such that for any $\lambda\in\mathbb{C}$ with $0<|\lambda-1|\leq \delta$, we have
    	\begin{align*}
    		\left\|(\lambda-1)(\lambda I - \check{P}_0)^{-1}\right\|&=\left\|(\lambda-1)Z_\lambda \left( I+(\lambda-1)^{-1} 1\otimes\check{\pi}\right)\right\|\\
    		&=\left\|((\lambda-1)I+1\otimes\check{\pi})Z_\lambda\right\|\\
    		&\leq C,
    	\end{align*}
        where $C>0$ is a constant. Therefore, we conclude that
    	\begin{align*}
    		(I-\check{P}_0)\Pi_1&=\frac{1}{2\pi i} \int _ {\Gamma_1} (I-\check{P}_0)(\lambda I - \check{P}_0)^{-1} d\lambda\\
    		&= \frac{1}{2\pi i} \int _ {\Gamma_1} (1-\lambda)(\lambda I - \check{P}_0)^{-1} d\lambda\\
    		&=0,
    	\end{align*}
        where $\Gamma_1$ is a circ around $1$ which isolates $1$ from $\sigma(\check{P}_0)\setminus\{1\}$. Thus, $\dim(\mathcal{N}_{1})=\dim (\text{Ran} (\Pi_{1}))=\dim(\ker(I - \check{P}_0))=1$. As a consequence, $\check{P}_0$ has a spectral gap. Let $\alpha\in (0,1)$ such that $\sigma(\check{P}_0)\setminus\{1\}\subset B_{\alpha}(0)$. Set $\varepsilon\ll (1-\gamma)/3$ where $\gamma$ as in \eqref{uniform geometric ergodic in SM}. By Lemma \ref{spectral perturbation}(i), there exists $\delta_1>0$ such that $\sigma(\check{P}_z) \subset B_{\alpha+\varepsilon}(0)\cup B_\varepsilon(1)$ for $|z|< \delta_1$.   
    	Let $U$ be an open ball in $\mathbb{C}$ centered at $1$ with radius smaller than $(1-\gamma)/3$, then $\overline{U} \cap \sigma(\check{P}_0)=\{1\}$. By the fact that $1$ is a simple eigenvalue of $\check{P}_0$ and utilizing Lemma \ref{spectral perturbation}(iii), we find that there exists a positive $\delta_2 < \delta_1$ such that for $|z|<\delta_2$, $\overline{U} \cap \sigma(\check{P}_z)=\{r(z)\}$. Hence, $r(z)$ is an isolated, dominant point of $\sigma(\check{P}_z)$. 
    	
    	Let $\Pi_{r(z)}$ be the spectral projection of $\check{P}_z$ at $r(z)$ and let $\Gamma$ be a positively oriented small circle centered at $1$ with radius $(1-\gamma)/3$, we have
    	\begin{align*}
    		&\|\Pi_{r(z)}-\Pi_0\|_{L^{\infty}(S\mathbb{T}^3;\mathbb{C})}\\
    		 =&\frac{1}{2\pi} \left\| \int_\Gamma \left[ (\lambda I - \check{P}_z)^{-1} - (\lambda I - \check{P}_0)^{-1} \right] d\lambda \right\|_{L^{\infty}(S\mathbb{T}^3;\mathbb{C})} \\
    		=&\frac{1}{2\pi} \left\| \int_\Gamma  (\lambda I - \check{P}_z)^{-1} (\check{P}_z-\check{P}_0) (\lambda I - \check{P}_0)^{-1}  d\lambda \right\|_{L^{\infty}(S\mathbb{T}^3;\mathbb{C})} \\
    		\leq &\frac{1}{2\pi} \| \check{P}_z-\check{P}_0\|_{L^{\infty}(S\mathbb{T}^3;\mathbb{C})} \left\| \int_\Gamma  (\lambda I - \check{P}_z)^{-1}(\lambda I - \check{P}_0)^{-1}  d\lambda \right\|_{L^{\infty}(S\mathbb{T}^3;\mathbb{C})} \\
    		\leq &\frac{|\Gamma|}{2\pi} \| \check{P}_z-\check{P}_0\|_{L^{\infty}(S\mathbb{T}^3;\mathbb{C})}  \sup_{\lambda \in \Gamma} \|(\lambda I - \check{P}_z)^{-1}\| _{L^{\infty}(S\mathbb{T}^3;\mathbb{C})}
    		\sup_{\lambda \in \Gamma} \|(\lambda I - \check{P}_0)^{-1}\|_{L^{\infty}(S\mathbb{T}^3;\mathbb{C})}, 
    	\end{align*}
    	which converges to $0$ as $z \to 0$, by Lemma \ref{bound for matrix norm of P_q}(ii). Then, take $\varepsilon>0$ sufficiently small, there exists $\delta_3>0$ such that when $|z|<\delta_3$
    	\begin{equation}\label{projection estimate}
    		\|\Pi_{r(z)}-\Pi_0\|_{L^{\infty}(S\mathbb{T}^3;\mathbb{C})}<\varepsilon.  
    	\end{equation}
    	Therefore, by Lemma \ref{spectral perturbation}(ii), the range of $\Pi_{r(z)}$ has dimension $1$ for $|z|<\delta_3$.
    	And since $r(z)$ is an isolated point of $\sigma(\check{P}_z)$, it follows that $r(z)$ is an eigenvalue of $\check{P}_z$. Next, we show that this eigenvalue is real when $z\in\mathbb{R}$. Notice that when $z\in \mathbb{R}$, $ \check{P}_z$ maps the real and imaginary parts of $g$ to the real and imaginary parts, respectively. If $\text{Im}(r(z)) \neq 0$, then the complex conjugate $\overline{r(z)}$ would also be an eigenvalue of $\check{P}_z$ which is a contradiction to $\overline{U} \cap \sigma(\check{P}_z)=\{r(z)\}$. 
    	
    	Finally, set $q_0=\delta_3$, then $\check{P}_q$ has a simple, dominant, isolated eigenvalue $r(q)\in (1-\varepsilon, 1+\varepsilon)$ and a spectral gap for $q \in (-q_0,q_0)$. We now turn our attention to the eigenfunction corresponding to $r(q)$.
    \end{proof}    
In Lemma \ref{bound for matrix norm of P_q} and \ref{spectral gap}, we regard $\check{P}_z, z\in\mathbb{C}$ as bounded linear operators on $L^{\infty}(S\mathbb{T}^3;\mathbb{C})$. In the subsequent, we consider the family $\{\check{P}_q\}_{q \in \mathbb{R}}$ defined for $\psi \in L^\infty(S \mathbb{T}^3)$ by
\[
\check{P}_q \psi(x,u)=\mathbb{E} |D_x \Phi_{\underline{\tau}} u|^{-q} \psi \left(\Phi_{\underline{\tau}}(x), {D_x \Phi_{\underline{\tau}} u}/{|D_x \Phi_{\underline{\tau}} u|} \right),
\]
as bounded linear operators on $L^\infty(S \mathbb{T}^3)$.
\begin{lem}\label{eigenfunction as a limit}
Assume $\check{P}$ is uniformly geometrically ergodic. Let $\{\check{P}_q\}_{q \in \mathbb{R}}$ be bounded linear operators on $L^\infty(S \mathbb{T}^3)$ as above. Let $q_0$ and $r(q)$ for $q\in[0, q_0]$ be as given in Lemma \ref{spectral gap}. Then for all $q\in [0, q_0]$, there is an eigenfunction $\psi_q$ corresponding to the dominant eigenvalue $r(q)$ of the operator $\check{P}_q$ satisfies
\begin{equation}\label{def of eigenfunction}
    \lim_{n \to \infty} \left\|\psi_q-r(q)^{-n} \check{P}_q^n \mathbf{1}\right\|=0.
\end{equation}
\end{lem}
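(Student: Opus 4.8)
The plan is to read $\psi_q$ directly off the spectral decomposition already produced in the proof of Lemma \ref{spectral gap}. For $q\in[0,q_0]$ let $\Pi_q$ denote the Riesz (spectral) projection of $\check{P}_q$ at its dominant eigenvalue $r(q)$. Then $\Pi_q$ commutes with $\check{P}_q$, the subspace $\text{Ran}\,\Pi_q$ is $\check{P}_q$-invariant, and $\dim\text{Ran}\,\Pi_q=1$; moreover, by \eqref{projection estimate} (shrinking $q_0$ if necessary), $\|\Pi_q-\Pi_0\|$ is as small as we wish, where $\Pi_0$ is the Riesz projection of $\check{P}=\check{P}_0$ at the eigenvalue $1$. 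Since $\mathbf{1}\in\ker(I-\check{P})=\text{Ran}\,\Pi_0$, we have $\Pi_0\mathbf{1}=\mathbf{1}$, and I will simply set $\psi_q:=\Pi_q\mathbf{1}$.

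First I will check that $\psi_q$ is a genuine, nonzero, real-valued eigenfunction of $\check{P}_q$ with eigenvalue $r(q)$. Non-vanishing follows from $\|\psi_q-\mathbf{1}\|=\|\Pi_q\mathbf{1}-\Pi_0\mathbf{1}\|\le\|\Pi_q-\Pi_0\|<1$, so $\psi_q\neq0$; real-valuedness will come out of the limit \eqref{def of eigenfunction} below, since $r(q)\in\mathbb{R}$ and $\check{P}_q$ maps real functions to real functions, so each $r(q)^{-n}\check{P}_q^n\mathbf{1}$ is real. For the eigenfunction property, Lemma \ref{Range of Riesz Projector} applied at the isolated point $r(q)$ gives $\text{Ran}\,\Pi_q=\{f:\lim_n\|(r(q)I-\check{P}_q)^nf\|^{1/n}=0\}$, which contains the generalized eigenspace $\mathcal{N}_{r(q)}$; since $r(q)$ is a simple eigenvalue, $\dim\mathcal{N}_{r(q)}=1=\dim\text{Ran}\,\Pi_q$, hence $\text{Ran}\,\Pi_q=\mathcal{N}_{r(q)}$, and since $r(q)$ is an eigenvalue the (nonzero) geometric eigenspace $\ker(r(q)I-\check{P}_q)$ is also one-dimensional and therefore equal to $\mathcal{N}_{r(q)}$. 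Thus $\psi_q=\Pi_q\mathbf{1}\in\ker(r(q)I-\check{P}_q)$, i.e. $\check{P}_q\psi_q=r(q)\psi_q$.

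Next I will establish the convergence \eqref{def of eigenfunction}. As $\check{P}_q$ acts on $\text{Ran}\,\Pi_q$ as multiplication by $r(q)$, we have $\check{P}_q\Pi_q=r(q)\Pi_q$; setting $N_q:=\check{P}_q(I-\Pi_q)$, which satisfies $\Pi_qN_q=N_q\Pi_q=0$, yields $\check{P}_q=r(q)\Pi_q+N_q$ and hence $\check{P}_q^{\,n}=r(q)^n\Pi_q+N_q^{\,n}$ for every $n\ge1$. The spectrum of $N_q$ is $\{0\}\cup(\sigma(\check{P}_q)\setminus\{r(q)\})$, so the spectral gap from Lemma \ref{spectral gap} gives $\sigma(N_q)\subset B_{|r(q)|-\eta}(0)$ for some $\eta>0$; by the spectral radius formula there are $C_q>0$ and $\rho_q\in(|r(q)|-\eta,|r(q)|)$ with $\|N_q^{\,n}\|\le C_q\rho_q^{\,n}$. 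Since $r(q)$ is positive and bounded away from $0$, we conclude
\[
\bigl\|r(q)^{-n}\check{P}_q^{\,n}\mathbf{1}-\psi_q\bigr\|=\bigl\|r(q)^{-n}N_q^{\,n}\mathbf{1}\bigr\|\le |r(q)|^{-n}\,\|N_q^{\,n}\|\le C_q\Bigl(\tfrac{\rho_q}{|r(q)|}\Bigr)^{\!n}\longrightarrow0
\]
as $n\to\infty$, which is \eqref{def of eigenfunction}; in particular $r(q)^{-n}\check{P}_q^n\mathbf{1}\to\psi_q$, confirming that $\psi_q$ is real-valued.

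I do not expect a substantial obstacle here: the statement is a soft consequence of the spectral gap obtained in Lemma \ref{spectral gap}. The two points that deserve a little care are the non-vanishing of $\psi_q$, which rests on $\Pi_0\mathbf{1}=\mathbf{1}$ together with the norm continuity \eqref{projection estimate} of the spectral projection, and the upgrade from algebraic simplicity of $r(q)$ to the identity $\text{Ran}\,\Pi_q=\ker(r(q)I-\check{P}_q)$; it is the latter that guarantees $\psi_q$ is an honest eigenfunction and not merely a generalized eigenvector.
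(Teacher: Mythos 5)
Your proposal is correct and follows essentially the same route as the paper: define $\psi_q:=\Pi_{r(q)}\mathbf{1}$, use the norm continuity of the spectral projections together with $\Pi_0\mathbf{1}=\mathbf{1}$ to see $\psi_q\neq 0$, decompose $\check{P}_q^n=r(q)^n\Pi_{r(q)}+(\check{P}_q(I-\Pi_{r(q)}))^n$, and kill the remainder via the spectral radius (Gelfand) formula and the spectral gap. The only difference is cosmetic: you explicitly justify that $\mathrm{Ran}\,\Pi_{r(q)}$ equals the geometric eigenspace (so that $\check{P}_q\Pi_{r(q)}=r(q)\Pi_{r(q)}$), a step the paper uses without comment.
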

\begin{proof}
Let $\Pi_{r(q)}$ be the Riesz projection of $\check{P}_q$ at $r(q)$. We decompose $\check{P}_q$ as follows:
\begin{equation*}
\check{P}_q= \check{P}_q \Pi_{r(q)} + \check{P}_q (I-\Pi_{r(q)}) = r(q) \Pi_{r(q)}+\check{P}_q (I-\Pi_{r(q)}) ,
\end{equation*}
Hence, for any $n \in \mathbb{N}$, we have 
\begin{equation*}
\check{P}_q^n = r(q)^n \Pi_{r(q)} + \check{P}_q^n (I - \Pi_{r(q)}).
\end{equation*}
Notice that $\check{P}_q(1-\Pi_{r(q)})$ is also a linear bounded operator on $L^\infty(S \mathbb{T}^3)$. Denote $s(q)$ the spectral radius of $\check{P}_q(1-\Pi_{r(q)})$. The Gelfand's formula shows 
\[
s(q)=\lim_{n \to \infty} \| \check{P}_q^n (I-\Pi_{r(q)}) \|^{{1}/{n}}.
\]
Since $\check{P}_q$ has a spectral gap with a simple, dominant, isolated eigenvalue $r(q)$, it follows that $s(q) < r(q)$. Consequently, we have 
\[
\lim_{n \to \infty}r(q)^{-n} \|\check{P}_q^n (I-\Pi_{r(q)}) \| = 0.
\] 
By \eqref{projection estimate} and the fact that $\Pi_{0} \mathbf{1}= \mathbf{1}$, we conclude that $q\in [0,q_0]$, we have $\Pi_{r(q)} \mathbf{1} \neq \mathbf{0}$. Define $\psi_q=\Pi_{r(q)} \mathbf{1}$. It follows that
\[
\lim_{n \to \infty} \left\|\psi_q-r(q)^{-n} \check{P}_q^n \mathbf{1}\right\|=0,
\]
and that
\[
\check{P}_q\psi_q=\check{P}_q\Pi_{r(q)} \mathbf{1}=r(q)\Pi_{r(q)} \mathbf{1}=r(q)\psi_q.
\]
\end{proof}
We now outline some additional properties of $\psi_q$ and $r(q)$, which will be utilized to establish the drift condition for the two-point chain.
\begin{lem}\label{property of psi}
	Assume $\check{P}$ is uniformly geometrically ergodic. Then, we have the following:
	\begin{itemize}
		\item[(i)] For any $\varepsilon>0$, there exists a constant $C_{\varepsilon}>0$ such that $\psi_q$ can be decomposed as $\psi_q=\psi_{q,0}+\psi_{q,1}$, where $\| \psi_{q,0} \| \leq \varepsilon$ and $\psi_{q,1}$ is continuously differentiable, satisfies $\| \psi_{q,1}\|_{C^1} \leq C_{\varepsilon}$. 
		\item[(ii)] Moreover, if $\check{P}$ is topologically irreducible, then $\psi_q$ is strictly positive on $S\mathbb{T}^3$.
	\end{itemize}
\end{lem}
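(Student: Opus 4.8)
The plan is to analyze the eigenfunction $\psi_q = \Pi_{r(q)}\mathbf{1}$ via the limit representation \eqref{def of eigenfunction}, namely $\psi_q = \lim_{n\to\infty} r(q)^{-n}\check{P}_q^n\mathbf{1}$, together with the spectral perturbation estimate \eqref{projection estimate} which says $\|\Pi_{r(q)}-\Pi_0\| < \varepsilon$ for $q$ small. For part (i), I would first observe that $\Pi_0 = \mathbf{1}\otimes\check\pi$, so $\Pi_0\mathbf{1} = \mathbf{1}$; hence \eqref{projection estimate} already gives $\|\psi_q - \mathbf{1}\| < \varepsilon$, which is a crude version of the decomposition with $\psi_{q,1}=\mathbf{1}$ (smooth, $C^1$-bounded) and $\|\psi_{q,0}\|<\varepsilon$. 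If the statement genuinely needs a refined decomposition I would instead split $\check{P}_q^n\mathbf{1}$ using the one-step smoothing of $\check P_q$: since $\check{P}_q\phi(x,u) = \mathbb{E}\,|D_x\Phi_{\underline\tau}u|^{-q}\phi(\check\Phi_{\underline\tau}(x,u))$ involves integration against the exponential density of $\underline\tau$, a strong-Feller-type argument (Theorem \ref{Strong Feller} applied to the projective chain, whose Lie bracket condition is Lemma \ref{Lie condition of projective chain}) shows that $\check{P}_q^{m_0}$ maps $L^\infty$ into a space of $C^1$ functions with norm controlled by the $L^\infty$ norm of the input and a constant $C(q)$ bounded uniformly on $[0,q_0]$; then write $\psi_q = r(q)^{-m_0}\check{P}_q^{m_0}\psi_q$, set $\psi_{q,1} := r(q)^{-m_0}\check{P}_q^{m_0}\psi_{q,1}'$ for a suitable smooth truncation, and push the small remainder into $\psi_{q,0}$ using \eqref{def of eigenfunction} to control the tail $r(q)^{-n}\check{P}_q^n(I-\Pi_{r(q)})\mathbf{1}\to 0$. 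The bound $\|\psi_{q,1}\|_{C^1}\le C_\varepsilon$ then follows from the uniform (in $q$) bound on the smoothing constant and on $r(q)\in(1-\varepsilon,1+\varepsilon)$.

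For part (ii), I would exploit positivity of the kernel. Since $|D_x\Phi_{\underline\tau}u|^{-q} > 0$ and $\mathbf{1}\ge 0$ with $\mathbf{1}\not\equiv 0$, each iterate $r(q)^{-n}\check{P}_q^n\mathbf{1}$ is nonnegative, so the limit $\psi_q$ is nonnegative; and $\psi_q\not\equiv 0$ because $\Pi_{r(q)}\mathbf{1}\neq\mathbf{0}$ (established in Lemma \ref{eigenfunction as a limit}). To upgrade nonnegativity to strict positivity, I would use the eigenfunction identity $r(q)^n\psi_q(x,u) = \mathbb{E}\,|D_x\Phi_{\underline\tau}^n u|^{-q}\,\psi_q(\check\Phi_{\underline\tau}^n(x,u))$: if $\psi_q(x_0,u_0) = 0$ at some point, then since the integrand is nonnegative it must vanish $\mathbb{P}$-a.s., forcing $\psi_q(\check\Phi_{\underline\tau}^n(x_0,u_0)) = 0$ for a.e. $\underline\tau$ and all $n$. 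Topological irreducibility of $\{\check\Phi_{\underline\tau}^m\}$ (Lemma \ref{irreducibility for projective chain}, which in fact gives exact controllability) says the set of reachable points from $(x_0,u_0)$ is dense in $S\mathbb{T}^3$; combined with continuity of $\psi_q$ on the complement of the small $L^\infty$ perturbation — more carefully, using the $C^1$ part $\psi_{q,1}$ from (i) and the fact that $\psi_q = r(q)^{-m_0}\check P_q^{m_0}\psi_q$ is itself continuous by the strong Feller smoothing — this would force $\psi_q\equiv 0$, a contradiction. Hence $\psi_q > 0$ everywhere.

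The main obstacle I anticipate is making the smoothing/continuity argument in part (i) genuinely uniform in $q\in[0,q_0]$: the twisted kernel $\check P_q$ depends on $q$ through the weight $|D_x\Phi_{\underline\tau}u|^{-q}$, and one must check that the $C^1$ bound coming from the coarea/integration-by-parts estimate in the proof of Theorem \ref{Strong Feller} does not blow up as $q$ ranges over a compact interval — this uses the polynomial two-sided bound \eqref{polynomial} on $|D_x\Phi_{\underline\tau}u|$ and the integrability estimate of Lemma \ref{boundedness of the series} to dominate all $q$-derivatives. A secondary subtlety is that $\psi_q$ need not be globally $C^1$ (only the regular part $\psi_{q,1}$ is), so in part (ii) the continuity needed to propagate the zero along a dense orbit must be obtained from the representation $\psi_q = r(q)^{-m_0}\check P_q^{m_0}\psi_q$ rather than from a global smoothness claim; once that is in hand, the density-of-reachable-points argument closes the proof.
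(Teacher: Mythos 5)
Your part (ii) is essentially sound and is a valid contrapositive variant of the paper's argument: the paper argues directly that $\psi_q$, being the uniform limit of the continuous nonnegative functions $r(q)^{-n}\check{P}_q^n\mathbf{1}$, is continuous, nonnegative and not identically zero, hence bounded below by some $C>0$ on an open set $U$, and then uses the eigenfunction identity together with topological irreducibility to conclude $\psi_q(x,u)\geq C\,r(q)^{-n}\check{P}^n((x,u),U)\,\mathbb{E}|D_x\Phi^n_{\underline{\tau}}u|^{-q}>0$. Your propagation-of-zeros argument reaches the same conclusion; note only that the continuity you need is free from \eqref{def of eigenfunction} (uniform limit of continuous functions), so there is no need to route it through a strong Feller representation.

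Part (i), however, has a genuine gap. Your ``crude'' version fails because the estimate \eqref{projection estimate} gives $\|\Pi_{r(q)}-\Pi_0\|<\varepsilon$ only for $|q|<\delta_3(\varepsilon)$; in the lemma $q$ is \emph{fixed} and $\varepsilon>0$ is arbitrary, so $\|\psi_q-\mathbf{1}\|$ is a fixed quantity that cannot be made smaller than every $\varepsilon$. Your refined version then rests on the claim that a strong-Feller-type argument shows $\check{P}_q^{m_0}$ maps $L^\infty$ into $C^1$ with controlled norm; but Theorem \ref{Strong Feller} as proved yields only \emph{continuity} of $Q^{m}f$ for bounded measurable $f$, and only on a neighborhood $U$ of a point where the Lie bracket condition holds --- it gives neither differentiability nor a global $C^1$ bound, so this step does not go through as written. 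The intended argument is much more elementary: by \eqref{def of eigenfunction} choose $m$ so large that $\sup_{(x,u)}|\psi_q(x,u)-r(q)^{-m}\check{P}_q^m\mathbf{1}(x,u)|<\varepsilon$, and set $\psi_{q,1}:=r(q)^{-m}\check{P}_q^m\mathbf{1}$, $\psi_{q,0}:=\psi_q-\psi_{q,1}$. Since $\check{P}_q^m\mathbf{1}(x,u)=\mathbb{E}|D_x\Phi^m_{\underline{\tau}}u|^{-q}$ and the flow maps are smooth with the two-sided polynomial bound \eqref{polynomial} permitting differentiation under the expectation, $\psi_{q,1}$ is $C^1$ with a norm depending on $m$, hence on $\varepsilon$. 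No smoothing theorem is needed.
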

\begin{proof}
Consider part (i) first. By \eqref{def of eigenfunction}, for any $\varepsilon>0$, there exists $m>0$ such that 
\[
\sup_{(x,u) \in S\mathbb{T}^3}|\psi_q(x,u)-r(q)^{-m} \check{P}_q^m \mathbf{1}(x,u)|<\varepsilon.
\]
Define $\psi_{q,1}:= r(q)^{-m} \check{P}_q^m \mathbf{1},\, \psi_{q,0}:= \psi_q -\psi_{q,1}$. Clearly, $\| \psi_{q,0} \| \leq \varepsilon$. Furthermore, the bound $\| \psi_{q,1}\|_{C^1} \leq C_{\varepsilon}$ follows from $\check{P}_q^n \mathbf{1}(x,u)=\mathbb{E} |D_x \Phi_{\underline{\tau}}^n u|^{-q}$ is continuously differentiable on $S\mathbb{T}^3$. 

Turning to part (ii). By the fact that $\{\check{P}_q^n \mathbf{1}\}_n$ is a sequence of continuous functions and by \eqref{def of eigenfunction}, $\psi_q\geq 0$ is a continuous function on $S\mathbb{T}^3$ and is not identically zero. Consequently, there exists a constant $C>0$ and an open set $U \subset S\mathbb{T}^3$ such that $\psi_q |_{U} \geq C$. Hence, for any $(x,u) \in S\mathbb{T}^3$, $n \geq 1$, 
\begin{align*}
\psi_q (x,u) 
&= r(q)^{-n} \mathbb{E} |D_x \check{\Phi}_{\underline{\tau}}^n u|^{-q} \psi_q(\Phi_{\underline{\tau}}^n(x), {D_x \Phi_{\underline{\tau}}^n u}/{|D_x \Phi_{\underline{\tau}}^n u|}) \\
&\geq C r(q)^{-n} \check{P}^n\left( (x, u) \in U \right)  \cdot \mathbb{E} |D_x \check{\Phi}_{\underline{\tau}}^n u|^{-q}.
\end{align*}
Using \eqref{polynomial}, we observe that $\mathbb{E} |D_x \check{\Phi}_{\underline{\tau}}^n u|^{-q} >0$. By Lemma \ref{irreducibility for projective chain}, $\{\check{\Phi}_{\underline{\tau}}^m\}$ is topologically irreducible. Then, there exists $n\geq 1$ such that $\check{P}^n\left( (x, u) \in U \right) >0$. Therefore, $\psi_q$ is strictly positive on $S\mathbb{T}^3$. 
\end{proof}

\begin{lem}\label{derivative of r(q)}
	Assume $\check{P}$ is uniformly geometrically ergodic.  Then, $r'(0)= -\lambda_1$. In particular, if $\lambda_1 >0$, then $r(q) <1$ for sufficiently small $q>0$.
\end{lem}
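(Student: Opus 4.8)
The plan is to differentiate the relation $\check P_q\psi_q=r(q)\psi_q$ at $q=0$ and then pair against the stationary measure $\check\pi$ of $\check P=\check P_0$ to kill the term involving $\psi_q'$. First I would record that, by Lemma \ref{spectral gap} and Remark \ref{holomorphic of r_i(z)}, $q\mapsto r(q)$ is holomorphic (hence smooth) near $0$, and by the resolvent/Riesz-projection formula $\psi_q=\Pi_{r(q)}\mathbf 1$ together with Lemma \ref{bound for matrix norm of P_q} the map $q\mapsto\psi_q$ is also differentiable in $L^\infty(S\mathbb T^3)$ at $q=0$, with $\psi_0=\mathbf 1$ and $r(0)=1$. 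Differentiating $\check P_q\psi_q=r(q)\psi_q$ at $q=0$ gives
\[
(\partial_q\check P_q)\big|_{q=0}\mathbf 1+\check P_0\psi_0'=r'(0)\mathbf 1+\psi_0',
\]
and from \eqref{analytic continuation} the derivative of the twisted operator at $q=0$ acts as $(\partial_q\check P_q)\big|_{q=0}\psi(x,u)=-\,\mathbb E\big[\ln|D_x\Phi_{\underline\tau}u|\;\psi(\check\Phi_{\underline\tau}(x,u))\big]$; applied to $\mathbf 1$ this is the function $G(x,u):=-\,\mathbb E\ln|D_x\Phi_{\underline\tau}u|$.

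Next I would integrate the identity against $\check\pi$. Since $\check\pi$ is $\check P_0$-stationary, $\int\check P_0\psi_0'\,d\check\pi=\int\psi_0'\,d\check\pi$, so those two terms cancel and we are left with
\[
\int_{S\mathbb T^3} G(x,u)\,d\check\pi(x,u)=r'(0)\int_{S\mathbb T^3}\mathbf 1\,d\check\pi=r'(0).
\]
It remains to identify $\int G\,d\check\pi$ with $-\lambda_1$. By the Furstenberg-type formula for the top exponent of the projective cocycle — i.e. the ergodic theorem applied along the stationary projective chain, using that $\check\pi$ is the unique stationary measure (Theorem \ref{Uniformly geometrically ergodic for projective chain}) and the integrability bound \eqref{polynomial} which puts $\ln|D_x\Phi_{\underline\tau}u|$ in $L^1$ — one has
\[
\lambda_1=\lim_{n\to\infty}\frac1n\log|D_x\Phi_{\underline\tau}^n u|
=\int_{S\mathbb T^3}\mathbb E\big[\ln|D_x\Phi_{\underline\tau}u|\big]\,d\check\pi(x,u)=-\int_{S\mathbb T^3}G\,d\check\pi,
\]
for $\check\pi$-a.e.\ $(x,u)$ (and the value is the deterministic $\lambda_1$ of Theorem \ref{Positivity of the top Lyapunov exponent} by the multiplicative ergodic theorem, since $\lambda_1$ is $\mu$-a.s.\ constant and $\check\pi$ projects to $\mu$). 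Combining the last two displays yields $r'(0)=-\lambda_1$, and since $\lambda_1>0$ by Theorem \ref{Positivity of the top Lyapunov exponent}, a first-order Taylor expansion $r(q)=1-\lambda_1 q+o(q)$ gives $r(q)<1$ for all sufficiently small $q>0$.

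The main obstacle is the justification of differentiability of $q\mapsto\psi_q$ at $q=0$ in a norm strong enough to pass to the limit under $\int(\cdot)\,d\check\pi$; this I would handle via the contour-integral representation $\psi_q=\frac1{2\pi i}\int_\Gamma(\lambda I-\check P_q)^{-1}\mathbf 1\,d\lambda$ on a fixed circle $\Gamma$ separating $r(q)$ from the rest of the spectrum (uniformly for small $q$ by Lemma \ref{spectral perturbation}(i)), together with the norm-holomorphy of $q\mapsto\check P_q$ from \eqref{analytic continuation}, which makes $q\mapsto(\lambda I-\check P_q)^{-1}$ norm-holomorphic and hence $q\mapsto\psi_q$ norm-holomorphic with values in $L^\infty(S\mathbb T^3)$. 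A secondary point requiring care is the interchange of derivative and expectation in computing $(\partial_q\check P_q)|_{q=0}$, which is legitimate because the power series \eqref{analytic continuation} converges locally uniformly in $q$ by Lemma \ref{boundedness of the series}; and the identification of the ergodic average with the deterministic $\lambda_1$, which follows from uniqueness of $\check\pi$ and the fact that the fibered exponent equals the top Lyapunov exponent (as already used in Lemma \ref{a version of Kifer}).
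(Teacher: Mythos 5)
Your proposal is correct, but it takes a genuinely different route from the paper. You compute $r'(0)$ by first-order spectral perturbation: differentiate $\check P_q\psi_q=r(q)\psi_q$ at $q=0$ (justifying norm-differentiability of $q\mapsto\psi_q$ via the Riesz-projection contour integral and the norm-holomorphy of $q\mapsto\check P_q$ from \eqref{analytic continuation}), pair against the stationary measure $\check\pi$ to cancel the $\psi_0'$ terms, and then identify $\int \mathbb{E}\log|D_x\Phi_{\underline\tau}u|\,d\check\pi$ with $\lambda_1$ by the Furstenberg formula, using ergodicity of the unique $\check\pi$ together with Lemma \ref{a version of Kifer} to guarantee that the fibered exponent along $\check\pi$ is the top one. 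The paper instead sets $\Lambda(q)=\log r(q)$ and never differentiates $\psi_q$: it uses the uniform convergence $r(q)^{-n}\check P_q^n\mathbf 1\to\psi_q$ and strict positivity of $\psi_q$ (Lemmas \ref{eigenfunction as a limit} and \ref{property of psi}) to identify $\Lambda(q)=\lim_n \frac1n\log\mathbb{E}|D_x\Phi_{\underline\tau}^n u|^{-q}$, derives the one-sided bound $\Lambda(q)\ge -q\lambda_1$ from Jensen's inequality, and then pinches the two one-sided difference quotients using analyticity of $\Lambda$ at $0$. Your approach is the more structural one — it yields the derivative as an explicit integral against $\check\pi$ and would extend to higher-order expansions of $r(q)$ — at the cost of needing eigenvector perturbation and the Furstenberg identification; the paper's argument is softer, avoiding both of those ingredients, but is tied to the specific convexity/sandwich mechanism and gives no formula for $r'(0)$ beyond its value. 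Both correctly reduce the final claim to $r(q)=1-\lambda_1 q+o(q)$.
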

\begin{proof}
	Let $q_0$ be as in Lemma \ref{spectral gap}, for any $q \in (-q_0, q_0)$, by Remark \ref{holomorphic of r_i(z)} and the prove of Lemma \ref{spectral gap}, $r(q)$ is analytic and $r(q)>0$. Therefore, we define $\Lambda(q)=\log r(q)$. Since $r(0)=1$, we only need to prove $\Lambda'(0)=-\lambda_1$. For any $(x,u) \in S\mathbb{T}^3$, we have $\check{P}_q^n \mathbf{1}(x,u)=\mathbb{E} |D_x \Phi_{\underline{\tau}}^n u|^{-q}.$ By \eqref{def of eigenfunction}, we have 
	\begin{equation*}
		\psi_q(x,u)=\lim_{n \to \infty}r(q)^{-n}\check{P}_q^n\mathbf{1}(x,u),
	\end{equation*}
    where the limit holds uniformly in $(x,u)\in S\mathbb{T}^3$.
    Then, by Lemma \ref{property of psi}(ii), we obtain
    \begin{align}\label{uniform limit}
    	\lim_{n \to \infty} \frac{1}{n} \log \mathbb{E} |D_x \Phi_{\underline{\tau}}^n u|^{-q}=&\lim_{n \to \infty} \frac{1}{n} \log \check{P}_q^n \mathbf{1}(x,u)\nonumber\\
    	=&\lim_{n \to \infty}\frac{1}{n}\log (r(q)^{-n}\check{P}_q^n\mathbf{1}(x,u))+\log r(q)\nonumber\\
    	=&\lim_{n\to\infty}\frac{1}{n}\log \psi_q(x,u)+\Lambda(q)=\Lambda(q),
    \end{align}
     where the limit holds uniformly in $(x,u)\in S\mathbb{T}^3$. For any $q \in (-q_0, q_0)$ and $n\in\mathbb{N}$, take $u_n$ such that $\|D_x \Phi_{\underline{\tau}}^n\|=|D_x \Phi_{\underline{\tau}}^n u_n|$. Then, by Jensen's inequality, we have
	\[
	\mathbb{E} |D_x \Phi_{\underline{\tau}}^n u_n|^{-q} = \mathbb{E} e^{-q\log |D_x \Phi_{\underline{\tau}}^n u_n|} \geq e^{-q \mathbb{E} \log \|D_x \Phi_{\underline{\tau}}^n\|}.
	\]
	Taking the logarithm of both sides, dividing by $n$, and letting $n \to \infty$. Since the convergence in \eqref{uniform limit} is uniform, we obtain
	\[
	\Lambda(q) \geq -q \lambda_1.
	\]
	Hence, for $q \in (-q_0,0)$, we have $\Lambda(q)/q \leq -\lambda_1$ and for $q \in (0,q_0)$, we have $\Lambda(q)/q \geq -\lambda_1$. Moreover, we conclude that $\Lambda'(0-) \leq -\lambda_1$ and $\Lambda'(0+) \geq -\lambda_1$. Since $\Lambda(q)$ is analytic for $q \in (-q_0,q_0)$, $\Lambda'(0)=-\lambda_1$, and the result follows.
\end{proof}

\subsection{Lyapunov-Foster drift condition}
\quad Building on the preliminary estimates established earlier, this subsection aims to provide an explicit construction of the function $V$ for the two-point chain $\{\tilde{\Phi}_{\underline{\tau}}^m\}$. We begin by decomposing the invariant sets $\mathcal{I}_{i}$, for $i=1,2,3,4$ in order to establish the translation relation between these sets to be used in constructing the function $V$. 

We start with $\mathcal{I}_1$. For any $(x,y) \in \mathcal{I}_1$, $y_1-x_1$ is a period of $f_2$, $y_2-x_2$ is a period of $f_3$, $y_3-x_3$ is a period of $f_1$. Since $f_i\in C^{\omega}(\mathbb{S}^1,\mathbb{R})$, $i=1, 2, 3$ are non-constant functions, there exist finite constants $a_i, b_j, c_k$ such that for any $(x,y) \in \mathcal{I}_1$, there exist $i$, $j$ and $k$ satisfying $y_1-x_1=a_i$, $y_2-x_2=b_j$ and $y_3-x_3=c_k$. Consequently, $\mathcal{I}_1$ can be characterized as the union of the following finitely many invariant sub-manifolds:
\[
\mathcal{I}_{a_i, b_j, c_k}:=\left\{ (x,y) \in \mathcal{O} \times \mathcal{O}: y_1-x_1=a_i,\, y_2-x_2=b_j,\,y_3-x_3=c_k \right\}.
\]
We proceed with $\mathcal{I}_i,i=2,3,4$ which share the similar structures. For any $(x,y) \in \mathcal{I}_2$, $(x_3+y_3)/2$ is an axis of even symmetry for $f_1$, 
 $2(y_1-x_1)$ is a period of $f_2$ and $((x_2+y_2)/2, 0)$ is the center of symmetry of the graph of $f_3$. Since $f_i \in C^w(\mathbb{S}^1;\mathbb{R})$ are non-constant, there exist finite constants $a_i', b_j', c_k'$ such that for any $(x,y) \in \mathcal{I}_2$, there exist there exist $i$, $j$ and $k$ satisfying $2(y_1-x_1)=a_i'$, $(x_2+y_2)/2=b_j'$ and $(x_3+y_3)/2=c_k'$. Consequently, $\mathcal{I}_2$ can be characterized as the union of the following finitely many invariant sub-manifolds:
\begin{align*}
	\mathcal{I}_{a_i', b_j', c_k'}:=\big\{ (x,y) \in \mathcal{O} \times \mathcal{O}: & 2(y_1-x_1)=a_i',\, (x_2+y_2)/2=b_j',\,(x_3+y_3)/2=c_k',\, \\
	& f_2(y_1+t) \equiv -f_2(x_1+t) \text{ for all } t \in \mathbb{R} \big\}.
\end{align*}
Similarly, there exist finite constants $a_i'', b_j'', c_k''$ such that $\mathcal{I}_3$ can be characterized as the union of the following finitely many invariant sub-manifolds:
\begin{align*}
	\mathcal{I}_{a_i'', b_j'', c_k''}:=\big\{ (x,y) \in \mathcal{O} \times \mathcal{O}: & 2(y_3-x_3)=c_k'',\, (x_1+y_1)/2=a_i'',\,(x_2+y_2)/2=b_j'' ,\, \\
	& f_1(y_3+t) \equiv -f_1(x_3+t) \text{ for all } t \in \mathbb{R}\big\}.
\end{align*} 
And there exist finite constants $a_i''', b_j''', c_k'''$ such that $\mathcal{I}_4$ can be characterized as the union of the following finitely many invariant sub-manifolds:
\begin{align*}
	\mathcal{I}_{a_i''', b_j''', c_k'''}:=\big\{ (x,y) \in \mathcal{O} \times \mathcal{O}: 
	& 2(y_2-x_2)=b_j''',\, (x_3+y_3)/2=c_k''',\,(x_1+y_1)/2=a_i''',\, \\
	& f_3(y_2+t) \equiv -f_3(x_2+t) \text{ for all } t \in \mathbb{R}\big\}.
\end{align*}

Next, we prove the Lyapunov-Foster drift condition for the two-point chain $\{\tilde{\Phi}_{\underline{\tau}}^m\}$. Without
loss of generality, we only need to consider $I:= \cup_{i=0}^4 I_i$ where $I_0=\left\{ (x,y) \in \mathcal{O} \times \mathcal{O}: x=y \right\}$, $I_1=\mathcal{I}_{a_1,b_1,c_1}$, $I_2=\mathcal{I}_{a_1',b_1',c_1'}$, $I_3=\mathcal{I}_{a_1'',b_1'',c_1''}$ and $I_4=\mathcal{I}_{a_1''',b_1''',c_1'''}$.

For $s>0$, we define 
\[
\Delta_i(s):=\left\{(x,y)\in\mathcal{T}: d(\tilde{x}(i),y)<s\right\},\ \ i=0, 1, 2, 3,4
\]
where $x=(x_1,x_2,x_3)$, $\tilde{x}(0)=x$, $\tilde{x}(1)=(a_1+x_1, b_1+x_2, c_1+x_3) \mod 2\pi\mathbb{Z}^3$, $\tilde{x}(2)=(a_1'/2+x_1, 2b_1'-x_2, 2c_1'-x_3) \mod 2\pi\mathbb{Z}^3$,  $\tilde{x}(3)=(2a_1''-x_1, 2b_1''-x_2, c_1''/2+x_3) \mod 2\pi\mathbb{Z}^3$ and $\tilde{x}(4)=(2a_1'''-x_1, b_1'''/2+x_2, 2c_1'''-x_3) \mod 2\pi\mathbb{Z}^3$. Let $s_0>0$ be the minimal injectivity radius of the exponential map $\exp_x$ for $x \in \mathbb{T}^3$. Hence, if $d(x,y) < s_0$, $w(x,y):=\exp_{x}^{-1}(y)$ is well-defined. Let $\hat{w}(x,y)$ be the normalization of $w(x,y)$.

Due to Lemma \ref{derivative of r(q)} and Lemma \ref{Lie bracket of lifted chain}, fix $q \in (0,q_0)$ such that $r(q)<1$. For $(x,y) \in \Delta_0(s)$, with $s>0$ to be determined, define
\begin{equation*}
	V_0(x,y)=d(x,y)^{-q} \psi_q(x, \hat{w}(x,y)).
\end{equation*}
We introduce function $\hat{V}: T\mathbb{T}^3\to\mathbb{R}$ defined by
\begin{equation*}
	\hat{V}(x,v):=|v|^{-q} \psi_q (x, {v}/{|v|}).
\end{equation*}
Recall that $\hat{P}$ is the transition kernel of Markov chain $\{\hat{\Phi}_{\underline{\tau}}^m\}$. We can verify that $\hat{V}(x,v)$ is the eigenfunction of operator $\hat{P}$ corresponding to the eigenvalue $r(q)$ as follows: 
\begin{align}\label{hat V as eigenfunction}
	\hat{P} \hat{V}(x,v) &= \mathbb{E} \hat{V}\left(\Phi_{\underline{\tau}}(x), D_x\Phi_{\underline{\tau}} v\right) = \mathbb{E} |D_x\Phi_{\underline{\tau}} v|^{-q} \psi_q \left( \Phi_{\underline{\tau}}(x), {D_x\Phi_{\underline{\tau}} v}/{|D_x\Phi_{\underline{\tau}} v|}\right)\nonumber \\
	&= |v|^{-q} \check{P}_q \psi_q (x,v/|v|) =r(q) |v|^{-q} \psi_q (x,v/|v|) =r(q) \hat{V}(x,v).
\end{align}

Recall that $\{\tau_i\}_{i=1}^\infty$ is a sequence of independent exponential random variables with mean $h>0$. In the following lemma, we approximate $P^{(2)}V_0(x,y)$ by $\hat{P} \hat{V}(x,w(x,y))$ for all $(x,y)\in\Delta_0(s)$.
\begin{lem}\label{approximate for V}
Let $\hat{P}$, $P^{(2)}$, $q$ and $h$ be as above. Let $T>1$ be sufficiently large. Then, for any $\varepsilon>0$, there exists a constant $C_{\varepsilon}>0$ such that 
\begin{align*}
&\left| P^{(2)}V_0(x,y)- \hat{P} \hat{V}(x,w(x,y))\right| \\
\lesssim &\left( \varepsilon d(x,y)^{-q} + C_{\varepsilon}(T+h+1)e^{-\frac{T}{h}} d(x,y)^{-q} +C_\varepsilon T^{3q+8}d(x,y)^{1-q}\right),
\end{align*}
for all $(x,y) \in \Delta_0(s_0/(C_0 T^5))$, uniformly in $\varepsilon$ and $T$. Here, $s_0$ is the minimal injectivity radius of the exponential map $\exp_x$ for $x \in \mathbb{T}^3$, $C_0>1$ is a constant.
\end{lem}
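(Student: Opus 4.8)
## Plan for proving Lemma \ref{approximate for V}

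The strategy is to expand $P^{(2)}V_0(x,y)$ via the definition of the two-point chain over one step (three consecutive flow applications with exponential times $\tau_1,\tau_2,\tau_3$), and compare it term-by-term with $\hat{P}\hat{V}(x,w(x,y))$. The natural bridge is the observation that, for $(x,y)$ close together, the exponential map identifies $y$ with the tangent vector $w(x,y)=\exp_x^{-1}(y)\in T_x\mathbb{T}^3$, and the one-step image $\Phi_{\underline\tau}(y)$ corresponds (to leading order) to $D_x\Phi_{\underline\tau}\,w(x,y)$. Thus $d(\Phi_{\underline\tau}(x),\Phi_{\underline\tau}(y))^{-q}\psi_q(\Phi_{\underline\tau}(x),\hat w(\Phi_{\underline\tau}(x),\Phi_{\underline\tau}(y)))$ should be close to $|D_x\Phi_{\underline\tau}w|^{-q}\psi_q(\Phi_{\underline\tau}(x),D_x\Phi_{\underline\tau}w/|D_x\Phi_{\underline\tau}w|)=\hat V(\Phi_{\underline\tau}(x),D_x\Phi_{\underline\tau}w)$, whose expectation is exactly $\hat P\hat V(x,w(x,y))$ by \eqref{hat V as eigenfunction}.

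First I would split the expectation defining $P^{(2)}V_0(x,y)$ according to whether $\tau_1+\tau_2+\tau_3\le T$ or $>T$. On the tail event $\{\tau_1+\tau_2+\tau_3>T\}$, use the crude bound $V_0\lesssim d(x,y)^{-q}$ (from boundedness of $\psi_q$, Lemma \ref{property of psi}) together with the distortion estimate \eqref{polynomial}: on this event $d(\Phi_{\underline\tau}(x),\Phi_{\underline\tau}(y))$ can shrink at worst like $(C_0\prod(1+2\tau_i))^{-1}$, so $V_0$ of the image is at most $C_0^q\prod(1+2\tau_i)^q\,d(x,y)^{-q}\,\|\psi_q\|$; integrating the polynomial weight $\prod(1+2\tau_i)^q$ against $h^{-3}e^{-\sum\tau_i/h}$ restricted to $\{\sum\tau_i>T\}$ yields a factor $\lesssim C_\varepsilon(T+h+1)e^{-T/h}$. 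The same tail treatment applies to $\hat P\hat V$ since $\hat V(x,w)\lesssim d(x,y)^{-q}$ near the diagonal. This accounts for the second term in the claimed bound.

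On the bulk event $\{\tau_1+\tau_2+\tau_3\le T\}$, the flow derivative satisfies $\|D_x\Phi_{\underline\tau}\|,\|D_x\Phi_{\underline\tau}^{-1}\|\le C_0 T^5$ (say), so if $d(x,y)<s_0/(C_0T^5)$ then all relevant points stay within one injectivity radius and the exponential-map identifications are valid. Here I would perform a second-order Taylor expansion of the flow: $\Phi_{\underline\tau}(\exp_x(w))=\exp_{\Phi_{\underline\tau}(x)}\big(D_x\Phi_{\underline\tau}w+O(\|D^2\Phi_{\underline\tau}\|\,|w|^2)\big)$, with the curvature/second-derivative error controlled by a polynomial in $T$ (the $C^2$ norm of the composed flow over time $\le T$ is $\lesssim T^{8}$ or so). Substituting into $V_0$ of the image and using (i) the local Lipschitz bound on $d^{-q}$ away from $0$ — actually the cleaner route is to bound $|d(\Phi x,\Phi y)^{-q}-|D_x\Phi w|^{-q}|\lesssim |D_x\Phi w|^{-q-1}\cdot(\text{error})$ — and (ii) the $C^1$ decomposition $\psi_q=\psi_{q,0}+\psi_{q,1}$ from Lemma \ref{property of psi}(i): the $\psi_{q,0}$ piece contributes the $\varepsilon\,d(x,y)^{-q}$ term, while the smooth $\psi_{q,1}$ piece contributes an error of order $C_\varepsilon\cdot(\text{Taylor error})\cdot d(x,y)^{-q+1}$, i.e. $C_\varepsilon T^{3q+8}d(x,y)^{1-q}$ after integrating the polynomial-in-$\tau$ weights (the exponent $3q+8$ collects the $\prod(1+2\tau_i)^q$ distortion factor from converting $|D_x\Phi w|^{-q}$ comparisons and the $C^2$-norm growth). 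Finally, $\mathbb{E}\big[\hat V(\Phi_{\underline\tau}(x),D_x\Phi_{\underline\tau}w)\mathbf 1_{\sum\tau_i\le T}\big]$ differs from $\hat P\hat V(x,w(x,y))$ only by its own tail, already absorbed above.

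The main obstacle is the bookkeeping of the polynomial-in-$T$ constants: one must track how the $C^2$ (and $C^1$) norms of the $m=1$ composed flow $\varphi^{(3)}_{\tau_3}\circ\varphi^{(2)}_{\tau_2}\circ\varphi^{(1)}_{\tau_1}$ grow in $\tau_1,\tau_2,\tau_3$ (each $\varphi^{(i)}$ is affine-in-$t$ in some coordinates but the composition has mixed $t_it_j$ and $t_1t_2t_3$ terms), then combine this with the $-q$-homogeneity of $d^{-q}$ and the distortion inequality \eqref{polynomial} so that every error term, after integration against the exponential density restricted to $\{\sum\tau_i\le T\}$, collapses to the stated powers $T^{3q+8}$ and $(T+h+1)e^{-T/h}$. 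Getting the diagonal-singularity estimate $|a^{-q}-b^{-q}|\lesssim \min(a,b)^{-q-1}|a-b|$ to interact correctly with $|a-b|\lesssim T^{(\cdot)}d(x,y)^2$ (so that one power of $d(x,y)$ is gained, landing on $d(x,y)^{1-q}$) is the delicate point; everything else is a controlled Taylor expansion plus the spectral input \eqref{hat V as eigenfunction} and the regularity decomposition of $\psi_q$.
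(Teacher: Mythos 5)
Your proposal follows essentially the same route as the paper's proof: truncate the random times at scale $T$ (the paper uses the cube $[0,T]^3$ rather than the simplex $\{\sum\tau_i\le T\}$, which is immaterial), bound the tail via the polynomial distortion estimate \eqref{polynomial} integrated against the exponential density to get the $(T+h+1)e^{-T/h}d(x,y)^{-q}$ term, and on the bulk compare $w(\Phi_{\underline\tau}(x),\Phi_{\underline\tau}(y))$ with $D_x\Phi_{\underline\tau}w(x,y)$ by a first-order Taylor expansion with $C^2$-norm control $\lesssim T^5$, combining the mean-value bound on $a\mapsto a^{-q}$ with the decomposition $\psi_q=\psi_{q,0}+\psi_{q,1}$ from Lemma \ref{property of psi}(i) to produce the $\varepsilon d(x,y)^{-q}$ and $C_\varepsilon T^{3q+8}d(x,y)^{1-q}$ terms. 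The plan is correct and matches the paper's argument in all essential respects.
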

\begin{proof}
	By Lemma \ref{property of psi}(i), for any $\varepsilon>0$ and $(x,y) \in \Delta_0(s_0/(C_0T^5))$, we have 
	\begin{align*}
		P^{(2)}V_0(x,y) &= \mathbb{E} d\left( \Phi_{\underline{\tau}}(x), \Phi_{\underline{\tau}}(y) \right)^{-q} \psi_q \left( \Phi_{\underline{\tau}}(x),\hat{w}(\Phi_{\underline{\tau}}(x),\Phi_{\underline{\tau}}(y))\right)  \nonumber \\
		&= \mathbb{E}  d\left( \Phi_{\underline{\tau}}(x), \Phi_{\underline{\tau}}(y) \right)^{-q} \psi_{q,1} \left( \Phi_{\underline{\tau}}(x),\hat{w}(\Phi_{\underline{\tau}}(x),\Phi_{\underline{\tau}}(y))\right)   + O \left( \varepsilon d(x,y)^{-q} \right),
	\end{align*}   
	and 
	\begin{align*}
		&~~~\hat{P} \hat{V}(x,(w(x,y))) \nonumber\\
		&= \mathbb{E}  |D_x\Phi_{\underline{\tau}} (w(x,y))|^{-q} \psi_q \left( \Phi_{\underline{\tau}}(x), D_x\Phi_{\underline{\tau}} (w(x,y))/|D_x\Phi_{\underline{\tau}} (w(x,y))|\right)  \nonumber \\
		&= \mathbb{E}  |D_x\Phi_{\underline{\tau}} (w(x,y))|^{-q} \psi_{q,1} \left( \Phi_{\underline{\tau}}(x), D_x\Phi_{\underline{\tau}} (w(x,y))/|D_x\Phi_{\underline{\tau}} (w(x,y))|\right) + O \left( \varepsilon d(x,y)^{-q} \right).
	\end{align*}
	Let $T>1$ be sufficiently large. For any $\underline{\tau} \in [0,T]^3$, there exists a constant $C_0>1$, independent of $T$, such that $\|\Phi_{\underline{\tau}}\|_{C^2} \leq C_0 T^5$. Then, by the first order Taylor expansion with Lagrange remainder term on $\mathbb{T}^3$, we obtain
	\begin{equation*}
		d \left( \Phi_{\underline{\tau}}(y), \exp_{\Phi_{\underline{\tau}}(x)}  D_{x} \Phi_{\underline{\tau}} (w(x,y)) \right) \leq C_0 T^5 d(x,y)^2.
	\end{equation*}
	for all $(x,y) \in \Delta_0(s_0/(C_0 T^5))$, $s_0$ is the minimal injectivity radius of the exponential map $\exp_x$ for $x \in \mathbb{T}^3$. It follows that
	\begin{equation*} 
		\left| w(\Phi_{\underline{\tau}}(x),\Phi_{\underline{\tau}}(y))- D_x\Phi_{\underline{\tau}} (w(x,y)) \right| \leq C_0 T^5 d(x,y)^2,
	\end{equation*}
	Using the mean value theorem, we deduce
	\begin{equation*}
		\left| |w(\Phi_{\underline{\tau}}(x),\Phi_{\underline{\tau}}(y))|^{-q}- |D_x\Phi_{\underline{\tau}} (w(x,y))|^{-q} \right| \leq K T^{3q+8} d(x,y)^{1-q},
	\end{equation*}
	where $K>1$ may differ from line to line. However, it is a constant that depends only on $h$ and $q$, and is independent of $T$ and $\varepsilon$. For any vectors $u_1$ and $u_2$, we have
	\begin{equation*}
		\left| \frac{u_1}{|u_1|}-\frac{u_2}{|u_2|} \right| \leq 2 \frac{|u_1-u_2|}{\min\{|u_1|,|u_2|\}}.
	\end{equation*}
	Applying this to $u_1=w(\Phi_{\underline{\tau}}(x),\Phi_{\underline{\tau}}(y))$ and $u_2=D_x\Phi_{\underline{\tau}} (w(x,y))$, we obtain
	\begin{equation*}
		\left| \hat{w}(\Phi_{\underline{\tau}}(x),\Phi_{\underline{\tau}}(y))- \frac{D_x\Phi_{\underline{\tau}} (w(x,y))}{\left|D_x\Phi_{\underline{\tau}} (w(x,y))\right|} \right| \leq K T^{8} d(x,y).
	\end{equation*}
	For any $\underline{\tau} \in [0,T]^3$ and $(x,y) \in \Delta_0(s_0/(C_0 T^5))$, combining these estimates with Lemma \ref{property of psi}(i), there exists $C_{\varepsilon}>1$ such that
	\begin{align*}
		&\left|d\left( \Phi_{\underline{\tau}}(x), \Phi_{\underline{\tau}}(y) \right)^{-q} \psi_{q,1} \left( \Phi_{\underline{\tau}}(x),\hat{w}(\Phi_{\underline{\tau}}(x),\Phi_{\underline{\tau}}(y))\right)- |D_x\Phi_{\underline{\tau}} w|^{-q} \psi_{q,1} \left( \Phi_{\underline{\tau}}(x), D_x\Phi_{\underline{\tau}} w/|D_x\Phi_{\underline{\tau}} w|\right)  \right| \nonumber \\
		\leq&\left|( d\left( \Phi_{\underline{\tau}}(x), \Phi_{\underline{\tau}}(y) \right)^{-q} -|D_x\Phi_{\underline{\tau}} w|^{-q})\psi_{q,1} \left( \Phi_{\underline{\tau}}(x),\hat{w}(\Phi_{\underline{\tau}}(x),\Phi_{\underline{\tau}}(y))\right)\right|\nonumber\\
		&+\left| |D_x\Phi_{\underline{\tau}} w|^{-q}(\psi_{q,1} \left( \Phi_{\underline{\tau}}(x),\hat{w}(\Phi_{\underline{\tau}}(x),\Phi_{\underline{\tau}}(y))\right)-\psi_{q,1} \left( \Phi_{\underline{\tau}}(x), D_x\Phi_{\underline{\tau}} w/|D_x\Phi_{\underline{\tau}} w|\right))\right|\nonumber\\
		\leq &K T^{3q+8}d(x,y)^{1-q}(\|\psi_{q,1}\|_{C^0}+\|D\psi_{q,1}\|_{C^0})\nonumber\\
		\leq&KC_\varepsilon T^{3q+8}d(x,y)^{1-q}.
	\end{align*}
    Denote $D_{T}:=(T,\infty) \times [0,\infty)^2\cup [0,\infty) \times (T,\infty) \times [0,\infty)\cup [0,\infty)^2\times (T,\infty)$. For any $\underline{\tau} \in D_T$, we first estimate the contribution of the term involving 
    $d\left(\Phi_{t}(x), \Phi_{t}(y)\right)^{-q} \psi_{q,1}$. By a direct calculation, we obtain:
	\begin{align*}
		&~~~ \frac{1}{h^3} \int_{D_T} d\left( \Phi_{t}(x), \Phi_{t}(y) \right)^{-q} \left| \psi_{q,1} \left( \Phi_{t}(x),\hat{w}(\Phi_{t}(x),\Phi_{t}(y))\right) \right|  e^{-\frac{t_1+t_2+t_3}{h}} dt \nonumber \\
		&\leq K C_{\varepsilon}(T+h+1)e^{-\frac{T}{h}} d(x,y)^{-q},
	\end{align*}
	where $t=(t_1,t_2,t_3)$, $dt=dt_1dt_2dt_3$. 
	
	Similarly, for the corresponding term involving 
	$\left|D_x\Phi_{\underline{\tau}}(w(x, y))\right|^{-q} \psi_{q,1}$, we have:
	\begin{align*}
		&~~~ \frac{1}{h^3} \int_{D_T} |D_x\Phi_{\underline{\tau}} (w(x,y))|^{-q} \left|\psi_{q,1} \left( \Phi_{\underline{\tau}}(x), D_x\Phi_{\underline{\tau}} (w(x,y))/|D_x\Phi_{\underline{\tau}} (w(x,y))|\right)\right| e^{-\frac{t_1+t_2+t_3}{h}} dt \nonumber\\
		&\leq K C_{\varepsilon}(T+h+1)e^{-\frac{T}{h}} d(x,y)^{-q}.
	\end{align*}
	Combining the above estimates, we deduce
	\begin{align*}
		&\left| P^{(2)}V_0(x,y)-  \hat{P} \hat{V}(x,(w(x,y))) \right| \\
		\leq & K\left( \varepsilon d(x,y)^{-q} + C_{\varepsilon}(T+h+1)e^{-\frac{T}{h}} d(x,y)^{-q} +C_\varepsilon T^{3q+8}d(x,y)^{1-q}\right).
	\end{align*}
    This completes the proof.
\end{proof}
Next, we show that $V_0$ satisfies the drift condition for $P^{(2)}$ on $\Delta_0(s)$ with $s > 0$ chosen sufficiently small.
\begin{lem}\label{diagonal drift condition}
    Let $P^{(2)}$, $q$ be as above. Let $q>0$ be sufficiently small such that $r(q)<1$. Then, there exist $\gamma\in(0,1)$ and $s>0$ such that 
\begin{equation*}
    P^{(2)}V_0(x,y) < \gamma V_0(x,y),
\end{equation*}
for all $(x,y)\in \Delta_0(s)$. 
\end{lem}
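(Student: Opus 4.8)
The plan is to feed the approximation of Lemma \ref{approximate for V} into the eigenfunction identity \eqref{hat V as eigenfunction}, and then to absorb the three error terms one at a time by choosing, in this order, the parameters $\varepsilon$, then $T$, then the radius $s$.

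First I record two elementary facts about $V_0$. For $(x,y)\in\Delta_0(s_0)$ one has $|w(x,y)|=d(x,y)$, so by the definitions of $\hat V$ and of $V_0$,
\[
\hat V(x,w(x,y))=d(x,y)^{-q}\psi_q(x,\hat w(x,y))=V_0(x,y),
\]
and hence $\hat P\hat V(x,w(x,y))=r(q)\hat V(x,w(x,y))=r(q)V_0(x,y)$ by \eqref{hat V as eigenfunction}. Next, since $\check P$ is uniformly geometrically ergodic (Theorem \ref{Uniformly geometrically ergodic for projective chain}) and topologically irreducible (Lemma \ref{irreducibility for projective chain}), Lemma \ref{property of psi}(ii) provides constants $0<c_-\le c_+<\infty$ with $c_-\le\psi_q\le c_+$ on the compact space $S\mathbb{T}^3$; therefore
\[
c_-\,d(x,y)^{-q}\le V_0(x,y)\le c_+\,d(x,y)^{-q},\qquad (x,y)\in\Delta_0(s_0).
\]

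\textbf{Absorbing the errors.} Combining the two displays with Lemma \ref{approximate for V}, there is a constant $K>0$ (depending only on $h$ and $q$) such that for every $\varepsilon>0$ there is $C_\varepsilon>0$ with
\begin{align*}
P^{(2)}V_0(x,y) &\le r(q)V_0(x,y)+K\varepsilon\, d(x,y)^{-q}\\
&\quad + KC_\varepsilon(T+h+1)e^{-T/h} d(x,y)^{-q}+KC_\varepsilon T^{3q+8} d(x,y)^{1-q}
\end{align*}
for all $(x,y)\in\Delta_0(s_0/(C_0T^5))$. Put $\delta:=1-r(q)>0$, which is positive precisely because $r(q)<1$; this is where the positivity of the top Lyapunov exponent enters, via Lemma \ref{derivative of r(q)}. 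Choose $\varepsilon>0$ so small that $K\varepsilon\le\tfrac{\delta}{4}c_-$, which fixes $C_\varepsilon$; then $K\varepsilon\, d(x,y)^{-q}\le\tfrac{\delta}{4}V_0(x,y)$. Next choose $T>1$ so large that $KC_\varepsilon(T+h+1)e^{-T/h}\le\tfrac{\delta}{4}c_-$, so the second error term is also $\le\tfrac{\delta}{4}V_0(x,y)$. Finally, using $d(x,y)^{1-q}=d(x,y)\,d(x,y)^{-q}\le c_-^{-1}d(x,y)V_0(x,y)$, choose $s\in(0,\,s_0/(C_0T^5)]$ so small that $KC_\varepsilon T^{3q+8}c_-^{-1}s\le\tfrac{\delta}{4}$; then for $(x,y)\in\Delta_0(s)$ the third term is $\le\tfrac{\delta}{4}V_0(x,y)$ as well. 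Summing,
\[
P^{(2)}V_0(x,y)\le\Bigl(r(q)+\tfrac{3\delta}{4}\Bigr)V_0(x,y)=\tfrac{3+r(q)}{4}\,V_0(x,y)<\gamma V_0(x,y)
\]
for all $(x,y)\in\Delta_0(s)$ and any $\gamma\in\bigl(\tfrac{3+r(q)}{4},1\bigr)$, where the strict inequality uses $V_0>0$.

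\textbf{Main obstacle.} There is no deep point here: the only substantive inputs are the eigenvalue identity $\hat P\hat V=r(q)\hat V$, the strict bound $r(q)<1$, and the uniform two-sided bound on $\psi_q$. The step requiring the most care is the last one, where the small-distance term $d(x,y)^{1-q}$ — which carries a better power of $d$ than $V_0\asymp d^{-q}$ — must be genuinely absorbed by shrinking $s$, while keeping $s$ compatible with the domain restriction $\Delta_0(s_0/(C_0T^5))$ of Lemma \ref{approximate for V}; since $T$ is fixed before $s$, this compatibility is automatic, so the choices can indeed be made consistently.
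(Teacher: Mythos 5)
Your proof is correct and follows essentially the same route as the paper: both feed Lemma \ref{approximate for V} into the eigenfunction identity \eqref{hat V as eigenfunction}, use the strict positivity of $\psi_q$ (Lemma \ref{property of psi}(ii)) to convert the $d(x,y)^{-q}$ error bounds into multiples of $V_0$, and fix $\varepsilon$, then $T$, then $s$ in that order to absorb the three error terms. The only differences are cosmetic choices of constants.
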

\begin{proof}
	Let $\varepsilon>0$ be sufficiently small, with constraints specified later. For any $(x,y)\in \Delta_0(s)$, where $s<s_0/(C_0T^5)$ (to be determined later), it follows from Lemma \ref{approximate for V} and \eqref{hat V as eigenfunction} that
\begin{align*}
P^{(2)}V_0(x,y) &\leq \hat{P}\hat{V}(x,w)+ K\left( \varepsilon d(x,y)^{-q} + C_{\varepsilon}(T+h+1)e^{-\frac{T}{h}} d(x,y)^{-q} +C_\varepsilon T^{3q+8}d(x,y)^{1-q}\right) \\
&\leq d(x,y)^{-q} \left( r(q)\psi_{q}(x,\hat{w}(x,y))+ K\varepsilon +KC_{\varepsilon}(T+h+1)e^{-\frac{T}{h}}+KC_\varepsilon T^{3q+8}s\right)
\end{align*}
where $K>0$ is a constant depending only on $h$ and $q$, but independent of $T$ and $\varepsilon$. We now specify the constants $\varepsilon$, $T$ and $s$. By Lemma \ref{property of psi}(ii) and the compactness of $S\mathbb{T}^3$, we know that $\inf_{(x,v) \in S\mathbb{T}^3} \psi_q(x, v) > 0$. Choose $\varepsilon>0$ to be sufficiently small, satisfying
\begin{equation*}
\varepsilon<\frac{1-r(q)}{300K}\inf_{(x,v)\in S\mathbb{T}^3}\psi_q(x,v).
\end{equation*}
Since $\lim_{t\to\infty}(t+h+1)e^{-\frac{t}{h}}=0$, we select $T>1$ sufficiently large such that 
\[
(T+h+1)e^{-\frac{T}{h}}<\frac{\varepsilon}{C_{\varepsilon}}.
\] 
Next, set $s>0$ small enough to satisfy
\[
s<\frac{\varepsilon}{C_{\varepsilon}T^{3q+8}}. 
\]
Substituting these choices of $\varepsilon$, $T$ and $s$ into the bound, we deduce
\begin{equation*}
P^{(2)}V_0(x,y)< d(x,y)^{-q}\left(r(q)+\frac{1-r(q)}{100}\right)\psi_q(x,\hat{w}(x,y)),
\end{equation*}
for $(x,y)\in \Delta_0(s)$. Setting $\gamma=r(q)+\frac{1-r(q)}{100}$, we note that $\gamma<1$, which completes the proof.
\end{proof}
Now, we are ready to prove the Lyapunov-Foster drift condition for the two-point Markov process $\{\tilde{\Phi}_{\underline{\tau}}^m\}$. 
\begin{pro}\label{drift condition for two-point process}
	Let $\{\tilde{\Phi}_{\underline{\tau}}^m\}$ be as above. Then, there exists an integrable function $V: \mathcal{T}\to [1,\infty)$ which satisfies the Lyapunov-Foster drift condition for $\{\tilde{\Phi}_{\underline{\tau}}^m\}$.
\end{pro}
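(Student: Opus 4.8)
\noindent The plan is to build $V$ by gluing, along the finitely many connected components of the invariant set $\mathcal{I}$, the local Lyapunov functions produced by Lemma~\ref{diagonal drift condition} and its analogues, and to let $V$ be constant on the part of $\mathcal{T}$ that stays uniformly away from $\mathcal{I}$. As already observed after the definitions of $\mathcal{I}_{a_i,b_j,c_k},\dots,\mathcal{I}_{a_i''',b_j''',c_k'''}$, it suffices to treat the model components $I_0,\dots,I_4$; moreover, since any two distinct components of $\mathcal{I}$ impose incompatible functional relations on $f_1,f_2,f_3$ (an overlap would force some $f_i\equiv0$) or carry distinct shift vectors, the components of $\mathcal{I}$ are pairwise disjoint, so for $s>0$ small the tubular neighbourhoods $\Delta_j(s)$ are pairwise disjoint. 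Each model component $I_j$ is the graph of the affine map $x\mapsto\tilde x(j)$, whose linear part is a signed permutation matrix, hence an isometry of each tangent space, and $\tilde\Phi_{\underline\tau}$ leaves $I_j$ invariant; therefore, for $(x,y)\in\Delta_j(s)$ the vector $w(\tilde x(j),y)$ is transported to leading order by $D_x\Phi_{\underline\tau}$ followed by an isometry, so the spectral data $r(q)<1$ and $\psi_q>0$ produced by the analysis of $\check P_q$ above (for the already-fixed small $q$, using Lemma~\ref{derivative of r(q)}, Theorem~\ref{Positivity of the top Lyapunov exponent} and Lemma~\ref{property of psi}) govern all the $I_j$ at once.

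First I would set, for $j=0,\dots,4$ and $(x,y)\in\Delta_j(s)$, $V_j(x,y):=d(\tilde x(j),y)^{-q}\psi_q\big(x,\hat w(\tilde x(j),y)\big)$, so that $V_0$ is the function of Lemma~\ref{diagonal drift condition}, and I would check that the proofs of Lemmas~\ref{approximate for V} and~\ref{diagonal drift condition} carry over verbatim to every $j$: the Taylor-remainder estimates use only $\|\Phi_{\underline\tau}\|_{C^2}\le C_0T^5$ on $[0,T]^3$ and the isometry of $D\tilde x(j)$, while the identity $\hat P\hat V=r(q)\hat V$ in \eqref{hat V as eigenfunction} is unchanged because $\hat V$ is built from $\psi_q$ alone. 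This produces $\gamma\in(0,1)$ and $s>0$ with $P^{(2)}V_j\le\gamma V_j$ on $\Delta_j(s)$ for each $j$, where the left-hand side is interpreted — as in the proof of Lemma~\ref{approximate for V} — through the defining formula of $V_j$ together with the negligible large-$\underline\tau$ remainder bounded via \eqref{polynomial}. Next I would fix smooth cut-offs $\chi_j$ with $\chi_j\equiv1$ on $\Delta_j(s/2)$ and $\operatorname{supp}\chi_j\subset\Delta_j(s)$ (so the supports are pairwise disjoint) and define $V:=1+\sum_j\chi_jV_j$, the sum running over all components of $\mathcal{I}$. Since $\psi_q>0$ we have $V\ge1$; near each $I_j$ one has $V=1+V_j\gtrsim d(\cdot,I_j)^{-q}$, so $V$ is continuous, proper on $\mathcal{T}$ (it blows up at $\mathcal{I}$), and, as $q<3$ while each $I_j$ has codimension $3$, integrable.

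Finally I would verify the drift inequality with $C:=\{V\le R\}\subset\mathcal{T}$, which is compact for $R$ large because $V$ is proper. For every $(x,y)\in\mathcal{T}$, the bound $V(z)\lesssim d(z,\mathcal{I})^{-q}$ together with $d(\tilde\Phi_{\underline\tau}(x,y),\mathcal{I})\gtrsim d((x,y),\mathcal{I})\,\|\tilde\Phi_{\underline\tau}^{-1}\|_{C^1}^{-1}$ and $\|\tilde\Phi_{\underline\tau}^{-1}\|_{C^1}\lesssim C_0\prod_i(1+2\tau_i)$ (from \eqref{polynomial}) shows, using the exponential moments of the $\tau_i$, that $P^{(2)}V(x,y)$ is finite and is bounded by a constant $b$ on the compact set $C$ where $d((x,y),\mathcal{I})$ is bounded below; hence $P^{(2)}V\le\alpha V+b\chi_C$ there for any $\alpha>0$. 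If $(x,y)\notin C$, then $V(x,y)>R$ forces $(x,y)\in\Delta_j(s/2)$ for a unique $j$ with $d(\tilde x(j),y)$ arbitrarily small as $R\to\infty$, and $V=1+V_j$ there; writing $P^{(2)}V(x,y)=\mathbb{E}[V(\tilde\Phi_{\underline\tau}(x,y))]$ and splitting according to whether $\tilde\Phi_{\underline\tau}(x,y)$ stays in $\Delta_j(s)$, the retained part is $\le1+\gamma V_j(x,y)+\varepsilon V_j(x,y)$ by the previous step, whereas on the complement $\tilde\Phi_{\underline\tau}(x,y)$ can reach another tube $\Delta_{j'}(s)$ only when $\|\tilde\Phi_{\underline\tau}\|_{C^1}\gtrsim\operatorname{dist}(I_j,I_{j'})/d(\tilde x(j),y)$, a tail event of exponentially small probability on which $V(\tilde\Phi_{\underline\tau}(x,y))\lesssim\operatorname{dist}(I_j,I_{j'})^{-q}\big(\prod_i(1+2\tau_i)\big)^{q}$, so this contribution is also $\le\varepsilon V_j(x,y)$ for $R$ large. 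Thus $P^{(2)}V(x,y)\le1+(\gamma+2\varepsilon)V_j(x,y)\le\alpha V(x,y)$ with $\alpha:=\gamma+3\varepsilon\in(0,1)$ for $R$ large, completing the drift estimate.

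The main obstacle is the transfer of Lemmas~\ref{approximate for V} and~\ref{diagonal drift condition} to the reflection-type components $\mathcal{I}_2,\mathcal{I}_3,\mathcal{I}_4$, whose defining relations combine a time reversal $t\mapsto-t$ with sign changes: one must check that the linearized two-point dynamics near them is still an \emph{isometric} conjugate of $D_x\Phi_{\underline\tau}$, so that $r(q)$ and $\psi_q$ keep providing the contraction, while simultaneously controlling the cross-tube escape terms — both relying on the polynomial-in-$\underline\tau$ bounds \eqref{polynomial} and the finiteness of all exponential moments of the $\tau_i$.
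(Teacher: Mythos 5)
Your overall strategy is the same as the paper's: reduce each off-diagonal invariant component to the diagonal case through the affine identification $x\mapsto\tilde x(j)$, use the local Lyapunov function $d^{-q}\psi_q$ built from the principal eigendata $r(q)<1$, $\psi_q>0$ of the twisted semigroup, and patch with a constant away from a neighbourhood of $\mathcal{I}$. Your treatment of the gluing (disjoint tubes, cross-tube escape controlled by the polynomial bounds on $\|\tilde\Phi_{\underline\tau}^{\pm1}\|_{C^1}$ and the exponential moments of the $\tau_i$, compact level set $\{V\le R\}$) is in fact more explicit than the paper's one-line ``directly calculating'' step, and the integrability argument ($q<3$, codimension $3$) is fine.

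There is, however, one concrete slip. You set $V_j(x,y)=d(\tilde x(j),y)^{-q}\psi_q\bigl(x,\hat w(\tilde x(j),y)\bigr)$, with the base point of $\psi_q$ at $x$, and claim Lemmas \ref{approximate for V} and \ref{diagonal drift condition} ``carry over verbatim.'' They do not, as written. Unwinding $P^{(2)}V_j$ and using $\Phi_{\underline\tau}(\tilde x(j))=\widetilde{\Phi_{\underline\tau}(x)}(j)$ together with $D_{\tilde x(j)}\Phi_{\underline\tau}=R_jD_x\Phi_{\underline\tau}R_j^{-1}$ ($R_j$ the signed permutation), the leading term becomes $\mathbb{E}\,|D_x\Phi_{\underline\tau}v|^{-q}\,\psi_q\bigl(\Phi_{\underline\tau}(x),R_j\widehat{D_x\Phi_{\underline\tau}v}\bigr)$ with $v=R_j^{-1}w$; this is $\check P_q$ applied to $\tilde\psi(z,u):=\psi_q(z,R_ju)$, and $\tilde\psi$ is not known to be an eigenfunction of $\check P_q$ — that would require $R_j$ to commute with the derivative cocycle at each fixed point, which is false (the conjugacy relates derivatives at $x$ and at $\tilde x(j)$, not at the same point). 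The fix is simply to take the base point at $\tilde x(j)$, i.e. $V_j(x,y):=V_0(\tilde x(j),y)$ as in the paper: then the commutation $\sigma_j\circ\Phi_{\underline\tau}=\Phi_{\underline\tau}\circ\sigma_j$ gives $P^{(2)}V_j(x,y)=P^{(2)}V_0(\tilde x(j),y)<\gamma V_0(\tilde x(j),y)=\gamma V_j(x,y)$ directly from Lemma \ref{diagonal drift condition}, and the whole ``isometric conjugacy of the linearization'' discussion you flag as the main obstacle becomes unnecessary.
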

\begin{proof}
	Let $s$ be the value specified in Lemma \ref{diagonal drift condition}. For any $(x,y) \in \Delta_i(s)$, $i=1, 2, 3, 4$, define
	\begin{equation*}
		V_i(x,y)= V_0(\tilde{x}(i),y). 
	\end{equation*}
    From Lemma \ref{diagonal drift condition}, it follows that
	\begin{equation*}
		P^{(2)}V_i(x,y)=P^{(2)}V_0(\tilde{x}(i),y)< \gamma V_0(\tilde{x}(i),y) =\gamma V_i(x,y),
	\end{equation*}
    for all $(x,y) \in \Delta_i(s)$. Set $\Delta(s)=\cup_{i=0}^4\Delta_i(s)$. Let $s_0\in (0,s)$ such that $\max_{i}V_i(x,y)\geq 1$ for all $(x,y)\in \Delta(s_0)$. This is possible since $\inf_{(x,v) \in S\mathbb{T}^3} \psi_q(x, v) > 0$. Define the drift function $V: \mathcal{T}\to [1,\infty)$ by
	\begin{equation*}
		V(x,y)= \chi_{\Delta(s)}(x,y) \max_{i}V_i(x,y) + b\chi_{\mathcal{T}\setminus \Delta(s)}(x,y),
	\end{equation*}
	where $\chi_D$ is a characteristic function on $D$ and $b\geq 1$ is a constant. Directly calculating, we obtain
	\begin{equation*}
		P^{(2)}V(x,y)\leq \gamma V(x,y)+ b\chi_{\mathcal{T}\setminus \Delta(s)}(x,y).
	\end{equation*}
\end{proof}
We are now prepared to prove the main theorem of this section, which establishes the $V$-uniform geometric ergodicity for the two-point chain.
 \begin{thm}[\textbf{$V$-uniform geometric ergodicity for the two-point chain}]\label{Uniformly geometrically ergodic for two-point chain}
	Let $\{\tilde{\Phi}_{\underline{\tau}}^m\}$ be as above. Then there exists an integrable function $V:\mathcal{T}\to [1,\infty)$ such that the transition kernel $P^{(2)}$ of $\{\tilde{\Phi}_{\underline{\tau}}^m\}$ is $V$-uniformly geometrically ergodic.
\end{thm}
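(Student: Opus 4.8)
The plan is to verify the four hypotheses (a)--(d) of Theorem \ref{conditions for uniformly geometrically ergodic} for the two-point chain $\{\tilde{\Phi}_{\underline{\tau}}^m\}$ on the noncompact state space $\mathcal{T}=\mathbb{T}^3\times\mathbb{T}^3\setminus\mathcal{I}$, together with the Feller property of its transition kernel $P^{(2)}$. Since $\tilde{\Phi}_{\underline{\tau}}$ is a diffeomorphism of $\mathbb{T}^3\times\mathbb{T}^3$ preserving $\mathcal{I}$, it also preserves $\mathcal{T}$, so $P^{(2)}$ is a genuine Markov kernel on $\mathcal{T}$; for bounded continuous $g$ on $\mathcal{T}$,
\[
P^{(2)}g(x,y)=\frac{1}{h^3}\int_{\mathbb{R}_+^3} g\bigl(\tilde{\Phi}_{(t_1,t_2,t_3)}(x,y)\bigr)\,e^{-(t_1+t_2+t_3)/h}\,dt_1dt_2dt_3,
\]
and dominated convergence (using boundedness of $g$ and continuity of $(x,y)\mapsto\tilde{\Phi}_{\underline{t}}(x,y)$) shows $P^{(2)}g$ is continuous, so $P^{(2)}$ is Feller.

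Conditions (a)--(c) are then read off from the earlier results. Condition (a), the existence of an open small set, follows by combining Lemma \ref{Lie bracket of two-point} --- which exhibits $\tilde{x}\in\mathcal{T}$ with $\mathrm{Lie}_{\tilde{x}}(\tilde{\mathcal{X}})=T_{\tilde{x}}\mathcal{T}$ --- with the abstract Lemma \ref{small set}. Condition (b), topological irreducibility, is precisely Lemma \ref{irreducible for two-point chain}. Condition (c), strong aperiodicity, holds at every point of $\mathcal{T}$: because $\tilde{\Phi}_{\underline{0}}(x,y)=(x,y)$ and $(x,y,\underline{\tau})\mapsto\tilde{\Phi}_{\underline{\tau}}(x,y)$ is continuous, any open $U\ni(x,y)$ satisfies $\tilde{\Phi}_{\underline{\tau}}(x,y)\in U$ for all sufficiently small $\underline{\tau}$, while $\mathbb{P}(\tau_1<\varepsilon,\tau_2<\varepsilon,\tau_3<\varepsilon)>0$ for every $\varepsilon>0$. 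Condition (d), the Lyapunov--Foster drift condition, is Proposition \ref{drift condition for two-point process}; here one records that although that proposition was phrased for the single representatives $I_0,\dots,I_4$, each $\mathcal{I}_k$ decomposes into finitely many translated/reflected ``diagonal'' submanifolds $\mathcal{I}_{a_i,b_j,c_k}$ (and analogues), so one replaces $\max_{0\le i\le4}V_i$ by the maximum over the finitely many such pieces; the estimate of Lemma \ref{diagonal drift condition} applies verbatim on each, yielding $P^{(2)}V\le\gamma V+b\chi_C$ with $\gamma\in(0,1)$ and $C:=\mathcal{T}\setminus\Delta(s)$. Since $\Delta(s)\cup\mathcal{I}$ is an open neighbourhood of the closed set $\mathcal{I}$ in $\mathbb{T}^3\times\mathbb{T}^3$, the set $C=(\mathbb{T}^3\times\mathbb{T}^3)\setminus(\mathcal{I}\cup\Delta(s))$ is compact and contained in $\mathcal{T}$.

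It remains to note that $V$ is integrable with respect to Lebesgue measure on $\mathcal{T}$: away from $\mathcal{I}$ it equals the constant $b$, while near each diagonal piece it behaves like $d(\cdot,\cdot)^{-q}\psi_q$ with $\psi_q$ bounded, and the transverse direction to each such piece is $3$-dimensional, so choosing $q\in(0,q_0)$ small enough that $q<3$ (compatible with the requirement $r(q)<1$ coming from Lemma \ref{derivative of r(q)}) makes $\int_{\mathcal{T}}V\,d\mathrm{Leb}<\infty$. Theorem \ref{conditions for uniformly geometrically ergodic} then gives that $P^{(2)}$ is $V$-uniformly geometrically ergodic, which is the assertion. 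The genuinely hard inputs --- the spectral-gap analysis of the twisted projective semigroup underlying Proposition \ref{drift condition for two-point process}, and the exhaustive controllability argument of Lemma \ref{irreducible for two-point chain} on the punctured space $\mathcal{T}$ --- are already in place; the only remaining obstacle at this stage is the noncompactness bookkeeping, namely confirming that the Lyapunov function built from the pieces $V_i$ genuinely dominates every approach to the excised invariant set $\mathcal{I}$, so that the exceptional set $C$ is compact in $\mathbb{T}^3\times\mathbb{T}^3$.
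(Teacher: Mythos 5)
Your proposal is correct and follows essentially the same route as the paper: verify the Feller property and the four hypotheses of Theorem \ref{conditions for uniformly geometrically ergodic}, obtaining the open small set from Lemma \ref{Lie bracket of two-point} together with Lemma \ref{small set}, topological irreducibility from Lemma \ref{irreducible for two-point chain}, strong aperiodicity from $\tilde{\Phi}_0(x,y)=(x,y)$ and $\mathbb{P}(\underline{\tau}<\varepsilon)>0$, and the drift condition from Proposition \ref{drift condition for two-point process}. Your added bookkeeping (extending the drift bound over all finitely many invariant sub-manifolds, checking compactness of $\mathcal{T}\setminus\Delta(s)$, and verifying integrability of $V$ via the codimension-$3$ transversal with $q<3$) is consistent with, and slightly more explicit than, the paper's proof.
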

\begin{proof}
    It is sufficient to apply Theorem \ref{conditions for uniformly geometrically ergodic} to $P^{(2)}$. The existence of an open small set can be deduced from Lemma \ref{Lie bracket of two-point} and Lemma \ref{small set}. The topological irreducibility property is established via Lemma \ref{irreducible for two-point chain}. Strong aperiodicity follows immediately from $\mathbb{P}(\tau<\varepsilon)>0$ for any $\varepsilon>0$ and $\tilde{\Phi}_0(x,y)=(x,y)$.  Finally, the Lyapunov-Foster drift condition is established with an integrable function $V$ in Proposition \ref{drift condition for two-point process}. Consequently, the transition kernel $\check{P}$ of $\{\check{\Phi}_{\underline{\tau}}^m\}$ is $V$-uniformly geometrically ergodic.
\end{proof}

\section{Quenched correlation decay}\label{section 7}
\subsection{Proof of Theorem \ref{Exponential mixing}}
\quad Having established the geometric ergodicity of the two-point Markov chain, we are now ready to prove Theorem \ref{Exponential mixing}. While the proof is essentially covered in \cite{BBP}, we include it here for the sake of completeness. Assume (H1) holds, by Theorem \ref{Uniformly geometrically ergodic}, the Lebesgue probability measure on $\mathbb{T}^3$ is the unique stationary measure for $\{\Phi_{\underline{\tau}}^m\}$. For sufficiently regular observables $f, g$ with mean zero, denote the following correlation term:
\[
\text{Cor}_m(f,g):=\left| \int_{\mathbb{T}^3} f(x) g \left(\Phi_{\underline{\tau}}^m(x) \right) dx\right|.
\]
Let $\{e_k\}_{k \in \mathbb{Z}^3}$ be the Fourier basis of $L^2(\mathbb{T}^3)$, that is, $e_k(x)=e^{ik\cdot x}$. Denote $\mathbb{Z}_0^3=\mathbb{Z}^3\setminus\{0\}$.

 \begin{lem}\label{Exponential mixing with 5/2}
	Let $\{\Phi_{\underline{\tau}}^m\}$ be as above. Then, for any $q>0, s'>5/2$, there exists a random variable $\xi=\xi_{q, s'}\geq 1$ and a constant $c=c_{q, s'}>0$ such that for all mean-zero function $f, g\in H^{s'}(\mathbb{T}^3)$, we have the almost sure estimate 
	\begin{equation*}
		\left|\int_{\mathbb{T}^3} f(x)g\left(\Phi_{\underline{\tau}}^m(x)\right)dx\right|\leq \xi(\underline{\tau})e^{-c m}\|f\|_{H^{s'}}\|g\|_{H^{s'}},
	\end{equation*}
	while the random variable $\xi$ satisfies the moment bound $\mathbb{E}[\xi^q]<\infty$.
\end{lem}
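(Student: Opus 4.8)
The plan is to expand $f$ and $g$ in the Fourier basis, reduce the problem to a uniform, mode-by-mode $L^2(\Omega)$ estimate coming from the $V$-uniform geometric ergodicity of the two-point chain (Theorem \ref{Uniformly geometrically ergodic for two-point chain}), and then pass from this $L^2(\Omega)$ bound to the claimed almost-sure bound with moment control by a frequency truncation together with a Chebyshev/Borel--Cantelli argument; this is the scheme of \cite{BBP}. Concretely, writing the mean-zero functions as $f=\sum_{j\in\mathbb{Z}_0^3}\hat f_j e_j$, $g=\sum_{k\in\mathbb{Z}_0^3}\hat g_k e_k$, one has
\[
\int_{\mathbb{T}^3} f\,(g\circ\Phi_{\underline\tau}^m)\,dx=\sum_{j,k\in\mathbb{Z}_0^3}\hat f_j\hat g_k\,c_m(j,k),\qquad c_m(j,k):=\int_{\mathbb{T}^3}e_j(x)\,e_k\bigl(\Phi_{\underline\tau}^m(x)\bigr)\,dx,
\]
and since $\Phi_{\underline\tau}^m$ is volume-preserving we have the deterministic bound $|c_m(j,k)|\le 1$ for all $j,k$.

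The key step is the uniform decay $\mathbb{E}|c_m(j,k)|^2\le C\gamma^m$ for all $j\in\mathbb{Z}^3$ and $k\in\mathbb{Z}_0^3$. Expanding the square and using $\tilde\Phi_{\underline\tau}(x,y)=(\Phi_{\underline\tau}(x),\Phi_{\underline\tau}(y))$,
\[
\mathbb{E}|c_m(j,k)|^2=\int_{\mathbb{T}^3\times\mathbb{T}^3}e^{ij\cdot(x-y)}\,\bigl((P^{(2)})^m\phi_k\bigr)(y,x)\,dx\,dy,\qquad \phi_k(a,b):=e^{ik\cdot(b-a)} .
\]
Here $\phi_k$ is bounded, hence lies in $\mathcal{M}_V(\mathcal{T})$ with $\|\phi_k\|_V\le 1$, and the unique stationary measure of $P^{(2)}$ is the Lebesgue probability measure on $\mathbb{T}^6$ restricted to $\mathcal{T}$: indeed $\tilde\Phi_{\underline\tau}$ preserves Lebesgue measure on $\mathbb{T}^6$ and maps $\mathcal{T}$ bijectively onto itself since $\tilde\Phi_{\underline\tau}\mathcal{I}=\mathcal{I}$, while $\mathcal{I}$ is a finite union of lower-dimensional submanifolds by the analysis of Section \ref{section 6}, hence Lebesgue-null; so $\mathrm{Leb}|_{\mathcal T}$ is a stationary probability measure and must coincide with the one furnished by Theorem \ref{Uniformly geometrically ergodic for two-point chain}. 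Since $\int_{\mathcal T}\phi_k\,d\mathrm{Leb}=0$ for $k\ne 0$, Theorem \ref{Uniformly geometrically ergodic for two-point chain} yields $|((P^{(2)})^m\phi_k)(y,x)|\le C\,V(y,x)\,\gamma^m$, and integrating against the unimodular kernel $e^{ij\cdot(x-y)}$ together with the integrability of $V$ (Proposition \ref{drift condition for two-point process}) gives $\mathbb{E}|c_m(j,k)|^2\le C\gamma^m\int_{\mathcal T}V<\infty$, uniformly in $j,k$. Combined with $|c_m(j,k)|\le 1$ this also gives $\mathbb{E}|c_m(j,k)|^{2p}\le C\gamma^m$ for every integer $p\ge 1$.

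To finish, fix $\theta>0$, set $K_m:=\lceil e^{\theta m}\rceil$, and split the sum over modes at $\max(|j|,|k|)=K_m$. Using $s'>5/2>3/2$ in dimension three, $\sum_j|\hat f_j|\lesssim_{s'}\|f\|_{H^{s'}}$ and $\sum_{|j|>K_m}|\hat f_j|\lesssim_{s'}K_m^{3/2-s'}\|f\|_{H^{s'}}$, so
\[
\mathrm{Cor}_m(f,g)\le C_{s'}\bigl(S_m(\underline\tau)+K_m^{3/2-s'}\bigr)\|f\|_{H^{s'}}\|g\|_{H^{s'}},\qquad S_m(\underline\tau):=\sup_{1\le|j|,|k|\le K_m}|c_m(j,k)| .
\]
Since $S_m^{2p}\le\sum_{|j|,|k|\le K_m}|c_m(j,k)|^{2p}$ one gets $\mathbb{E}[S_m^{2p}]\lesssim K_m^{6}\gamma^m=e^{6\theta m}\gamma^m$. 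Given $q$, choose an integer $p$ with $2p>q$, then $\theta>0$ and $c\in\bigl(0,\theta(s'-3/2)\bigr)$ small enough that $2pc+6\theta<\log(1/\gamma)$; Chebyshev then gives $\sum_m\mathbb{P}\bigl(e^{cm}S_m>\lambda\bigr)\le C\lambda^{-2p}$ for all $\lambda>0$, so $\xi:=C_{s'}\bigl(1+\sup_{m\ge 0}e^{cm}S_m(\underline\tau)\bigr)$ is almost surely finite with $\mathbb{E}[\xi^q]<\infty$, and since $c<\theta(s'-3/2)$ forces $e^{cm}K_m^{3/2-s'}\le 1$, we conclude $\mathrm{Cor}_m(f,g)\le \xi(\underline\tau)\,e^{-cm}\|f\|_{H^{s'}}\|g\|_{H^{s'}}$ for all $m$ and all mean-zero $f,g\in H^{s'}$. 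The only substantive analytic input is the $V$-uniform geometric ergodicity of the two-point chain, which is already available; the remaining care goes into identifying $\mathrm{Leb}|_{\mathcal T}$ as the stationary measure (where the lower-dimensionality of $\mathcal{I}$ from Section \ref{section 6} enters) and into the bookkeeping that makes $\xi$ independent of $f,g$ while preserving its $q$-th moment. The threshold $s'>5/2$ is used only to make the high-frequency tails summable and may be taken for granted.
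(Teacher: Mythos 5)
Your proposal is correct, and it rests on the same essential input as the paper's proof: the $V$-uniform geometric ergodicity of the two-point chain (Theorem \ref{Uniformly geometrically ergodic for two-point chain}) applied to the tensorized Fourier modes, giving $\mathbb{E}|c_m(j,k)|^2\lesssim\gamma^m$ uniformly in $j,k$ (your identification of $\mathrm{Leb}|_{\mathcal{T}}$ as the unique stationary measure, via the Lebesgue-nullity of $\mathcal{I}$, is exactly what the paper uses implicitly when it invokes the mean-zero property of $\tilde e_{k_2}$). Where you diverge is in how the almost-sure bound and the moment control on $\xi$ are extracted. The paper introduces, for each mode pair, the random time $N_{k_1,k_2}=\max\{m:\mathrm{Cor}_m(e_{k_1},e_{k_2})>\gamma^{\alpha m}\}$, then a random frequency cutoff $K(\underline\tau)=\max\{|k_1|\vee|k_2|:e^{DN_{k_1,k_2}}>|k_1||k_2|\}$, and sets $\xi=\max_{|k_1|\vee|k_2|\le K}e^{DN_{k_1,k_2}}$; the weight $|k_1||k_2|$ in this construction is what forces $s'>5/2$, and the moment bound $\mathbb{E}[\xi^q]<\infty$ is obtained by Cauchy--Schwarz over the events $\{K=l\}$. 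You instead use a deterministic, exponentially growing cutoff $K_m=\lceil e^{\theta m}\rceil$, control $S_m=\sup_{|j|,|k|\le K_m}|c_m(j,k)|$ in $L^{2p}(\Omega)$ by crude summation over the window (interpolating $\mathbb{E}|c_m|^{2p}\le\mathbb{E}|c_m|^2$ against $|c_m|\le 1$), and take $\xi\simeq 1+\sup_m e^{cm}S_m$, with the high-frequency tail absorbed by $\sum_{|j|>K_m}|\hat f_j|\lesssim K_m^{3/2-s'}\|f\|_{H^{s'}}$. This buys two things: the moment bound for arbitrary $q$ comes directly from the union bound with $2p>q$ rather than from the Cauchy--Schwarz bookkeeping (which in the paper constrains $\alpha$), and your argument in fact only requires $s'>3/2$ rather than $5/2$, since you never weight the Fourier coefficients by $|k_1||k_2|$. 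The parameter constraints you impose ($2pc+6\theta<\log(1/\gamma)$ and $c<\theta(s'-3/2)$) are simultaneously satisfiable, so the argument closes.
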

\begin{proof}
	For $k_1, k_2\in \mathbb{Z}_0^3$ and $\varepsilon>0$ by Chebyshev inequality, we derive
	\begin{align*}
		\mathbb{P}\left( \text{Cor}_m(e_{k_1},e_{k_2}) > \varepsilon \right) &\leq \varepsilon^{-2} \mathbb{E} \left| \int_{\mathbb{T}^3} e_{k_1}(x) e_{k_2} \left(\Phi_{\underline{\tau}}^m(x) \right) dx\right|^2 \\
		&=\varepsilon^{-2} \mathbb{E} \left| \int_{\mathbb{T}^3 \times \mathbb{T}^3} \tilde{e}_{k_1}(x,y) \tilde{e}_{k_2} \left(\Phi_{\underline{\tau}}^m(x),\Phi_{\underline{\tau}}^n(y) \right) dx dy \right| \\
		&=\varepsilon^{-2} \left| \int_{\mathbb{T}^3 \times \mathbb{T}^3} \tilde{e}_{k_1}(x,y) \left( P^{(2)} \right)^m \left( \tilde{e}_{k_2} (x,y) \right) dx dy \right|,
	\end{align*}
	where $\tilde{e}_{k_i}(x,y)=e_{k_i}(x)e_{k_i}(y), i=1,2$. By Theorem \ref{Uniformly geometrically ergodic for two-point chain}, $P^{(2)}$ is $V$-uniformly geometrically ergodic, there exists constants $C>0$ and $\gamma \in (0,1)$ such that for any $f \in \mathcal{M}_V(\mathbb{T}^3 \times \mathbb{T}^3)$, $m \geq 1$, 
	\begin{equation*}
		\left| \left( P^{(2)} \right)^m f(x,y) - \int_{\mathbb{T}^3 \times \mathbb{T}^3} f(x,y)dxdy \right| \leq C \gamma^m V(x,y) \|f\|_V .
	\end{equation*}
	Combining with $ \tilde{e}_{k_2}$ is mean-zero, we have
	\begin{equation}\label{Cor estimate}
		\mathbb{P}\left( \text{Cor}_m(e_{k_1},e_{k_2}) > \varepsilon \right) \leq \varepsilon^{-2} \cdot C \gamma^m  \|V\|_{L^1(\mathbb{T}^3\times \mathbb{T}^3)} \lesssim \varepsilon^{-2} \gamma^m.
	\end{equation}
	Let $\varepsilon=\varepsilon_m=\gamma^{\alpha m}$ with $\alpha\in (0,1/2)$ to be determined. By Borel-Cantelli lemma we have 
	\[
	\mathbb{P}\left( \limsup_{m \to \infty}\left\{ \text{Cor}_m(e_{k_1},e_{k_2}) > \varepsilon_m \right\} \right)=0.
	\] 
	Hence, for $k_1, k_2\in \mathbb{Z}_0^3$, the random variable
	\[
	N_{k_1,k_2}:=\max\left\{m\geq 0: \text{Cor}_m(e_{k_1},e_{k_2})>\gamma^{\alpha m} \right\}
	\]
	is finite with probability $1$. By \eqref{Cor estimate}, the tail estimate
	\begin{equation}\label{tail estimate}
		\mathbb{P} \left(  N_{k_1,k_2} > n \right) \leq \sum_{m \geq n +1} \mathbb{P}\left( \text{Cor}_m(e_{k_1},e_{k_2}) > \varepsilon_m \right) \lesssim \gamma^{(1-2\alpha)n}
	\end{equation}
    holds uniformly in $k_1, k_2$. Since $\text{Cor}_m(e_{k_1},e_{k_2}) \leq 1$, let $D\geq \alpha \ln (\gamma^{-1})$ to be determined. Then, for any $m \in \mathbb{N}$, we have the almost sure estimate
	\begin{equation*}
		\left| \int_{\mathbb{T}^3} e_{k_1}(x) e_{k_2} \left(\Phi_{\underline{\tau}}^m(x) \right) dx\right| \leq e^{D N_{k_1,k_2}(\underline{\tau})} \gamma^{\alpha m}. 
	\end{equation*}
	Let $f, g\in L^2(\mathbb{T}^3)$ be regular enough to be determined with zero mean value. We have Fourier expansions $f=\sum_{k \in \mathbb{Z}_0^3}f_k e_k$, $g=\sum_{k \in \mathbb{Z}_0^3}g_k e_k$. Then, we have the almost sure estimate
	\begin{equation*}
		\left| \int_{\mathbb{T}^3} f(x) g \left(\Phi_{\underline{\tau}}^m(x) \right) dx\right| \leq \gamma^{\alpha m} \sum_{k_1,k_2 \in \mathbb{Z}_0^3} |f_{k_1}| |g_{k_2}| e^{D N_{k_1,k_2}(\underline{\tau})} .
	\end{equation*}
	Define the random variable 
	\begin{equation*}
		K(\underline{\tau})=\max \left\{ |k_1|\vee|k_2|: e^{D N_{k_1,k_2}(\underline{\tau})} > |k_1||k_2| \right\},
	\end{equation*}
    where $a\vee b=\max\{a, b\}$. By \eqref{Cor estimate}, we have
	\begin{align*}
		\mathbb{P} \left( e^{D N_{k_1,k_2}} > |k_1||k_2| \right) &= \mathbb{P} \left(  N_{k_1,k_2} > \frac{1}{D} \ln ( |k_1||k_2|) \right) \\
		&\leq \sum_{m \geq \lfloor \frac{1}{D} \ln ( |k_1||k_2|)\rfloor +1} \mathbb{P}\left( \text{Cor}_m(e_{k_1},e_{k_2}) > \varepsilon_m \right) \\
		&\lesssim \sum_{m \geq \lfloor \frac{1}{D} \ln ( |k_1||k_2|)\rfloor+1} \gamma^{(1-2\alpha)m} \lesssim \gamma^{(1-2\alpha) \cdot \frac{1}{D} \ln ( |k_1||k_2|) }.
	\end{align*}
	where $\lfloor a \rfloor$ denotes the largest integer smaller than $a$. Let $\alpha<1/5$, $D<(1-2\alpha)\ln (\gamma^{-1})/3$. Define $\beta=(2\alpha-1)\ln (\gamma^{-1})/D$, so that $\beta<-3$.
	Then, for any positive integer $l$, 
	\begin{align}\label{K}
		\mathbb{P}(K >l ) &\leq 2 \sum_{k_1,k_2 \in \mathbb{Z}_0^3,|k_1|>l}  \mathbb{P} \left( e^{D N_{k_1,k_2}} >  |k_1||k_2| \right) \nonumber\\
		& \lesssim \sum_{k_1,k_2 \in \mathbb{Z}_0^3,|k_1|>l} \gamma^{(1-2\alpha) \cdot \frac{1}{D} \ln ( |k_1||k_2|) } \nonumber\\
		&\lesssim \sum_{k_1 \in \mathbb{Z}_0^3,|k_1|>l} |k_1|^{\beta}  \sum_{k_2 \in \mathbb{Z}_0^3} |k_2|^{\beta}\nonumber \\
		&\lesssim \sum_{k_1 \in \mathbb{Z}_0^3,|k_1|>l} |k_1|^{\beta} \lesssim l^{3+\beta}.
	\end{align}
	Therefore, $K(\underline{\tau})$ is almost surely finite, and so is
	\begin{equation*}
		\xi(\underline{\tau})=\max_{|k_1| \vee |k_2| \leq K} e^{D N_{k_1,k_2}(\underline{\tau})}.
	\end{equation*}
    Let $s'>5/2$. For any $f, g\in H^{s'}$, we obtain
	\begin{align*}
		\left| \int_{\mathbb{T}^3} f(x) g \left(\Phi_{\underline{\tau}}^m(x) \right) dx\right| &\leq \xi(\underline{\tau}) \gamma^{\alpha m} \left( \sum_{k_1 \in \mathbb{Z}_0^3} |k_1| |f_{k_1}| \right) \left( \sum_{k_2 \in \mathbb{Z}_0^3} |k_2| |g_{k_2}| \right) \\
		&\leq \xi(\underline{\tau}) \gamma^{\alpha m} \left(\sum_{k\in \mathbb{Z}_0^3} |k|^{2-2s'}\right) \left(\sum_{k\in \mathbb{Z}_0^3} |k|^{2s'}|f_k|^2 \right)^{\frac{1}{2}} \left(\sum_{k\in \mathbb{Z}_0^3} |k|^{2s'}|g_k|^2 \right)^{\frac{1}{2}} \\
		& \lesssim \xi(\underline{\tau}) \gamma^{\alpha m} \|f\|_{H^{s'}} \|g\|_{H^{s'}}.
	\end{align*}
    Now we prove for any $q>0$, $\mathbb{E}\xi^q < \infty$. Let $\alpha<1/5$ sufficiently small and $D=\alpha \ln (\gamma^{-1})$. It follows that $\gamma^{1-2\alpha}e^{2qD} <1$. By \eqref{tail estimate}, the estimate
    \begin{align*}
    	\mathbb{E}e^{2qD N_{k_1,k_2}}=\sum_{n \geq 1} \mathbb{P}(N_{k_1,k_2}=n) e^{2qDn} \lesssim \sum_{n \geq 1} (\gamma^{1-2\alpha} e^{2qD})^n,
    \end{align*}
    holds uniformly in $k_1, k_2$. Therefore, by \eqref{K} and Cauchy-Schwarz inequality, we have 
    \begin{align*}
    	\mathbb{E}[\xi^q] &= \sum_{l \geq 1} \mathbb{E} \left(1_{\{K=l\}} \max_{|k_1|,|k_2|\leq l} e^{qD N_{k_1,k_2}} \right) \\
    	&\leq \sum_{l \geq 1} [\mathbb{P}(K=l)]^{\frac{1}{2}} \sum_{|k_1|,|k_2|\leq l} [\mathbb{E}e^{2qD N_{k_1,k_2}}]^{\frac{1}{2}}  \\
    	&\lesssim \sum_{l \geq 1} l^{\frac{3+\beta}{2}} \cdot l^6=\sum_{l \geq 1} l^{\frac{17\alpha-1}{2\alpha}}.
    \end{align*}
    This shows that the moment $\mathbb{E}[\xi^q]$ is finite when $\alpha<1/19$ is sufficiently small.
\end{proof}
To transition from $H^{s'}$ with $s' > \frac{5}{2}$ to $H^s$ for arbitrary $s > 0$, we adjust the exponential decay rate to account for the reduced regularity of the functions $f$ and $g$. This adjustment is facilitated by the standard approximation lemma from Lemma 7.1 in \cite{BBP} and Lemma 4.2 in \cite{CDE}.
\begin{lem}\label{general Hs approximation}
	Let $0<s<s'$ and let $f \in H^s$ be a mean-zero function. Then, for any $\varepsilon>0$ there exists a mean-zero function $f_{\varepsilon} \in H^{s'}$ such that  $\|f_\varepsilon\|_{L^2} \lesssim \|f\|_{L^2}$, $\|f_\varepsilon-f\|_{L^2} \lesssim \varepsilon \|f\|_{H^s}$ and $\|f_\varepsilon\|_{H^{s'}} \lesssim \varepsilon^{-\frac{s'-s}{s}} \|f\|_{H^s}$.
\end{lem}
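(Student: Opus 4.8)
\textbf{Proof proposal for Lemma \ref{general Hs approximation}.} The plan is to produce $f_\varepsilon$ by a Fourier frequency cutoff, choosing the cutoff radius in terms of $\varepsilon$ so that all three bounds hold simultaneously. Write $f=\sum_{k\in\mathbb{Z}_0^3} f_k e_k$ (the mean-zero hypothesis means the $k=0$ coefficient vanishes, so $f_\varepsilon$ will automatically be mean-zero), and for a parameter $N\ge 1$ to be fixed, set
\[
f_\varepsilon := \sum_{0<|k|\le N} f_k e_k .
\]

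First I would record the elementary consequences of Parseval. Since $f_\varepsilon$ keeps a subset of the Fourier coefficients of $f$, we immediately get $\|f_\varepsilon\|_{L^2}\le \|f\|_{L^2}$, so the first bound holds with constant $1$. Next, for the tail, on the complement $\{|k|>N\}$ we have $|k|^{-2s}\le N^{-2s}$, hence
\[
\|f_\varepsilon-f\|_{L^2}^2=\sum_{|k|>N}|f_k|^2\le N^{-2s}\sum_{|k|>N}|k|^{2s}|f_k|^2\le N^{-2s}\|f\|_{H^s}^2,
\]
giving $\|f_\varepsilon-f\|_{L^2}\le N^{-s}\|f\|_{H^s}$. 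Similarly, for the high-regularity norm, on $\{|k|\le N\}$ we estimate $|k|^{2s'}=|k|^{2s}\cdot|k|^{2(s'-s)}\le |k|^{2s}N^{2(s'-s)}$, so
\[
\|f_\varepsilon\|_{H^{s'}}^2=\sum_{0<|k|\le N}|k|^{2s'}|f_k|^2\le N^{2(s'-s)}\sum_{0<|k|\le N}|k|^{2s}|f_k|^2\le N^{2(s'-s)}\|f\|_{H^s}^2,
\]
that is $\|f_\varepsilon\|_{H^{s'}}\le N^{s'-s}\|f\|_{H^s}$.

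Finally I would choose $N$. Taking $N=\lceil \varepsilon^{-1/s}\rceil$ makes $N^{-s}\le \varepsilon$ (for $\varepsilon$ small; for $\varepsilon$ bounded away from $0$ one may just take $f_\varepsilon=0$ or absorb constants), which yields $\|f_\varepsilon-f\|_{L^2}\lesssim\varepsilon\|f\|_{H^s}$, while $N^{s'-s}\lesssim \varepsilon^{-(s'-s)/s}$ gives $\|f_\varepsilon\|_{H^{s'}}\lesssim \varepsilon^{-(s'-s)/s}\|f\|_{H^s}$, as required. There is essentially no obstacle here: the only mild point to be careful about is the regime $\varepsilon\gtrsim 1$, where $N=1$ and one checks the stated inequalities hold with the implicit constants absorbing the $O(1)$ discrepancy; all three conclusions are then immediate from Parseval's identity and monotonicity of $|k|\mapsto|k|^\theta$. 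This is exactly the content of Lemma 7.1 in \cite{BBP} and Lemma 4.2 in \cite{CDE}, so one may simply cite it, but the short argument above is self-contained.
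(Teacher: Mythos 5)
Your proof is correct, but it uses a different construction from the paper's. You regularize by a sharp Fourier truncation at radius $N=\lceil\varepsilon^{-1/s}\rceil$, whereas the paper defines $f_\varepsilon$ as the fractional heat semigroup applied to $f$ at time $\varepsilon^2$, i.e.\ it multiplies the Fourier coefficients by the smooth weights $e^{-\varepsilon^2|k|^{2s}}$ rather than by the indicator of $\{|k|\le N\}$. The three estimates are then obtained in the paper from elementary suprema such as $\sup_{x>0}x^{s'-s}e^{-2\varepsilon^2 x^s}\lesssim\varepsilon^{-2(s'-s)/s}$ and $\sup_{x>0}(1-e^{-x})^2/x<\infty$, which play exactly the role of your pointwise comparisons $|k|^{-2s}\le N^{-2s}$ on the tail and $|k|^{2(s'-s)}\le N^{2(s'-s)}$ on the low modes. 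Your route is slightly more elementary (no calculus of the exponential weights, only Parseval and monotonicity), and you correctly handle the automatic mean-zero property and the edge case $\varepsilon\gtrsim 1$; the semigroup construction buys nothing extra for this lemma beyond the fact that $f_\varepsilon$ lands in every $H^{s''}$ simultaneously, which is not needed here. Either argument is acceptable, and your observation that one may simply cite the standard references is consistent with how the paper frames the lemma.
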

\begin{proof}
	Consider $f \in H^s(\mathbb{T}^3)$, a mean-zero function, and define $h : [0, \infty) \times \mathbb{T}^3 \to \mathbb{R}$ as the solution to the fractional heat equation with initial condition $f$. The equation is given by:
	\[
	\begin{cases}
		\partial_t h(t,x) =- (-\Delta)^s h(t,x), &\text{on } [0, \infty) \times \mathbb{T}^3, \\
		h(0,x) = f(x), & \text{on } \mathbb{T}^3.
	\end{cases}
	\]
	Let $\lambda_k=|k|^2$ denote the eigenvalue of $-\Delta$ associated with the eigenfunction $e_{k}$, where $k \in \mathbb{Z}^3$. Since $f\in H^s(\mathbb{T}^3)$ is a mean-zero function, we can express it in terms of its Fourier series: $f(x)=\sum_{k \in \mathbb{Z}_0^3} f_k e_k(x)$. The solution $h(t,x)$ to the fractional heat equation can thus be written in terms of its Fourier series as
	\[
	h(t,x)=\sum_{k \in \mathbb{Z}_0^3} e^{-t\lambda_k^s} f_k e_k(x).
	\]
	For any $\varepsilon>0$, define
	\[
	f_{\varepsilon}(x):=h(\varepsilon^2,x) = \sum_{k \in \mathbb{Z}_0^3} e^{-\varepsilon^2 \lambda_k^s} f_ke_k(x).
	\]
	It is straightforward to confirm that $f_\varepsilon$ is mean-zero. Accordingly, we deduce the following:
	\[
	\|f_\varepsilon\|_{L^2}^2 = \sum_{k \in \mathbb{Z}_0^3} e^{-2 \varepsilon^2 \lambda_k^s} |f_k|^2 \lesssim \sum_{k \in \mathbb{Z}_0^3} |f_k|^2 = \|f\|_{L^2}^2.
	\]
	On the other hand, for any $s'>s$, 
	\[
	\|f_\varepsilon\|_{H^{s'}}^2 = \sum_{k \in \mathbb{Z}_0^3} \lambda_k^{s'} e^{-2 \varepsilon^2 \lambda_k^s} |f_k|^2 \leq \sup_{x>0} \left( x^{s'-s} e^{-2 \varepsilon^2 x^s} \right) \cdot \sum_{k \in \mathbb{Z}_0^3} \lambda_k^{s}  |f_k|^2 \lesssim_{s,s'} \varepsilon^{-\frac{2(s'-s)}{s}} \|f\|_{H^{s}}^2.
	\]
	Finally, we compute how well $f_{\varepsilon}$ approximates $f$:
	\[
	\|f_\varepsilon-f \|_{L^2}^2 = \sum_{k \in \mathbb{Z}_0^3} \left(1-e^{- \varepsilon^2 \lambda_k^s} \right)^2 |f_k|^2 \leq \sup_{x>0} \frac{(1-e^{-x})^2}{x} \cdot \varepsilon^2 \sum_{k \in \mathbb{Z}_0^3} \lambda_k^{s}  |f_k|^2 \lesssim \varepsilon^2 \|f\|_{H^{s}}^2.
	\] 
	Hence, the conclusion follows. 
\end{proof}
With the preceding steps in place, we can now prove the main theorem on almost-sure quenched correlation decay.
\begin{proof}[\textbf{Proof of Theorem \ref{Exponential mixing}}]
	By Lemma \ref{Exponential mixing with 5/2}, we only need to consider the case where $s\in (0,3)$. For any mean-zero function $f\in H^s$ and any $\varepsilon>0$, by Lemma \ref{general Hs approximation}, there exists $f_\varepsilon\in H^{3}$ such that 
	\[
	\|f_\varepsilon\|_{L^2} \lesssim \|f\|_{L^2}, \quad  \|f_\varepsilon-f\|_{L^2} \lesssim \varepsilon \|f\|_{H^s}, \quad  \|f_\varepsilon\|_{H^{3}} \lesssim \varepsilon^{-\frac{3-s}{s}} \|f\|_{H^s} .
	\]
	A similar statement holds for any mean-zero function $g\in H^s$. Then, it follows from Lemma \ref{Exponential mixing with 5/2} that
	\begin{align*}
		\left| \int_{\mathbb{T}^3} f(x) g \left(\Phi_{\underline{\tau}}^m(x) \right) dx\right| &\leq \left| \int_{\mathbb{T}^3} f_{\varepsilon}(x) g_{\varepsilon} \left(\Phi_{\underline{\tau}}^m(x) \right) dx\right| +\|g\|_{L^2}\|f_\varepsilon-f\|_{L^2} + \|f_{\varepsilon}\|_{L^2}\|g_\varepsilon-g\|_{L^2} \\
		&\lesssim \xi(\underline{\tau}) \gamma^{\alpha m} \|f_{\varepsilon}\|_{H^{3}} \|g_{\varepsilon}\|_{H^{3}} + \varepsilon \|f\|_{H^s} \|g\|_{H^s} \\
		&\lesssim \xi(\underline{\tau}) \left( \gamma^{\alpha m} \varepsilon^{-\frac{2(3-s)}{s}} + \varepsilon \right) \|f\|_{H^s} \|g\|_{H^s}.
	\end{align*}
	Take $\varepsilon=\left( \frac{6-2s}{s} \gamma^{\alpha m} \right)^{\frac{s}{6-s}}$, we have
	\begin{equation*}
		\left| \int_{\mathbb{T}^3} f(x) g \left(\Phi_{\underline{\tau}}^m(x) \right) dx\right| \leq \xi(\underline{\tau})  \gamma^{\alpha' m} \|f\|_{H^s} \|g\|_{H^s},
	\end{equation*}
	where $\alpha'=\frac{s}{6-s}\alpha$ and $\xi(\underline{\tau})$ absorbs an $s$-dependent constant. This completes the proof of Theorem \ref{Exponential mixing}.
\end{proof}

\subsection{Exponential mixing: Proof of Corollary \ref{exponential mixing}}
\quad In this subsection, we demonstrate that the velocity field $u_{\underline{\tau}}$ is almost-sure exponential mixing for passive scalars.
    \begin{proof}[\textbf{Proof of Corollary \ref{exponential mixing}}]
    	By incompressibility and $f(t)=f_0 \circ (\psi_{\underline{\tau}}^t)^{-1}$, it suffices to extend Theorem \ref{Exponential mixing} to the continuous-time case. Recall that the flow $\psi_{\underline{\tau}}^t$, generated by $\dot{x} = u_{\underline{\tau}}(t,x)$, satisfies $\psi_{\underline{\tau}}^{3n}(x) = \Phi_{\underline{\tau}}^n(x)$ at discrete times $t = 3n$ where $n \in \mathbb{N}$. For time $t \in (3n,3(n+1))$, let $t=3n+\tilde{t}$ with $\tilde{t} \in (0,3)$. Applying Theorem \ref{Exponential mixing}, we have 
    	\begin{equation}\label{continuous estimate}
    		\left| \int_{\mathbb{T}^3} f(x) g ( \psi_{\underline{\tau}}^t (x)) \right| \leq \xi(\underline{\tau})e^{-\alpha n}\|f\|_{H^s} \|g \circ \psi_{\theta_{3n} \underline{\tau}}^{\tilde{t}} \|_{H^s}.
    	\end{equation}
    	Combining Lemma 3.1.1 from \cite{Vasilev2000} with Proposition 1.4 in \cite{BO}, we establish that for any $s>0$ and $f \in H^s(\mathbb{T}^d)$, the norm $\|f\|_{H^s}^2$ is equivalent to the norm 
    	\begin{equation*}
    		\sum_{|k|=\lfloor s \rfloor} \int_{\mathbb{T}^d} \int_{\mathbb{T}^d} \frac{|D^k f(x)-D^k f(y)|^2}{d(x,y)^{2\{s\}+d}} dxdy + \|f\|_{L^2}^2,
    	\end{equation*}
    	where $s=\lfloor s \rfloor+ \{s\}$ with $\{s\} \in (0,1)$. Then, we have
    	\begin{align*}
    		\|g &\circ \psi_{\theta_{3n} \underline{\tau}}^{\tilde{t}} \|_{H^s}^2 \lesssim \|g\circ \psi_{\theta_{3n} \underline{\tau}}^{\tilde{t}}\|_{L^2}^2 + \sum_{|k|=\lfloor s \rfloor} \int_{\mathbb{T}^3} \int_{\mathbb{T}^3} \frac{|D^k g(\psi_{\theta_{3n} \underline{\tau}}^{\tilde{t}}(x))- D^k g( \psi_{\theta_{3n} \underline{\tau}}^{\tilde{t}}(y))|^2}{d(x,y)^{2\{s\}+3}} dxdy \\
    		&= \|g\|_{L^2}^2+ \sum_{|k|=\lfloor s \rfloor} \int \int \frac{|D^k g(\psi_{\theta_{3n} \underline{\tau}}^{\tilde{t}}(x))-D^k g( \psi_{\theta_{3n} \underline{\tau}}^{\tilde{t}}(y))|^2}{d(\psi_{\theta_{3n} \underline{\tau}}^{\tilde{t}}(x),\psi_{\theta_{3n} \underline{\tau}}^{\tilde{t}}(y))^{2\{s\}+3}} \cdot \frac{|\psi_{\theta_{3n} \underline{\tau}}^{\tilde{t}}(x)-\psi_{\theta_{3n} \underline{\tau}}^{\tilde{t}}(y)|^{2\{s\}+3}}{d(x,y)^{2\{s\}+3}}   dxdy \\
    		&\lesssim \|g\|_{L^2}^2+ \|D\psi_{\theta_{3n} \underline{\tau}}^{\tilde{t}}\|_{L^{\infty}}^{2\{s\}+3} \|g\|_{H^s}^2,
    	\end{align*}
    	where we use the fact that $\psi_{\underline{\tau}}^{t}$ is a volume-preserving diffeomorphism on $\mathbb{T}^3$ for every $\underline{\tau}$. We consequently obtain
    	\begin{equation*}
    		\sup_{\tilde{t} \in (0,3)} \|g \circ \psi_{\theta_{3n} \underline{\tau}}^{\tilde{t}} \|_{H^s} \lesssim \|g\|_{H^s} \cdot \sup_{\tilde{t} \in (0,3)} \|D\psi_{\theta_{3n} \underline{\tau}}^{\tilde{t}}\|_{L^{\infty}}^{\{s\}+\frac{3}{2}}.
    	\end{equation*}
    	Since $\psi_{\underline{\tau}}^t$ is generated by $\dot{x}=u_{\underline{\tau}}(t,x)$, we can deduce that 
    	\begin{equation*}
    		\frac{d D_x\psi_{\underline{\tau}}^{t}(x)}{dt}=\nabla u_{\underline{\tau}}(t,\psi_{\underline{\tau}}^{t}(x)) \cdot D_x\psi_{\underline{\tau}}^{t}(x).
    	\end{equation*}
    	By applying Gr\"{o}nwall's inequality, we have
    	\begin{equation*}
    		\|D \psi_{\underline{\tau}}^t \|_{L^{\infty}} \lesssim \sup_{x \in \mathbb{T}^3} \exp{\int_0^t \|\nabla u_{\underline{\tau}}(\gamma,\psi_{\underline{\tau}}^\gamma(x))\|d\gamma} \leq \exp{\int_0^t \|\nabla u_{\underline{\tau}}(\gamma,\psi_{\underline{\tau}}^\gamma)\|_{L^{\infty}}d\gamma}.
    	\end{equation*}
    	Consequently, we establish
    	\begin{equation}\label{75}
    		\sup_{\tilde{t} \in (0,3)} \|g \circ \psi_{\theta_{3n} \underline{\tau}}^{\tilde{t}} \|_{H^s} \lesssim \|g\|_{H^s} \cdot  \exp{ \left( c\int_0^3 \|\nabla u_{\underline{\tau}}(\gamma,\psi_{\theta_{3n} \underline{\tau}}^\gamma)\|_{L^{\infty}}d\gamma \right)} =: \|g\|_{H^s} \cdot \Gamma(\theta_{3n} \underline{\tau}),
    	\end{equation}
    	where $c=\{s\}+\frac{3}{2}$.
    	
    	Denote $M = \sup_{x \in \mathbb{T}^3} \{ \|D X_i(x)\| : i = 1, 2, 3 \} < \infty$. By the definition of $u_{\underline{\tau}}$, for any $\underline{\tau} = (\tau_1, \tau_2, \dots)$, we have
    	\[
    	\mathbb{E} \Gamma^q (\underline{\tau}) \leq \mathbb{E} e^{q c M (\tau_1 + \tau_2 + \tau_3)}.
    	\]
    	Since the $\{\tau_i\}_{i=1}^\infty$ are independent and exponentially distributed with mean $h>0$, the right-hand side is finite if and only if $q < (cMh)^{-1}$. For any $\varepsilon>0$, $0<q <(cMh)^{-1}$ and $n \in \mathbb{N}$, Chebyshev’s inequality yields
    	\begin{equation*}
    		\mathbb{P}(\Gamma(\theta_{3n} \underline{\tau}) >e^{\varepsilon n}) \leq e^{-q \varepsilon n} \mathbb{E}\Gamma^q.
    	\end{equation*}
    	By the Borel–Cantelli lemma, there exists an almost surely finite integer-valued random variable $N=N(\varepsilon,\underline{\tau})$ such that for all $n \in \mathbb{N}$,
    	\begin{equation}\label{76}
    		\Gamma(\theta_{3n} \underline{\tau}) \leq e^{\varepsilon (n+N)}.
    	\end{equation}
    	By combining estimates \eqref{75}, \eqref{76}, and \eqref{continuous estimate}, we establish that for any $\varepsilon>0$ 
    	\begin{equation*}
    		\left| \int_{\mathbb{T}^3} f(x) g ( \psi_{\underline{\tau}}^t (x)) \right| \leq \tilde{\xi}(\underline{\tau})e^{\frac{-\alpha+\varepsilon}{3} t}\|f\|_{H^s} \|g \|_{H^s},
    	\end{equation*}
    	where $\tilde{\xi}(\underline{\tau})=c_0\xi(\underline{\tau})e^{\varepsilon (N-1)+\alpha}$ is almost surely finite, with $c_0>0$ being a constant. Choosing $\varepsilon<\alpha$, this establishes the almost sure exponential mixing of the velocity field $u_{\underline{\tau}}$.
    \end{proof}

\end{document}